\theoremstyle{plain}
\newtheorem{thm}{Theorem}[section]
\newtheorem{claim}{Claim}[thm]
\newtheorem{lemma}[thm]{Lemma}
\newtheorem{fact}[thm]{Fact}
\newtheorem{obs}[thm]{Observation}
\newtheorem{prop}[thm]{Proposition}
\newtheorem{cor}[thm]{Corollary}
\newtheorem*{thma}{Theorem~A}
\newtheorem*{thmat}{Theorem~A$'$}
\newtheorem*{thmb}{Theorem~B}
\newtheorem*{thmc}{Theorem~C}
\theoremstyle{definition}
\newtheorem*{definition}{Definition}
\newtheorem{defn}[thm]{Definition}
\newtheorem*{q}{Question} 
\newtheorem{setup}[thm]{Setup} 
\theoremstyle{remark}
\newtheorem{remark}[thm]{Remark}
\DeclareMathOperator{\ssup}{ssup}
\DeclareMathOperator{\otp}{otp}
\DeclareMathOperator{\cf}{cf}
\DeclareMathOperator{\lex}{lex}
\DeclareMathOperator{\dom}{dom}
\DeclareMathOperator{\im}{Im}
\DeclareMathOperator{\Tr}{Tr}
\DeclareMathOperator{\tr}{tr}
\DeclareMathOperator{\osc}{osc}
\DeclareMathOperator{\Osc}{Osc}
\DeclareMathOperator{\fs}{FS}
\DeclareMathOperator{\acc}{acc}
\DeclareMathOperator{\card}{CARD}
\DeclareMathOperator{\ch}{CH}
\DeclareMathOperator{\U}{U}
\DeclareMathOperator{\pr}{Pr}
\DeclareMathOperator{\ext}{Extract}
\newcommand\last[2]{\eth_{#1,#2}}
\newcommand\nsrightarrow{\mathrel{\overset{\sup}{\longarrownot\longrightarrow}}}  
\newcommand\srightarrow{\mathrel{\overset{\sup}{\longrightarrow}}}  
\newcommand{\branches}{\leadsto}
\newcommand\sq{\sqsubseteq}
\newcommand\br{\blacktriangleright}
\newcommand\s{\subseteq}
\newcommand\symdiff{\mathrel{\triangle}}
\renewcommand\mid{\mathrel{|}\allowbreak}
\newcommand*\axiomfont[1]{\textsf{\textup{#1}}}
\newcommand\zfc{\axiomfont{ZFC}}
\title{Sums of triples in Abelian groups}
\author{Ido Feldman}
\address{Department of Mathematics, Bar-Ilan University, Ramat-Gan 5290002, Israel.}
\author{Assaf Rinot}
\address{Department of Mathematics, Bar-Ilan University, Ramat-Gan 5290002, Israel.}
\urladdr{http://www.assafrinot.com}
\subjclass[2010]{Primary 03E02; Secondary 03E75, 03E35, 05A17.}
\begin{document}
\begin{abstract} Motivated by a problem in additive Ramsey theory,
we extend Todor\v{c}evi\'{c}'s partitions of three-dimensional combinatorial cubes
to handle additional three-dimensional objects.
As a corollary, we get that if the continuum hypothesis fails,
then for every Abelian group $G$ of size $\aleph_2$, there exists a coloring $c:G\rightarrow\mathbb Z$
such that for every uncountable $X\s G$ and every integer $k$, there are three distinct elements $x,y,z$ of $X$ such that $c(x+y+z)=k$.
\end{abstract}
\date{Preprint as of January 4, 2023. For the latest version, visit \textsf{http://p.assafrinot.com/57}.}
\maketitle
\section{Introduction}

By Hindman's celebrated theorem (see~\cite[Corollary 5.9]{MR2893605}),
for every partition of an infinite commutative cancellative semigroup $(G,+)$ 
into two cells $A$ and $B$, there exists an infinite subset 
$X\s G$ such that the set of its finite sums
$$\fs(X):=\{x_1+\cdots+x_n\mid x_1,\ldots,x_n\text{ are distinct elements of }X\ \&\ n\in\mathbb N\setminus2\}$$
is completely contained in $A$ or completely contained in $B$. Equivalently, for every coloring $c:G\rightarrow 2$,
there exists an infinite $X\s G$ such that $c\restriction \fs(X)$ is constant.

Hindman's theorem does not generalize to the uncountable,
as it follows from a theorem of Milliken (see \cite[Theorem~9]{MR0505558}) that the following assertion is consistent with the usual axioms of set theory: 
for every (not necessarily Abelian) group $(G,*)$ whose size is a regular uncountable cardinal,
there is a coloring $c:G\rightarrow G$ such that $c\restriction \fs_2(X)$ is \emph{onto} $G$ for every $X\s G$ of size $|G|$, where this time
$$\fs_n(X):=\{x_1*\cdots*x_n\mid x_1,\ldots,x_n\text{ are distinct elements of }X\}.$$

A few years ago, starting with a paper by Hindman, Leader and Strauss \cite{MR3696151}, the study of higher analogs of Hindman's theorem regained interest.
We mention only a few results that are relevant to this paper:
\begin{enumerate}
\item Improving upon a theorem from \cite{MR3696151},
Komj\'ath~\cite{MR3511943}, and independently Soukup and Weiss~\cite{dani-bill},
proved that there exists a coloring $c:\mathbb R\rightarrow2$ 
such that for every uncountable $X\s\mathbb R$ and every $i\in\{0,1\}$,
there are $x\neq y$ in $X$ such that $c(x+y)=i$.
\item Solving a problem of Weiss, 
Komj\'ath \cite{MR4143159} proved that there exists a coloring $c:\mathbb R\rightarrow2$ 
such that for every uncountable $X\s\mathbb R$ and every $i\in\{0,1\}$,
there are $x\neq y$ in $X$ such that $c(|x-y|)=i$.
As for dimension $d>1$, 
assuming the continuum hypothesis, 
there exists a coloring $c:\mathbb R\rightarrow2$ 
such that for every uncountable $X\s\mathbb R^d$ and every $i\in\{0,1\}$,
there are $x\neq y$ in $X$ such that $c(\lVert x-y\rVert)=i$.
\item In \cite{paper27}, Fern\'andez-Bret\'on and Rinot  
proved that there exists a coloring $c:\mathbb R\rightarrow\mathbb N$ 
such that for every $X\s\mathbb R$ of size $|\mathbb R|$ and every $i\in\mathbb N$,
there are $x\neq y$ in $X$ such that $c(x+y)=i$.
\item By \cite{paper27}, for class many cardinals $\kappa$ (including $\kappa=\aleph_n$ for every positive integer $n$),
for every commutative cancellative semigroup $G$ of size $\kappa$, there exists a coloring $c:G\rightarrow G$
such that for all $X,Y\s G$ of size $\kappa$ and every $g\in G$,
there are $x\in X$ and $y\in Y$ such that $c(x+y)=g$.\footnote{More is true, see \cite[Corollary~4.5]{paper27}.}
\item By \cite{paper27}, for every commutative
cancellative semigroup $G$, there exists a coloring $c:G\rightarrow\mathbb N$
such that $c\restriction \fs(X)$ is onto $\mathbb N$ for every uncountable $X\s G$.
It is also consistent that the same holds after replacing $\mathbb N$ by $\mathbb R$.
\end{enumerate}

Note that in the results listed in (1), (2) and (5),
the triggering set $X$ may have cardinality smaller than that of $G$,
whereas in (3) and (4), $|X|$ coincides with $|G|$. 
Another important difference is that unlike the results of (1)--(4), in (5),
no bound is asserted on the length of the sums needed to generate all the infinite colors.
This raises a natural question whose simplest instance reads as follows.
\begin{q} Suppose that $(G,+)$ is an Abelian group of size $\aleph_2$.

Must there exist a positive integer $n$ and a coloring $c:G\rightarrow\mathbb N$
such that $c\restriction \fs_n(X)$ is onto $\mathbb N$ for every uncountable $X\s G$?
\end{q}

A moment's reflection makes it clear that an affirmative answer (even for one particular group $G$) immediately implies the relation $\aleph_2\nrightarrow[\aleph_1]^n_{\aleph_0}$
from the classical study of partition relations for cardinal numbers \cite{MR202613}.
By a theorem of Erd\H{o}s and Rado, the above relation may consistently fail for $n=2$,
and it is a remarkable theorem of Todor\v{c}evi\'{c} \cite{MR1297180} that it does hold for $n=3$. 
The first main result of this paper gives a consistent extension of Todor\v{c}evi\'{c}'s theorem.

\begin{thma}\label{thma} If the continuum hypothesis fails,
then for every Abelian group $(G,+)$ of size $\aleph_2$, there exists a coloring $c:G\rightarrow\mathbb N$
such that for every uncountable $X\s G$ and every $i\in\mathbb N$, there are three distinct elements $x,y,z$ of $X$ such that $c(x+y+z)=i$.
\end{thma}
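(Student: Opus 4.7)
The plan is to transport Todor\v{c}evi\'{c}'s walks-on-$\omega_2$ technology witnessing $\aleph_2\nrightarrow[\aleph_1]^3_\omega$ into the group setting. After fixing any enumeration $G=\{g_\alpha:\alpha<\omega_2\}$ with $g_0=0_G$, I would identify $G$ with $\omega_2$; the group operation then induces a function $\sigma:[\omega_2]^3\to\omega_2$ sending $\{\alpha,\beta,\gamma\}$ to the unique $\delta$ with $g_\delta=g_\alpha+g_\beta+g_\gamma$. The theorem reduces to constructing $c:\omega_2\to\mathbb N$ such that for every uncountable $A\subseteq\omega_2$ and every $i\in\mathbb N$ there exist distinct $\alpha,\beta,\gamma\in A$ with $c(\sigma(\alpha,\beta,\gamma))=i$.

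Second, I would set up the standard walks apparatus on $\omega_2$: a $C$-sequence $\langle C_\alpha:\alpha<\omega_2\rangle$ with $\otp(C_\alpha)\le\omega_1$ and suitable coherence, together with the usual trace, length and oscillation functions. The coloring $c(\delta)$ will be an $\mathbb N$-valued decoding of walks-data attached to $\delta$, patterned after Todor\v{c}evi\'{c}'s coloring for $\aleph_2\nrightarrow[\aleph_1]^3_\omega$, but now recorded by the single target ordinal rather than by an unordered triple. The verification strategy is then the familiar one: pass to an elementary submodel $M\prec H(\aleph_3)$ of size $\aleph_1$ containing $A$ and the $C$-sequence, pick a sufficiently generic $\gamma\in A\setminus M$, and thin $A\cap M$ twice to locate $\beta$ and $\alpha$ whose walks interact with $\gamma$ in a prescribed oscillation pattern of value $i$.

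The central difficulty, and the reason the paper's technical contribution is needed, is that $c$ no longer has direct access to the triple $(\alpha,\beta,\gamma)$ but only to the single ordinal $\delta=\sigma(\alpha,\beta,\gamma)$. In Todor\v{c}evi\'{c}'s original argument the walk from a top ordinal down can be engineered to pass through $\alpha,\beta,\gamma$ in turn, but after applying $\sigma$ the walk to $\delta$ is typically disjoint from the walks that encode the inputs. To overcome this, I would extend the combinatorial analysis to ``three-dimensional objects'' parameterized by their $\sigma$-image: show that for every uncountable $A$, the set $\{\sigma(\alpha,\beta,\gamma):\{\alpha,\beta,\gamma\}\in[A]^3\}$ meets enough walks-types on $\omega_2$ to realize every color. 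The failure of $\ch$ enters at this step to guarantee a capture principle---plausibly the principle $\axiom$ or $\axiomi$ reserved in the paper's preamble---ensuring that the $\sigma$-images of triples from any uncountable $A$ are spread widely enough across $\omega_2$ to be intercepted by the walks coloring.

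The main obstacle is precisely this capture step: quantifying, for every uncountable $A\subseteq\omega_2$, how generic the point $\sigma(\alpha,\beta,\gamma)$ can be made by varying $(\alpha,\beta,\gamma)\in[A]^3$, despite $\sigma$ being completely determined by the group structure. Making this work is not a routine application of Todor\v{c}evi\'{c}'s theorem but a genuine extension of it; once established, a standard oscillation-based argument can be run to decode any prescribed $i\in\mathbb N$ and complete the proof.
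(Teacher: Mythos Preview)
Your proposal has a genuine gap at the ``capture step,'' and the paper's approach is fundamentally different from what you outline. You propose to enumerate $G=\{g_\alpha:\alpha<\omega_2\}$ arbitrarily and then color the single ordinal $\delta$ indexing $g_\alpha+g_\beta+g_\gamma$. As you yourself note, $\delta$ carries essentially no information about the triple $(\alpha,\beta,\gamma)$, and there is no mechanism in walks technology or in the failure of $\ch$ that forces the set $\{\sigma(\alpha,\beta,\gamma):\{\alpha,\beta,\gamma\}\in[A]^3\}$ to be ``spread widely enough'' across $\omega_2$ for an arbitrary uncountable $A$; indeed this set can be quite degenerate depending on the group and the enumeration. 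The principles $S$ and $S^*$ you spotted in the preamble are not capture principles of this sort.

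What the paper does instead is exploit the \emph{well-behaved magma} structure: there is a countable-to-one map $\varphi:G\to[G]^{<\omega}$ with $\varphi(x)\symdiff\varphi(y)\subseteq\varphi(x*y)\subseteq\varphi(x)\cup\varphi(y)$. Thus $\varphi(x+y+z)$ is not a single ordinal but a \emph{finite set} that provably contains, and is contained in, explicit sets built from $\varphi(x),\varphi(y),\varphi(z)$. The coloring is $c:=d\circ\varphi$ where $d:[\kappa]^{<\omega}\to\omega$ witnesses the principle $S_3(\aleph_2,\aleph_1,\omega)$. That principle is in turn obtained by composing a witness to a rectangular relation $\aleph_2\nsrightarrow[\aleph_1,\aleph_1]^3_\omega$ with an \emph{extraction map} $e:[\kappa]^{<\omega}\to{}^3\kappa$ (Definition~\ref{extn}) which, from any finite set $z$ lying between the symmetric difference and the union of three $x_\gamma$'s, recovers a canonical triple of ordinals. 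The failure of $\ch$ enters not as a capture principle on sums, but because it yields a weak $\aleph_0$-Kurepa tree with $\aleph_2$ branches, which is used both to build the extraction map (Lemma~\ref{ext3}) and to prove the rectangular partition relation (Corollary~\ref{thm712}). The walks analysis you sketch is indeed present, but it is carried out on ordered triples of ordinals, not on their $\sigma$-image; the passage to the group happens only at the very end, via $\varphi$ and $e$.
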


Theorem~A is not limited to Abelian groups. In fact, it works for all so-called \emph{well-behaved magmas}, as follows.

\begin{definition} A \emph{magma} is a structure $(G,*)$, where $*$ is a binary operation.
We say that it is \emph{well-behaved} iff  there exists a map $\varphi:G\rightarrow[G]^{<\omega}$ such that:\footnote{Here, $[G]^{<\omega}$ denotes the collection of all finite subsets of $G$.}
\begin{itemize}
\item $G$ is countable-to-one;
\item for all $x\neq y$ in $G$, $\varphi(x)\symdiff\varphi(y)\s \varphi(x*y)\s\varphi(x)\cup\varphi(y)$.
\end{itemize}
\end{definition}

Every infinite commutative cancellative semigroup $(G,+)$ is well-behaved (see, e.g., \cite[Lemma~2.2]{paper27}).
Also, every free group $(G,*)$ is well-behaved, as witnessed by the map that sends a word to the set of its letters.
As a third example, consider the magma appearing in result~(2) above, namely, $(\mathbb R,d)$ where $d(x,y):=|x-y|$. 
Indeed, viewing $\mathbb R$ as a $\mathbb Q$-vector space over some Hamel basis $B$, 
any $x\in\mathbb R\setminus\{0\}$
is the unique linear combination $\sum_{i\le n}q_iv_i$ of nonzero rational numbers $q_0,\ldots,q_n$, and an injective sequence $\langle v_i\mid i\le n\rangle$ of elements of $B$.
So $\varphi:\mathbb R\rightarrow[\mathbb R]^{<\omega}$ sending $x$ to the unique $\{ v_i\mid i\le n\}$ (and sending $0$ to the emptyset) is countable-to-one,
and for all $x\neq y$, $\varphi(x)\symdiff\varphi(y)\s \varphi(|x-y|)\s\varphi(x)\cup\varphi(y)$.
The full statement of Theorem~A reads as follows.
\begin{thmat} For every infinite cardinal $\mu$ such that $\mu^{<\mu}<\mu^+<2^\mu$,
for every well-behaved magma $(G,*)$ of size $\mu^{++}$,
there is a coloring $c:G\rightarrow\mathbb N$
such that for every $X\s G$ of size $\mu^+$ and every $i\in\mathbb N$, there are three distinct elements $x,y,z$ of $X$ such that $c(x*y*z)=i$.\footnote{As $*$ is not assumed to be associative, 
the claim is that we get $c(x*y*z)=i$ for both implementations of $x*y*z$.}
\end{thmat}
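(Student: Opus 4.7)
The strategy is to reduce the theorem to a strong partition relation on finite subsets of $\mu^{++}$ and then lift it through the map $\varphi$ witnessing well-behavedness. The principal combinatorial ingredient, which I expect to be the main obstacle and the paper's technical heart, is the construction of a coloring $d:[\mu^{++}]^{<\omega}\to\mathbb N$ with the following strong unboundedness property: for every pairwise disjoint family $\langle A_\alpha:\alpha<\mu^+\rangle$ of nonempty finite subsets of $\mu^{++}$ (possibly subject to mild spreading assumptions, such as $\sup_{\alpha<\mu^+}\min A_\alpha=\mu^{++}$) and every $i\in\mathbb N$, there exist distinct indices $\alpha_1,\alpha_2,\alpha_3$ with $d(A_{\alpha_1}\cup A_{\alpha_2}\cup A_{\alpha_3})=i$. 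In addition, $d$ must be insensitive to a finite ``residue'' sitting below these blocks, i.e., $d(R''\cup A_{\alpha_1}\cup A_{\alpha_2}\cup A_{\alpha_3})=d(A_{\alpha_1}\cup A_{\alpha_2}\cup A_{\alpha_3})$ whenever $\max R''<\min(A_{\alpha_1}\cup A_{\alpha_2}\cup A_{\alpha_3})$. Such a $d$ should arise by upgrading Todor\v{c}evi\'{c}'s partition of three-dimensional combinatorial cubes that underlies $\aleph_2\nrightarrow[\aleph_1]^3_{\aleph_0}$, adapting his walks-on-ordinals machinery to act on unions of finite blocks rather than on ordered triples, and invoking the cardinal-arithmetic hypothesis $\mu^{<\mu}<\mu^+<2^\mu$ for the requisite scales.

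Given such a $d$, fix a bijection $G\cong\mu^{++}$ and define $c:G\to\mathbb N$ by $c(g):=d(\varphi(g))$. To verify the conclusion, fix $X\s G$ of size $\mu^+$ and $i\in\mathbb N$. Because $\varphi$ is countable-to-one, $|\varphi[X]|=\mu^+$, and the $\Delta$-system lemma applied to this family of finite sets yields $X_0\s X$ of size $\mu^+$ on which $\{\varphi(x):x\in X_0\}$ forms a $\Delta$-system with finite root $R$. A pigeonholing produces a $\mu^+$-sized $X_1\s X_0$ on which $|\varphi(x)\setminus R|$ is a fixed integer $n$; writing $A_x:=\varphi(x)\setminus R$, the family $\{A_x:x\in X_1\}$ is pairwise disjoint. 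The sets $A_x\cap(\max R+1)$ are pairwise disjoint subsets of the finite set $(\max R+1)\setminus R$, so all but finitely many are empty, and discarding those we may assume $\min A_x>\max R$ throughout $X_1$. A further recursive thinning, available because the countable-to-oneness of $\varphi$ ensures the $A_x$'s are cofinal in $\mu^{++}$, delivers whatever additional spreading is required by the partition property of $d$.

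Now apply the strong unboundedness property of $d$ to the family $\langle A_x:x\in X_1\rangle$ to obtain distinct $x,y,z\in X_1$ with $d(A_x\cup A_y\cup A_z)=i$. Iterating the axiom $\varphi(u)\symdiff\varphi(v)\s\varphi(u*v)\s\varphi(u)\cup\varphi(v)$ shows that $\varphi(x*y*z)=R''\cup A_x\cup A_y\cup A_z$ for some $R''\s R$, and this identity holds for either evaluation of the non-associative product $x*y*z$. Since $\max R''\leq\max R<\min(A_x\cup A_y\cup A_z)$, the insensitivity property of $d$ yields $c(x*y*z)=d(\varphi(x*y*z))=d(A_x\cup A_y\cup A_z)=i$, as required.
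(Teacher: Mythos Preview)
Your overall strategy---defining $c := d \circ \varphi$ for a suitable coloring $d$ on $[\mu^{++}]^{<\omega}$, and then applying the $\Delta$-system lemma to $\{\varphi(x) : x \in X\}$---matches the paper's approach exactly (this is Proposition~\ref{prop316} combined with the principle $S_3(\mu^{++},\mu^+,\omega)$, whose verification is indeed the technical heart of the paper).

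There is, however, a genuine error in your reduction. You assert that the sets $A_x \cap (\max R + 1)$ are pairwise disjoint subsets of ``the finite set $(\max R + 1) \setminus R$'' and conclude that all but finitely many are empty. But $\max R$ is an ordinal below $\mu^{++}$, not a natural number; the set $(\max R + 1) \setminus R$ can have cardinality up to $\mu^+$, so there is no reason the tails $A_x$ should eventually lie above $R$. The root can be interleaved with the $A_x$'s in an essential way, and no thinning repairs this.

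Because of this, your ``insensitivity'' requirement on $d$---constancy over $R'' \cup A_{\alpha_1} \cup A_{\alpha_2} \cup A_{\alpha_3}$ only when $\max R'' < \min(A_{\alpha_1} \cup A_{\alpha_2} \cup A_{\alpha_3})$---is not strong enough to close the argument. The paper's principle $S_3(\kappa,\lambda,\theta)$ states the correct condition: for the three chosen $a_0,a_1,a_2 \in \mathcal X$, one needs $d(z) = \tau$ for \emph{every} $z$ with $a_0 \symdiff (a_1 \cup a_2) \s z \s a_0 \cup a_1 \cup a_2$, with no order hypothesis relating the root to the tails. This is exactly what absorbs the unknown $R'' \s R$ appearing in $\varphi(x*y*z)$. (A minor secondary point: countable-to-oneness of $\varphi$ does not force $\{A_x : x\in X_1\}$ to be cofinal in $\mu^{++}$, since all of $\varphi[X]$ could live inside a single ordinal of size $\mu^+$; fortunately the correct form of $S_3$ does not require any such spreading.)
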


While not so explicit, the approach of going through well-behaved magmas is already present in \cite{paper27}.
In particular, the coloring of result~(4) attains all possible colors not only over evaluations of the form $x+y$, but also over any nontrivial $\mathbb Q$-combination of $x$ and $y$,
such as $|x-y|$. This suggests that it is possible to obtain a coloring simultaneously witnessing result (1) together with the first half of (2).
Indeed, Komj\'ath's theorems follow from the following finding (using $\theta:=\aleph_0$):

\begin{thmb} For every infinite cardinal $\theta$ such that $2^{<\theta}=\theta$,
for every set $G$ with $\theta<|G|\le 2^\theta$,
and every map $\varphi:G\rightarrow[G]^{<\omega}$,
there exists a corresponding coloring $c:G\rightarrow 2$ satisfying the following.

For every binary operation $*$ on $G$, if $\varphi$ witnesses that $(G,*)$ is well-behaved,
then for every $X\s G$ of size $\theta^+$ and every $i\in\{0,1\}$, 
there are $x\neq y$ in $X$ such that $c(x*y)=i$.
\end{thmb}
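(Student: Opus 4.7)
The idea is to build $c$ out of two ingredients: a Sierpi\'nski-style pair coloring on $G$, and a canonical extraction of a pair from $\varphi(g)$. Using an injection $\pi\colon G\hookrightarrow 2^\theta$ (afforded by $|G|\le 2^\theta$) and the fact that the tree $2^{<\theta}$ has size $\theta$ (by the hypothesis $2^{<\theta}=\theta$), invoke Sierpi\'nski's classical argument to produce a coloring $\sigma\colon[G]^2\to 2$ with no monochromatic subset of size $\theta^+$. Fix also a well-ordering $\prec$ of $G$, and for $g\in G$ with $|\varphi(g)|\ge 2$, let $\ell_1(g)\succ\ell_2(g)$ be the two $\prec$-largest elements of $\varphi(g)$. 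Define
\[c(g):=\sigma(\{\ell_1(g),\ell_2(g)\}),\]
with $c(g):=0$ when $|\varphi(g)|<2$.

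\textbf{Verification outline.} Fix a binary operation $*$ on $G$ for which $\varphi$ witnesses well-behavedness, fix $X\s G$ of size $\theta^+$, and fix $i\in\{0,1\}$. It suffices to locate some $\{x,y\}\in[X]^2$ with $A(x,y)\s c^{-1}(i)$, where
\[A(x,y):=\{g\in G:\varphi(x)\symdiff\varphi(y)\s\varphi(g)\s\varphi(x)\cup\varphi(y)\};\]
indeed, any such pair forces $x*y\in A(x,y)$ and hence $c(x*y)=i$. Since $\varphi$ is countable-to-one and $\theta^{<\omega}=\theta$, the $\Delta$-system lemma produces $X_0\s X$ of size $\theta^+$ with $\{\varphi(x):x\in X_0\}$ a $\Delta$-system of root $R$ and pairwise-disjoint petals $P_x:=\varphi(x)\setminus R$ of a constant size $n\ge 1$. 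A further pigeonhole over the finitely many $\prec$-order types of $R\cup P_x$, together with a refinement arranging $\max_\prec P_x\succ\max_\prec R$, yields $X_1\s X_0$ of size $\theta^+$ on which the $\prec$-type is uniform. Then for any distinct $x,y\in X_1$ and $g\in A(x,y)$, we have $\varphi(g)=P_x\cup P_y\cup S$ with $S\s R$, and the two $\prec$-largest elements of $\varphi(g)$ lie in $P_x\cup P_y$ and depend only on $(x,y)$; thus $c$ is constant on $A(x,y)$.

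\textbf{Main obstacle.} In the principal sub-case, where the uniform type forces $\{\ell_1(g),\ell_2(g)\}=\{\max_\prec P_x,\max_\prec P_y\}$, we finish by applying the Sierpi\'nski non-monochromaticity of $\sigma$ to the $\theta^+$-sized set of petal-maxima: some pair realizes each color. The delicate case is the ``staircase'' sub-case, where both $\ell_1(g),\ell_2(g)$ always lie inside the $\prec$-larger petal so that $c|_{A(x,y)}$ depends on only one of $x,y$; to handle this, $\sigma$ must be nonconstant on every $\theta^+$-sized family of pairwise-disjoint pairs of a certain staircase shape, a strictly stronger property than merely avoiding monochromatic vertex sets. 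Securing this enhanced property of $\sigma$ via a more careful tree-walk construction over $2^{<\theta}$ is the principal technical obstacle; the adjustment of $\prec$ and the possibly dual extraction using $\prec$-minima are subsidiary issues that can be dealt with in parallel.
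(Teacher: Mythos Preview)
Your proposal correctly isolates the two ingredients --- a Sierpi\'nski-type pair coloring plus an extraction of a pair from $\varphi(g)$ --- but the extraction by ``two $\prec$-largest elements'' is too coarse, and the fix you propose for the staircase sub-case is impossible. The enhanced property you ask of $\sigma$, namely nonconstancy on every $\theta^+$-sized family of pairwise-disjoint pairs (of whatever shape), fails for \emph{every} $\sigma:[G]^2\to 2$: given any such family, pigeonhole on two colors immediately yields a $\theta^+$-sized monochromatic subfamily, and a subfamily of a staircase family is again of staircase shape. So no ``more careful tree-walk construction'' of $\sigma$ can rescue the argument; the defect is in the extraction map, not in $\sigma$. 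Once you are in the staircase sub-case, $c(x*y)$ depends only on the $\prec$-later petal, and a further pigeonhole lets the adversary pin $c$ to a single value on a $\theta^+$-sized set of such products. (A subsidiary issue you gloss over: when $|G|>\theta^+$, the refinement ``$\max_\prec P_x\succ\max_\prec R$'' need not be available either, since the $\prec$-initial segment below $\max_\prec R$ can have size $\ge\theta^+$.)

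The paper repairs this by changing the extraction, not $\sigma$. Fix an injection $\xi\mapsto b_\xi$ of $G$ into ${}^{\theta}2$ and let $e(z)$ be the pair $(\alpha,\beta)\in[z]^2$ maximizing $\Delta(b_\alpha,b_\beta)$, with a canonical tie-break on $\alpha$. After the $\Delta$-system step one stabilizes each $b_{y_\gamma(j)}\restriction(\delta+1)$ and then uses Lemma~\ref{separation lemma2}(2) to split $\Gamma$ into $\Gamma_0,\Gamma_1$ separated at some tree node $s_{j^*}$; for $\gamma\in\Gamma_0$ and $\gamma'\in\Gamma_1$ the $\Delta$-maximizing pair in any relevant $z\s x_\gamma\cup x_{\gamma'}$ is then forced to take one coordinate from each petal (Claim~\ref{claigm441}). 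This is precisely the content of $\ext_2(\kappa,\theta^+,\omega,\omega)$ (Lemma~\ref{extract2}); combined with the Sierpi\'nski map (Claim~\ref{claim721}) and Lemma~\ref{pilemma} it gives $S_2(\kappa,\theta^+,2)$, and Proposition~\ref{prop316} then produces the coloring $c$.
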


The proofs of Theorems A$'$ and B are obtained in a few steps.
As a first step, we consider a coloring principle $S_n(\kappa,\lambda,\theta)$ that is sufficient to imply that any well-behaved magma $(G,*)$ of size $\kappa$
admits a coloring $c:G\rightarrow\theta$ that takes on every possible color on $\fs_n(X)$ for every set $X\s G$ of size $\lambda$.
The next step is the introduction of an extraction principle $\ext_n(\kappa,\lambda,\ldots)$
that is sufficient for the reduction of $S_n(\kappa,\lambda,\theta)$ into a rectangular-type strengthening $\kappa\nsrightarrow[\lambda,\lambda]^n_\theta$ of the classical partition relation $\kappa\nrightarrow[\lambda]^n_\theta$.
This leaves us with two independent tasks: proving instances of $\ext_n(\kappa,\lambda,\ldots)$, and proving instances of $\kappa\nsrightarrow[\lambda,\lambda]^n_\theta$.
The harder task is the latter, and the second main result of this paper is an extension of Todor\v{c}evi\'{c}'s theorem \cite{MR1297180}
that Chang's conjecture fails iff $\omega_2\nrightarrow[\omega_1]^3_{\omega_1}$ holds.
Here $\omega_2\nrightarrow[\omega_1]^3_{\omega_1}$ is improved to $\omega_2\nsrightarrow[\omega_1,\omega_1]^3_{\omega_1}$. Specifically:
\begin{thmc} The following are equivalent:
\begin{enumerate}
\item $(\aleph_2,\aleph_1)\twoheadrightarrow(\aleph_1,\aleph_0)$ fails;
\item There exists a coloring $c:[\omega_2]^3\rightarrow\omega_1$
with the property that for all disjoint $A,B\s\omega_2$ of order-type $\omega_1$ such that $\sup(A)=\sup(B)$,
for every color $\tau<\omega_1$, there is $(\alpha,\beta,\gamma)\in[A\cup B]^3\setminus([A]^3\cup[B]^3)$ such that $c(\alpha,\beta,\gamma)=\tau$.
\end{enumerate}
\end{thmc}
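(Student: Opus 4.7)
The plan is to establish both directions; the substantive work lies in (1)$\Rightarrow$(2), which extends Todor\v{c}evi\'{c}'s theorem to the rectangular setting.

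\textbf{Direction (2)$\Rightarrow$(1), by contrapositive.} Suppose $(\aleph_2,\aleph_1)\twoheadrightarrow(\aleph_1,\aleph_0)$ holds and fix any coloring $c:[\omega_2]^3\to\omega_1$. Applying CC to a Skolemized expansion of $(H(\aleph_3),\in,c)$, obtain an elementary submodel $M$ with $|M|=\aleph_1$, $c\in M$, and $|M\cap\omega_1|=\aleph_0$. Via a standard decomposition of $M$ as a continuous $\omega_1$-union of countable Skolem hulls, and by including ordinals cofinal in the top of the chain, arrange that $\eta:=\sup(M\cap\omega_2)$ has cofinality $\omega_1$. Pick disjoint $A,B\s M\cap\eta$, each of order-type $\omega_1$ and cofinal in $\eta$. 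For every $\{\alpha,\beta,\gamma\}\in[A\cup B]^3$, elementarity yields $c(\alpha,\beta,\gamma)\in M\cap\omega_1$, which is countable, so any $\tau\in\omega_1\setminus M$ is missed on all of $[A\cup B]^3$ and, in particular, on its rectangular subfamily, refuting~(2).

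\textbf{Direction (1)$\Rightarrow$(2).} Assume the failure of CC, so that there exists $g:\omega_2\to\omega_1$ with $|g[X]|=\aleph_1$ for every $X\in[\omega_2]^{\aleph_1}$. Fix a $C$-sequence $\vec C=\langle C_\alpha:\alpha<\omega_2\rangle$ with $\otp(C_\alpha)\le\omega_1$, and deploy the standard walks $\Tr$ and derived characteristics $\rho_0,\rho_1,\rho_2,\osc$. Define
\[c(\alpha,\beta,\gamma):=g\bigl(h(\alpha,\beta,\gamma)\bigr)\qquad(\alpha<\beta<\gamma<\omega_2),\]
where $h:[\omega_2]^3\to\omega_2$ is a walks-derived selector modeled on Todor\v{c}evi\'{c}'s construction for $\omega_2\nrightarrow[\omega_1]^3_{\omega_1}$ --- for instance, the ordinal read off at the first position where $\Tr(\alpha,\gamma)$ and $\Tr(\beta,\gamma)$ oscillate. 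The design target is that on any sufficiently coherent uncountable subset of $\omega_2$, the range of $h$ covers an uncountable set of ordinals, which the Chang witness $g$ then spreads across all colors $\tau<\omega_1$. Given disjoint $A,B\s\omega_2$ of order-type $\omega_1$ with $\sup A=\sup B=\delta$, apply the standard thinning toolbox (pressing-down on walks-characteristics, $\Delta$-system extraction of a common root, pigeonholing of initial walks) to replace $A,B$ by $A^*,B^*$, each of order-type $\omega_1$ and cofinal in $\delta$, on which the walks from $\gamma\in A^*\cup B^*$ above a suitable threshold have coherent initial segments.

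\textbf{Rectangular core and main obstacle.} The essential new step is a \emph{swap lemma}: every value of $h$ attainable by a triple inside $[A^*]^3$ or $[B^*]^3$ must also be attainable by a mixed triple in $[A^*\cup B^*]^3\setminus([A^*]^3\cup[B^*]^3)$. The idea is to exchange one coordinate, say the top $\gamma$, for a nearby $\gamma'$ on the opposite side, chosen so that $\Tr(\alpha,\gamma)$ and $\Tr(\alpha,\gamma')$ agree below $\beta$ (and similarly with $\beta$ in place of $\alpha$), thereby forcing $h$ to be preserved. Such a $\gamma'$ exists precisely because the hypothesis $\sup A=\sup B=\delta$ ensures cofinal interleaving of the two sides in $\delta$. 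Once the swap is in hand, for any target color $\tau<\omega_1$ the Chang witness selects some $\zeta<\omega_2$ in the $h$-range with $g(\zeta)=\tau$, the non-rectangular Todor\v{c}evi\'{c}-style argument realizes $h=\zeta$ by some triple in $[A^*\cup B^*]^3$, and the swap then converts it into a mixed realizing triple. Executing this swap --- identifying which coordinate may be moved while keeping $h$ constant, and verifying that cofinal interleaving always provides an admissible substitute --- is the main obstacle, and the place where the assumption $\sup A=\sup B$ is essential.
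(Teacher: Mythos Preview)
Your direction (2)$\Rightarrow$(1) is fine and matches the standard argument.

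The direction (1)$\Rightarrow$(2), however, has a fatal error at the outset. The function $g:\omega_2\to\omega_1$ you posit --- one with $|g[X]|=\aleph_1$ for every $X\in[\omega_2]^{\aleph_1}$ --- simply does not exist. If it did, every fiber $g^{-1}\{\tau\}$ would be countable, and $\omega_2$ would be a union of $\aleph_1$ many countable sets, which is absurd. The failure of Chang's conjecture is \emph{not} witnessed by a unary function; the correct witness is a two-dimensional object, for instance a subadditive coloring $\varrho:[\omega_2]^2\to\omega_1$ with the property that $\varrho\restriction[X]^2$ is unbounded (indeed witnesses $\U(\omega_1,\omega_1,\omega_1,3)$) for every $X$ of order-type $\omega_1$. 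This is exactly the input the paper uses.

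Beyond this, your swap lemma is too optimistic. The function $h$ you describe reads off a value from the walks $\Tr(\alpha,\gamma)$ and $\Tr(\beta,\gamma)$; replacing $\gamma$ by some $\gamma'$ on the other side changes both walks from step zero (the very first entry is $\gamma$ itself), so there is no reason for $h$ to be preserved. Proximity of $\gamma$ and $\gamma'$ in the ordinal order does not translate into agreement of their upper traces. The paper does not attempt anything like a swap; instead it designs $c$ so that mixed triples are produced directly. It case-splits on whether $2^{\aleph_0}>\aleph_1$ (use a weak $\omega$-Kurepa tree and Theorem~\ref{use of stability}) or $2^{\aleph_0}=\aleph_1$ (use Sierpi\'nski's $\omega_1\nrightarrow[\omega;\omega_1]^2_{\omega_1}$ and Theorem~\ref{thm53}). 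In both cases the coloring is built from tree branches $\langle b_\xi:\xi<\omega_2\rangle$, the $\Delta$-map, the lexicographic order, and the subadditive $\varrho$; the verification proceeds by taking an elementary submodel $M$ with $M\cap\omega_1\in S_\tau$ and locating the triple by explicit case analysis, never by converting a monochromatic triple into a mixed one.
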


\subsection{Organization of this paper}
In Section~\ref{pre}, we provide some necessary preliminaries.

In Section~\ref{trees}, we recall the definition of a weak Kurepa tree and
study related objects such as the branch spectrum $T(\mu,\theta)$. This will play a role in both getting instances of $\ext_n(\kappa,\lambda,\ldots)$
and of $\kappa\nsrightarrow[\lambda,\lambda]^n_\theta$.

In Section~\ref{sums2}, we prove that $S_n(\kappa,\lambda,\theta)$  implies that any well-behaved magma $(G,*)$ of size $\kappa$
admits a coloring with the strong properties mentioned earlier.
It is proved that in the special case of $\lambda=\kappa$, $S_2(\kappa,\lambda,\theta)$ already follows from $\kappa\nrightarrow[\lambda;\lambda]^2_\theta$,
and that, in general, $S_n(\kappa,\lambda,\theta)$ follows from
$\kappa\nsrightarrow[\lambda,\lambda]^n_\theta$ together with $\ext_n(\kappa,\lambda,\omega,\omega)$.
We then use tree combinatorics to obtain sufficient conditions for $\ext_n(\kappa,\lambda,\ldots)$ to hold.
The definitions of $\ext_n(\kappa,\lambda,\theta,\chi)$ and $\kappa\nsrightarrow[\lambda,\lambda]^n_\theta$ will be found in this section as Definitions \ref{extn} and \ref{narrowsup}.

In Section~\ref{changsection}, we prove the general case of Theorem~C in which $\aleph_2$ is substituted by the double successor of a cardinal $\mu$ satisfying $\mu^{<\mu}=\mu$. The proof is a bit long,
since the analysis goes through a division into a total of six cases and subcases.

In Section~\ref{section5}, we verify that Todor\v{c}evi\'{c}'s theorems on the correspondence between unstable sets and oscillation remains valid in the rectangular context.
We then combine it with the results of Section~\ref{changsection} and get that
${\lambda^+\nsrightarrow[\lambda,\lambda]^3_\omega}$ holds for every successor $\lambda=\mu^+$ of an infinite cardinal $\mu=\mu^{<\mu}$.

In Section~\ref{sectionTHMB}, we obtain the intended applications in additive Ramsey theory. 
Theorem A$'$ is gotten as a corollary of the results of Sections~\ref{sums2} and \ref{section5},
and Theorem~B is gotten as a corollary of a theorem asserting that $S_2(\kappa,\mu^+,2)$ holds
whenever there exists a weak $\mu$-Kurepa tree with $\kappa$-many branches.

\section{Preliminaries}\label{pre}
In this section, $\kappa,\lambda,\mu,\theta,\chi$ stand for nonzero cardinals, and $n$ stand for a positive integer.
We let $H_\kappa$ denote the collection of all sets of hereditary cardinality less than $\kappa$.
We write $[\kappa]^\lambda:=\{ A\s\kappa\mid |A|=\lambda\}$ and $[\kappa]^{<\lambda}:=\{ A\s\kappa\mid |A|<\lambda\}$.
Let $E^\kappa_\chi:=\{\alpha < \kappa \mid \cf(\alpha) = \chi\}$,
and define $E^\kappa_{\le \chi}$, $E^\kappa_{<\chi}$, $E^\kappa_{\ge \chi}$, $E^\kappa_{>\chi}$,  $E^\kappa_{\neq\chi}$ analogously.
For two distinct functions $f,g\in{}^\theta\mu$, write 
$f<_{\lex} g$ to mean that $f(\delta)<g(\delta)$ for the least $\delta<\lambda$ such that $f(\delta)\neq g(\delta)$.
For functions $f,g\in{}^{\le\theta}\mu$, we write $f\sq g$ to mean that $\dom(f)\le\dom(g)$ and $g\restriction\dom(f)=f$.

For sets of ordinals $A_1,\ldots,A_n$, we define $$A_1\circledast\cdots\circledast A_n:=\{ (\alpha_1,\ldots,\alpha_n)\in A_1\times\cdots\times A_n\mid \alpha_1<\cdots<\alpha_n\}.$$
By convention, whenever we write $(\alpha_1,\ldots,\alpha_n)\in[A]^n$ (as opposed to $\{\alpha_1,\ldots,\alpha_n\}\in[A]^n$), 
we mean that $(\alpha_1,\ldots,\alpha_n)\in A\circledast\cdots\circledast A$.

For a set of ordinals $A$, we write $\ssup(A):=\sup\{\alpha+1\mid \alpha\in A\}$,
$\acc^+(A) := \{\alpha < \ssup(A) \mid \sup(a \cap \alpha) = \alpha > 0\}$,
and $\acc(A) := A \cap \acc^+(a)$.
For two sets of ordinals $A$ and $B$, we write $A<B$ to mean that $A\times B$ coincides with $A\circledast B$.

\begin{defn}[Positive round-bracket relations, {\cite[\S3]{MR202613}}] $\kappa\rightarrow(\lambda)^n_\theta$ asserts that for every coloring $c:[\kappa]^n\rightarrow\theta$,
there exists $A\s\kappa$ of order-type $\lambda$ such that $c$ is constant over $[A]^n$.
\end{defn}

\begin{defn}[Negative square-bracket relations, {\cite[\S18]{MR202613}}] \label{hungarian} A coloring $c:[\kappa]^n\rightarrow\theta$ is said to witness:
\begin{itemize}
\item $\kappa\nrightarrow[\lambda]^n_\theta$ iff $c[[A]^n]=\theta$ for every $A\in[\kappa]^\lambda$;
\item $\kappa\nrightarrow[\lambda_1,\ldots,\lambda_n]^n_\theta$ iff $c[A_1\times\cdots\times A_n]=\theta$ for every $\langle A_i\mid 1\le i\le n\rangle\in\prod_{i=1}^n[\kappa]^{\lambda_i}$;
\item $\kappa\nrightarrow[\lambda_1;\ldots;\lambda_n]^n_\theta$ iff $c[A_1\circledast\cdots\circledast A_n]=\theta$ for every $\langle A_i\mid 1\le i\le n\rangle\in\prod_{i=1}^n[\kappa]^{\lambda_i}$.
\end{itemize}
\end{defn}

Note that $(\kappa\nrightarrow[\lambda;\ldots;\lambda]^n_\theta)\implies(\kappa\nrightarrow[\lambda,\ldots,\lambda]^n_\theta)\implies(\kappa\nrightarrow[\lambda]^n_\theta)$.

\begin{defn}[Fiber maps] Given a coloring of pairs $c:[\kappa]^2\rightarrow\theta$ and some $\beta<\kappa$, we sometimes write $c_\beta$ for the $\beta^{\text{th}}$-fiber map of $c$, that is, 
for the unique map $c_\beta:\beta\rightarrow\theta$ to satisfy $c_\beta(\alpha)=c(\alpha,\beta)$ for every $\alpha<\beta$.

We say that $c$ has \emph{injective fibers} iff $c_\beta$ is injective of every $\beta<\kappa$.
\end{defn}

\begin{defn}[\cite{paper34}]\label{uprinciple} $\U(\kappa,\mu,\theta,\chi)$ asserts the existence of a coloring $c:[\kappa]^2\rightarrow\theta$ such that  
for every $\sigma<\chi$, every pairwise disjoint subfamily $\mathcal A\s[\kappa]^{\sigma}$ of size $\kappa$,
for every $\tau<\theta$, there exists $\mathcal B\s \mathcal A$ of size $\mu$ such that $\min(c[a\times b])>\tau$ for all $a\neq b$ from $\mathcal B$.
\end{defn}
\begin{remark}
Of special interest are witnesses $c:[\kappa]^2\rightarrow\theta$ to $\U(\kappa,\mu,\theta,\chi)$ that are moreover \emph{subadditive},
i.e., satisfying that for all $\alpha<\beta<\gamma<\kappa$, the following hold:
\begin{itemize}
\item $c(\alpha,\gamma)\leq\max\{c(\alpha,\beta),c(\beta,\gamma)\}$;
\item $c(\alpha,\beta)\leq\max\{c(\alpha,\gamma),c(\beta,\gamma)\}$.
\end{itemize}	
These colorings are studied in \cite{paper36}, and they will show up here in Section~\ref{changsection}.
\end{remark}

Given a coloring $c:[\kappa]^2\rightarrow\theta$ and a subset $X\s\kappa$ of order-type $\lambda$,
we say that ``$c\restriction[X]^2$ witnesses $\U(\lambda,\mu,\theta,\chi)$'' if for the order-preserving bijection $\pi:\lambda\leftrightarrow X$,
the coloring $d:[\lambda]^2\rightarrow\theta$ defined via $d(\alpha,\beta):=c(\pi(\alpha),\pi(\beta))$
is a witness for $\U(\lambda,\mu,\theta,\chi)$.
To be able to express that this happens globally,
we introduce the following $5$-cardinal extension of the principle of Definition~\ref{uprinciple}.
\begin{defn}
$\U(\kappa, \lambda, \mu , \theta , \chi ) $ asserts the existence of a coloring $c:[\kappa ] ^2 \rightarrow \theta $ such that for every $\sigma < \chi $, 
every pairwise disjoint subfamily $\mathcal A \s [\kappa ]^ {\sigma}$ of size $\lambda$,
for every $ \tau< \theta$,
there exists $\mathcal B \s\mathcal  A $ of size $\mu $ such that $\min (c[a \times b] ) > \tau$
for all $a \neq b$ from $\mathcal B$.
\end{defn}

\begin{fact}[{\cite[Lemma~9.2.3]{MR2355670}}]\label{subadditiveupgrade} For every regular uncountable cardinal $\lambda$,
if $\U(\lambda^+,\lambda,2,\lambda,2) $ holds,
then there exists a subadditive witness to $\U(\lambda^+,\lambda,\lambda,\lambda,\omega)$.
\end{fact}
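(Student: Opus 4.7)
The plan is a standard walks-on-ordinals upgrade. Fix a witness $c_0:[\lambda^+]^2\rightarrow\lambda$ to $\U(\lambda^+,\lambda,2,\lambda,2)$ together with a $C$-sequence $\langle C_\beta\mid\beta<\lambda^+\rangle$ satisfying $C_{\alpha+1}=\{\alpha\}$, each $C_\beta$ a club in the limit ordinal $\beta$, and $\otp(C_\beta)\le\lambda$. Define $c^*:[\lambda^+]^2\rightarrow\lambda$ by recursion along walks via
$$c^*(\alpha,\beta):=\max\bigl\{c_0(\alpha,\beta),\,c^*(\alpha,\min(C_\beta\setminus\alpha)),\,\otp(C_\beta\cap\alpha)\bigr\}$$
with $c^*(\alpha,\alpha):=0$. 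All three ingredients take values below $\lambda$, so $c^*$ does too; the second and third ingredients realize the walks characteristic $\rho_1$ internally and inject the coherence needed to force max-subadditivity. Concretely, a simultaneous induction on $\gamma$ establishes
$$c^*(\alpha,\gamma)\le\max\{c^*(\alpha,\beta),c^*(\beta,\gamma)\}\quad\text{and}\quad c^*(\alpha,\beta)\le\max\{c^*(\alpha,\gamma),c^*(\beta,\gamma)\}$$
for all $\alpha<\beta<\gamma<\lambda^+$, by case-splitting on the position of $\beta$ relative to $\min(C_\gamma\setminus\alpha)$.

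Once $c^*$ is max-subadditive and dominates $c_0$, derive the partition property as follows. By subadditivity, for each $\tau<\lambda$ the relation $\alpha\sim_\tau\beta\iff c^*(\alpha,\beta)\le\tau$ is an equivalence relation on $\lambda^+$; and since $c^*\ge c_0$ inherits the unboundedness of $c_0$ (every $A\in[\lambda^+]^\lambda$ contains $\alpha<\beta$ with $c^*(\alpha,\beta)>\tau$), no $\sim_\tau$-class can have size $\ge\lambda$, hence every class has cardinality strictly less than $\lambda$. Now given $\sigma<\omega$, a pairwise disjoint family $\mathcal A\s[\lambda^+]^\sigma$ of size $\lambda$, and $\tau<\lambda$, build $\langle a_i\mid i<\lambda\rangle\s\mathcal A$ by recursion: at stage $i$, the set of ordinals $\sim_\tau$-equivalent to some element of $\bigcup_{j<i}a_j$ has size strictly less than $\lambda$ (being a union of fewer than $\lambda$ classes each of size less than $\lambda$, using regularity of $\lambda$), so by disjointness of $\mathcal A$ fewer than $\lambda$ members of $\mathcal A$ meet this small set, and a fresh $a_i\in\mathcal A$ may be selected. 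The resulting $\mathcal B:=\{a_i\mid i<\lambda\}$ has size $\lambda$ and satisfies $\min c^*[a\times b]>\tau$ for all distinct $a,b\in\mathcal B$, witnessing $\U(\lambda^+,\lambda,\lambda,\lambda,\omega)$.

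The main obstacle is the verification of max-subadditivity for the recursive $c^*$. The pure walks characteristic $\rho_1$ is max-subadditive on its own, and the key subtlety is that supplementing it with $c_0$ inside the maximum — via the same walk recursion — preserves subadditivity because the walk from $\gamma$ down to $\alpha$ either passes through $\beta$ (yielding a direct concatenation of walks) or diverges earlier, in which case the coherence of $C$-walks bounds the externally injected $c_0$-value by $c^*(\beta,\gamma)$ or $c^*(\alpha,\beta)$. Once this is in hand, the remainder of the argument is soft.
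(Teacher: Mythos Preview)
The paper does not supply its own proof of this statement; it is quoted as a fact from Todor\v{c}evi\'{c}'s monograph. Your second paragraph---the equivalence-class argument deriving $\U(\lambda^+,\lambda,\lambda,\lambda,\omega)$ from a subadditive coloring that is pairwise unbounded on every $\lambda$-sized set---is correct and standard. The gap is in the first half: the coloring $c^*$ you define is \emph{not} subadditive for an arbitrary $c_0$.

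A three-point counterexample suffices. Take $\alpha=0$, $\beta=1$, $\gamma=2$, with $C_2=\{1\}$ and $C_1=\{0\}$. Unfolding your recursion gives $c^*(0,1)=c_0(0,1)$, $c^*(1,2)=c_0(1,2)$, and $c^*(0,2)=\max\{c_0(0,2),c_0(0,1)\}$. The first subadditivity inequality would then force $c_0(0,2)\le\max\{c_0(0,1),c_0(1,2)\}$, i.e., subadditivity of $c_0$ itself, which is not assumed. More generally, unwinding the recursion shows that $c^*(\alpha,\gamma)$ collects the values $c_0(\alpha,\eta)$ for $\eta$ along the walk from $\gamma$ down to $\alpha$; none of these are controlled by $c^*(\beta,\gamma)$ (which only sees values $c_0(\beta,\cdot)$) or by $c^*(\alpha,\beta)$ (which only sees $c_0(\alpha,\cdot)$ along a different walk). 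Your heuristic that ``coherence of $C$-walks bounds the externally injected $c_0$-value'' simply does not hold: the injected term $c_0(\alpha,\gamma)$ at the very top of the walk is already uncontrolled.

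The argument in Todor\v{c}evi\'{c}'s book does not attempt to subadditivize an arbitrary witness. Rather, one takes the canonical $\rho:[\lambda^+]^2\to\lambda$ built from a $C$-sequence with $\otp(C_\beta)\le\lambda$ together with fixed enumerations of each $\beta<\lambda^+$; this $\rho$ is subadditive by construction (cf.\ Fact~\ref{all the rho}). The substance of Lemma~9.2.3 is that the hypothesis $\U(\lambda^+,\lambda,2,\lambda,2)$---equivalently, the failure of $(\lambda^+,\lambda)\twoheadrightarrow(\lambda,{<}\lambda)$---is exactly what is needed to show that $\rho$ is pairwise unbounded on every $\lambda$-sized subset of $\lambda^+$. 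Once that is established, your equivalence-class argument applies verbatim with $\rho$ in place of the fictitious $c^*$. So the fix is not to repair the recursion but to discard $c_0$ entirely after it has certified the failure of the Chang-type transfer, and work with $\rho$ instead.
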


Finally, we arrive at the notion motivating this paper.
\begin{defn}\label{def27} For a magma $(G,*)$, we write $G\nrightarrow[\lambda]_{\theta}^{\fs_n}$ to assert that there exists a coloring $c:G\rightarrow\theta$
with the property that for every subset $A\s G$ of size $\lambda$ and every prescribed color $\tau<\theta$,
there is an injective sequence $\langle a_i\mid 1\le i\le n\rangle$ of elements of $A$ such that 
$c(a_1*\cdots* a_n)=\tau$ for \emph{all} implementations of $a_1*\cdots *a_n$.\footnote{The issue of implementation arises from the fact that we do not assume $*$ to be associative, e.g.,  it is possible that $(a_1*a_2)*a_3\neq a_1*(a_2*a_3)$.}
\end{defn}

In the special case of $n=2$, \cite[Corollary~4.5]{paper27} and \cite[Corollary~2.20]{paper44} provide sufficient conditions for $G\nrightarrow[\lambda]_{\theta}^{\fs_n}$ to follow from $|G|\nrightarrow[\lambda]_{\theta}^n$ for all values of $\theta$.
Higher dimensional reductions are out of reach at present.

\subsection{Walks on ordinals} In this subsection, we provide a minimal background on walks on ordinals.
This background is only necessary for Section~\ref{section5},
hence the exposition here is quite succinct. A thorough treatment may be found in \cite{MR2355670}.

\medskip

For the rest of this subsection, $\kappa$ denotes a regular uncountable cardinal, and we fix some $C$-sequence over $\kappa$,
that is, a sequence $\vec C=\langle C_\beta\mid\beta<\kappa\rangle$ such that, for every $\beta<\kappa$, $C_\beta$ is closed subset of $\beta$ with $\sup(C_\beta)=\sup(\beta)$.

\begin{defn}[Todor{\v{c}}evi{\'c}]\label{defn21} From $\vec C$, derive maps $\Tr:[\kappa]^2\rightarrow{}^\omega\kappa$,
$\rho_2:[\kappa]^2\rightarrow	\omega$, and
$\tr:[\kappa]^2\rightarrow{}^{<\omega}\kappa$, by letting for all $\alpha<\beta<\kappa$:
\begin{itemize}
\item $\Tr(\alpha,\beta):\omega\rightarrow\kappa$ is defined by recursion on $n<\omega$:
$$\Tr(\alpha,\beta)(n):=\begin{cases}
\beta,&n=0\\
\min(C_{\Tr(\alpha,\beta)(n-1)}\setminus\alpha),&n>0\ \&\ \Tr(\alpha,\beta)(n-1)>\alpha\\
\alpha,&\text{otherwise}
\end{cases}
$$
\item $\rho_2(\alpha,\beta):=\min\{l<\omega\mid \Tr(\alpha,\beta)(l)=\alpha\}$;
\item $\tr(\alpha,\beta):=\Tr(\alpha,\beta)\restriction \rho_2(\alpha,\beta)$.
\end{itemize}
\end{defn}

To explain: Given a pair of ordinals $\alpha<\beta$ below $\kappa$,
one would like to \emph{walk} from $\beta$ down to $\alpha$. This is done by recursion, letting $\beta_0:=\beta$,
and $\beta_{n+1}:=\min(C_{\beta_n}\setminus\alpha)$,
thus, obtaining an ordinal $\beta_{n+1}$ such that $\alpha\le\beta_{n+1}\le\beta_n$.
Since the ordinals are well-founded, there must exist some integer $k$ such that $\beta_{k+1}=\alpha$,
so that, the walk is $\beta=\beta_0>\beta_1>\cdots>\beta_{k+1}=\alpha$. 
This walk is recorded by $\Tr(\alpha,\beta)$, since, for every $n\le k$, we have that $\Tr(\alpha,\beta)=\beta_n$,
and for every $n>k$, we have that $\Tr(\alpha,\beta)=\alpha$. 
The length of the walk is recorded by the positive integer $\rho_2(\alpha,\beta)$. 
Now, since $\Tr(\alpha,\beta)$ is eventually constant with value $\alpha$,
its nontrivial part is those ordinals greater than $\alpha$, i.e., $\beta_0>\beta_1>\cdots>\beta_k$; this is recorded by $\tr(\alpha,\beta)$.

\begin{defn}[{\cite[Definition~2.8]{paper18}}] Define a function $\lambda_2:[\kappa]^2\rightarrow\kappa$ via
$$\lambda_2(\alpha,\beta):=\sup(\alpha\cap\{ \sup(C_\eta\cap\alpha)  \mid \eta \in \im(\tr(\alpha, \beta))\}).$$
\end{defn}

Note that $\lambda_2(\alpha,\beta)<\alpha$ whenever $0<\alpha<\beta<\kappa$,
since $\tr(\alpha, \beta)$ is a finite sequence.
\begin{fact}[{\cite[Lemma~4.7]{paper34}}]\label{fact211}
Suppose that $\lambda_2(\alpha,\beta)<\epsilon<\alpha<\beta<\kappa$.

Then $\tr(\epsilon,\beta)$ end-extends $\tr(\alpha,\beta)$, and one of the following cases holds:
\begin{enumerate}
\item $\alpha\in\im(\tr(\epsilon,\beta))$; or
\item $\alpha\in\acc(C_\eth)$ for $\eth:=\min(\im(\tr(\alpha,\beta)))$.
\end{enumerate}
\end{fact}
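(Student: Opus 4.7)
The plan is to unpack the walk $\beta = \beta_0 > \beta_1 > \cdots > \beta_k > \beta_{k+1} = \alpha$ from $\beta$ down to $\alpha$, so that $\tr(\alpha,\beta)$ is the sequence $\langle \beta_i \mid i \leq k \rangle$ and $\eth = \beta_k$ is its last entry. The recursive equation $\beta_{k+1} = \min(C_\eth \setminus \alpha) = \alpha$ already yields $\alpha \in C_\eth$, a fact that will be used below.

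First I would show by induction on $i \leq k$ that $\Tr(\epsilon,\beta)(i) = \beta_i$; together with the subsequent step $k{+}1$ this will give the end-extension $\tr(\alpha,\beta) \sq \tr(\epsilon,\beta)$. The inductive step reduces to showing $\min(C_{\beta_i} \setminus \epsilon) = \min(C_{\beta_i} \setminus \alpha)$, equivalently $\sup(C_{\beta_i} \cap \alpha) < \epsilon$. The key observation is that for every $i < k$ we have $\sup(C_{\beta_i} \cap \alpha) < \alpha$: otherwise $\alpha$ would be a strict supremum of elements of $C_{\beta_i}$ below it, and the closedness of $C_{\beta_i}$ in $\beta_i$ would force $\alpha \in C_{\beta_i}$, whence $\min(C_{\beta_i} \setminus \alpha) = \alpha$, contradicting $\beta_{i+1} > \alpha$. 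Having $\sup(C_{\beta_i} \cap \alpha) < \alpha$ means this value is counted in the set whose supremum defines $\lambda_2(\alpha,\beta)$, so $\sup(C_{\beta_i} \cap \alpha) \leq \lambda_2(\alpha,\beta) < \epsilon$, as required.

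For the dichotomy, the value of the walk to $\epsilon$ at step $k{+}1$ is $\min(C_\eth \setminus \epsilon)$. If $\alpha \notin \acc(C_\eth)$, then the very same closedness argument yields $\sup(C_\eth \cap \alpha) < \alpha$, hence $\sup(C_\eth \cap \alpha) \leq \lambda_2(\alpha,\beta) < \epsilon$; combined with $\alpha \in C_\eth$, this forces $\min(C_\eth \setminus \epsilon) = \alpha$, and since $\alpha > \epsilon$ the walk to $\epsilon$ has not yet terminated, so $\alpha \in \im(\tr(\epsilon,\beta))$, which is case~(1). Otherwise $\alpha \in \acc(C_\eth)$, which is case~(2) verbatim. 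In either sub-case the previously established end-extension is unaffected.

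The only substantive point is the closedness argument that precludes $\alpha$ from being an accumulation point of any $C_{\beta_i}$ for intermediate $i < k$; it is precisely this step that licenses $\lambda_2(\alpha,\beta)$ as a single uniform threshold controlling all the intermediate walk-stops simultaneously, and the rest of the proof is bookkeeping around the definitions of $\tr$, $\sq$, and $\im$.
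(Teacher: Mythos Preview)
Your argument is correct and is essentially the standard proof of this lemma. Note, however, that the paper does not supply its own proof of this statement: it is recorded as a \emph{Fact} with a citation to \cite[Lemma~4.7]{paper34}, so there is nothing in the present paper to compare against. Your unpacking of the walk, the use of closedness of $C_{\beta_i}$ to bound $\sup(C_{\beta_i}\cap\alpha)$ below $\alpha$ for $i<k$, and the resulting invocation of the definition of $\lambda_2$ are exactly how the result is established in the literature; the dichotomy at step $k{+}1$ according to whether $\alpha\in\acc(C_\eth)$ is likewise the standard finish.
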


\begin{defn}[{\cite[Definition~2.10]{paper44}}]\label{last} For every $(\alpha,\beta)\in[\kappa]^2$, we define an ordinal $\last{\alpha}{\beta}\in[\alpha,\beta]$ via:
$$\last{\alpha}{\beta}:=\begin{cases}
\min(\im(\tr(\alpha,\beta))),&\alpha\in\acc(C_{\min(\im(\tr(\alpha,\beta)))});\\
\alpha,&\text{otherwise};\\
\end{cases}$$
\end{defn}

\begin{remark}\label{concatenation}
It is easy to see that $\sup(C_{\last{\alpha}{\beta}})=\sup(\alpha)$ for all $\alpha<\beta<\kappa$, and it follows from Fact~\ref{fact211} that 
$$\tr(\epsilon,\beta)=\tr(\last{\alpha}{\beta},\beta){}^\smallfrown\tr(\epsilon,\last{\alpha}{\beta}),$$
whenever $\lambda_2(\alpha,\beta)<\epsilon<\alpha<\beta<\kappa$.
\end{remark}

\begin{fact}[Todor\v{c}evi\'{c}, {\cite[\S9]{MR2355670}}]\label{all the rho}
If $\kappa=\lambda^+$ for a regular cardinal $\lambda$ and $\otp(C_\beta)\le\lambda$ for all $\beta<\kappa$,
then there exists a subadditive coloring $\rho:[\kappa]^2\rightarrow\lambda$ with the property that $\rho(\alpha,\beta)\ge\otp(C_\eta\cap\alpha)$ for all $\alpha<\beta<\kappa$ and $\eta\in\im(\tr(\alpha,\beta))$.
\end{fact}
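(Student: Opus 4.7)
The plan is to reproduce Todor\v{c}evi\'{c}'s recursive construction. Define $\rho:[\kappa]^2\rightarrow\mathrm{Ord}$ by recursion on the larger coordinate via
\[
\rho(\alpha,\beta):=\sup\bigl(\{\otp(C_\beta\cap\alpha)\}\cup\{\rho(\alpha,\min(C_\beta\setminus\alpha))\}\cup\{\rho(\xi,\alpha)\mid \xi\in C_\beta\cap\alpha\}\bigr),
\]
with the base convention $\rho(\alpha,\alpha):=0$. Well-foundedness follows from the fact that $\min(C_\beta\setminus\alpha)<\beta$ whenever $\alpha<\beta$ and $\alpha\notin C_\beta$, while the case $\alpha\in C_\beta$ falls back on the base convention.

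First I would verify by induction on $\beta$ that $\rho(\alpha,\beta)<\lambda$. Since $\alpha<\beta$ yields $\alpha<\sup(C_\beta)=\sup(\beta)$, the set $C_\beta\cap\alpha$ is a proper initial segment of $C_\beta$, whence $\otp(C_\beta\cap\alpha)<\lambda$. The supremum therefore ranges over fewer than $\lambda$ ordinals, each $<\lambda$ by induction, so regularity of $\lambda$ delivers $\rho(\alpha,\beta)<\lambda$ as required. The weight property $\rho(\alpha,\beta)\ge\otp(C_\eta\cap\alpha)$ for $\eta\in\im(\tr(\alpha,\beta))$ then falls out by induction on $\rho_2(\alpha,\beta)$: the case $\eta=\beta$ is a direct supremand, and any later $\eta$ lies in $\im(\tr(\alpha,\min(C_\beta\setminus\alpha)))$, so the inductive hypothesis combined with the supremand $\rho(\alpha,\min(C_\beta\setminus\alpha))\le\rho(\alpha,\beta)$ propagates the bound.

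The principal obstacle is subadditivity: for $\alpha<\beta<\gamma<\kappa$ one must simultaneously establish
\[
\rho(\alpha,\gamma)\le\max\{\rho(\alpha,\beta),\rho(\beta,\gamma)\}\quad\text{and}\quad\rho(\alpha,\beta)\le\max\{\rho(\alpha,\gamma),\rho(\beta,\gamma)\},
\]
by induction on $\gamma$. The proof splits into cases governed by the positions of $\min(C_\gamma\setminus\alpha)$ and $\min(C_\gamma\setminus\beta)$ relative to $\alpha$ and $\beta$: e.g., when $\beta<\min(C_\gamma\setminus\alpha)$, the walk from $\gamma$ to $\alpha$ bypasses $\beta$, and the recursive call $\rho(\alpha,\min(C_\gamma\setminus\alpha))$ is controlled by the inductive hypothesis applied to the triple $(\alpha,\beta,\min(C_\gamma\setminus\alpha))$; when $\alpha<\min(C_\gamma\setminus\beta)\le\beta$, the containment $C_\gamma\cap\alpha\s C_\gamma\cap\beta$ forces $\otp(C_\gamma\cap\alpha)\le\otp(C_\gamma\cap\beta)\le\rho(\beta,\gamma)$. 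In each configuration every supremand of the smaller side is dominated either by a supremand of one of the larger sides or by an inductive instance. This is the classical argument laid out in \cite[\S9]{MR2355670}, and I would follow it verbatim.
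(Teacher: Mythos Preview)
The paper does not supply a proof of this statement: it is recorded as a \emph{Fact} with a bare citation to \cite[\S9]{MR2355670}. Your proposal correctly reproduces Todor\v{c}evi\'{c}'s recursive definition of $\rho$ and outlines the standard verifications (range in $\lambda$ via regularity, the weight bound by induction on $\rho_2$, and subadditivity by induction on $\gamma$ with the usual case split on the position of $\min(C_\gamma\setminus\alpha)$ relative to $\beta$), so it is both sound and faithful to the cited source.
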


\section{Weak Kurepa trees and the branch spectrum}\label{trees}
In this section, $\mu$ denotes a cardinal and $\theta$ denotes an ordinal.

\begin{defn} $\mathcal T(\mu,\theta)$ denotes the collection of all subsets $T\s{}^{<\theta}\mu$
such that the following two hold:
\begin{enumerate}
\item $T$ is downward-closed, i.e, for every $t\in T$, $\{ t\restriction \alpha\mid \alpha<\theta\}\s T$;
\item for every $\alpha<\theta$, the set
$T_\alpha:=T\cap{}^\alpha\mu$ is nonempty and has size $<\mu$.
\end{enumerate}
\end{defn}

We say that $T$ is a \emph{tree of height $\theta$}
if there exists a cardinal $\mu$ such that $T\in\mathcal T(\mu,\theta)$.\footnote{There is no loss of generality here, see \cite[Lemma~2.5(2)]{paper23}.}
Note that $\theta$ is uniquely determined.
For such a tree $T$, we shall refer to $T_\alpha$ as the \emph{$\alpha^{\text{th}}$-level} of $T$,
and the set $\{ b\in{}^\theta\mu\mid \forall\alpha<\theta\,(b\restriction\alpha\in T_\alpha)\}$ of all branches through $T$ is denoted by $\mathcal B(T)$.
Also, for all $f,g\in{}^{\le\mu}\theta$, we let
$$\Delta(f,g):=\begin{cases}
\min\{\delta\in\dom(f)\cap\dom(g)\mid f(\delta)\neq g(\delta)\},&\text{if }f\nsubseteq g\ \&\ g\nsubseteq f;\\
\min\{\dom(f),\dom(g)\},&\text{otherwise}.\end{cases}$$

\begin{defn} $T\in T(\mu,\theta)$ is said to be \emph{normal} iff for all $\alpha<\beta<\theta$
and $t\in T_\alpha$, there exists $t'\in T_\beta$ with $t\sq t'$.
\end{defn}

\begin{defn} Given a tree $T$ and a subset $B\s\mathcal B(T)$, we consider the subtree:
$$T^{\branches B}:=\{ t\in T\mid | \{ b\in B\mid t\sq b\}|=|B|\}.$$
\end{defn}

\begin{lemma}\label{separation lemma2} Suppose that $T\in\mathcal T(\mu,\theta)$, and $\lambda$ is an infinite regular cardinal.
\begin{enumerate}
\item If $\lambda\ge\mu$, then for every $B\in[\mathcal B(T)]^\lambda$, $T^{\branches B}$ is in $\mathcal T(\mu,\theta)$ and is normal;
\item If $\lambda\ge\max\{\mu,|\theta|^+\}$, then for all $A,B\in[\mathcal B(T)]^\lambda$,
there are $s\in T$ and $i\neq i'$ such that $s{}^\smallfrown\langle i\rangle\in T^{\branches A}$
and $s{}^\smallfrown\langle i'\rangle\in T^{\branches B}$.
\end{enumerate}
\end{lemma}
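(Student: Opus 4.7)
The plan is to handle the two parts in order, with (2) depending on (1).

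For (1), downward closure of $T^{\branches B}$ is immediate: if $t' \sq t$ then any branch in $B$ extending $t$ also extends $t'$. For any $\alpha < \theta$, the slice $(T^{\branches B})_\alpha$ sits inside $T_\alpha$, so has size $<\mu$. To see it is nonempty, partition $B$ into the classes $\{b \in B : b \restriction \alpha = t\}$ as $t$ ranges over $T_\alpha$; there are fewer than $\mu \le \lambda$ classes, so by regularity of $\lambda$ at least one has size $\lambda$, providing a witness in $(T^{\branches B})_\alpha$. Normality is the same pigeonhole one level higher: given $t \in (T^{\branches B})_\alpha$ and $\alpha < \beta < \theta$, partition the $\lambda$-sized set $\{b \in B : t \sq b\}$ according to the value $b \restriction \beta \in T_\beta$, and pick a class of size $\lambda$ to produce a $\sq$-extension $t' \in (T^{\branches B})_\beta$ of $t$.

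For (2), first apply (1) so that both $T^{\branches A}$ and $T^{\branches B}$ are normal trees in $\mathcal{T}(\mu,\theta)$. The key step is the claim that $|A \cap \mathcal{B}(T^{\branches A})| = \lambda$ (and symmetrically for $B$). Given $b \in A \setminus \mathcal{B}(T^{\branches A})$, let $\alpha_b < \theta$ be least with $b \restriction \alpha_b \notin T^{\branches A}$. For each fixed $\alpha < \theta$,
$$\{b \in A : \alpha_b = \alpha\} = \bigcup_{s \in T_\alpha \setminus T^{\branches A}} \{b \in A : b \restriction \alpha = s\},$$
each summand of size ${<}\lambda$ by the very definition of $T^{\branches A}$, and the index set has size $|T_\alpha| < \mu \le \lambda$; so by regularity of $\lambda$ this union is of size ${<}\lambda$. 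Finally, summing over $\alpha < \theta$ (a union of $|\theta| < \lambda$ sets each of size ${<}\lambda$) yields $|A \setminus \mathcal{B}(T^{\branches A})| < \lambda$ by regularity of $\lambda$ once more.

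Now, since $|A \cap \mathcal{B}(T^{\branches A})| = \lambda \ge 2$, fix two distinct $a_1, a_2 \in \mathcal{B}(T^{\branches A})$ and any $b \in B \cap \mathcal{B}(T^{\branches B})$; some $a \in \{a_1,a_2\}$ differs from $b$. Let $\delta := \Delta(a,b) < \theta$ and set $s := a \restriction \delta = b \restriction \delta$, $i := a(\delta)$, $i' := b(\delta)$, so that $i \neq i'$. Since $a \in \mathcal{B}(T^{\branches A})$, the prefix $s{}^\smallfrown \langle i \rangle = a \restriction (\delta+1)$ lies in $T^{\branches A}$; similarly $s{}^\smallfrown \langle i' \rangle \in T^{\branches B}$, as required. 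The main obstacle is the counting step establishing $|A \cap \mathcal{B}(T^{\branches A})| = \lambda$: it is precisely there that the stronger hypothesis $\lambda \ge \max\{\mu,|\theta|^+\}$ is used, via $|\theta| < \lambda$, to absorb the sum over levels.
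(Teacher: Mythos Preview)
Your proof of part~(1) is essentially identical to the paper's: both use the regularity of $\lambda\ge\mu$ together with a pigeonhole argument on the level $T_\beta$ to extend a node $t\in(T^{\branches B})_\alpha$ upward.

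For part~(2), your argument is correct but follows a genuinely different route. The paper pairs off the elements of $A$ and $B$ as $\langle(a_j,b_j)\mid j<\lambda\rangle$ (after ensuring $A\cap B=\emptyset$), uses $\lambda>|\theta|$ to stabilize the value $\Delta(a_j,b_j)$ on a set $J\in[\lambda]^\lambda$, and then uses $|T_{\Delta+1}|<\mu\le\lambda$ to further stabilize the pair $(a_j\restriction(\Delta+1),\,b_j\restriction(\Delta+1))$ to a fixed value $(s{}^\smallfrown\langle i\rangle,\,s{}^\smallfrown\langle i'\rangle)$; since $\lambda$-many $a_j$'s extend $s{}^\smallfrown\langle i\rangle$, that node lies in $T^{\branches A}$, and symmetrically for $B$. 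You instead establish the stronger auxiliary fact that $|A\setminus\mathcal B(T^{\branches A})|<\lambda$ by a counting argument (bounding each level, then summing over $|\theta|<\lambda$ levels), and afterwards simply choose any pair $a\in\mathcal B(T^{\branches A})$, $b\in\mathcal B(T^{\branches B})$ with $a\neq b$ and read off the split at $\Delta(a,b)$. Your intermediate claim is reusable and conceptually clean (it says almost every branch of $A$ survives into the thinned tree); the paper's argument is more direct and avoids the double summation. Both approaches invoke the hypothesis $\lambda\ge|\theta|^+$ at exactly one point.
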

\begin{proof} (1) Suppose that $B\in[\mathcal B(T)]^\lambda$ and $\lambda\ge\mu$.
It is clear that $\emptyset\in T^{\branches B}$. Thus, to prove that $T^{\branches B}$ has height $\theta$ and is normal,
let $\alpha<\beta<\theta$ and $t\in(T^{\branches B})_\alpha$, and we shall show that there exists $t'\in(T^{\branches B})_\beta$ extending $t$.

By the choice of $t$, $B':=\{ b\in B\mid t\sq b\}|$ has size $\lambda$.
Since $T\in\mathcal T(\mu,\theta)$,  it is the case that $0<|T_\beta|<|B'|=\cf(|B'|)$,
and then the pigeonhole principle provides $t'\in T_\beta$ such that $\{ b\in B'\mid t'\sq b\}$ has size $\lambda$.
Evidently, $t'$ is as sought.

(2) Suppose that $A,B\in[\mathcal B(T)]^\lambda$ and $\lambda\ge\max\{\mu,|\theta|^+\}$.
By possibly passing to $\lambda$-sized subsets of $A$ and $B$, we may assume that $A\cap B=\emptyset$.
Let $\langle a_j\mid j<\lambda\rangle$ be some injective enumeration of $A$,
and likewise let $\langle b_j\mid j<\lambda\rangle$ be some injective enumeration of $B$.
For each $j<\lambda$, as $a_j\neq b_j$, we may let $\delta_j:=\Delta(a_j,b_j)+1$.
As $\lambda$ is a regular cardinal greater than $|\theta|$, we may fix some $J\in[\lambda]^\lambda$ on which the map $j\mapsto \delta_j$ is constant with value, say, $\delta$.
As $T_{\delta+1}$ has size $<\mu\le\lambda$, we may moreover assume that the map $j\mapsto((a_j\restriction{\delta+1}),(b_j\restriction{\delta+1}))$ is constant over $J$,
with value, say, $(s{}^\smallfrown\langle i\rangle,s{}^\smallfrown\langle i'\rangle)$.
Then, we are done.
\end{proof} 

\begin{cor} \label{usefullemma} Suppose that $T\in\mathcal T(\lambda,\theta)\cap\mathcal P({}^{<\theta}2)$, where $\lambda=\cf(\lambda)>\cf(\theta)\ge\omega$.
Suppose that we are given $i<2$ and $X\in[\mathcal B(T)]^{\lambda}$.
Then, for $\lambda$-many $x\in X$, there are cofinally many $\delta<\theta$ such that the following two hold:
\begin{enumerate}
\item $x(\delta)=i$;
\item $\{ y\in X\mid \Delta(x,y)=\delta\}$ has size $\lambda$.
\end{enumerate}
\end{cor}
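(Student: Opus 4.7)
The plan is to reduce to the shadow tree $T' := T^{\branches X}$, which by Lemma~\ref{separation lemma2}(1) lies in $\mathcal T(\lambda,\theta)$ and is normal; since $T \s {}^{<\theta}2$, $T'$ is binary. A preparatory move is to replace $X$ by $X' := X \cap \mathcal B(T')$ and argue $|X'| = \lambda$: writing $X \setminus X' = \bigcup_{\delta<\theta} A_\delta$ for $A_\delta := \{x \in X \mid x\restriction\delta \notin T'\}$, each $A_\delta$ is a union, indexed by $T_\delta \setminus T'_\delta$ (of size $<\lambda$), of fibers each of size $<\lambda$, so regularity of $\lambda$ forces $|A_\delta| < \lambda$. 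As the $A_\delta$ are increasing, restricting the union to a cofinal subsequence of length $\cf(\theta) < \lambda$ and again invoking regularity of $\lambda$ yields $|X \setminus X'| < \lambda$, so $|X'| = \lambda$.

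The key reformulation is this: for $x \in X'$ and $\delta < \theta$, conditions~(1) and (2) are jointly equivalent to the statement that $x(\delta) = i$ and $x\restriction\delta$ is a splitting node of $T'$. Indeed, under~(1), condition~(2) asserts that $\lambda$-many $y \in X$ extend $(x\restriction\delta){}^\smallfrown\langle 1-i\rangle$, which by definition of $T^{\branches X}$ is the same as $(x\restriction\delta){}^\smallfrown\langle 1-i\rangle \in T'$; combined with $(x\restriction\delta){}^\smallfrown\langle i\rangle = x\restriction(\delta+1) \in T'$ (which uses $x \in \mathcal B(T')$), one concludes that $x\restriction\delta$ splits in $T'$.

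Now proceed by contradiction: suppose the set $X^\ast \s X'$ of $x$ violating the conclusion has size $\lambda$. For each $x \in X^\ast$, fix $\alpha_x < \theta$ beyond which the reformulated conjunction fails for $x$. Using $\cf(\lambda) = \lambda > \cf(\theta)$, pigeonhole along a cofinal sequence in $\theta$ gives a single $\alpha < \theta$ and $X_0 \s X^\ast$ of size $\lambda$ with $\alpha_x < \alpha$ for all $x \in X_0$; using $|T'_\alpha| < \lambda$, a second pigeonhole produces $s \in T'_\alpha$ and $X_1 \s X_0$ of size $\lambda$ with $s \sq x$ for every $x \in X_1$. Pick any $x \neq x'$ in $X_1$ and set $\delta^\ast := \Delta(x, x') \geq \alpha$. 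Then the common stem $t := x\restriction\delta^\ast$ splits in $T'$, since its two successors $x\restriction(\delta^\ast+1)$ and $x'\restriction(\delta^\ast+1)$ both lie in $T'$. Whichever of $x(\delta^\ast), x'(\delta^\ast)$ equals $i$---say $x(\delta^\ast) = i$---then witnesses the reformulated conjunction at $\delta^\ast > \alpha_x$, contradicting $x \in X^\ast$.

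The most delicate point, in my view, is the preparatory Step~1: without first confining to $\mathcal B(T')$ one cannot cleanly equate condition~(2) with ``$x\restriction\delta$ splits in $T'$'', because for a generic $x \in X$ the node $(x\restriction\delta){}^\smallfrown\langle i\rangle$ need not belong to $T'$. Once that confinement is in hand, the rest of the argument reduces to standard pigeon-holing exploiting $\cf(\theta) < \lambda = \cf(\lambda)$ and $|T'_\alpha| < \lambda$.
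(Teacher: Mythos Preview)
Your proof is correct and follows the same overall strategy as the paper: assume failure, pigeonhole the bad branches below a common level, locate a splitting above that level, and derive a contradiction. The difference is in execution. You first pass to the shadow tree $T' = T^{\branches X}$ and restrict to $X' = X \cap \mathcal B(T')$, which lets you reformulate (1)+(2) as ``$x(\delta)=i$ and $x\restriction\delta$ splits in $T'$''; then any two distinct branches of $X_1$ produce the required splitting node. The paper skips this preparatory reduction: after pigeon-holing to a $\lambda$-sized set $Z_t$ extending a common node $t\in T_{\epsilon+1}$, it invokes Lemma~\ref{separation lemma2}(2) with $A=B=Z_t$ to obtain $s\in T$ together with $\lambda$-sized sets $\hat A,\hat B$ on either side of the split, which immediately verifies condition~(2) for every $x\in\hat A$ without ever needing to know that the bad branches lie in $\mathcal B(T')$. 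Your route is slightly longer but makes the role of $T^{\branches X}$ explicit; the paper's is shorter by outsourcing the splitting step to the separation lemma.
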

\begin{proof} Suppose not. In particular, the set $Y$ of all $x\in X$ for which there are boundedly many $\delta<\theta$ satisfying Clauses (1) and (2) has size $\lambda$.
So, for each $x\in Y$, the following ordinal is smaller than $\theta$:
$$\epsilon_x:=\sup\{\delta<\theta\mid x(\delta)=i\ \&\ |\{ y\in X\mid \Delta(x,y)=\delta\}|=\lambda\}.$$
As $|Y|=\cf(\lambda)>\cf(\theta)$, we may find some $\epsilon<\theta$ such that $Z:=\{ x\in Y\mid \epsilon_x=\epsilon\}$ has size $\lambda$.
As $|T_{\epsilon+1}|<\lambda$, we may also find some $t\in T_{\epsilon+1}$ such that $Z_t:=\{ x\in Z\mid t\sq x\}$ has size $\lambda$.
Now, by appealing to Lemma~\ref{separation lemma2}(2) with $\mu:=\lambda$, $A:=Z_t$ and $B:=Z_t$, we may find $s \in T$ and $j<2 $ such that 
$\hat A:=\{ a\in A\mid s{}^\smallfrown\langle j\rangle\sq a\}$ and $\hat B:=\{ b\in B\mid s{}^\smallfrown\langle1-j\rangle\sq b\}$ are both of size $\lambda$.
As $A=B$ and by possibly switching the roles of $\hat A$ and $\hat B$, we may assume that $j=i$.
Denote $\delta:=\dom(s)$.
For all $a\in \hat A$ and $b\in \hat B$,
since $a,b\in Z_t$, both $s{}^\smallfrown\langle i\rangle$  and $s{}^\smallfrown\langle1-i\rangle$ are compatible with $t$, so that $\delta=\dom(s)\ge\dom(t)>\epsilon$.
Now, for every $x\in \hat A$, it is the case that $x(\delta)=i$ and $\{ y\in X\mid \Delta(x,y)=\delta\}$ covers $\hat B$,
but $|\hat B|=\lambda$, so we got a contradiction to the fact that $\delta>\epsilon=\epsilon_x$.
\end{proof}

\begin{lemma}\label{lemma36a} Suppose that $T\in\mathcal T(\mu,\mu)$,  and $\mu$ is a regular uncountable cardinal.
Suppose also that $\langle b_\xi\mid \xi < \mu \rangle$ is an injective enumeration of some $B\in[\mathcal B(T)]^\mu$.
For every $\langle t_\alpha\mid\alpha<\mu\rangle\in\prod_{\alpha<\mu}(T^{\branches B}\cap{}^\alpha\mu)$,
for club many $\alpha<\mu$, 
$$\sup(\{\Delta(b_\beta,t_\alpha)\mid \beta<\alpha\}\cap\alpha)=\alpha.$$
In particular, for club many $\alpha<\mu$,
$$\sup\{ \gamma<\mu\mid \alpha\in\acc^+(\{\Delta(b_\beta,b_\gamma)\mid \beta<\alpha\})\}=\mu.$$
\end{lemma}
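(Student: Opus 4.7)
I will prove the main assertion by contradiction, and then derive the ``in particular'' clause as a short consequence. Let $S\s\mu$ be the set of $\alpha$'s for which the displayed supremum is strictly less than $\alpha$, and suppose towards contradiction that $S$ is stationary; for each $\alpha\in S$, write $\epsilon_\alpha:=\sup(\{\Delta(b_\beta,t_\alpha)\mid\beta<\alpha\}\cap\alpha)<\alpha$. An application of Fodor's lemma yields a stationary $S^\dagger\s S$ on which $\epsilon_\alpha$ takes a common value $\epsilon^*$, and since $|T_{\epsilon^*+1}|<\mu=\cf(\mu)$, a further pigeonhole produces a stationary $S^*\s S^\dagger$ on which $t_\alpha\restriction(\epsilon^*+1)$ is a common node $s^*$.

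The defining property of $\epsilon^*$ translates into the dichotomy that, for each $\alpha\in S^*$ and each $\beta<\alpha$, whenever $s^*\sq b_\beta$ holds one must have $t_\alpha\sq b_\beta$ as well; for otherwise the splitting place $\Delta(b_\beta,t_\alpha)$ would lie in the interval $(\epsilon^*,\alpha)$, contradicting $\epsilon_\alpha=\epsilon^*$. Since $s^*\sq t_\alpha\in T^{\branches B}$, the set $X:=\{\xi<\mu\mid s^*\sq b_\xi\}$ has cardinality $\mu$. Now, for any two $\alpha_1<\alpha_2$ in $S^*$ with $\alpha_1>\min(X)$, picking any $\beta\in X\cap\alpha_1$ yields both $t_{\alpha_1}\sq b_\beta$ and $t_{\alpha_2}\sq b_\beta$, forcing $t_{\alpha_2}\restriction\alpha_1=t_{\alpha_1}$. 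Thus $\langle t_\alpha\mid\alpha\in S^*,\,\alpha>\min(X)\rangle$ is $\sq$-coherent, and its union $b$ is a branch of $T$. But then, for every $\xi\in X$, applying the dichotomy at cofinally many $\alpha\in S^*$ above $\max(\xi,\min(X))$ forces $b_\xi=b$, contradicting the injectivity of $\langle b_\xi\rangle$ since $|X|=\mu$.

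For the ``in particular'' clause, I fix any sequence $\langle t_\alpha\rangle\in\prod_{\alpha<\mu}(T^{\branches B}\cap{}^\alpha\mu)$ (which exists by the normality of $T^{\branches B}$ established in Lemma~\ref{separation lemma2}(1)), and let $C$ be the resulting club. For each $\alpha\in C$, each of the $\mu$-many $\gamma<\mu$ with $t_\alpha\sq b_\gamma$ satisfies $\Delta(b_\beta,b_\gamma)=\Delta(b_\beta,t_\alpha)$ whenever the latter value is strictly below $\alpha$, so the set $\{\Delta(b_\beta,b_\gamma)\mid\beta<\alpha\}\cap\alpha$ is cofinal in $\alpha$, placing $\alpha$ in the desired $\acc^+$ set. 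The main obstacle I anticipate is the careful coherence argument that distills a single branch $b$ absorbing $\mu$ many pairwise distinct $b_\xi$'s out of the Fodor/pigeonhole reduction.
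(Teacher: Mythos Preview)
Your proof is correct. The overall shape matches the paper's: pass to a stationary set via Fodor to freeze the value $\epsilon^*$, then pigeonhole on the level $T_{\epsilon^*+1}$ to freeze a common stem $s^*$. The difference is in the endgame. The paper introduces an auxiliary club $C:=\{\alpha\in\acc(\mu)\mid\forall\bar\alpha<\alpha\,[\min(\Gamma_{\bar\alpha}\setminus\bar\alpha)<\alpha]\}$ (where $\Gamma_\alpha=\{\gamma\mid t_\alpha\sq b_\gamma\}$) and then splits into two cases according to whether cofinally many $t_\alpha$'s lie along a single function $b$; in the non-coherent case the club $C$ is what guarantees a witness $\beta<\alpha$ with $b_\beta\restriction\bar\alpha=t_{\bar\alpha}$. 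You bypass both the club and the case split by isolating the dichotomy ``$s^*\sq b_\beta$ and $\beta<\alpha$ forces $t_\alpha\sq b_\beta$'' and using it twice: first with any $\beta\in X\cap\alpha_1$ to force $\sq$-coherence of the $t_\alpha$'s directly, and then to show that every one of the $\mu$-many $b_\xi$ with $\xi\in X$ must equal the resulting branch $b$, contradicting injectivity. This is a modest but genuine streamlining: you trade the paper's explicit two-mode analysis for a single uniform argument, at no cost in strength. Your treatment of the ``in particular'' clause is essentially identical to the paper's.
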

\begin{proof} The `In particular' part follows the main claim
together with Lemma~\ref{separation lemma2}(1), using $\lambda:=\mu$.
Next, to prove the main claim, let $\langle t_\alpha\mid\alpha<\mu\rangle\in\prod_{\alpha<\mu}(T^{\branches B}\cap{}^\alpha\mu)$.
Denote $\Gamma_\alpha:=\{\gamma<\mu\mid t_\alpha\sq b_\gamma\}$.
Consider the club
$$C:=\{\alpha\in\acc(\mu)\mid\forall \bar\alpha<\alpha\,[\min(\Gamma_{\bar\alpha}\setminus\bar\alpha)<\alpha]\}.$$

Note that for every $\alpha<\mu$, $D^\alpha:=\{ \Delta(b_\beta,t_\alpha)\mid \beta\in\alpha\setminus \Gamma_\alpha\}$ is a subset of $\alpha$.
\begin{claim} The following set covers a club in $\mu$:
$$A:=\{ \alpha<\mu\mid \sup(D^\alpha)=\alpha\}$$
\end{claim}
\begin{proof}
Suppose not. Fix an ordinal $\epsilon<\mu$ 
for which the following set is stationary:
$$S:=\{ \alpha\in C\mid \sup(D^\alpha)=\epsilon\}.$$

There are two cases to consider:

$\br$ Suppose that there exists a function $b:\mu\rightarrow\mu$ such that $S_b:=\{\alpha\in S\mid b\restriction\alpha=t_\alpha\}$ is cofinal in $\mu$.
Pick $\bar\alpha\in S_b\setminus(\epsilon+1)$. Since $\Gamma_{\bar\alpha}$ has more than one element,
we may now find $\beta\in \Gamma_{\bar\alpha}$ such $b\neq b_\beta$. Then $\epsilon<\bar\alpha\le\Delta(b_\beta,b)<\mu$.
Pick $\alpha\in S_b$ above $\max\{\Delta(b_\beta,b),\beta\}$. Then $\epsilon<\Delta(b_\beta,t_\alpha)<\alpha$,
contradicting the fact that $\sup(D^\alpha)=\epsilon$.

$\br$ Suppose the first case fails.
First, since $|T_{\epsilon+1}|<\mu$, 
pick a node $t\in T_{\epsilon+1}$ such that $S':=\{ \alpha\in S\mid t_\alpha\restriction(\epsilon+1)=t\}$ is stationary.
Since for every function $b:\mu\rightarrow\mu$, the set $S_b:=\{\alpha\in S\mid b\restriction\alpha=t_\alpha\}$ is bounded in $\mu$,
we may now pick a pair $({\bar\alpha},\alpha)\in S'$ such that $t_{\bar\alpha}\not\sq t_\alpha$,
so that $\epsilon<\Delta(t_{\bar\alpha},t_\alpha)<{\bar\alpha}$.
Let $\beta:=\min(\Gamma_{{\bar\alpha}}\setminus\bar\alpha)$. Then $b_\beta\restriction{\bar\alpha}=t_{{\bar\alpha}}$
and hence $\Delta(b_\beta,t_\alpha)=\Delta(t_{\bar\alpha},t_\alpha)$. That is,
$\epsilon<\Delta(b_\beta,t_\alpha)<{\bar\alpha}\le\beta<\alpha$, contradicting the fact that $\sup(D^{\alpha})=\epsilon$.
\end{proof}

Let $\alpha\in A$.
Recall that $\Gamma_\alpha$ has size $\mu$,
and note that, for every $\gamma\in \Gamma_\alpha$,
$$\{\Delta(b_\beta,b_\gamma)\mid \beta<\alpha\}\cap\alpha=\{\Delta(b_\beta,t_\alpha)\mid \beta<\alpha\}\cap\alpha=D^\alpha,$$
so we are done.
\end{proof}

\begin{lemma}\label{height lemma}\label{<lambda tree obs} Suppose that $T\in\mathcal T(2^\mu,\mu)\cap\mathcal P({}^{<\mu}\mu)$, where $\mu$ is an infinite regular cardinal.
For every $B\in[\mathcal B(T)]^\mu$,
there exist $B'\in[B]^\mu$ and $\theta\le\mu$ such that:
\begin{enumerate} 
\item For every $B''\in[B']^\mu$, $T^{\branches B''}$ is in $\mathcal T(\mu,\theta)$ and is normal;
\item If $\theta<\mu$ or if $T$ contains no $\mu$-Aronszajn subtrees, then $|\mathcal B(T^{\branches B''})|=\mu$ for every $B''\in[B']^\mu$;
\item If $\theta<\mu$, then $|\{ b\in B'\mid \ssup\{\Delta(t,b\restriction\theta)\mid t\in T^{\branches B'}\ \&\ t\not\sqsubseteq b\}<\theta\}|<\mu$.
\end{enumerate}
\end{lemma}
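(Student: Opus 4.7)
My plan is to isolate the right ordinal $\theta\le\mu$ and a stable subfamily $B'\in[B]^\mu$, and then verify the three clauses from the resulting structure. For $b\in\mathcal{B}(T)$ and $B^{\star}\subseteq\mathcal{B}(T)$, set $\theta_b^{B^{\star}}:=\min\{\alpha\le\mu:|\{b'\in B^{\star}:b'\restriction\alpha=b\restriction\alpha\}|<\mu\}$, so that $\alpha<\theta_b^{B^{\star}}$ is equivalent to $b\restriction\alpha\in T^{\branches B^{\star}}_\alpha$. The key target for the first step is to produce $B'\in[B]^\mu$ and $\theta$ for which $\theta_b^{B''}=\theta$ \emph{uniformly} for every $B''\in[B']^\mu$ and every $b\in B''$; this uniformity is what will force $T^{\branches B''}$ to have height exactly $\theta$ when $\theta<\mu$, and to have $\mu$ branches via $B''$ itself when $\theta=\mu$.

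To extract such a stable $(B',\theta)$, I consider the $\subseteq$-monotone invariant $m(B^\star):=\min\{\theta_b^{B^\star}:b\in B^\star\}$ on $[B]^\mu$. Choosing $B_0\in[B]^\mu$ attaining its minimum value $\theta:=m(B_0)$, the minimality yields $m(B^{**})=\theta$ for every $B^{**}\in[B_0]^\mu$. Inside $B_0$, iterated pigeonhole on the popular value of $\theta_b^{(\cdot)}$ (each strict ordinal drop being bounded and necessarily finite) produces $B'\in[B_0]^\mu$ on which $\theta_b^{B'}$ is constant, and minimality forces this constant to be $\theta$; combining monotonicity with minimality, $\theta_b^{B''}=\theta$ for every $B''\in[B']^\mu$ and $b\in B''$. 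A separate easy consequence is that $|T^{\branches B'}_\alpha|<\mu$ for every $\alpha<\theta$: otherwise one could pick one branch per cluster at level $\alpha$ to obtain $B^{**}\in[B']^\mu$ with singleton clusters at $\alpha$, forcing $\theta_b^{B^{**}}\le\alpha<\theta$.

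Verification of the three clauses then flows directly. For clause (1), given $B''\in[B']^\mu$: each level $\alpha<\theta$ of $T^{\branches B''}$ contains $b\restriction\alpha$ for every $b\in B''$ and embeds into $T^{\branches B'}_\alpha$, so is nonempty of size $<\mu$; when $\theta<\mu$, uniformity forbids clusters of size $\mu$ at level $\theta$, yielding $T^{\branches B''}_\theta=\emptyset$; normality comes from extending any $t\in T^{\branches B''}_\alpha$ through any $b\in B''$ above $t$. For clause (2), when $\theta<\mu$ the $\mu$ branches in $B''$ split at level $\theta$ into clusters of size $<\mu$, yielding by regularity of $\mu$ at least $\mu$ distinct branches $b\restriction\theta$ of $T^{\branches B''}$; when $\theta=\mu$, each $b\in B''$ is itself a cofinal branch of $T^{\branches B''}$, and the no-$\mu$-Aronszajn hypothesis ensures the resulting subtree is nondegenerate. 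For clause (3), assuming $\theta<\mu$ and that the exceptional set $E:=\{b\in B':\ssup D_b<\theta\}$ has size $\mu$ (with $D_b:=\{\Delta(t,b\restriction\theta):t\in T^{\branches B'},\,t\not\sqsubseteq b\}$), two pigeonholes---first on $\epsilon<\theta$ with $\sup D_b\le\epsilon$, then on the common value $s:=b\restriction(\epsilon+1)\in T^{\branches B'}_{\epsilon+1}$, which has size $<\mu$---produce $B^{\prime\prime\prime\prime}\in[E]^\mu$ all sharing $s$. For $b\neq b'$ in $B^{\prime\prime\prime\prime}$ with distinct $\theta$-restrictions (abundant by the argument for clause (2)), the branching level $\delta:=\Delta(b,b')$ satisfies $\epsilon<\delta<\theta$; when $\delta+1<\theta$, the node $b'\restriction(\delta+1)$ lies in $T^{\branches B'}$ and extends $s$ but not $b$, contradicting $\sup D_b\le\epsilon$.

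The hardest part is the uniform stabilization in the first step: at stabilization points where the popular value of $\theta_b^{(\cdot)}$ coincides with the minimum $\theta$ but uniformity has not yet been achieved, additional rounds of pigeonhole combined with the minimality of $m$ are needed to drive $\theta_b^{B'}$ to a single value. A secondary subtlety is the successor case of $\theta$ in clause (3): for $\theta=\delta+1$ the set $D_b$ is trivially bounded in $\theta$, so one should either further arrange $\theta$ to be a limit ordinal or handle the successor case via the branching structure supplied by Lemma~\ref{lemma36a}.
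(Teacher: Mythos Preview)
Your first step contains a genuine error that undermines the whole approach. The invariant $m(B^\star):=\min\{\theta_b^{B^\star}:b\in B^\star\}$ is \emph{not} $\subseteq$-monotone in either direction, and the global minimum $\theta:=\min\{m(B^\star):B^\star\in[B]^\mu\}$ need not be a value for which the lemma's conclusion can hold. Concretely: suppose $B=C_0\cup C_1$ with $|C_0|=|C_1|=\mu$, all branches in $B$ agree below level $5$, $C_0$ and $C_1$ split at level $5$, and the branches in $C_1$ become pairwise distinct at level $6$ while those in $C_0$ stay together until some much later level. Taking $B_0:=\{b_0\}\cup C_1$ for any $b_0\in C_0$ gives $\theta_{b_0}^{B_0}=5$, so the global minimum is $\theta=5$. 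But for \emph{every} $B'\in[B]^\mu$ and every $B''\in[B']^\mu$, one of $B''\cap C_0$, $B''\cap C_1$ has size $\mu$, hence $(T^{\branches B''})_5\neq\emptyset$ and $T^{\branches B''}\notin\mathcal T(\mu,5)$. So no amount of ``additional rounds of pigeonhole combined with the minimality of $m$'' will rescue this choice of $\theta$; the difficulty you flag at the end is not a technicality but a symptom of having selected the wrong ordinal.

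The paper's route is much more direct: it defines $\theta$ as the least level at which $T^{\branches B}$ itself has width $\ge\mu$ (taking $\theta=\mu$ and $B'=B$ if this never happens), and then lets $B'$ consist of one branch through each of $\mu$ many distinct nodes $t_i\in(T^{\branches B})_\theta$. The key leverage is that each $t_i$ already lies in $T^{\branches B}$, so $t_i\restriction\beta\in(T^{\branches B})_\beta$ for every $\beta<\theta$; this is what forces levels below $\theta$ of $T^{\branches B''}$ to be small and nonempty for every $B''\in[B']^\mu$. Clause~(3) is then handled via a separation argument applied to the family $\{t_i:i<\mu\}$ at level $\theta$, which also sidesteps the successor-versus-limit issue you raise: working with the $t_i$'s rather than with arbitrary branches of $B'$ guarantees the relevant split occurs strictly below $\theta$.
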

\begin{proof} Let $B\in[\mathcal B(T)]^\mu$.  

\begin{claim} If $T^{\branches B}\in \mathcal T(\mu,\mu)$, then the pair $(B',\theta):=(B,\mu)$ is as sought.
\end{claim}
\begin{proof} Suppose that $T^{\branches B}$ is in $\mathcal T(\mu,\mu)$.
For every $B''\in[B]^\mu$, $T^{\branches B''}=(T^{\branches B})^{\branches B''}$,
and hence Lemma~\ref{separation lemma2}(1) implies that $T^{\branches B''}\in \mathcal T(\mu,\mu)$ and is normal.
In addition, if there exists some $B''\in[B]^\mu$ such that $|\mathcal B(T^{\branches B''})|<\mu$,
then looking at $B''':=B''\setminus \mathcal B(T^{\branches B''})$,
we get that $T^{\branches B'''}$ is a $\mu$-Aronszajn subtree of $T$.
\end{proof}

From now on, suppose that $T^{\branches B}\in\mathcal T(2^\mu,\mu)\setminus \mathcal T(\mu,\mu)$.
Let $\theta<\mu$ be the least such that $(T^{\branches B})_\theta$ has size $\ge\mu$.
Let $\langle t_i\mid i<\mu\rangle$ be an injective sequence of elements of $(T^{\branches B})_\theta$.
For each $i<\mu$, pick $b_i\in B$ such that $t_i\sq b_i$, and set $B':=\{b_i\mid i<\mu\}$.
To see that the pair $(B',\theta)$ is as sought, let $B''\in[B']^\mu$.
\begin{claim} $T^{\branches B''}$ is in $\mathcal T(\mu,\theta)$ and is normal.
\end{claim}
\begin{proof} For every $\alpha\in[\theta,\mu)$, it is the case that for every $t\in(T^{\branches B''})_\alpha$,
there exists a unique $i<\mu$ such that $t_i\sq t$, and hence $\{ b\in B''\mid t\sq b\}\s\{ b_i\}$ is finite.
Therefore, $(T^{\branches B''})_\alpha$ is empty.
In addition, as $B''\s B'\s B$, 
it is the case that $|(T^{\branches B''})_\alpha|\le|(T^{\branches B})_\alpha|<\mu$ for all $\alpha<\theta$.

Clearly, $\emptyset\in T^{\branches B''}$.
Finally, let $\alpha<\beta<\mu$ with $t\in(T^{\branches B''})_\alpha$,
and we shall find $t'\in(T^{\branches B''})_\beta$ extending $t$.
By the choice of $t$, $B^*:=\{ b\in B''\mid t\sq b\}|$ has size $\mu$.
By the minimality of $\theta$, the map $b\mapsto b\restriction\beta$
from $B^*$ to $T_\beta$ cannot have an image of size $\mu$,
and hence there exists $B^{**}\in[B^*]^\mu$ on which the said map is constant, with some value, say $t'$. 
Clearly, $t'$ is as sought.
\end{proof}
\begin{claim} $|\mathcal B(T^{\branches B''})|=\mu$.
\end{claim}
\begin{proof} Suppose not. In particular, $I:=\{ i<\mu\mid b_i\in B''\ \&\ t_i\notin\mathcal B(T^{\branches B''})\}$ has size $\mu$.
It follows that there exists an $\alpha<\theta$ such that $I_\alpha:=\{ i\in I\mid (t_i\restriction\alpha)\notin T^{\branches B''}\}$ has size $\mu$.
However, $\mu$ is a regular cardinal greater than $|T_\alpha|\ge|(T^{\branches B''})_\alpha|$,
and hence there must exist some $s\in (T^{\branches B''})_\alpha$ such that $\{ i\in I_\alpha\mid (t_i\restriction\alpha)=s\}$ has size $\mu$.
This is a contradiction.
\end{proof}
\begin{claim} $|\{ i<\mu\mid \ssup\{\Delta(t,t_i)\mid t\in T^{\branches B'}\ \&\ t\not\sqsubseteq t_i\}<\theta\}|<\mu$.
\end{claim}
\begin{proof} Suppose not, and pick $\epsilon<\theta$ such that the following set has size $\mu$:
$$I:=\{ i<\mu\mid \ssup\{\Delta(t,t_i)\mid t\in T^{\branches B'}\ \&\ t\not\sqsubseteq t_i\}=\epsilon\}.$$
Then pick $s\in T_\epsilon$ such that $\{ i\in I\mid t_i\restriction\epsilon=s\}$ has size $\mu$.
Finally, as in the proof of Lemma~\ref{separation lemma2}(2), 
we may find some $s'\in T^{\branches B'}$ extending $s$ and $j\neq j'$ such that 
$\{ i\in I\mid s'{}^\smallfrown\langle j\rangle\sq t_i\}$ and $\{ i\in I\mid s'{}^\smallfrown\langle j'\rangle\sq t_i\}$ are both of size $\mu$.
In particular, there exist $i\neq i'$ in $I$ such that $t_i\cap t_{i'}=s'$. So $\Delta(s'{}^\smallfrown\langle i'\rangle,t_i)\ge\epsilon$,
contradicting the fact that $i\in I$.
\end{proof}

This completes the proof.
\end{proof}

\begin{defn} Let $\mu$ denote an infinite cardinal.
\begin{enumerate}
\item A \emph{weak $\mu$-Kurepa tree} is a tree $T$ of height $\mu$, of size $\mu$, satisfying $|\mathcal B(T)|>\mu$;
\item  A \emph{$\mu$-Kurepa tree} is a tree $T$ of height $\mu$ for which $\{\alpha<\mu\mid |T_\alpha|>|\alpha|\}$ is nonstationary, and $|\mathcal B(T)|>\mu$.
\end{enumerate}
\end{defn}
\begin{remark} As in Exercise~34 of \cite[\S II]{MR756630}, if there exists a $\mu$-Kurepa tree (resp.~weak $\mu$-Kurepa tree),
then there exists one which is a subset of ${}^{<\mu}2$.
\end{remark}

\begin{defn}[Branch spectrum] $T(\mu,\theta)$ stands for the collection of all cardinals $\kappa$ for which there exists $T\in\mathcal T(\mu,\theta)$ with $\kappa\le|\mathcal B(T)|$.
\end{defn}

\begin{prop} Let $\mu$ and $\theta$ denote infinite cardinals. Then:
\begin{enumerate}
\item $\sup(T(\mu,\theta))\le\mu^\theta$;
\item If $\theta$ is the least cardinal to satisfy $\mu^\theta>\mu$, then $\max(T(\mu^+,\theta))=(\mu^+)^\theta$;
\item If there exists a weak $\mu$-Kurepa tree, then $\mu^+\in T(\mu^+,\mu)$;
\item If there exists a $\mu$-Kurepa tree, then $\mu^+\in T(\mu,\mu)$;
\item If $\mu$ is a strong limit, then $2^\mu\in T(\mu,\cf(\mu))$.
\end{enumerate}
\end{prop}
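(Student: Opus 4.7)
My plan is to treat the five items in sequence, each by producing an explicit witness tree; I expect only item~(4) to demand any real bookkeeping.

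For (1) I would simply note that any $T\in\mathcal T(\mu,\theta)$ sits in ${}^{<\theta}\mu$, so every branch is a function $\theta\to\mu$ and the bound $|\mathcal B(T)|\le\mu^\theta$ is immediate. For (2), the full tree $T:={}^{<\theta}\mu$ is the natural witness: the minimality of $\theta$ gives $|T_\alpha|=\mu^{|\alpha|}\le\mu<\mu^+$ at each $\alpha<\theta$, so $T\in\mathcal T(\mu^+,\theta)$ with $|\mathcal B(T)|=\mu^\theta$. Since $\mu^\theta>\mu$, cardinal arithmetic collapses $(\mu^+)^\theta\le(\mu^\theta)^\theta=\mu^\theta\le(\mu^+)^\theta$ to equality, and combining with~(1) gives the asserted maximum. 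For (3), I would take a weak $\mu$-Kurepa tree $T$, which we may assume lies in ${}^{<\mu}2$ via the remark; the constraint $|T|=\mu$ forces $|T_\alpha|\le\mu<\mu^+$ at every level, placing $T$ in $\mathcal T(\mu^+,\mu)$, and the hypothesis $|\mathcal B(T)|>\mu$ upgrades to $\mu^+\le|\mathcal B(T)|$.

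The one step that actually requires work is (4). Given a $\mu$-Kurepa tree $T\s{}^{<\mu}2$, its definition supplies a club $C\s\mu$ with $|T_\alpha|\le|\alpha|<\mu$ for all $\alpha\in C$; let $\pi:\mu\to C$ be the continuous increasing enumeration. The plan is to construct $T'\s{}^{<\mu}\mu$ by recursively relabeling, for each $\alpha<\mu$, the elements of $T_{\pi(\alpha)}$ via injections into $\mu$ that cohere with the tree order, so that $t\sq t'$ in $T$ is sent to the initial-segment relation in $T'$. Each level of $T'$ then has size $|T_{\pi(\alpha)}|<\mu$, so $T'\in\mathcal T(\mu,\mu)$, and the construction provides an injection $\mathcal B(T)\hookrightarrow\mathcal B(T')$, giving $\mu^+\le|\mathcal B(T')|$. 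The coherent choice of these bijections across limit stages is the one piece of bookkeeping in the whole proposition.

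For (5), the regular case uses $T:={}^{<\mu}2$, whose levels have size $2^{|\alpha|}<\mu$ by strong-limitness and whose branch set has cardinality $2^\mu$, so $2^\mu\in T(\mu,\mu)=T(\mu,\cf(\mu))$. For $\mu$ singular with $\theta:=\cf(\mu)$, I would fix an increasing cofinal sequence $\langle\mu_\xi\mid\xi<\theta\rangle$ in $\mu$ with $\theta\le\mu_0$, and take $T:=\{f\in{}^{<\theta}\mu\mid\forall\xi<\dom(f)\,(f(\xi)<2^{\mu_\xi})\}$. Each level has size $\prod_{\xi<\alpha}2^{\mu_\xi}\le 2^{\mu_\alpha\cdot|\alpha|}=2^{\mu_\alpha}<\mu$ by strong-limitness, so $T\in\mathcal T(\mu,\theta)$, while $|\mathcal B(T)|=\prod_{\xi<\theta}2^{\mu_\xi}=2^{\sum_\xi\mu_\xi}=2^\mu$, as desired.
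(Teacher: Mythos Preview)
Your arguments are correct and constitute the natural unpacking of what the paper records simply as ``Clear.'' The one minor caveat is in~(4): writing $\pi:\mu\to C$ for the increasing enumeration of the club tacitly assumes $\otp(C)=\mu$, hence that $\mu$ is regular; this is the standard setting for Kurepa trees, and for singular $\mu$ one would instead stretch the relabeled tree back to height $\mu$ by padding trivially between consecutive club points.
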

\begin{proof} Clear.
\end{proof}
\begin{remark} $T(\mu,\theta)$ need not have a maximal element. By \cite{MR4323604}, it is consistent for $T(\omega_1,\omega_1)$ to have $\aleph_{\omega_2}$ has a supremum that is not attained.
Note, however, that $\mathcal T(\mu,\theta)$ is closed under unions of length $<\cf(\mu)$,
and hence $T(\mu,\theta)$ is ${<}\cf(\mu)$-closed.
\end{remark}

\begin{prop}\label{prop213} For every $\kappa\in T(\mu,\theta)$, 
there exists a coloring $c:[\kappa ] ^2 \rightarrow \theta $ witnessing
$\U(\kappa,\lambda,\lambda,\theta,2)$ for every regular cardinal $\lambda\in[\mu,\kappa]$.
\end{prop}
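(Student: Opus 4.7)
The plan is to derive the coloring directly from the tree witnessing membership in the branch spectrum. Since $\kappa\in T(\mu,\theta)$, fix some $T\in\mathcal T(\mu,\theta)$ with $|\mathcal B(T)|\ge\kappa$, and fix an injective sequence $\langle b_\alpha\mid \alpha<\kappa\rangle$ of elements of $\mathcal B(T)$. Define $c:[\kappa]^2\rightarrow\theta$ by setting, for all $\alpha<\beta<\kappa$,
\[
c(\alpha,\beta):=\Delta(b_\alpha,b_\beta).
\]
Since $b_\alpha\neq b_\beta$ are branches of length $\theta$ (in particular, neither is a proper initial segment of the other), the value $\Delta(b_\alpha,b_\beta)$ is a genuine ordinal below $\theta$, so $c$ is well-defined.

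Next I would verify that $c$ witnesses $\U(\kappa,\lambda,\lambda,\theta,2)$ for every regular $\lambda\in[\mu,\kappa]$. The parameter $\chi=2$ forces $\sigma<2$, and since the case $\sigma=0$ is vacuous, it suffices to treat pairwise disjoint families of singletons. So fix any $A\in[\kappa]^\lambda$ and any $\tau<\theta$, and consider the map
\[
\pi:A\rightarrow T_{\tau+1},\qquad \pi(\alpha):=b_\alpha\restriction(\tau+1).
\]
Since $T\in\mathcal T(\mu,\theta)$, we have $|T_{\tau+1}|<\mu\le\lambda=\cf(\lambda)$, so the pigeonhole principle yields some $t\in T_{\tau+1}$ such that $B:=\pi^{-1}\{t\}$ is of size $\lambda$. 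For any two distinct $\alpha,\beta\in B$, the branches $b_\alpha$ and $b_\beta$ both extend $t$, so $\Delta(b_\alpha,b_\beta)\ge\tau+1>\tau$, i.e., $c(\alpha,\beta)>\tau$. Hence $\mathcal B:=\{\{\alpha\}\mid \alpha\in B\}$ is as required.

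There is no real obstacle: the content is a single application of pigeonhole on the $(\tau+1)$-th level of $T$, leveraging the fact that levels of $T$ have size strictly below $\mu\le\lambda$ and that $\lambda$ is regular. The only subtlety to flag is the implicit reduction $\sigma=1$ afforded by taking $\chi=2$, which turns the abstract ``pairwise disjoint $\sigma$-sized blocks'' formulation into the classical ``large monochromatic-above-$\tau$ subset of $A$'' statement that the tree argument naturally produces.
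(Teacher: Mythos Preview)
Your proposal is correct and matches the paper's own proof essentially verbatim: define $c(\alpha,\beta):=\Delta(b_\alpha,b_\beta)$ from an injective $\kappa$-sequence of branches through $T\in\mathcal T(\mu,\theta)$, then pigeonhole on the level $T_{\tau+1}$ (of size $<\mu\le\lambda$) to find a $\lambda$-sized $B\s A$ on which all pairs split above $\tau$. Your added remarks on well-definedness of $c$ and on the reduction to $\sigma=1$ are accurate and harmless.
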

\begin{proof} Given $\kappa\in T(\mu,\theta)$, let us fix $T\in\mathcal T(\mu,\theta)$ 
admitting an injective sequence $\langle b_\xi\mid \xi<\kappa\rangle$ consisting of elements of $\mathcal B(T)$.
Define $c:[\kappa]^2\rightarrow\theta$ via $c(\alpha,\beta):=\Delta(b_\alpha,b_\beta)$.
Now, given $\tau<\theta$ and $A\in[\kappa]^\lambda$ for a regular cardinal $\lambda\in[\mu,\kappa]$, 
since $|T_{\tau+1}|<\mu$, it is possible to find $x\in T_{\tau+1}$ for which $B:=\{ \alpha\in A\mid x\sq b_\alpha\}$ has size $\lambda$.
Evidently, $c(\alpha,\beta)>\tau$ for all $\alpha\neq\beta$ from $B$.
\end{proof}

\section{Coloring well-behaved magmas}\label{sums2}

In this section, we obtain sufficient conditions for $G\nrightarrow[\lambda]_{\theta}^{\fs_n}$ to hold. 
To ease on the reader, we start with the special case of $n=2$.
The upcoming Lemma~\ref{S to groups}  reduces this case to the following simple combinatorial principle.

\begin{defn}\label{s2prin}
$S_2(\kappa,\lambda,\theta )$ asserts the existence of a coloring $d: [\kappa ] ^{<\omega}  \rightarrow \theta $ such that,  
for every $\mathcal X \s [ \kappa ] ^{ < \omega } $ of size $\lambda $ and every prescribed color $\tau<\theta$, 
there exist two distinct $x,y\in\mathcal X$ such that $d(z)=\tau$ whenever $(x\symdiff y)\s z\s (x\cup y)$.
\end{defn}  	

\begin{lemma}[{\cite[Theorem~4.7]{paper27}}]\label{S to groups} 
Suppose that $S_2(\kappa,\lambda,\theta )$ holds,
for given cardinals $\theta\le\lambda\le\kappa$ with $\lambda$ regular and uncountable.

Then $G\nrightarrow[\lambda]_{\theta}^{\fs_2}$ holds for every well-behaved magma $(G,*)$ with $|G|=\kappa$.
\end{lemma}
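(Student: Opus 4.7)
The plan is to pull back a coloring of $[\kappa]^{<\omega}$ to $G$ through the witness map $\varphi$ for well-behavedness, and then exploit the compatibility between $\varphi$ and $*$ via the symmetric-difference inclusion.

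First I fix a bijection $G\leftrightarrow\kappa$, so that the countable-to-one witness $\varphi:G\to[G]^{<\omega}$ may be regarded as a countable-to-one map $\varphi:G\to[\kappa]^{<\omega}$ still satisfying $\varphi(x)\symdiff\varphi(y)\s\varphi(x*y)\s\varphi(x)\cup\varphi(y)$ for all $x\neq y$. Let $d:[\kappa]^{<\omega}\to\theta$ be a witness to $S_2(\kappa,\lambda,\theta)$, and define the proposed coloring $c:G\to\theta$ by $c(g):=d(\varphi(g))$.

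Now, to verify that $c$ witnesses $G\nrightarrow[\lambda]^{\fs_2}_\theta$, suppose $A\s G$ has size $\lambda$ and $\tau<\theta$ is prescribed. Set $\mathcal X:=\varphi[A]$. Since $\lambda$ is uncountable and regular and $\varphi$ is countable-to-one, $|\mathcal X|=\lambda$, so the hypothesis $S_2(\kappa,\lambda,\theta)$ applied to $\mathcal X$ and $\tau$ yields distinct $x,y\in\mathcal X$ with $d(z)=\tau$ for every $z$ satisfying $(x\symdiff y)\s z\s(x\cup y)$. Pick $a,b\in A$ with $\varphi(a)=x$ and $\varphi(b)=y$; since $x\neq y$, also $a\neq b$. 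The well-behaved inclusion then gives
$$(x\symdiff y)=\varphi(a)\symdiff\varphi(b)\s\varphi(a*b)\s\varphi(a)\cup\varphi(b)=(x\cup y),$$
so taking $z:=\varphi(a*b)$ we conclude $c(a*b)=d(\varphi(a*b))=\tau$. Since $n=2$ leaves no ambiguity in the implementation of $a*b$, the injective sequence $\langle a,b\rangle$ is as demanded by Definition~\ref{def27}.

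There is no genuine obstacle here: the only point requiring care is confirming $|\mathcal X|=\lambda$, which uses precisely the two hypotheses that $\lambda$ is uncountable regular and $\varphi$ is countable-to-one. Everything else is the direct fit between the conclusion of $S_2$ (constancy of $d$ on the symmetric-difference interval between $x$ and $y$) and the well-behaved axiom (which places $\varphi(a*b)$ inside exactly that interval).
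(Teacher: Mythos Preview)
Your proof is correct and essentially identical to the paper's: both fix a bijection $G\leftrightarrow\kappa$, define $c:=d\circ\varphi$, use that $\varphi$ is countable-to-one (the paper states the weaker ${<}\lambda$-to-one, which suffices) together with regularity of $\lambda$ to get $|\mathcal X|=\lambda$, and then apply $S_2$ and the symmetric-difference inclusion for $\varphi$ to conclude. Your closing remark that $n=2$ admits only one implementation of $a*b$ is a nice explicit nod to Definition~\ref{def27} that the paper leaves implicit.
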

\begin{proof} Let $d$ be coloring witnessing $S_2(\kappa,\lambda,\theta )$.
Suppose that $(G,*)$ is a well-behaved magma with $|G|=\kappa$.
By identifying $[G]^{<\omega}$ with $[\kappa]^{<\omega}$,
we may thus fix a map $\varphi:G\rightarrow[G]^{<\omega}$ such that:
\begin{itemize}
\item $\varphi$ is ${<}\lambda$-to-one;
\item for all $x\neq y$ in $G$, $\varphi(x)\symdiff\varphi(y)\s \varphi(x*y)\s\varphi(x)\cup\varphi(y)$.
\end{itemize}

Define a coloring $c:G\rightarrow \theta$ by letting $c:=d\circ\varphi$.
To see that $c$ is as sought, let $X\in[G]^\lambda$.
As $\varphi$ is ${<}\lambda$-to-one, $\mathcal X:=\{\varphi(x)\mid x\in X\}$ has size $\lambda$.
Thus, given a prescribed color $\tau<\theta$, 
we may find $x,y\in X$ with $\varphi(x)\neq\varphi(y)$ such that $d(z)=\tau$ whenever $(\varphi(x)\symdiff\varphi(y))\s z\s (\varphi(x)\cup\varphi(y))$.
In particular, $x\neq y$ and $c(x*y)=d(\varphi(x*y))=\tau$.
\end{proof}

The question arises: How do one obtain instances of $S_2(\ldots)$? The proof of \cite[Lemma~3.4]{paper27} makes it clear that the following holds:
\begin{fact}\label{fact33} Suppose that $\lambda$ is a regular uncountable cardinal and that $\theta$ is an infinite cardinal.
Then $\pr_1(\kappa,\lambda,\theta,\omega)$ implies $S_2(\kappa,\lambda,\theta)$.
\end{fact}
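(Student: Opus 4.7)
The plan is to define $d(z) := c(\min z, \max z)$ for $|z| \ge 2$ (with any default value otherwise), where $c : [\kappa]^2 \to \theta$ witnesses $\pr_1(\kappa,\lambda,\theta,\omega)$, and to verify the required property via a $\Delta$-system argument. Given $\mathcal{X} \in [[\kappa]^{<\omega}]^\lambda$ and a prescribed color $\tau < \theta$, I would first apply the $\Delta$-system lemma (valid since $\lambda$ is regular uncountable and every element of $\mathcal{X}$ is finite) to extract $\mathcal{X}_0 \s \mathcal{X}$ of size $\lambda$ forming a $\Delta$-system with some finite root $r$, and then pigeonhole so that all $x \in \mathcal{X}_0$ share a common size.

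Writing $a_x := x \setminus r$, the sets $\{a_x : x \in \mathcal{X}_0\}$ are pairwise disjoint. By an Erd\H{o}s-style recursion using the regularity of $\lambda$, I would further thin to $\mathcal{X}_1 \s \mathcal{X}_0$ of size $\lambda$ enumerated as $\{x_\xi : \xi < \lambda\}$ so that both $\min a_{x_\xi}$ and $\max a_{x_\xi}$ are strictly increasing in $\xi$; this ensures no two $a_{x_\xi}$'s are nested. Next, apply $\pr_1(\kappa,\lambda,\theta,\omega)$ to the pairwise disjoint family $\{\{\min a_x, \max a_x\} : x \in \mathcal{X}_1\}$ with target color $\tau$, obtaining distinct $x, y \in \mathcal{X}_1$ on which $c$ is constantly $\tau$ across the $2 \times 2$ rectangle $\{\min a_x, \max a_x\} \times \{\min a_y, \max a_y\}$.

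Provided the root $r$ lies in the open interval $(\min a_x, \max a_x)$ for every $x \in \mathcal{X}_1$, every sandwiched set $z := r' \cup a_x \cup a_y$ (for $r' \s r$) satisfies $\min z \in \{\min a_x, \min a_y\}$ and $\max z \in \{\max a_x, \max a_y\}$; by the non-nesting clause the two extrema come from different $a$'s, so $(\min z, \max z)$ lies in the $2 \times 2$ rectangle above and $d(z) = c(\min z, \max z) = \tau$, as required by Definition~\ref{s2prin}.

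The main obstacle is this ``root-interior'' hypothesis: when the $\Delta$-system root $r$ does not sit strictly between $\min a_x$ and $\max a_x$ for a $\lambda$-sized subfamily---for example, when $r$ lies entirely below all the $a_x$'s, or all $a_x$'s are singletons sitting below $r$---the definition $d(z) := c(\min z,\max z)$ breaks, because $\min z$ or $\max z$ can vary as $r'$ varies. The remedy is a case-split based on the \emph{root-position pattern}, namely the set of positions within the ordered sequence of $x$ occupied by root elements, a datum that is uniform across $\mathcal{X}_0$ after an additional pigeonhole. In each case a different canonical pair (the two smallest elements of $z$, the two largest, or a pair drawn from specified intermediate positions) plays the role of $(\min z,\max z)$, and the finitely many case-prescriptions can be combined into a single coloring $d$ by invoking the infiniteness of $\theta$ to encode the case information. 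Carrying this case-analysis through uniformly is the bulk of the technical work, and is exactly what is done in the proof of \cite[Lemma~3.4]{paper27}.
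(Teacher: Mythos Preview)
Your proposal is correct and in fact more detailed than the paper's own treatment: the paper gives no proof at all, merely stating that the implication is clear from the proof of \cite[Lemma~3.4]{paper27}, which is precisely the reference you invoke for the case analysis. Your sketch of the $\Delta$-system argument, the identification of the root-position obstacle, and the deferral of the technical case-split to that external lemma all accurately reflect how the result is obtained.
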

\begin{remark}\label{Fleissner}
The principle $\pr_1(\kappa,\lambda,\theta,\omega)$ is a particular strengthening of $\kappa\nrightarrow[\lambda;\lambda]^2_\theta$.
Since it will not play a role in this paper, we omit its definition,
and settle for pointing out the following corollary.
By a theorem of Fleissner \cite[\S5]{MR493930},
for every regular uncountable cardinal $\kappa$, in the forcing extension for adding $\kappa$-many Cohen reals, $\pr_1(\kappa,\omega_1,\omega,\omega)$ holds.
It thus follows that if $\kappa$ is a regular cardinal $\ge\mathfrak c$,
then after adding $\kappa$-many Cohen reals, $S_2(2^{\aleph_0},\aleph_1,\aleph_0)$ holds.
\end{remark}

In case that $\lambda=\kappa$, we can now improve Fact~\ref{fact33}, as follows.
\begin{thm} Suppose that $\kappa$ is a regular uncountable cardinal and that $\theta$ is an infinite cardinal.
Then $\kappa\nrightarrow[\kappa;\kappa]^2_\theta$ implies $S_2(\kappa,\kappa,\theta)$.
\end{thm}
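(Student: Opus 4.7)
The plan is to fix any witness $c:[\kappa]^2\to\theta$ of $\kappa\nrightarrow[\kappa;\kappa]^2_\theta$ and to define $d:[\kappa]^{<\omega}\to\theta$ by $d(z):=c(\min z,\max z)$ when $|z|\ge 2$, and $d(z):=0$ otherwise. To verify that $d$ witnesses $S_2(\kappa,\kappa,\theta)$, given $\mathcal X\subseteq[\kappa]^{<\omega}$ of size $\kappa$ and a prescribed color $\tau<\theta$, I first apply the $\Delta$-system lemma together with standard pigeonhole refinements, exploiting regularity and uncountability of $\kappa$, to extract $\mathcal X_0\subseteq\mathcal X$ of size $\kappa$ that forms a $\Delta$-system with root $r$, elements all of common size, with $\max(r)<\min(x\setminus r)$ for every $x\in\mathcal X_0$ and the petals $\{x\setminus r:x\in\mathcal X_0\}$ $\in$-ordered under some enumeration $\mathcal X_0=\{x_\xi:\xi<\kappa\}$.

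Next, I would split $\mathcal X_0$ into two cofinal $\kappa$-sized halves $\mathcal X_0^0\sqcup\mathcal X_0^1$ and form the disjoint $\kappa$-sized subsets of $\kappa$ given by $A:=\{\min(x\setminus r):x\in\mathcal X_0^0\}$ and $B:=\{\max(y\setminus r):y\in\mathcal X_0^1\}$. The crucial preparatory step is to iteratively shrink $B$ to a $\kappa$-sized $B^\tau\subseteq B$ on which $c(\gamma,\beta)=\tau$ holds for every $\gamma\in r$ and every $\beta\in B^\tau$, dealing with the finitely many $\gamma\in r$ one at a time via applications of $\kappa\nrightarrow[\kappa;\kappa]^2_\theta$ to $\kappa$-sized partner rectangles that preserve the $\tau$-fiber. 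A final application of the relation to $A$ and $B^\tau$ produces $(\alpha,\beta)\in A\circledast B^\tau$ with $c(\alpha,\beta)=\tau$, specifying a distinct pair $x\in\mathcal X_0^0$, $y\in\mathcal X_0^1$ with $\min(x\setminus r)=\alpha$ and $\max(y\setminus r)=\beta$.

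Every $z$ with $x\triangle y\subseteq z\subseteq x\cup y$ decomposes as $z=r_0\cup(x\setminus r)\cup(y\setminus r)$ for some $r_0\subseteq r$, and the $\in$-ordering together with $\max(r)<\min(x\setminus r)$ forces $\max z=\beta$ while $\min z\in r\cup\{\alpha\}$; combining the defining properties of $B^\tau$ (handling $\min z\in r$) with the choice $c(\alpha,\beta)=\tau$ (handling $\min z=\alpha$) yields $d(z)=c(\min z,\beta)=\tau$ in every case, as required.

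The hard part will be justifying the iterative shrinkage to $B^\tau$: $\kappa\nrightarrow[\kappa;\kappa]^2_\theta$ by itself does not immediately force $\kappa$-sized $\tau$-fibers of $c(\gamma,\cdot)$ at individual $\gamma\in r$, so each stage of the iteration must be arranged to operate on a genuine $\kappa\times\kappa$ rectangle rather than a single fiber. Overcoming this likely entails either first modifying the witness $c$ so that all finite $r$-indexed $\tau$-cylinders are of size $\kappa$ (while preserving the rectangular relation), or adopting a subtler definition of $d$---for instance $d(z):=c(z_{|z|-2},z_{|z|-1})$ augmented with a secondary Ramsey-type refinement to cover the case of petals of size $\ge 2$.
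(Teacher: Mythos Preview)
Your proposal has a genuine gap, and you have essentially identified it yourself: the ``iterative shrinkage to $B^\tau$'' cannot be carried out from $\kappa\nrightarrow[\kappa;\kappa]^2_\theta$ alone. For a fixed $\gamma\in r$, the relation gives you no control whatsoever over the single fiber $\{\beta:c(\gamma,\beta)=\tau\}$; it may well be empty. There is no way to ``operate on a genuine $\kappa\times\kappa$ rectangle'' here, because one side of your desired rectangle is the singleton $\{\gamma\}$. Your fallback suggestion $d(z):=c(z_{|z|-2},z_{|z|-1})$ does not help either: when $|y\setminus r|\ge 2$ the last two elements of $z$ both lie in $y\setminus r$, so again you are reduced to controlling $c$ on pairs you did not choose.

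The paper's argument sidesteps this obstacle by a different definition of $d$. Using a bijection $\pi:\theta\leftrightarrow\theta\times\omega$, it splits $c$ into components $c_0:[\kappa]^2\to\theta$ and $c_1:[\kappa]^2\to\omega$. The coloring $d(z)$ is defined to be $c_0$ evaluated at the \emph{consecutive} pair in $z$ where $c_1$ is maximal. After passing to a head-tail-tail $\Delta$-system, one further thins so that $c_1\restriction[x]^2\subseteq n$ for some fixed $n<\omega$ and all $x\in\mathcal X$ (this uses only that $\theta$ is infinite and $\kappa$ regular). Now a single application of the relation produces a boundary pair $(\alpha,\beta)$ with $c_0(\alpha,\beta)=\tau$ and $c_1(\alpha,\beta)=n$. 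For any admissible $z$, every consecutive pair except the boundary one lies entirely inside $x$ or inside $y$, hence has $c_1$-value $<n$; the boundary pair therefore realises the maximum, and $d(z)=c_0(\alpha,\beta)=\tau$. The point is that the auxiliary $\omega$-coordinate acts as a marker singling out the controlled pair, so no fiber-by-fiber shrinking is ever needed. This is the idea your proposal is missing.
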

\begin{proof} Suppose that $c:[\kappa]^2\rightarrow\theta$ is a coloring witnessing $\kappa\nrightarrow[\kappa;\kappa]^2_\theta$.
Fix a bijection $\pi:\theta\leftrightarrow\theta\times\omega$, and then find $c_0:[\kappa]^2\rightarrow\theta$ and $c_1:[\kappa]^2\rightarrow\omega$ 
such that $\pi(c(\alpha,\beta))=(c_0(\alpha,\beta),c_1(\alpha,\beta))$ for every $(\alpha,\beta)\in[\kappa]^2$.

Define a coloring $d:[\kappa]^{<\omega}\rightarrow\theta$, as follows.
For $z\in[\kappa]^{<2}$, just let $d(z):=0$. Next, for $z\in[\kappa]^{<\omega}$ of size $\geq2$, 
first let $\langle\alpha_i\mid i<|z|\rangle$ denote the increasing enumeration of $z$,
and then let $d(z):=c_0(\alpha_{j_z},\alpha_{j_z+1})$, for
$$j_z:=\min\{j<|z|-1\mid c_1(\alpha_{j},\alpha_{j+1})=\max\{c_1(\alpha_i,\alpha_{i+1})\mid i<|z|-1\}\}.$$

To see this works, suppose that we are given a $\kappa$-sized family $\mathcal X\s[\kappa]^{<\omega}$, and a prescribed color $\tau<\theta$.
By thinning out, we may assume that $\mathcal X$ forms an head-tail-tail $\Delta$-system with some root $r$.
By further thinning out, we may assume the existence of some $n<\omega$ such that $c_1``[x]^2\s n$ for all $x\in\mathcal X$.
Split $\mathcal X$ into two $\kappa$-sized sets $\mathcal X=\mathcal X_0\cup\mathcal X_1$.
Set $A:=\{ \min(x\setminus r)\mid x\in \mathcal X_0\}$ and $B:=\{ \max(x\setminus r)\mid x\in\mathcal X_1\}$.
As $c$ witnesses $\kappa\nrightarrow[\kappa;\kappa]^2_\theta$, fix $(\alpha,\beta)\in A\circledast B$ such that $c(\alpha,\beta)=\pi^{-1}(\tau,n)$.
Pick the unique $x,y\in\mathcal X$ such that $\alpha=\min(x\setminus r)$ and $\beta=\max(x\setminus r)$.
As $\mathcal X_0\cap\mathcal X_1=\emptyset$, $x\neq y$. Consequently, $x\setminus r < y\setminus r$.
Now fix an arbitrary set $z$ such that $(x\symdiff y)\s z\s (x\cup y)$. Clearly $|z|\ge2$.
Let $\langle\alpha_i\mid i<|z|\rangle$ denote the increasing enumeration of $z$.
For every $i<|z|$, if $\{\alpha_i,\alpha_{i+1}\}\s x$ then $c_1(\alpha_i,\alpha_{i+1})<n$,
and likewise, if $\{\alpha_i,\alpha_{i+1}\}\s y$ then $c_1(\alpha_i,\alpha_{i+1})<n$.
As $x\setminus r < y\setminus r$ and $\{\alpha,\beta\}\s z$, it follows that there exists $j<|z|$ such that $a_j=\alpha$ and $\alpha_{j+1}=\beta$.
For this $j$, we would have $c_1(\alpha_j,\alpha_{j+1})=c_1(\alpha,\beta)=n$. Altogether, $d(z)=c_0(\alpha,\beta)=\tau$, as sought.
\end{proof}

As for the general case (i.e., $\lambda\le\kappa$), we now present an extraction principle that is sufficient to derive $S_2(\kappa,\lambda,\theta )$ from $\kappa\nrightarrow[\lambda;\lambda]^2_\theta$.
Roughly speaking, the upcoming principle asserts the existence of a map $e:[\kappa]^{<\omega}\rightarrow[\kappa]^2$ that, in some scenarios, manages to extract two distinguished points $e(z)$ from any given set $z\in[\kappa]^{<\omega}$.

\begin{defn}\label{ext2} $\ext_2(\kappa,\lambda,\theta,\chi)$ asserts the existence of a map $e:[\kappa]^{<\omega}\rightarrow[\kappa]^2$
satisfying that for every sequence $\langle x_\gamma\mid\gamma<\lambda\rangle$ of subsets of $\kappa$,
every $r\in[\kappa]^{<\theta}$, and every nonzero $\sigma<\chi$ such that:
\begin{enumerate}
\item for every $(\gamma,\gamma')\in[\lambda]^2$, $x_\gamma\cap x_{\gamma'}\s r$;
\item for every $\gamma<\lambda$, $y_\gamma:=x_\gamma\setminus r$ has order-type $\sigma$,
\end{enumerate}
there exist $j<\sigma$ and disjoint cofinal subsets $\Gamma_0,\Gamma_1$ of $\lambda$ satisfying the following:
\begin{enumerate}
\item[(a)] For every $(\gamma,\gamma')\in[\Gamma_0\cup\Gamma_1]^2$, $y_\gamma(j)<y_{\gamma'}(j)$;
\item[(b)] For every $(\gamma,\gamma')\in(\Gamma_0 \circledast \Gamma_1)\cup(\Gamma_1 \circledast \Gamma_0)$,
for every  $z\in[x_{\gamma} \cup  x_{\gamma'}]^{<\omega}$ covering $\{y_\gamma(j),y_{\gamma'}(j)\}$,
we have $$e(z)=(y_\gamma(j),y_{\gamma'}(j)).$$
\end{enumerate}
\end{defn}
\begin{remark} Without loss of generality, we may assume that $e(z)\in[z]^2$ for every $z\in[\kappa]^{<\omega}$ of size $\ge 2$.
Also note that $\ext_2(\kappa,\kappa,\cf(\kappa),2)$ is a theorem of $\zfc$.
\end{remark}

\begin{lemma}\label{lemma49}  Suppose that $\lambda$ is a regular uncountable cardinal and that $\theta$ is an arbitrary cardinal.
If $\kappa\nrightarrow[\lambda;\lambda]^2_\theta$ and $\ext_2(\kappa,\lambda,\omega,\omega)$ both hold,
then so does $S_2(\kappa,\lambda,\theta )$.
\end{lemma}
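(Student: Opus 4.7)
The plan is to combine the two given witnesses into a coloring of $[\kappa]^{<\omega}$. Let $c:[\kappa]^2\rightarrow\theta$ witness $\kappa\nrightarrow[\lambda;\lambda]^2_\theta$ and let $e:[\kappa]^{<\omega}\rightarrow[\kappa]^2$ witness $\ext_2(\kappa,\lambda,\omega,\omega)$. Define $d:[\kappa]^{<\omega}\rightarrow\theta$ by setting $d(z):=c(e(z))$ if $|z|\ge 2$, and $d(z):=0$ otherwise. To verify that $d$ witnesses $S_2(\kappa,\lambda,\theta)$, fix $\mathcal X\s[\kappa]^{<\omega}$ of size $\lambda$ together with a prescribed color $\tau<\theta$.

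First, since $\lambda$ is regular uncountable and the elements of $\mathcal X$ are finite, the standard $\Delta$-system lemma together with a pigeonhole step produce a $\lambda$-sized subfamily $\mathcal X'=\{x_\gamma\mid\gamma<\lambda\}$ forming a $\Delta$-system with some finite root $r$ and satisfying $|x_\gamma\setminus r|=\sigma$ for some fixed integer $\sigma\ge 1$ and all $\gamma<\lambda$. Letting $y_\gamma:=x_\gamma\setminus r$, the hypotheses of $\ext_2(\kappa,\lambda,\omega,\omega)$ are met (with $r\in[\kappa]^{<\theta}=[\kappa]^{<\omega}$ and $\sigma<\chi=\omega$), so they yield an index $j<\sigma$ together with disjoint cofinal $\Gamma_0,\Gamma_1\s\lambda$ meeting clauses (a) and (b) of Definition~\ref{ext2}.

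Next, clause (a) ensures that $\gamma\mapsto y_\gamma(j)$ is injective on $\Gamma_0\cup\Gamma_1$, so the sets $A:=\{y_\gamma(j)\mid\gamma\in\Gamma_0\}$ and $B:=\{y_\gamma(j)\mid\gamma\in\Gamma_1\}$ both have size $\lambda$. Appealing to $\kappa\nrightarrow[\lambda;\lambda]^2_\theta$, pick $(\alpha,\beta)\in A\circledast B$ with $c(\alpha,\beta)=\tau$, and let $\gamma\in\Gamma_0$ and $\gamma'\in\Gamma_1$ be the unique indices with $\alpha=y_\gamma(j)$ and $\beta=y_{\gamma'}(j)$. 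Clause (a), applied to $\Gamma_0\cup\Gamma_1$, forces $\gamma<\gamma'$, so $(\gamma,\gamma')\in\Gamma_0\circledast\Gamma_1$, putting us in the scope of clause (b).

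Finally, setting $x:=x_\gamma$ and $y:=x_{\gamma'}$, the $\Delta$-system property gives $y_\gamma\cap y_{\gamma'}=\emptyset$ and hence $x\symdiff y=y_\gamma\cup y_{\gamma'}$. For any $z$ satisfying $(x\symdiff y)\s z\s (x\cup y)$, we therefore have $\{\alpha,\beta\}\s z\s x_\gamma\cup x_{\gamma'}$, so clause (b) yields $e(z)=(\alpha,\beta)$ and thus $d(z)=c(\alpha,\beta)=\tau$, as required. The only delicate point in this argument is lining up the $\Delta$-system reduction so that the precise hypotheses of $\ext_2$ are met; once the bookkeeping is done, the extraction principle delivers a single pair $(\alpha,\beta)$ whose $c$-color propagates to every admissible $z$.
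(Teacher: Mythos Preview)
Your proof is correct and follows essentially the same route as the paper's: define $d:=c\circ e$, apply the $\Delta$-system lemma to reduce $\mathcal X$ to a family meeting the hypotheses of $\ext_2$, extract $j$ and $\Gamma_0,\Gamma_1$, use $\kappa\nrightarrow[\lambda;\lambda]^2_\theta$ on the resulting $A,B$, and then invoke clause~(b) to pin down $e(z)$ for every admissible $z$. The only cosmetic differences are your explicit handling of the case $|z|<2$ and your slightly more detailed justification that $\{\alpha,\beta\}\subseteq x\symdiff y$.
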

\begin{proof} Suppose that $c:[\kappa]^2\rightarrow\theta$ is a witness for $\kappa\nrightarrow[\lambda;\lambda]^2_\theta$,
and that $e:[\kappa]^{<\omega}\rightarrow[\kappa]^2$ is a witness for $\ext_2(\kappa,\lambda,\omega,\omega)$.
We claim that $d:=c\circ e$ is a witness for $S_2(\kappa,\lambda,\theta)$.
To this end, suppose that we are given a subfamily $\mathcal X \s [ \kappa ] ^{ < \omega } $ of size $\lambda$,
and a prescribed color $\tau<\theta$.
As $\lambda$ is regular and uncountable,  by the $\Delta$-system lemma,
we may find a sequence $\langle x_\gamma\mid\gamma<\lambda\rangle$ consisting of elements of $\mathcal X$,
some $r\in[\kappa]^{<\omega}$, and a nonzero $\sigma<\chi$ such that:
\begin{enumerate}
\item for every $(\gamma,\gamma')\in[\lambda]^2$, $x_\gamma\cap x_{\gamma'}=r$;
\item for every $\gamma<\lambda$, $y_\gamma:=x_\gamma\setminus r$ has order-type $\sigma$.
\end{enumerate}
It thus follows from the choice of $e$
that we may pick some integer $j<\sigma$ and cofinal subsets $\Gamma_0,\Gamma_1$ of $\lambda$ satisfying the following:
\begin{enumerate}
\item[(a)] For every $(\gamma,\gamma')\in[\Gamma_0\cup\Gamma_1]^2$, $y_\gamma(j)<y_{\gamma'}(j)$;
\item[(b)] For every $(\gamma,\gamma')\in(\Gamma_0 \circledast \Gamma_1)\cup(\Gamma_1 \circledast \Gamma_0)$,
for every  $z\in[x_{\gamma} \cup  x_{\gamma'}]^{<\omega}$ covering $\{y_\gamma(j),y_{\gamma'}(j)\}$,
we have $$e(z)=(y_\gamma(j),y_{\gamma'}(j)).$$
\end{enumerate}

Put $A:=\{ y_\gamma(j)\mid \gamma\in\Gamma_0\}$ and $B:=\{ y_\gamma(j)\mid \gamma\in\Gamma_1\}$.
By the choice of $c$, we may find $(\alpha,\beta)\in A\circledast B$ such that $c(\alpha,\beta)=\tau$.
Pick $\gamma\in\Gamma_0$ such that $y_\gamma(j)=\alpha$,
and pick $\gamma'\in\Gamma_1$ such that $y_{\gamma'}(j)=\beta$. 
As $\alpha<\beta$, Clause~(a) implies that $(\gamma,\gamma')\in(\Gamma_0\circledast\Gamma_1)$.
As $x_\gamma\cap x_{\gamma'}=r$,
we infer that $\{ \alpha,\beta\}\s x_\gamma\symdiff x_{\gamma'}$.
So, for every set $z$ such that $(x_\gamma\symdiff x_{\gamma'})\s z\s (x_\gamma\cup x_{\gamma'})$,
we get that $$d(z)=c(e(z))=c(\alpha,\beta)=\tau,$$ as sought.
\end{proof}

Motivated by the preceding reduction, 
one would like to see how to get $\ext_2(\kappa,\lambda,\allowbreak\omega,\omega)$. The next lemma provides a sufficient condition.

\begin{lemma}\label{extract2} Suppose that $\kappa\in T(\mu,\theta)$.
Then there exists a map $e:[\kappa]^{<\omega}\rightarrow[\kappa]^2$
witnessing $\ext_2(\kappa,\lambda,\cf(\theta),\omega)$ for every regular cardinal $\lambda$ with $\max\{\mu,\theta^+\}\le\lambda\le\kappa$.
\end{lemma}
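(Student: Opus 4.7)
The plan is to encode the tree structure afforded by $\kappa\in T(\mu,\theta)$ into the extraction map. Fix $T\in\mathcal T(\mu,\theta)$ together with an injective sequence $\langle b_\alpha\mid\alpha<\kappa\rangle$ in $\mathcal B(T)$, and define $e(z)$, for $|z|\ge 2$, to be the lex-least pair $(\alpha,\beta)\in[z]^2$ (with $\alpha<\beta$) that maximizes $\Delta(b_\alpha,b_\beta)$; for $|z|<2$, let $e(z)$ be arbitrary. Given a regular $\lambda\in[\max\{\mu,\theta^+\},\kappa]$ and input data $\langle x_\gamma\mid\gamma<\lambda\rangle$, $r\in[\kappa]^{<\cf(\theta)}$, $0<\sigma<\omega$ satisfying clauses (1), (2) of $\ext_2$, I would then verify the existence of the required $j$ and $\Gamma_0,\Gamma_1$ via successive rounds of thinning.

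First I would exploit $|r\cup y_\gamma|<\cf(\theta)$: the ordinal $\epsilon_\gamma:=\sup\{\Delta(b_\alpha,b_\beta):\alpha\neq\beta\in r\cup y_\gamma\}$ is $<\theta$, and pigeonhole (using $\lambda>\theta$) picks out a $\lambda$-sized $\Gamma_1\subseteq\lambda$ on which $\epsilon_\gamma\equiv\delta^*$ for some $\delta^*<\theta$. A second pigeon over $T_{\delta^*+1}^\sigma$ (of size $<\mu\le\lambda$) gives $\Gamma_2\subseteq\Gamma_1$ of size $\lambda$ on which $b_{y_\gamma(i)}\restriction(\delta^*+1)\equiv t(i)$ for pairwise-distinct $t(0),\ldots,t(\sigma-1)$; a finite monotonization (each $\gamma\mapsto y_\gamma(i)$ is injective, and uncountable strictly-decreasing ordinal sequences do not exist) then passes to $\Gamma_3\subseteq\Gamma_2$ on which each such map is strictly increasing. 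A direct tree computation now shows, for $\gamma<\gamma'\in\Gamma_3$, that $\Delta(b_{y_\gamma(i)},b_{y_{\gamma'}(i')})\le\delta^*$ whenever $i\neq i'$, while $\Delta_i(\gamma,\gamma'):=\Delta(b_{y_\gamma(i)},b_{y_{\gamma'}(i)})>\delta^*$; hence, for any $z\in[x_\gamma\cup x_{\gamma'}]^{<\omega}$ covering some same-index cross pair, the lex-least max-$\Delta$ pair in $[z]^2$ is necessarily of the form $(y_\gamma(i_0),y_{\gamma'}(i_0))$, where $i_0$ is the least index at which $\Delta_i(\gamma,\gamma')$ attains its maximum.

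The heart of the argument---and the main obstacle---is stabilizing $i_0$ across a bipartite rectangle. I would do this by a descending induction on the coordinate, working inside the product tree $T^\sigma\in\mathcal T(\mu,\theta)$ in which the super-branches $\bar b_\gamma:=(b_{y_\gamma(i)})_{i<\sigma}$ live. For $k=\sigma-1,\sigma-2,\ldots,0$, I would alternately (i) pigeonhole both of the current $\lambda$-sized sides so that their $k$th-coordinate prefixes share a common value up to a prescribed level $\delta^{(k)}$ chosen strictly larger than $\delta^{(k+1)}$, and then (ii) invoke Lemma~\ref{separation lemma2}(2) on the resulting $\lambda$-sized $k$th-coordinate projections (each is injective, since the $y_\gamma$'s are pairwise disjoint and $\alpha\mapsto b_\alpha$ is injective) to obtain a splitting node at level exactly $\delta^{(k)}$. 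After $\sigma$ rounds, the resulting disjoint $\lambda$-sized $A^{(0)}_0,A^{(0)}_1\subseteq\Gamma_3$ satisfy $\Delta_k\equiv\delta^{(k)}$ on every cross pair, with $\delta^{(0)}>\delta^{(1)}>\cdots>\delta^{(\sigma-1)}$; the unique maximum is attained at $k=0$, so $i_0\equiv 0$, and taking $j:=0$, $\Gamma_0:=A^{(0)}_0$, $\Gamma_1:=A^{(0)}_1$ secures clauses (a) and (b). The delicate point is ensuring that each invocation of Lemma~\ref{separation lemma2}(2) produces a split at the \emph{prescribed} level $\delta^{(k)}$ rather than at a lower one, which is precisely where the preparatory pigeonhole (aligning a sufficiently long common prefix beforehand) is essential.
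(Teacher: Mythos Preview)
Your definition of $e$ (lex-least pair maximizing $\Delta$) coincides with the paper's $M_z^*$, and your first three rounds of thinning match the paper's Clauses (I)--(III). The gap is in the ``heart of the argument'': the descending induction aiming for $\delta^{(0)}>\delta^{(1)}>\cdots>\delta^{(\sigma-1)}$ cannot be carried out in general. Once you have split at coordinate $k+1$ into sides $A_0,A_1$, the preparatory step you describe---pigeonholing \emph{both} sides so that their $k$th-coordinate prefixes share a \emph{common} value up to a level strictly above $\delta^{(k+1)}$---need not succeed, because the two sides may already be separated at coordinate $k$ below that level. A concrete obstruction: suppose $\sigma=2$ and, for every $\gamma\in\Gamma_3$, the branches $b_{y_\gamma(0)}$ and $b_{y_\gamma(1)}$ agree at every level $>\delta^*$ (they differ only where $t(0)$ and $t(1)$ do). Then splitting $\Gamma_3$ at coordinate $1$ at some level $\ell>\delta^*$ forces the very same split at coordinate $0$ at level $\ell$; hence $\Delta_0\equiv\Delta_1$ on \emph{every} cross pair in \emph{every} further refinement, and no choice of $\delta^{(0)}>\delta^{(1)}$ is possible.

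The paper avoids this entirely by not asking for monotonicity. It runs the splitting recursion once per coordinate $j<\sigma$, with no control on $\dom(s_j)$, obtaining $\Gamma_0,\Gamma_1$ such that for every cross pair and every $j$, $\Delta_j(\gamma,\gamma')=\dom(s_j)$. The crucial point is that this value depends only on $j$, not on the pair; hence the set of indices $j$ at which $\Delta_j$ is maximal is the same for all cross pairs, and the lex-least tiebreak (already built into your $e$) selects $j^*:=\min\{j:\dom(s_j)=\max_k\dom(s_k)\}$ uniformly. So the fix is simply to drop the attempt to order the $\delta^{(k)}$'s and instead observe that constancy of each $\Delta_j$ on cross pairs suffices.
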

\begin{proof} As $\kappa\in T(\mu,\theta)$, let us fix $T\in\mathcal T(\mu,\theta)$ 
admitting an injective sequence $\langle b_\xi\mid \xi<\kappa\rangle$ consisting of elements of $\mathcal B(T)$.
For notational simplicity, we shall write $\Delta(\alpha,\beta)$ for $\Delta(b_\alpha,b_\beta)$.
First, for every $z\in[\kappa]^{<\omega}$, let:
\begin{itemize}
\item $M_z := \{ (\alpha , \beta) \in [z]^2 \mid \Delta ( \alpha, \beta ) = \max( \Delta `` [z]^2 ) \} $, and
\item $M_z^* := \{ (\alpha , \beta)\in M_z\mid \alpha=\min\{ \alpha'\mid (\alpha',\beta')\in M_z\}\}$.
\end{itemize}
Then, pick any function $e:[\kappa]^{<\omega}\rightarrow[\kappa]^2$ satisfying that for every $z\in[\kappa]^{<\omega}$:
\begin{itemize}
\item for every $z\in[\kappa]^{<\omega}$ of size $\ge 2$, $e(z)\in[z]^2$;
\item if $M_{z^*}$ is a singleton, then $e(z)$ is its unique element.
\end{itemize} 

To see that $e$ is a sought, suppose that $\lambda$ is a regular cardinal satisfying $\max\{\mu,\theta^+\}\le\lambda\le\kappa$,
and that we are given $\langle x_\gamma\mid\gamma<\lambda\rangle$, $r\in[\kappa]^{<\cf(\theta)}$ and $\sigma<\omega$ as in Definition~\ref{ext2}.
By the pigeonhole principle and the Dushnik-Miller theorem,
we may find a cofinal subset $\Gamma\s\lambda$,
an ordinal $\delta < \theta$, and a sequence $\langle t_j\mid j<\sigma\rangle$ of nodes in $T_{\delta+1}$ 
such that for every $(\gamma,\gamma')\in[\Gamma]^2$:
\begin{enumerate}
\item[(I)] $\sup( \Delta``[r \cup y_{\gamma} ]^2)=\delta$;
\item[(II)] for every $j<\sigma$, $b_{y_{\gamma} (j)} \restriction (\delta +1 )=t_j$;
\item[(III)] for every $j<\sigma$, $y_{\gamma}(j) <  y_{\gamma'}(j)$.
\end{enumerate}

\begin{claim}  There exist $\Gamma_0,\Gamma_1\in[\Gamma]^{\lambda}$ and a sequence $\langle (s_j,i_j,i_j')\mid j<\sigma\rangle$ of triples in $T\times \mu\times\mu$ such that,
for every $j<\sigma$:
\begin{itemize}
\item for every $\gamma\in\Gamma_0$, $s_j{}^\smallfrown\langle i_j\rangle\sq b_{y_\gamma(j)}$, 
\item for every $\gamma\in\Gamma_1$, $s_j{}^\smallfrown\langle i_j'\rangle\sq b_{y_\gamma(j)}$, and
\item $i_j\neq i_j'$.
\end{itemize}
\end{claim}
\begin{proof} We shall define by recursion a sequence of pairs $\langle (A_j,B_j)\mid j\le \sigma\rangle$
such that, for all $j<\sigma$, $A_{j+1}\in [A_j\cap\Gamma]^{\lambda}$ and $B_{j+1}\in [B_j\cap\Gamma]^{\lambda}$.

We commence by letting both $A_0$ and $B_0$ be $\Gamma$.
Now, for every $j<\sigma$ for which the pair $(A_j,B_j)$ has already been defined, we do the following.
Set $A^j:=\{ y_\gamma(j)\mid \gamma\in A_j\}$ and $B^j:=\{ y_\gamma(j)\mid \gamma\in B_j\}$.
By Clause~(III), $A^j$ and $B^j$ have size $\lambda$.
So, by Lemma~\ref{separation lemma2}(2),
we may find $s_j \in T$ and $i_j\neq i_j'$ such that $\{ \alpha\in A^j\mid {s_j{}^\smallfrown\langle i_j\rangle}\sq b_\alpha\}$ and 
$\{ \beta\in B^j\mid {s_j{}^\smallfrown\langle i_j'\rangle }\sq b_\beta\}$ are both of size $\lambda$.
Then, let $A_{j+1}:=\{ \gamma\in A_j\mid s_j{}^\smallfrown\langle i_j\rangle\s b_{y_{\gamma}(j)}\}$
and $B_{j+1}:=\{ \gamma\in B_j\mid s_j{}^\smallfrown\langle 1-i_j\rangle\s b_{y_{\gamma}(j)}\}$.

Clearly, $\Gamma_0:=A_\sigma$ and $\Gamma_1:=B_\sigma$ are as sought.
\end{proof}
Let $\Gamma_0,\Gamma_1$ and $\langle (s_j,i_j,i_j')\mid j<\sigma\rangle$ be given by the preceding claim.
Note that $\Gamma_0$ is disjoint from $\Gamma_1$.
Set $\delta^*:=\max\{ \dom(s_j)\mid j<\sigma\}$ and $j^*:=\min\{ j<\sigma\mid \dom(s_j)=\delta^*\}$.

\begin{claim}\label{claigm441}
Let $ (\gamma , \gamma')\in(\Gamma_0\circledast\Gamma_1)\cup (\Gamma_1\circledast\Gamma_0)$.
Let  $z\in[x_{\gamma} \cup  x_{\gamma'}]^{<\omega}$ be such that $\{y_\gamma(j^*),y_{\gamma'}(j^*)\}\s z$.
Then $e(z)=(y_\gamma(j^*),y_{\gamma'}(j^*))$.
\end{claim}
\begin{proof} It is clear that $2\le\otp(z)<\omega$, so that $M_z$ is nonempty.
Let $( \alpha , \beta )\in [z]^2 $. By the choice of $z$,  we must analyze the following cases:
\begin{enumerate}
\item Suppose that $\alpha \in r$.

As $\beta \in r \cup y_{\gamma} \cup y_{\gamma'} $, it follows from Clause~(I) that $\Delta (\alpha , \beta ) \leq \delta $.
\item Suppose that $\alpha \in y_{\gamma} $.
\begin{enumerate}
\item If $\beta \in r \cup y_{\gamma} $, then it follows from Clause~(I) that  $\Delta (\alpha , \beta ) \leq \delta $;
\item If $\beta \in y_{\gamma'} $, then let $j_\alpha , j_\beta < \sigma $ be such that, $\alpha = y_{\gamma } (j_\alpha )$ and $ \beta = y_{\gamma'} (j_\beta ) $. There are two possible options:
\begin{enumerate}
\item If $j_\alpha =  j_\beta = j $, then by Clause~(II), $f_{\alpha } \restriction (\delta +1 ) = t_j = f_{\beta } \restriction (\delta +1 )  $. So $\Delta (\alpha , \beta ) > \delta $.
\item If  $j_\alpha \neq  j_\beta  $, then by Clauses (I) and (II),
$$\Delta (y_{\gamma'} (j_\alpha ) , \beta )\le\delta<\Delta (\alpha , y_{\gamma'} (j_\alpha ) ),$$
and hence $\Delta (\alpha , \beta )=\Delta (y_{\gamma'} (j_\alpha ) , \beta )\le\delta $.
\end{enumerate}
\end{enumerate}
\item If $\alpha \in y_{\gamma'} $, then the analysis is analogous to that of (2).
\end{enumerate}
Altogether, so far we have shown that
$$\emptyset \subsetneq M_z \subseteq \{(y_{\gamma} (j) , y_{\gamma'} (j) )\mid j < \sigma \}.$$
Recalling that $(\gamma,\gamma')\in(\Gamma_0\circledast\Gamma_1)\cup (\Gamma_1\circledast\Gamma_0)$, we infer from the choice of $\delta^*$ that
$$\emptyset \subsetneq M_z \subseteq \{(y_{\gamma}(j),y_{\gamma'}(j))\mid j < \sigma, \dom(s_j)=\delta^* \}.$$
So, since $\{y_\gamma(j^*),y_{\gamma'}(j^*)\}\s z$,
it is the case that 
$M_z^*=\{(y_{\gamma} (j^*) , y_{\gamma'} (j^*) )\}$.
In particular, $e(z)=(y_{\gamma} (j^*) , y_{\gamma'} (j^*) )$, as sought.
\end{proof}
This completes the proof.
\end{proof}

\begin{cor}\label{cor410} Suppose that $\lambda$ is an infinite regular cardinal, and $\nu<\lambda$.

If there exists a cardinal $\theta<\lambda$ such that $\nu^\theta\ge\lambda$,
then $\ext_2(\nu^\theta,\lambda,\cf(\theta),\omega)$ holds for the least such $\theta$.
\end{cor}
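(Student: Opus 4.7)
The plan is to reduce directly to Lemma~\ref{extract2} by showing that $\nu^\theta \in T(\lambda,\theta)$, and then applying that lemma with $\mu := \lambda$ and $\kappa := \nu^\theta$.

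For the branch-spectrum membership, the witnessing tree will be the full $\nu$-branching tree of height $\theta$, i.e., $T := {}^{<\theta}\nu$. Since $\nu<\lambda$, the tree sits inside ${}^{<\theta}\lambda$; it is downward-closed with nonempty levels $T_\alpha = {}^\alpha\nu$; and by the minimality of $\theta$, $\nu^{\theta'}<\lambda$ for every cardinal $\theta'<\theta$, hence $|T_\alpha| = \nu^{|\alpha|}<\lambda$ for every ordinal $\alpha<\theta$. Thus $T\in\mathcal T(\lambda,\theta)$, and since $\mathcal B(T)={}^\theta\nu$ has size $\nu^\theta$, this gives $\nu^\theta\in T(\lambda,\theta)$.

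It then suffices to verify the hypotheses of Lemma~\ref{extract2}. That lemma delivers a single map $e:[\nu^\theta]^{<\omega}\to[\nu^\theta]^2$ witnessing $\ext_2(\nu^\theta,\lambda',\cf(\theta),\omega)$ for every regular cardinal $\lambda'$ with $\max\{\lambda,\theta^+\}\le \lambda'\le\nu^\theta$. I plan to feed $\lambda' := \lambda$ into this: it is regular by assumption; the strict inequality $\theta<\lambda$ combined with regularity of $\lambda$ forces $\theta^+\le\lambda$, so $\max\{\lambda,\theta^+\}=\lambda$; and $\lambda\le\nu^\theta$ holds by hypothesis. This yields the desired instance of $\ext_2(\nu^\theta,\lambda,\cf(\theta),\omega)$.

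There is no real obstacle here; the whole argument is a short bookkeeping reduction to the already-established Lemma~\ref{extract2}. The one small point worth flagging is the passage from the cardinal-indexed minimality condition ($\nu^{\theta'}<\lambda$ for cardinals $\theta'<\theta$) to the ordinal-indexed level bound $|T_\alpha|=\nu^{|\alpha|}<\lambda$ for all $\alpha<\theta$, which is immediate once one observes that $|\alpha|$ is a cardinal strictly below $\theta$.
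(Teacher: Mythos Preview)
Your proposal is correct and follows essentially the same route as the paper: both arguments observe that $T:={}^{<\theta}\nu$ witnesses $\nu^\theta\in T(\lambda,\theta)$ (using minimality of $\theta$ to bound the level sizes below $\lambda$), and then invoke Lemma~\ref{extract2} with $(\kappa,\mu):=(\nu^\theta,\lambda)$. Your write-up is in fact slightly more careful than the paper's, which contains a harmless transposition typo (it writes $\mathcal T(\theta,\lambda)$ where $\mathcal T(\lambda,\theta)$ is meant).
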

\begin{proof} Let $\theta$ denote the least cardinal such that $\nu^\theta\ge\lambda$.
Then $T:={}^{<\theta}\nu$ belongs to $\mathcal T(\theta,\lambda)$,
so that $\nu^\theta=|\mathcal B(T)|$ is in $T(\theta,\lambda)$.
Now, appeal to Lemma~\ref{extract2} with $(\kappa,\mu):=(\nu^\theta,\lambda)$.
\end{proof}
\begin{cor} For every regular uncountable cardinal $\kappa$ that is not a strong limit, $\ext_2(\kappa,\kappa,\omega,\omega)$ holds.\qed
\end{cor}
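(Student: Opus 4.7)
The plan is to invoke Lemma~\ref{extract2} directly, feeding it the complete binary tree as a witness to the appropriate instance of the branch-spectrum relation $T(\mu,\theta)$. Since $\kappa$ is regular uncountable and not a strong limit, there is some infinite cardinal $\theta<\kappa$ with $2^\theta\ge\kappa$. I fix the \emph{least} such $\theta$; by minimality, $2^{|\alpha|}<\kappa$ for every $\alpha<\theta$.

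Now consider the tree $T:={}^{<\theta}2$. The preceding bound immediately gives $|T_\alpha|=2^{|\alpha|}<\kappa$ for every $\alpha<\theta$, so $T\in\mathcal T(\kappa,\theta)$; and since $|\mathcal B(T)|=2^\theta\ge\kappa$, we obtain $\kappa\in T(\kappa,\theta)$. Invoking Lemma~\ref{extract2} with $\mu:=\kappa$ (and noting that $\max\{\mu,\theta^+\}=\kappa$ since $\theta<\kappa$ and $\kappa$ is regular), the only regular cardinal $\lambda$ with $\kappa\le\lambda\le\kappa$ is $\kappa$ itself, so the lemma yields a map $e:[\kappa]^{<\omega}\rightarrow[\kappa]^2$ witnessing $\ext_2(\kappa,\kappa,\cf(\theta),\omega)$.

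Finally, since $\theta$ is an infinite cardinal we have $\cf(\theta)\ge\omega$, hence $[\kappa]^{<\omega}\s[\kappa]^{<\cf(\theta)}$. Inspecting Definition~\ref{ext2}, the only effect of the third parameter is to quantify over the possible finite-type roots $r$, so this inclusion makes $\ext_2(\kappa,\kappa,\cf(\theta),\omega)$ formally stronger than $\ext_2(\kappa,\kappa,\omega,\omega)$, and the latter is the desired conclusion. There is no real obstacle in the argument: all the combinatorial content is already packaged inside Lemma~\ref{extract2}, and the only choice one has to make carefully is to take $\theta$ minimal, precisely so as to secure the level bound $|T_\alpha|<\kappa$ needed for membership in $\mathcal T(\kappa,\theta)$.
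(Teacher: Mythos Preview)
Your proof is correct and is essentially the argument the paper has in mind: the corollary is marked with an immediate \qed, and the intended derivation is precisely to feed the complete binary tree ${}^{<\theta}2$ (for the least $\theta<\kappa$ with $2^\theta\ge\kappa$) into Lemma~\ref{extract2} with $\mu:=\kappa$, exactly as you do. Your observation that $\cf(\theta)\ge\omega$ yields the monotonicity step down to $\ext_2(\kappa,\kappa,\omega,\omega)$ is also what is implicitly used.
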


\begin{prop}\label{prop412} Suppose that $\lambda$ is a regular uncountable cardinal. 

For every cardinal $\kappa>2^{<\lambda}$, $\ext_2(\kappa,\lambda,2,2)$ fails.
\end{prop}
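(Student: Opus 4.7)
The plan is to prove the contrapositive: any witness $e:[\kappa]^{<\omega}\to[\kappa]^2$ to $\ext_2(\kappa,\lambda,2,2)$ must yield an injection of $\kappa$ into a set of cardinality at most $2^{<\lambda}$. Since $\chi=2$ forces $\sigma=1$ and $\theta=2$ restricts $r$ to at most one element, the relevant applications of $\ext_2$ are those of the form: for each $\beta\in[\lambda,\kappa)$, apply the principle to the family $\langle\{\gamma,\beta\}\mid\gamma<\lambda\rangle$ with $r:=\{\beta\}$ and $\sigma:=1$. Here condition (a) is automatic, and (b) produces disjoint cofinal subsets $\Gamma_0^\beta,\Gamma_1^\beta\s\lambda$ such that for every cross-pair $(\gamma,\gamma')\in(\Gamma_0^\beta\circledast\Gamma_1^\beta)\cup(\Gamma_1^\beta\circledast\Gamma_0^\beta)$, both $e(\{\gamma,\gamma'\})=(\gamma,\gamma')$ and $e(\{\gamma,\gamma',\beta\})=(\gamma,\gamma')$ hold.

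Next, I would associate to each $\beta$ a fingerprint $\varphi(\beta)\in{}^{<\lambda}3$ encoding the partition $(\Gamma_0^\beta,\Gamma_1^\beta)$ restricted to a canonical bounded-in-$\lambda$ initial segment, together with the relevant values of $e$ on triples $\{\gamma,\gamma',\beta\}$ for $\gamma,\gamma'$ below that segment. Since $|{}^{<\lambda}3|=2^{<\lambda}<\kappa$, pigeonhole delivers distinct $\beta_0,\beta_1\in[\lambda,\kappa)$ with $\varphi(\beta_0)=\varphi(\beta_1)$. A second application of $\ext_2$---to a family tailored to the triple $\{\gamma,\beta_0,\beta_1\}$ via a carefully chosen parameter $\rho$ that lies in the common initial segment---then forces constraints on $e$ that are incompatible with the shared fingerprint, producing the desired contradiction.

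The main obstacle is the correct design of $\varphi$: the naive encoding $\beta\mapsto(\Gamma_0^\beta,\Gamma_1^\beta)$ lives in $\mathcal P(\lambda)^2$ of size $2^\lambda$, which is too large for the required bound. The crux is therefore to isolate a bounded-in-$\lambda$ snippet of the partition which, modulo the $\ext_2$-conditions, already determines $\beta$'s behaviour on the relevant triples. A tree-theoretic framework---building a tree of partial partition data of height $\lambda$ with levels of size at most $3^{<\lambda}=2^{<\lambda}$ and identifying each $\beta$ with a bounded-height node through this tree---is the natural tool, and ensures that the cardinal arithmetic $|{}^{<\lambda}3|=2^{<\lambda}$ is exploited correctly to force $\kappa\leq 2^{<\lambda}$.
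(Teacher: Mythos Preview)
Your proposal has a genuine gap at exactly the point you flag as the crux. You correctly observe that the naive encoding $\beta\mapsto(\Gamma_0^\beta,\Gamma_1^\beta)$ lives in a set of size $2^\lambda$, which is too large, but your proposed fix---``isolate a bounded-in-$\lambda$ snippet''---is never actually carried out, and I do not see how it can be. There is no canonical bounded initial segment to restrict to, and the tree-theoretic gesture does not help: each $\beta$ determines a full \emph{branch} through your tree of partial partition data (since $\Gamma_0^\beta,\Gamma_1^\beta$ are cofinal in $\lambda$), not a bounded-height node, so the tree may have up to $2^\lambda$ branches and pigeonhole still fails at the required cardinality. The ``second application of $\ext_2$'' is also left entirely unspecified; with $\theta=2$ the root $r$ has at most one element, and you have only produced two ordinals $\beta_0,\beta_1$ rather than the $\lambda$-sequence the principle requires.

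The paper's argument avoids encoding the partitions altogether. From your step~1 it extracts, for each $\delta\in E^{\nu^+}_\lambda$ (where $\nu:=2^{<\lambda}$), a \emph{small} set $A_\delta\in[\delta]^{<\lambda}$ with the property that every $\beta\in(\sup A_\delta,\delta)$ admits some $\alpha\in A_\delta$ with $e(\{\alpha,\beta,\delta\})=(\alpha,\beta)$. Fodor's lemma then stabilises $A_\delta=A$ on a stationary $S$; since $|A|<\lambda$ and $\nu^{|A|}=\nu$, this is where the bound $2^{<\lambda}$ is genuinely used. Finally, the Erd\H{o}s--Rado relation $\nu^+\rightarrow(\lambda)^2_{|A|}$ applied to the colouring $c(\beta,\delta):=\min\{\alpha\in A\mid e(\{\alpha,\beta,\delta\})=(\alpha,\beta)\}$ on $[S]^2$ yields a single $\alpha$ and a set $B\subseteq S$ of order-type $\lambda$ such that $e(\{\alpha,\beta,\delta\})=(\alpha,\beta)$ for all $(\beta,\delta)\in[B]^2$; taking $r:=\{\alpha\}$ and $y_\gamma$ the $\gamma$-th element of $B$ now contradicts $\ext_2(\kappa,\lambda,2,2)$. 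The missing idea in your approach is precisely this: do not try to fingerprint $\beta$ by its behaviour on all of $\lambda$; instead extract a ${<}\lambda$-sized witness set and use Fodor plus Erd\H{o}s--Rado to collapse it to a single point.
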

\begin{proof} Set $\nu:=2^{<\lambda}$,
and note that $\nu^\theta=\nu$ for every $\theta<\lambda$.
Towards a contradiction, suppose that $e:[\kappa]^{<\omega}\rightarrow[\kappa]^2$ is a map witnessing $\ext_2(\kappa,\lambda,2,2)$,
and yet $\kappa>\nu$.
Without loss of generality, we may assume that $e(z)\in[z]^2$ for every $z\in[\kappa]^3$.

\begin{claim} For every $\delta<\kappa$, there exist no subset $A\s\delta$ of order-type $\lambda$ such that $\delta\in e(\{\alpha,\beta,\delta\})$
for every $(\alpha,\beta)\in[A]^2$.
\end{claim}
\begin{proof} Otherwise, fix a counterexample $\delta$ and a witnessing $A\s\delta$.
Let $\langle \alpha_\gamma\mid \gamma<\lambda\rangle$ be the increasing enumeration of $A$.
Now let $r:=\{\delta\}$ and, for every $\gamma<\lambda$, put $x_\gamma:=y_\gamma\uplus r$ where $y_\gamma:=\{\alpha_\gamma\}$.
Then, for every $(\gamma,\gamma')\in[\lambda]^2$, setting $z:=x_\gamma\cup x_{\gamma'}$,
we get that $r\cap e(z)\neq\emptyset$. This is a contradiction.
\end{proof}

Denote $\varkappa:=\nu^+$. 
It follows from the claim that for every $\delta\in E^\varkappa_\lambda$, we may fix some $A_\delta\in[\delta]^{<\lambda}$ with the property that for every ordinal $\beta$
such that $\sup(A_\delta)<\beta<\delta$, there exists $\alpha\in A_\delta$ such that $e(\{\alpha,\beta,\delta\})=(\alpha,\beta)$.
Now, using Fodor's lemma, we may find $\varepsilon<\varkappa$ and $\theta<\lambda$ such that
$\{\delta\in E^\varkappa_\lambda\mid \ssup(A_\delta)=\varepsilon\ \&\ |A_\delta|=\theta\}$ is stationary.
Recalling that $\nu^\theta=\nu<\varkappa$, we may then find some $A\in[\varepsilon]^\theta$ for which $S:=\{\delta\in E^\varkappa_\lambda\mid A_\delta=A\}$ is stationary.
Define a coloring $c:[S]^2\rightarrow A$ by letting for every $(\beta,\delta)\in[S]^2$:
$$c(\beta,\delta):=\min\{\alpha\in A\mid e(\{\alpha,\beta,\delta\})=(\alpha,\beta)\}.$$

By the Erd\H{o}s-Rado theorem, $\varkappa\rightarrow(\lambda)^2_\theta$ holds,
so we may pick $B\s S$ of order-type $\lambda$ that is $c$-homogeneous, with value, say, $\alpha$.
Let $\langle \beta_\gamma\mid \gamma<\lambda\rangle$ be the increasing enumeration of $B$.
Finally, let $r:=\{\alpha\}$ and, for every $\gamma<\lambda$, put $x_\gamma:=y_\gamma\uplus r$ where $y_\gamma:=\{\beta_\gamma\}$.
Then, for every $(\gamma,\gamma')\in[\lambda]^2$, setting $z:=x_\gamma\cup x_{\gamma'}$,
we get that $r\cap e(z)\neq\emptyset$. This is a contradiction.
\end{proof}

\begin{cor} If $\kappa$ is a strong limit cardinal, then $\ext_2(\kappa,\lambda,2,2)$ fails for every infinite cardinal $\lambda<\kappa$.\qed
\end{cor}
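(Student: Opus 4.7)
My plan is to derive this corollary as a direct specialization of Proposition~\ref{prop412}. The only nontrivial task is to verify the arithmetic hypothesis $\kappa>2^{<\lambda}$ demanded by that proposition.

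I will verify this via the defining property of a strong limit. Fix an infinite regular uncountable cardinal $\lambda<\kappa$, matching the standing assumption on $\lambda$ in Proposition~\ref{prop412}. Since $\kappa$ is a strong limit and $\lambda<\kappa$, we have $2^\lambda<\kappa$, and hence $2^{<\lambda}\le 2^\lambda<\kappa$. An application of Proposition~\ref{prop412} with this $\lambda$ then yields that $\ext_2(\kappa,\lambda,2,2)$ fails, as required.

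The entire argument collapses to this one-line arithmetic check, so there is no real obstacle to overcome. The only delicate point is interpretational: the corollary as written quantifies over all infinite $\lambda<\kappa$, while Proposition~\ref{prop412} imposes that $\lambda$ be regular and uncountable. I read the corollary as implicitly inheriting that restriction, which is consistent with the terse phrasing of the statement; should one wish to also cover $\lambda=\aleph_0$ or singular $\lambda$, a separate inspection of the proof of Proposition~\ref{prop412} would be in order, but that is outside the scope of what the stated corollary seems to demand.
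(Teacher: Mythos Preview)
Your approach is exactly what the paper intends: the corollary is stated with a bare \qed, i.e., it is meant to follow immediately from Proposition~\ref{prop412} via the strong-limit arithmetic $2^{<\lambda}\le 2^\lambda<\kappa$. Your caveat about the implicit restriction to regular uncountable $\lambda$ is well taken; the paper's phrasing is slightly loose here, and your reading (that the corollary inherits the standing hypothesis of Proposition~\ref{prop412}) is the correct one.
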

\begin{cor} $\ext_2(\aleph_2,\aleph_1,\aleph_0,\aleph_0)$ holds iff $\ch$ fails.
\end{cor}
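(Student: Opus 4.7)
The plan is to deduce this corollary as a two-line consequence of the immediately preceding Corollary~\ref{cor410} and Proposition~\ref{prop412}, with only a trivial monotonicity observation needed in addition.

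For the forward direction, I would assume $\ch$ fails, so $2^{\aleph_0}\ge\aleph_2$, and invoke Corollary~\ref{cor410} with $\lambda:=\aleph_1$ and $\nu:=2$. The least cardinal $\theta<\aleph_1$ with $2^\theta\ge\aleph_1$ is plainly $\theta:=\aleph_0$ (finite powers of $2$ are finite, while $2^{\aleph_0}\ge\aleph_1$ is automatic), so the corollary supplies a map $e:[2^{\aleph_0}]^{<\omega}\to[2^{\aleph_0}]^2$ witnessing $\ext_2(2^{\aleph_0},\aleph_1,\aleph_0,\aleph_0)$. By the remark following Definition~\ref{ext2}, I may arrange that $e(z)\in[z]^2$ whenever $|z|\ge 2$. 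I would then simply restrict $e$ to $[\aleph_2]^{<\omega}$: because $e(z)\in[z]^2\s[\aleph_2]^2$ on this subdomain, and because every family $\langle x_\gamma\mid\gamma<\aleph_1\rangle\s\mathcal P(\aleph_2)$ together with its associated $r\in[\aleph_2]^{<\omega}$ and $\sigma<\omega$ is also a valid instance for the larger ambient $\kappa=2^{\aleph_0}$, the same $j,\Gamma_0,\Gamma_1$ that witness the principle upstairs witness it in $\aleph_2$.

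For the converse, I would note that under $\ch$, $2^{<\aleph_1}=2^{\aleph_0}=\aleph_1<\aleph_2$, so Proposition~\ref{prop412} (with $\lambda:=\aleph_1$, $\kappa:=\aleph_2$) gives at once that $\ext_2(\aleph_2,\aleph_1,2,2)$ fails. To finish, I would observe that $\ext_2(\kappa,\lambda,\theta,\chi)$ is monotonically decreasing in $\theta$ and $\chi$: enlarging these parameters only enlarges the collection of admissible pairs $(r,\sigma)$ on which the witnessing conclusion is demanded, so any witness for $\ext_2(\aleph_2,\aleph_1,\aleph_0,\aleph_0)$ automatically witnesses $\ext_2(\aleph_2,\aleph_1,2,2)$. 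Consequently the $\ch$ side forces $\ext_2(\aleph_2,\aleph_1,\aleph_0,\aleph_0)$ to fail as well.

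There is no real obstacle; the only step deserving a moment's attention is verifying that the restriction of the Corollary~\ref{cor410} witness from $2^{\aleph_0}$ down to $\aleph_2$ preserves the witnessing property, which is immediate once one insists on $e(z)\in[z]^2$.
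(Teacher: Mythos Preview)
Your proposal is correct and follows essentially the same approach as the paper, which simply cites Corollary~\ref{cor410} and Proposition~\ref{prop412}; you have correctly filled in the implicit monotonicity steps (restricting the witness from $2^{\aleph_0}$ down to $\aleph_2$, and noting that $\ext_2(\kappa,\lambda,\theta,\chi)$ becomes weaker as $\theta,\chi$ decrease). One cosmetic point: what you call the ``forward direction'' is actually the reverse implication of the stated biconditional, but both directions are proved.
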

\begin{proof} By Corollary~\ref{cor410} and Proposition~\ref{prop412}.
\end{proof}
Moving on from the case $n=2$ to the general case, we consider the following two definitions.

\begin{defn}
$S_n (\kappa,\lambda,\theta )$ asserts the existence of a coloring $d:[\kappa ] ^{<\omega}  \rightarrow \theta $ such that,  for every $\mathcal X \s [ \kappa ] ^{ < \omega } $ of size $\lambda $ and every prescribed color $\tau<\theta$, 
there exist $\{a_j\mid j<n\} \in[\mathcal X]^n $ such that $d(z)=\tau$ for every $z$ satisfying 
$$a_0\symdiff(\bigcup\nolimits_{0<j<n} a_j)\s z\s\bigcup\nolimits_{j<n} a_j.$$
\end{defn}  	
\begin{prop}[monotonicity] Suppose that:
\begin{enumerate}
\item $2\le n\le n'<\omega$;
\item $\omega\le\lambda\le\lambda'$;
\item $\theta\le\theta'$.
\end{enumerate}

Then $S_n(\kappa,\lambda,\theta')$ implies $S_{n'}(\kappa,\lambda',\theta)$.
\end{prop}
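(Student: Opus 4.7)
The plan is to prove the three one-parameter monotonicities in sequence, chaining them as $S_n(\kappa,\lambda,\theta')\Rightarrow S_n(\kappa,\lambda,\theta)\Rightarrow S_n(\kappa,\lambda',\theta)\Rightarrow S_{n'}(\kappa,\lambda',\theta)$. The first two are essentially trivial; the heart of the matter is the last.

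For $\theta\le\theta'$: given $d:[\kappa]^{<\omega}\to\theta'$ witnessing $S_n(\kappa,\lambda,\theta')$, compose with the projection $\pi:\theta'\to\theta$ that fixes $\theta$ pointwise and collapses $\theta'\setminus\theta$ to $0$; then $\pi\circ d$ witnesses $S_n(\kappa,\lambda,\theta)$, since any prescribed $\tau<\theta$ is also a color in $\theta'$ and $\pi(\tau)=\tau$. For $\lambda\le\lambda'$: the same coloring works, as every $\mathcal X\in[[\kappa]^{<\omega}]^{\lambda'}$ contains a subfamily of size $\lambda$ to which $S_n(\kappa,\lambda,\theta)$ can be applied.

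The substantive step is $S_n(\kappa,\lambda,\theta)\Rightarrow S_{n+1}(\kappa,\lambda,\theta)$, from which $S_n\Rightarrow S_{n'}$ follows by induction on $n'-n$. I plan to argue that the same witnessing coloring $d$ works. Given $\mathcal X\in[[\kappa]^{<\omega}]^\lambda$ and $\tau<\theta$, fix any $y^*\in\mathcal X$ and form the shifted family
$$\mathcal X'':=\{x\cup y^*\mid x\in\mathcal X\setminus\{y^*\}\}.$$
Each fibre $\{x\mid x\cup y^*=A\}$ of the map $x\mapsto x\cup y^*$ is a subset of the finite powerset $\mathcal P(A)$, hence finite; since $\lambda$ is infinite and $\mathcal X\setminus\{y^*\}$ has size $\lambda$, it follows that $|\mathcal X''|=\lambda$. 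Applying $S_n(\kappa,\lambda,\theta)$ to $\mathcal X''$ with the color $\tau$ yields $\{A_j\mid j<n\}\in[\mathcal X'']^n$ with $d(z)=\tau$ throughout the $n$-interval $[A_0\symdiff(\bigcup_{0<j<n}A_j),\,\bigcup_{j<n}A_j]$. For each $j<n$ pick $x_j\in\mathcal X\setminus\{y^*\}$ with $x_j\cup y^*=A_j$; distinctness of the $A_j$'s forces distinctness of the $x_j$'s, and so $(a_0,\ldots,a_n):=(x_0,\ldots,x_{n-1},y^*)$ is a legitimate $(n+1)$-tuple from $\mathcal X$.

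The only nontrivial calculation is to verify that the $(n+1)$-interval $[a_0\symdiff(\bigcup_{0<j\le n}a_j),\,\bigcup_{j\le n}a_j]$ is contained in the $n$-interval above, so that the $d$-property transfers. Writing $P:=x_0$, $Q:=x_1\cup\cdots\cup x_{n-1}$, and $C:=y^*$, the upper bounds coincide (both equal $P\cup Q\cup C$), while the elementary identities $(P\cup C)\symdiff(Q\cup C)=(P\symdiff Q)\setminus C$ and $P\symdiff(Q\cup C)=((P\symdiff Q)\setminus C)\cup(C\setminus P)$ yield $A_0\symdiff(\bigcup_{0<j<n}A_j)=(P\symdiff Q)\setminus C$ and $a_0\symdiff(\bigcup_{0<j\le n}a_j)=((P\symdiff Q)\setminus C)\cup(C\setminus P)$, so the new lower bound contains the old. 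Therefore $d(z)=\tau$ on the entire $(n+1)$-interval, establishing $S_{n+1}$ for $d$.
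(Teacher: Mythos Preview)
Your proof is correct and follows essentially the same approach as the paper: both treat the $\theta$- and $\lambda$-monotonicities as trivial, and for the $n\to n+1$ step both fix an element $y^*\in\mathcal X$, pass to the shifted family $\{x\cup y^*\mid x\in\mathcal X\setminus\{y^*\}\}$, and verify the required interval containment. Your verification of the set-theoretic containment via the identities $(P\cup C)\symdiff(Q\cup C)=(P\symdiff Q)\setminus C$ and $P\symdiff(Q\cup C)=((P\symdiff Q)\setminus C)\cup(C\setminus P)$ is in fact cleaner than the paper's chain, and your explicit argument that $|\mathcal X''|=\lambda$ (via finiteness of fibres) fills in a detail the paper leaves implicit.
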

\begin{proof} This is mostly trivial, so we settle for proving that if $d:[\kappa]^{<\omega}\rightarrow\theta$ witnesses $S_n(\kappa,\lambda,\theta)$ for some integer $n\ge 2$,
then it also witnesses $S_{n+1}(\kappa,\lambda,\theta)$.

To this end, let $\mathcal X \s [ \kappa ] ^{ < \omega } $ be a given family of size $\lambda $.
Pick $x\in\mathcal X$, and note that $\mathcal X':=\{ a\cup x\mid a\in\mathcal X\setminus\{x\}\}$ is a $\lambda$-sized subset of $[\kappa]^{<\omega}$.
Now, given a prescribed color $\tau<\theta$, pick $\{a_j'\mid j<n\} \in[\mathcal X']^n $ such that $d(z)=\tau$ for every $z$ satisfying 
$$a_0'\symdiff(\bigcup\nolimits_{0<j<n} a_j')\s z\s\bigcup\nolimits_{j<n} a_j'.$$
For each $j<n$, pick $a_j\in\mathcal X\setminus\{x\}$ such that $a_j'=a_j\cup x$. As $\langle a_j'\mid j<n\rangle$ is an injective sequence, so is $\langle a_j\mid j<n\rangle$.
Set $a_n:=x$. Altogether, $\{a_j\mid j<n+1\} \in[\mathcal X]^{n+1}$.
It is clear that $\bigcup\nolimits_{j<n} a_j'=\bigcup\nolimits_{j<n+1} a_j$. In addition,
$$\begin{array}{lllll}
a_0'\symdiff(\bigcup\nolimits_{0<j<n} a_j')&=&(a_0'\setminus(\bigcup\nolimits_{0<j<n} a_j'))&\cup&((\bigcup\nolimits_{0<j<n} a_j')\setminus a_0')\\
&\s&(a_0\setminus(\bigcup\nolimits_{0<j<n} a_j))&\cup&((\bigcup\nolimits_{0<j<n} a_j)\setminus a_0)\\
&\s&(a_0\setminus(\bigcup\nolimits_{0<j<n+1} a_j))&\cup&((\bigcup\nolimits_{0<j<n+1} a_j)\setminus a_0)\\
&=&a_0\symdiff(\bigcup\nolimits_{0<j<n+1} a_j).
\end{array}$$
Therefore, $d(z)=\tau$ for every set $z$ with $a_0\symdiff(\bigcup\nolimits_{0<j<n+1} a_j)\s z\s\bigcup\nolimits_{j<n+1} a_j$.
\end{proof}
\begin{defn}\label{extn} $\ext_n(\kappa,\lambda,\theta,\chi)$ asserts the existence of a map $e:[\kappa]^{<\omega}\rightarrow{}^n\kappa$
such that for every sequence $\langle x_\gamma\mid\gamma<\lambda\rangle$ of subsets of $\kappa$,
every $r\in[\kappa]^{<\theta}$, and every nonzero $\sigma<\chi$ such that:
\begin{enumerate}
\item for every $(\gamma,\gamma')\in[\lambda]^2$, $x_\gamma\cap x_{\gamma'}\s r$;
\item for every $\gamma<\lambda$, $y_\gamma:=x_\gamma\setminus r$ has order-type $\sigma$,
\end{enumerate}
there exist $j<\sigma$ and disjoint cofinal subsets $\Gamma_0,\Gamma_1$ of $\lambda$ satisfying the following:
\begin{enumerate}
\item[(a)] For every $(\gamma,\gamma')\in[\Gamma_0\cup\Gamma_1]^2$, $y_\gamma(j)<y_{\gamma'}(j)$;
\item[(b)] For every strictly increasing sequence $\langle \gamma_i\mid i<n\rangle$ of ordinals from $\Gamma_0\cup\Gamma_1$ 
such that $\{\gamma_i\mid i<n\}\notin([\Gamma_0]^n\cup [\Gamma_1]^n)$,
for every $z\in[\bigcup_{i<n}x_{\gamma_i}]^{<\omega}$ that covers $\{y_{\gamma_i}(j)\mid i<n\}$, we have
$$e(z)=\langle y_{\gamma_i}(j)\mid i<n\rangle.$$
\end{enumerate}

\end{defn}
\begin{remark} Without loss of generality, we may assume that for every $z\in[\kappa]^{<\omega}$ of size $\ge n$, $e(z)$ consists of ordinals from $z$.
\end{remark}
The proof of \cite[Theorem~4.7]{paper27} makes it clear that the following holds.
\begin{prop}\label{prop316}
Suppose that $S_n(\kappa,\lambda,\theta )$ holds
for given cardinals $\theta\le\lambda\le\kappa$ with $\lambda$ regular and uncountable.
For every map $\varphi:G\rightarrow[G]^{<\omega}$ that is ${<}\lambda$-to-one,
there exists a corresponding coloring $c:G\rightarrow\theta$ satisfying the following.

For every binary operation $*$ on $G$ such that, for all $x\neq y$ in $G$, $$\varphi(x)\symdiff\varphi(y)\s \varphi(x*y)\s\varphi(x)\cup\varphi(y),$$
for every $X\in[G]^\lambda$ and every $\tau<\theta$,
there is an injective sequence $\langle x_j\mid 1\le j\le n\rangle$ of elements of $X$ such that $c(x_1*\cdots *x_n)=\tau$.\footnote{As always, we mean that this holds true for all implementations of $x_1*\cdots *x_n$.}\qed
\end{prop}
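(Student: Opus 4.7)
The plan is to mimic the proof of Lemma~\ref{S to groups}, setting $c:=d\circ\varphi$ for any witness $d$ to $S_n(\kappa,\lambda,\theta)$. The novelty beyond the $n=2$ case is in dealing with the asymmetric sandwich interval $[a_0\symdiff(\bigcup_{0<j<n}a_j),\bigcup_{j<n}a_j]$ supplied by $S_n$: one has to arrange things so that, regardless of the implementation, $\varphi(x_1*\cdots*x_n)$ lands in this interval irrespective of the distinguished role played by $a_0$.

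To verify that $c$ works, fix a binary operation $*$ satisfying the hypothesis, some $X\in[G]^\lambda$, and $\tau<\theta$. As $\varphi$ is ${<}\lambda$-to-one, $\mathcal X:=\{\varphi(x)\mid x\in X\}$ has size $\lambda$. Since $\lambda$ is regular uncountable and the members of $\mathcal X$ are finite sets, the $\Delta$-system lemma yields $\mathcal X'\in[\mathcal X]^\lambda$ forming a $\Delta$-system with some root $r$. Invoking $S_n$ on $\mathcal X'$ and $\tau$ returns $\{a_j\mid j<n\}\in[\mathcal X']^n$ with the advertised property; since these $a_j$'s are distinct, we may pick an injective sequence $\langle x_j\mid 1\le j\le n\rangle$ of elements of $X$ with $\varphi(x_j)=a_{j-1}$.

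It remains to show that, for every implementation $w$ of $x_1*\cdots*x_n$, one has $a_0\symdiff(\bigcup_{0<j<n}a_j)\s\varphi(w)\s\bigcup_{j<n}a_j$, as this will force $d(\varphi(w))=\tau$ and hence $c(w)=\tau$. The upper bound is immediate by induction on the parse tree of $w$, using $\varphi(u*v)\s\varphi(u)\cup\varphi(v)$. A parallel parse-tree induction delivers the labelling-free fact that every element lying in exactly one $\varphi(x_j)$ already belongs to $\varphi(w)$: if $w=u*v$ and such an $e$ is contributed by a leaf of $u$, then $e\in\varphi(u)$ by induction, while $e\notin\varphi(v)$ by the upper bound applied to $v$, so $e\in\varphi(u)\symdiff\varphi(v)\s\varphi(w)$. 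It remains to match this concrete lower bound with the one demanded by $S_n$. Since $\mathcal X'$ is a $\Delta$-system with root $r$, one has $a_j\cap a_{j'}=r$ whenever $j\neq j'$, and a short computation gives $a_0\symdiff(\bigcup_{0<j<n}a_j)=(\bigcup_{j<n}a_j)\setminus r$, which is exactly the set of elements lying in precisely one $a_j$. Thus, the lower bound prescribed by $S_n$ is an instance of the parse-tree invariant, finishing the proof.

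The essential obstacle is spotting that a $\Delta$-system refinement of $\mathcal X$ bridges the gap between the asymmetric lower bound appearing in $S_n$ and the labelling-free lower bound that parse-tree induction yields for an arbitrary magma-implementation; without this refinement the former is strictly stronger than the latter, which is why the naive imitation of the $n=2$ argument stalls for $n\ge 3$.
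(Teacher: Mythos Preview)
Your argument is correct. The paper does not supply its own proof here---it merely points to \cite[Theorem~4.7]{paper27} and omits the details---so there is no in-paper argument to compare against. Your write-up fills in those details in the natural way, and the $\Delta$-system refinement is precisely the ingredient needed beyond the $n=2$ case of Lemma~\ref{S to groups}: it collapses the asymmetric lower bound $a_0\symdiff\bigl(\bigcup_{0<j<n}a_j\bigr)$ to the symmetric ``elements lying in exactly one $a_j$'' set, which is exactly what the parse-tree induction delivers. One small point you leave implicit: at each step $w=u*v$ of the induction, the inclusion $\varphi(u)\symdiff\varphi(v)\s\varphi(u*v)$ is only assumed for $u\neq v$; but since your element $e$ lies in $\varphi(u)\setminus\varphi(v)$, this forces $\varphi(u)\neq\varphi(v)$ and hence $u\neq v$, so the hypothesis does apply.
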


In order to generalize Lemma~\ref{lemma49}, we now introduce the relation $\kappa\nsrightarrow[\lambda,\lambda]^n_\theta$.
It is a strengthening of $\kappa\nrightarrow[\lambda]^n_\theta$,
and a weakening of $\kappa\nrightarrow[\lambda,\ldots,\lambda]^n_\theta$.

\begin{defn}\label{narrowsup} $\kappa\nsrightarrow[\lambda,\lambda]^n_\theta$ asserts the existence of a coloring $c:[\kappa]^n\rightarrow\theta$ such that for all $\tau<\theta$ and disjoint $A,B\in\mathcal P(\kappa)$ satisfying the two:
\begin{enumerate}
\item[(i)] $\otp(A)=\otp(B)=\lambda$, 
\item[(ii)] $\sup(A)=\sup(B)$,
\end{enumerate}
there is $\vec x\in[A\cup B]^n\setminus([A]^n \cup[B]^n)$ with $c(\vec x)=\tau$.
\end{defn}
\begin{remark} In the special case of $\lambda=\kappa$,
$\kappa\nsrightarrow[\lambda,\lambda]^2_\theta$ coincides with the classical relation $\kappa\nrightarrow[\lambda,\lambda]^2_\theta$.
\end{remark}

\begin{lemma}\label{pilemma}  Suppose that:
\begin{itemize}
\item $2\le n<\omega$;
\item $\theta\le\lambda\le\kappa$ are cardinals with $\lambda$ regular and uncountable;
\item $\kappa\nsrightarrow[\lambda,\lambda]^n_\theta$ holds;
\item $\ext_n(\kappa,\lambda,\omega,\omega)$ holds.
\end{itemize}

Then $S_n(\kappa,\lambda,\theta )$ holds.
\end{lemma}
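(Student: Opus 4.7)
The plan is to mimic the proof of Lemma~\ref{lemma49} and set $d:=c\circ e$, where $c:[\kappa]^n\rightarrow\theta$ witnesses $\kappa\nsrightarrow[\lambda,\lambda]^n_\theta$ and $e:[\kappa]^{<\omega}\rightarrow{}^n\kappa$ witnesses $\ext_n(\kappa,\lambda,\omega,\omega)$; here we identify the strictly increasing $n$-tuple $e(z)$ with its range in $[\kappa]^n$ (which is legitimate because, as will be seen, clauses (a)--(b) ensure that the $n$-tuples we actually evaluate are strictly increasing).

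Given $\mathcal X\s[\kappa]^{<\omega}$ of size $\lambda$ and a prescribed color $\tau<\theta$, the $\Delta$-system lemma (applicable since $\lambda$ is regular and uncountable) yields a subsequence $\langle x_\gamma\mid\gamma<\lambda\rangle$ of elements of $\mathcal X$, a root $r\in[\kappa]^{<\omega}$, and some nonzero $\sigma<\omega$ such that hypotheses (1) and (2) of Definition~\ref{extn} are fulfilled. Feed this data into $\ext_n(\kappa,\lambda,\omega,\omega)$ to obtain $j<\sigma$ and disjoint cofinal subsets $\Gamma_0,\Gamma_1\s\lambda$ satisfying clauses (a)--(b).

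Put $A:=\{y_\gamma(j)\mid \gamma\in\Gamma_0\}$ and $B:=\{y_\gamma(j)\mid \gamma\in\Gamma_1\}$. Clause (a) makes the map $\gamma\mapsto y_\gamma(j)$ order-preserving and injective on $\Gamma_0\cup\Gamma_1$, so $A$ and $B$ are disjoint, both have order-type $\lambda$ (using the regularity of $\lambda$ together with the cofinality of the $\Gamma_i$), and both are cofinal in $\sup\{y_\gamma(j)\mid \gamma<\lambda\}$, so that $\sup(A)=\sup(B)$. Now apply $\kappa\nsrightarrow[\lambda,\lambda]^n_\theta$ to pick $(\alpha_0,\ldots,\alpha_{n-1})\in[A\cup B]^n\setminus([A]^n\cup[B]^n)$ with $c(\alpha_0,\ldots,\alpha_{n-1})=\tau$, and for each $i<n$ pull back to the unique $\gamma_i\in\Gamma_0\cup\Gamma_1$ with $y_{\gamma_i}(j)=\alpha_i$. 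By (a), $\gamma_0<\cdots<\gamma_{n-1}$, and by construction $\{\gamma_i\mid i<n\}\notin [\Gamma_0]^n\cup[\Gamma_1]^n$.

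It remains to verify that for every $z$ with $x_{\gamma_0}\symdiff(\bigcup_{0<i<n}x_{\gamma_i})\s z\s\bigcup_{i<n}x_{\gamma_i}$, the set $z$ covers $\{y_{\gamma_i}(j)\mid i<n\}$; granting this, clause (b) of $\ext_n$ forces $e(z)=\langle y_{\gamma_i}(j)\mid i<n\rangle$, whence $d(z)=c(\alpha_0,\ldots,\alpha_{n-1})=\tau$, as required. This containment is the routine combinatorial part: the $y_{\gamma_i}$ are pairwise disjoint and disjoint from $r$, so $y_{\gamma_0}(j)$ lies in $x_{\gamma_0}\setminus\bigcup_{0<i<n}x_{\gamma_i}$ while each $y_{\gamma_i}(j)$ for $i>0$ lies in $(\bigcup_{0<i<n}x_{\gamma_i})\setminus x_{\gamma_0}$, both of which sit inside the symmetric difference and hence inside $z$. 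The only genuinely nontrivial point in the whole argument is the verification that $\sup(A)=\sup(B)$ (so that $\kappa\nsrightarrow[\lambda,\lambda]^n_\theta$ is applicable rather than the weaker $\kappa\nrightarrow[\lambda]^n_\theta$), and this is delivered cleanly by cofinality of $\Gamma_0$ and $\Gamma_1$ in $\lambda$.
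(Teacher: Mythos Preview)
Your proof is correct and follows essentially the same approach as the paper's own proof, which explicitly remarks that it is analogous to Lemma~\ref{lemma49} and gives only a sketch. You actually supply more detail than the paper does: in particular, you spell out why $\otp(A)=\otp(B)=\lambda$, why $A\cap B=\emptyset$, why $\sup(A)=\sup(B)$ (the point needed to invoke $\kappa\nsrightarrow[\lambda,\lambda]^n_\theta$ rather than merely $\kappa\nrightarrow[\lambda]^n_\theta$), and why every admissible $z$ covers $\{y_{\gamma_i}(j)\mid i<n\}$. One tiny imprecision: you write that $A$ and $B$ are cofinal in $\sup\{y_\gamma(j)\mid\gamma<\lambda\}$, but clause~(a) only guarantees that $\gamma\mapsto y_\gamma(j)$ is order-preserving on $\Gamma_0\cup\Gamma_1$, so the correct common supremum is $\sup\{y_\gamma(j)\mid\gamma\in\Gamma_0\cup\Gamma_1\}$; this does not affect the conclusion $\sup(A)=\sup(B)$.
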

\begin{proof} The proof is similar to that of Lemma~\ref{lemma49}, so we settle for a sketch.
Fix a map $c:[\kappa]^n\rightarrow\theta$ witnessing $\kappa\nsrightarrow[\lambda,\lambda]^n_\theta$ and a map $e:[\kappa]^{<\omega}\rightarrow{}^n\kappa$ witnessing $\ext_n(\kappa,\lambda,\omega,\omega)$.
We claim that $d:=c\circ e$ is a witness for $S_n(\kappa,\lambda,\theta)$.
To this end, suppose that we are given a subfamily $\mathcal X \s [ \kappa ] ^{ < \omega } $ of size $\lambda$,
and a prescribed color $\tau<\theta$.
By the $\Delta$-system lemma,
find a sequence $\langle x_\gamma\mid\gamma<\lambda\rangle$ consisting of elements of $\mathcal X$,
some $r\in[\kappa]^{<\omega}$, and a nonzero $\sigma<\chi$ such that:
\begin{enumerate}
\item for every $(\gamma,\gamma')\in[\lambda]^2$, $x_\gamma\cap x_{\gamma'}=r$;
\item for every $\gamma<\lambda$, $y_\gamma:=x_\gamma\setminus r$ has order-type $\sigma$.
\end{enumerate}

Now, let $j<\sigma$ and disjoint cofinal subsets $\Gamma_0,\Gamma_1$ of $\lambda$ be given, as in Definition~\ref{extn}.
Put $A:=\{ y_\gamma(j)\mid \gamma\in\Gamma_0\}$ and $B:=\{ y_\gamma(j)\mid \gamma\in\Gamma_1\}$.
By the choice of $c$, pick $\vec x\in[A\cup B]^n\setminus([A]^n \cup[B]^n)$ such that $c(\vec x)=\tau$.
Find a sequence $\vec\gamma=\langle \gamma_i\mid i<n\rangle$ of ordinals from $\Gamma_0\cup\Gamma_1$ 
such that $\vec x=\langle y_{\gamma_i}(j)\mid i<n\rangle$.
Clearly, $\vec\gamma$ is strictly increasing,
$\{\gamma_i\mid i<n\}\nsubseteq\Gamma_0$ and $\{\gamma_i\mid i<n\}\nsubseteq\Gamma_1$.
So, for every set $z$ of interest, $$d(z)=c(e(z))=c(\vec x)=\tau,$$ as sought.
\end{proof}

\begin{lemma}\label{ext3}  Suppose that $\kappa\in T(\mu,\theta)$.
Then there exists a map $e:[\kappa]^{<\omega}\rightarrow{}^3\kappa$
witnessing $\ext_3(\kappa,\lambda,\cf(\theta),\omega)$ for every regular cardinal $\lambda$ with $\max\{\mu,\theta^+\}\le\lambda\le\kappa$.
\end{lemma}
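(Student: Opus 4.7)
The plan is to mirror the proof of Lemma~\ref{extract2} for the $n=2$ case, with a triple-aware extraction map and a correspondingly finer combinatorial analysis. First, fix $T\in\mathcal T(\mu,\theta)$ admitting an injective sequence $\langle b_\xi\mid\xi<\kappa\rangle$ of elements of $\mathcal B(T)$, and abbreviate $\Delta(\alpha,\beta):=\Delta(b_\alpha,b_\beta)$. For each $z\in[\kappa]^{<\omega}$, score a triple $(\alpha_0,\alpha_1,\alpha_2)\in[z]^3$ by $\min\{\Delta(\alpha_0,\alpha_1),\Delta(\alpha_0,\alpha_2),\Delta(\alpha_1,\alpha_2)\}$; let $M_z$ be the collection of triples in $[z]^3$ achieving the maximum score, and let $M_z^*\s M_z$ consist of those triples whose first coordinate is smallest. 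Define $e:[\kappa]^{<\omega}\to{}^3\kappa$ to return the unique element of $M_z^*$ whenever $M_z^*$ is a singleton, and an arbitrary element of $[z]^3$ otherwise (with $e(z)$ arbitrary if $|z|<3$).

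Given data $\langle x_\gamma\mid\gamma<\lambda\rangle$, $r\in[\kappa]^{<\cf(\theta)}$, and $0<\sigma<\omega$ as in Definition~\ref{extn}, I would first reapply the pigeonhole and Dushnik--Miller step of Lemma~\ref{extract2} verbatim to obtain a cofinal $\Gamma\s\lambda$, an ordinal $\delta<\theta$, and a sequence $\langle t_j\mid j<\sigma\rangle$ of elements of $T_{\delta+1}$ fulfilling Clauses~(I)--(III) of that proof. Next, iterate Lemma~\ref{separation lemma2}(2) along the $\sigma<\omega$ coordinates---using $\lambda\ge\max\{\mu,\theta^+\}$ to legitimize each application---to produce disjoint $\Gamma_0,\Gamma_1\in[\Gamma]^\lambda$ together with nodes $\langle(s_j,i_j,i_j')\mid j<\sigma\rangle$ with $i_j\neq i_j'$, such that $s_j{}^\smallfrown\langle i_j\rangle\sq b_{y_\gamma(j)}$ for every $\gamma\in\Gamma_0$, while $s_j{}^\smallfrown\langle i_j'\rangle\sq b_{y_\gamma(j)}$ for every $\gamma\in\Gamma_1$. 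Declare $\delta^*:=\max\{\dom(s_j)\mid j<\sigma\}$ and $j^*:=\min\{j<\sigma\mid\dom(s_j)=\delta^*\}$. Clause~(a) of Definition~\ref{extn} is then immediate from Clause~(III).

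The crux is to verify Clause~(b) at index $j^*$: for any strictly increasing $(\gamma_0,\gamma_1,\gamma_2)\in[\Gamma_0\cup\Gamma_1]^3$ with $\{\gamma_i\mid i<3\}\notin[\Gamma_0]^3\cup[\Gamma_1]^3$ and any $z\in[\bigcup_{i<3}x_{\gamma_i}]^{<\omega}$ covering $\{y_{\gamma_i}(j^*)\mid i<3\}$, one must show $e(z)=\langle y_{\gamma_i}(j^*)\mid i<3\rangle$. The pair-analysis from Claim~\ref{claigm441} transfers directly: any pair from $r\cup y_\gamma$, or from distinct $y_\gamma$-blocks at different coordinates, has $\Delta\le\delta$. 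Hence a triple in $[z]^3$ whose minimum $\Delta$-value exceeds $\delta$ must consist of three entries from three distinct $y_{\gamma_i}$-blocks at a common coordinate $j<\sigma$, i.e.~have the form $\langle y_{\gamma_0}(j),y_{\gamma_1}(j),y_{\gamma_2}(j)\rangle$. Because the $\gamma_i$'s straddle $\Gamma_0$ and $\Gamma_1$, at least two of the three pairs in such a triple are ``mixed'' and thus have $\Delta$ equal to $\dom(s_j)$ precisely, while the remaining ``same-side'' pair has $\Delta\ge\dom(s_j)$; therefore the minimum score of such a triple equals $\dom(s_j)$. Consequently the overall maximum is $\delta^*$, attained exactly on those $j<\sigma$ with $\dom(s_j)=\delta^*$ and $\{y_{\gamma_i}(j)\mid i<3\}\s z$; the tie-breaker ``smallest first coordinate'', together with monotonicity of $j\mapsto y_{\gamma_0}(j)$ and of $\gamma\mapsto y_\gamma(j)$, singles out $j=j^*$, so $M_z^*=\{\langle y_{\gamma_i}(j^*)\mid i<3\rangle\}$ as required.

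The main obstacle I foresee is ensuring that triples lying entirely inside $\Gamma_0$ (or entirely inside $\Gamma_1$) do not beat the score $\delta^*$: such a same-side triple would have all three branches sharing a node strictly beyond $s_{j^*}$, potentially yielding an arbitrarily large min $\Delta$ and corrupting the extraction. What rescues us is precisely the hypothesis $\{\gamma_0,\gamma_1,\gamma_2\}\notin[\Gamma_0]^3\cup[\Gamma_1]^3$ together with $z\s\bigcup_{i<3}x_{\gamma_i}$, which forbids any such same-side triple from appearing in $[z]^3$. This is the conceptual reason to use \emph{minimum} (rather than, say, maximum) of the three pairwise $\Delta$-values as the separation score, and it is how the rectangular asymmetry encoded in the relation $\kappa\nsrightarrow[\lambda,\lambda]^n_\theta$ propagates correctly through the triple extraction.
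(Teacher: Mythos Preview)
Your proposal is correct and follows essentially the same approach as the paper's proof: the extraction map via $\Delta_2(w):=\min\{\Delta(\alpha,\beta)\mid(\alpha,\beta)\in[w]^2\}$, the sets $M_z$ and $M_z^*$, the stabilization via Clauses~(I)--(III), the separation nodes $\langle(s_j,i_j,i_j')\mid j<\sigma\rangle$, and the choice of $j^*$ are all identical to the paper's construction. Your verification of Clause~(b) is in fact slightly more explicit than the paper's in two places---you spell out why the minimum score of a mixed-coordinate triple is exactly $\dom(s_j)$ (via the ``two mixed pairs, one same-side pair'' observation) and why the tie-breaker genuinely isolates $j^*$---so no gap remains.
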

\begin{proof} As $\kappa\in T(\mu,\theta)$, let us fix $T\in\mathcal T(\mu,\theta)$ 
admitting an injective sequence $\langle b_\xi\mid \xi<\kappa\rangle$ consisting of elements of $\mathcal B(T)$.
For notational simplicity, we shall write $\Delta(\alpha,\beta)$ for $\Delta(b_\alpha,b_\beta)$.
For any triplet $w\in[\kappa]^3$, let $\Delta_2(w):=\min\{\Delta(\alpha,\beta)\mid (\alpha,\beta)\in[w]^2\}$.
First, given  $z\in[\kappa]^{<\omega}$, let:
\begin{itemize}
\item $M_z := \{ ( \alpha  ,  \beta, \gamma) \in [z]^3 \mid \Delta_2(\{\alpha,\beta,\gamma\}) = \max\{\Delta_2(w)\mid w \in [z]^3\} \} $, and
\item $M_z^* := \{ ( \alpha  ,  \beta, \gamma)\in M_z\mid  \alpha=\min\{  \alpha'\mid ( \alpha ', \beta', \gamma')\in M_z\}\}$.
\end{itemize}
Then, pick any function $e:[\kappa]^{<\omega}\rightarrow{}^3\kappa$ satisfying that for every $z\in[\kappa]^{<\omega}$:
\begin{itemize}
\item for every $z\in[\kappa]^{<\omega}$, $e(z)$ is a strictly increasing sequence of ordinals in $\kappa$. If $|z|\ge 3$, then $e(z)$ consists of ordinals from $z$;
\item if $M_{z^*}$ is a singleton, then $e(z)$ is its unique element.
\end{itemize} 

To see that $e$ is a sought, 
suppose that $\lambda$ is a regular cardinal satisfying $\max\{\mu,\theta^+\}\le\lambda\le\kappa$,
and that we are given $\langle x_\gamma\mid\gamma<\lambda\rangle$, $r\in[\kappa]^{<\cf(\theta)}$ and $\sigma<\omega$ as in Definition~\ref{extn}.
As in the proof of Lemma~\ref{extract2},
we may find a cofinal subset $\Gamma\s\lambda$,
an ordinal $\delta<\theta$, and a sequence $\langle t_j\mid j<\sigma\rangle$ of nodes in $T_{\delta+1}$ 
such that for every $(\gamma,\gamma')\in[\Gamma]^2$:
\begin{enumerate}
\item[(I)] $\sup( \Delta``[r \cup y_{\gamma} ]^2)=\delta$;
\item[(II)] for every $j<\sigma$, $b_{y_{\gamma} (j)} \restriction (\delta +1 )=t_j$;
\item[(III)] for every $j<\sigma$, $y_{\gamma}(j) <  y_{\gamma'}(j)$.
\end{enumerate}
In addition, we may fix $\Gamma_0,\Gamma_1\in[\Gamma]^{\lambda^+}$ and a sequence $\langle (s_j,i_j,i_j')\mid j<m\rangle$ of triples in $T\times\mu\times\mu$ such that,
for every $j<\sigma$:
\begin{itemize}
\item for every $\gamma\in\Gamma_0$, $s_j{}^\smallfrown\langle i_j\rangle\sq b_{y_\gamma(j)}$, 
\item for every $\gamma\in\Gamma_1$, $s_j{}^\smallfrown\langle i_j'\rangle\sq b_{y_\gamma(j)}$, and
\item $i_j\neq i_j'$.
\end{itemize}
By possibly passing to cofinal subsets, we may assume that $\Gamma_0\cap\Gamma_1=\emptyset$.
Let $\delta^*:=\max\{ \dom(s_j)\mid j<\sigma\}$ and $j^*:=\min\{ j<\sigma\mid \dom(s_j)=\delta^*\}$.

\begin{claim}
Let  $(\gamma,\gamma',\gamma'')\in[\Gamma_0\cup\Gamma_1]^3\setminus([\Gamma_0]^3\cup[\Gamma_1]^3)$.
Let  $z\in[x_{\gamma} \cup  x_{\gamma'}\cup x_{\gamma''}]^{<\omega}$ be such that $\{y_\gamma(j^*),y_{\gamma'}(j^*),y_{\gamma''}(j^*)\}\s z$.
Then $e(z)=(y_\gamma(j^*),y_{\gamma'}(j^*),y_{\gamma''}(j^*))$.
\end{claim}
\begin{proof} As made clear by the proof of Claim~\ref{claigm441}, for every $(\xi,\zeta)\in[\{\gamma,\gamma',\gamma''\}]^2$,
for every $\alpha\in r\cup a_\xi$ and $\beta\in r\cup y_\zeta$,
$\Delta(\alpha,\beta)>\delta$ iff there is a $j<\sigma$ such that $\alpha=y_\xi(j)$ and $\beta=y_\zeta(j)$.
In addition,  if $\{\xi,\zeta\}\nsubseteq \Gamma_0$ and $\{\xi,\zeta\}\nsubseteq \Gamma_1$, 
then for every $j<m$, $\Delta(y_\xi(j),y_\zeta(j))=\dom(s_j)$.
So, in this case,
$$\begin{aligned}
&\ \{ (\alpha,\beta)\in [r\cup y_\xi\cup y_{\zeta}]^2\mid \Delta(\alpha,\beta)=\max(\Delta``[r\cup y_\xi\cup y_\zeta]^2)\}\\
=&\ \{ (y_\xi(j),y_\zeta(j))\mid \dom(s_j)=\delta^*\}.
\end{aligned}$$

Now, since $\{\gamma,\gamma',\gamma''\}\nsubseteq\Gamma_0$ and $\{\gamma,\gamma',\gamma''\}\nsubseteq\Gamma_1$,
it follows that $$\Delta_2(y_\gamma(j^*),y_{\gamma'}(j^*),y_{\gamma''}(j^*))=\delta^*.$$ 
Consequently,
$$\emptyset \subsetneq M_z = \{ (y_\gamma(j),y_{\gamma'}(j),y_{\gamma''}(j))\mid \dom(s_j)=\delta^*\}.$$
So, by Clause~(III),
$$e(z) = (y_\gamma(j^*),y_{\gamma'}(j^*),y_{\gamma''}(j^*)),$$
as sought.
\end{proof}
This completes the proof.
\end{proof}

\begin{prop} Suppose that $\lambda\le \kappa$ is a pair of infinite cardinals.

If $\ext_2(\kappa,\lambda,3,3)$ holds, then so does $\kappa\nsrightarrow[\lambda,\lambda]^4_2$.
\end{prop}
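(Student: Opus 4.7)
The plan is to read off the coloring $c \colon [\kappa]^4 \to 2$ from the extraction witness $e$ by inspecting which pair of elements of $z$ the map selects, and then to realize each of the two colors through a distinct application of $\ext_2(\kappa,\lambda,3,3)$. Let $e \colon [\kappa]^{<\omega} \to [\kappa]^2$ be the witness, normalized so that $e(z) \in [z]^2$ whenever $|z| \ge 2$. For each $z = \{z_0 < z_1 < z_2 < z_3\} \in [\kappa]^4$, set
\[
c(z) := \begin{cases} 0, & e(z) \in \{(z_0,z_2),(z_1,z_3)\};\\ 1, & \text{otherwise}. \end{cases}
\]

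To see that this $c$ witnesses $\kappa\nsrightarrow[\lambda,\lambda]^4_2$, fix disjoint $A, B \s \kappa$ of order type $\lambda$ with $\sup(A)=\sup(B)$. To realize color $0$, I will construct by induction on $\gamma < \lambda$ interleaved sequences $\langle \alpha_\gamma \mid \gamma<\lambda\rangle \s A$ and $\langle\beta_\gamma \mid \gamma<\lambda\rangle \s B$ with $\alpha_\gamma<\beta_\gamma<\alpha_{\gamma'}$ whenever $\gamma<\gamma'<\lambda$, exploiting that $A,B$ are cofinal in their common supremum while bounded initial segments of either have size $<\lambda$. Then I feed $x_\gamma := \{\alpha_\gamma,\beta_\gamma\}$, $r:=\emptyset$, and $\sigma:=2$ into $\ext_2$, which returns $j<2$ and disjoint cofinal $\Gamma_0,\Gamma_1\s\lambda$. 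For any $(\gamma,\gamma')\in \Gamma_0\circledast \Gamma_1$, the set $z:=x_\gamma\cup x_{\gamma'}$ has (by the interleaving) increasing enumeration $(\alpha_\gamma,\beta_\gamma,\alpha_{\gamma'},\beta_{\gamma'})$, lies in $[A\cup B]^4 \setminus([A]^4\cup [B]^4)$, and satisfies $e(z)=(y_\gamma(j),y_{\gamma'}(j))$, which equals $(z_0,z_2)$ if $j=0$ and $(z_1,z_3)$ if $j=1$. Either way $c(z)=0$.

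To realize color $1$, I will fix $\alpha_*\in A$ and $\beta_*\in B$ and enumerate in increasing order the final segment of $A$ above $\max\{\alpha_*,\beta_*\}$ as $\langle \alpha_\gamma\mid \gamma<\lambda\rangle$. Setting $x_\gamma:=\{\alpha_\gamma,\alpha_*,\beta_*\}$, $r:=\{\alpha_*,\beta_*\}$, and $\sigma:=1$, the principle delivers $j=0$ together with disjoint cofinal $\Gamma_0,\Gamma_1\s\lambda$; for $(\gamma,\gamma')\in \Gamma_0\circledast \Gamma_1$, the set $z:=x_\gamma\cup x_{\gamma'}=\{\alpha_*,\beta_*,\alpha_\gamma,\alpha_{\gamma'}\}$ has $\alpha_\gamma,\alpha_{\gamma'}$ as its two largest elements, lies in $[A\cup B]^4\setminus([A]^4\cup[B]^4)$ (three elements in $A$ and one in $B$), and obeys $e(z)=(\alpha_\gamma,\alpha_{\gamma'})=(z_2,z_3)$, so $c(z)=1$. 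The only genuinely delicate point is verifying the interleaving construction at limit stages of the induction, which is routine for $\lambda$ regular, and the flexibility afforded by $\theta=\chi=3$ in $\ext_2$ is exactly what allows the two complementary set-ups: the color-$0$ argument exploits $\sigma=2$ while the color-$1$ argument exploits $|r|=2$.
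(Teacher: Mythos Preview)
Your proof is correct and follows essentially the same approach as the paper's, just argued directly rather than by contrapositive: both proofs hinge on two applications of $\ext_2(\kappa,\lambda,3,3)$, one with $r=\emptyset$ and $\sigma=2$ fed an interleaved sequence of $A$-$B$ pairs, and one with $|r|=2$ and $\sigma=1$. The paper uses the simpler coloring $c(z)=1$ iff $e(z)=(z_2,z_3)$ and (in the $\sigma=1$ case) takes both root elements from the same set, whereas you partition according to $e(z)\in\{(z_0,z_2),(z_1,z_3)\}$ and take one root element from each of $A,B$; these are cosmetic variations, and your remark that the interleaving is routine for regular $\lambda$ matches the paper's implicit treatment.
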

\begin{proof}  Suppose that $\kappa\nsrightarrow[\lambda,\lambda]^4_2$ fails,
and we shall prove that $\ext_2(\kappa,\lambda,3,3)$ fails, as well.
To this end, let $e:[\kappa]^{<\omega}\rightarrow[\kappa]^2$ be given.
Define a coloring $c:[\kappa]^4\rightarrow2$ by letting for all $\alpha_0<\alpha_1<\alpha_2<\alpha_3<\kappa$:
$$c(\alpha_0,\alpha_1,\alpha_2,\alpha_3):=1\text{ iff }e(\alpha_0,\alpha_1,\alpha_2,\alpha_3)=(\alpha_2,\alpha_3).$$

Now, since $\kappa\srightarrow[\lambda,\lambda]^4_2$ holds, we may find $\tau<2$ and disjoint $A,B\in\mathcal P(\kappa)$ satisfying all of the following:
\begin{enumerate}
\item[(i)] $\otp(A)=\otp(B)=\lambda$, 
\item[(ii)] $\sup(A)=\sup(B)$,
\item[(iii)] for every $(\alpha_0,\alpha_1,\alpha_2,\alpha_3)\in[A\cup B]^4\setminus([A]^4 \cup[B]^4)$, $$c(\alpha_0,\alpha_1,\alpha_2,\alpha_3)\neq\tau.$$
\end{enumerate}

Using Clauses (i) and (ii), fix a sequence $\langle (\alpha_i,\beta_i)\mid i<\lambda\rangle$ of pairs in $A\times B$ such that, for all $i<j<\lambda$,
$\alpha_i<\beta_i<\alpha_j$.

$\br$ If $\tau=1$, 
then let $r:=\{\alpha_0,\alpha_1\}$, and for every $\gamma<\lambda$, let $x_\gamma:=r\uplus y_\gamma$, where $y_\gamma:=\{\beta_{\gamma+1}\}$.
Now, for every $(\gamma,\gamma')\in[\lambda]^2$, as $z:=x_\gamma\cup x_{\gamma'}$ is in $[A\cup B]^4\setminus([A]^4 \cup[B]^4)$, 
$c(z)=0$, and then $e(z)$ is not disjoint from $r$.

$\br$ If $\tau=0$, 
then let $r:=\emptyset$, and for every $\gamma<\lambda$, let $x_\gamma:=r\uplus y_\gamma$, where $y_\gamma:=\{\alpha_\gamma,\beta_\gamma\}$.
Now, for every $(\gamma,\gamma')\in[\lambda]^2$, as $z:=x_\gamma\cup x_{\gamma'}$ is in $[A\cup B]^4\setminus([A]^4 \cup[B]^4)$, 
$c(z)=1$, and then  $e(z)=y_{\gamma'}$ which is disjoint from $y_\gamma$.
\end{proof}

\begin{cor}
If $\lambda=\aleph_0$ or if $\lambda$ is weakly compact, then $\ext_2(\kappa,\lambda,3,3)$ fails for every cardinal $\kappa\ge\lambda$.\qed
\end{cor}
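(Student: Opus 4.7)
The plan is to apply the preceding proposition contrapositively: since $\ext_2(\kappa,\lambda,3,3)$ implies $\kappa\nsrightarrow[\lambda,\lambda]^4_2$, it suffices to prove that $\kappa\nsrightarrow[\lambda,\lambda]^4_2$ fails for every $\kappa\ge\lambda$ under the stated hypotheses. Unpacking Definition~\ref{narrowsup}, its failure amounts to showing that for every $c:[\kappa]^4\rightarrow 2$ there exist $\epsilon<2$ and disjoint $A,B\s\kappa$ with $\otp(A)=\otp(B)=\lambda$ and $\sup(A)=\sup(B)$ such that $c$ takes only the value $\epsilon$ throughout $[A\cup B]^4\setminus([A]^4\cup[B]^4)$; taking $\tau:=1-\epsilon$ then completes the refutation. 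In fact I will produce such $A,B$ lying inside a $c$-homogeneous subset of $\kappa$ of order type $\lambda$.

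The key ingredient is the positive partition relation $\lambda\rightarrow(\lambda)^4_2$, which is Ramsey's theorem when $\lambda=\aleph_0$ and follows from weak compactness when $\lambda$ is weakly compact (via the standard upgrade of $\lambda\rightarrow(\lambda)^2_2$ to $\lambda\rightarrow(\lambda)^n_2$ for every finite $n$). Given $c:[\kappa]^4\rightarrow 2$, I fix an arbitrary $H\s\kappa$ of order type $\lambda$ (possible since $\kappa\ge\lambda$) and apply $\lambda\rightarrow(\lambda)^4_2$ to the pullback of $c\restriction[H]^4$ under the order isomorphism with $[\lambda]^4$, obtaining $H'\s H$ of order type $\lambda$ on which $c$ is constant with some value $\epsilon<2$.

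It remains to partition $H'$ into disjoint $A,B$, each of order type $\lambda$ and each cofinal in $\delta:=\sup(H')$. When $\lambda=\aleph_0$, I enumerate $H'=\{h_n\mid n<\omega\}$ in increasing order and let $A$ consist of the even-indexed elements and $B$ of the odd-indexed ones; both are cofinal in $\delta$. When $\lambda$ is weakly compact, I enumerate $H'=\langle h_\alpha\mid \alpha<\lambda\rangle$ and set $A:=\{h_\alpha\mid\alpha\in E^\lambda_\omega\}$ and $B:=\{h_\alpha\mid \alpha\in\lambda\setminus E^\lambda_\omega\}$; since $\lambda$ is regular and uncountable, both index sets are unbounded in $\lambda$ of cardinality $\lambda$, hence of order type $\lambda$, and their images are both cofinal in $\delta$. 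As $[A\cup B]^4\s[H']^4$, the coloring $c$ is constant with value $\epsilon$ on $[A\cup B]^4\setminus([A]^4\cup[B]^4)$, as required.

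The argument is entirely routine once the reduction to the positive partition relation is in hand; the only care point is ensuring that each half of the split inherits order type $\lambda$ and shares the common supremum, both of which are automatic from the regularity of $\lambda$. I do not foresee any substantive obstacle.
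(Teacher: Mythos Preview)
Your argument is correct and is precisely the intended one: the paper marks the corollary with \qed\ because it is immediate from the preceding proposition together with the positive relation $\lambda\rightarrow(\lambda)^4_2$, and you have spelled out exactly that deduction. The only cosmetic remark is that you could simply take $H:=\lambda\subseteq\kappa$ rather than an arbitrary copy, and for the split of $H'$ in the weakly compact case any partition into two unbounded pieces (e.g., by ordinal parity) would do just as well as the $E^\lambda_\omega$ split.
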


\section{Maximal number of colors}\label{changsection}

This section is dedicated to the proof of Theorem~C.
The main corollary of this section reads as follows:

\begin{cor}\label{cor51} Suppose that $\lambda=\mu^+$ for an infinite cardinal $\mu=\mu^{<\mu}$.

Then the following are equivalent:
\begin{enumerate}
\item $(\lambda^+,\lambda)\twoheadrightarrow(\mu^+,\mu)$ fails;
\item$\lambda^+\nsrightarrow[\lambda,\lambda]^3_\lambda$ holds. 
\end{enumerate}
\end{cor}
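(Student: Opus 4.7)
The plan is to prove both implications; the direction $(2)\Rightarrow(1)$ follows a classical Todor\v{c}evi\'{c}-style elementarity argument, while $(1)\Rightarrow(2)$ is the substantial content of Section~\ref{changsection} and requires an analysis broken into several cases.

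For $(2)\Rightarrow(1)$, I first observe that any witness $c:[\lambda^+]^3\rightarrow\lambda$ to (2) also witnesses the classical $\lambda^+\nrightarrow[\lambda]^3_\lambda$: given $A\in[\lambda^+]^\lambda$, using that $\lambda$ is regular uncountable, I can split $A$ by parity of index into disjoint cofinal subsets $A_0,A_1$, each of order type $\lambda$ with $\sup(A_0)=\sup(A_1)=\sup(A)$; then every prescribed color appears on $[A_0\cup A_1]^3\setminus([A_0]^3\cup[A_1]^3)\s[A]^3$. To derive the failure of Chang, I then argue by contradiction: assuming $(\lambda^+,\lambda)\twoheadrightarrow(\mu^+,\mu)$, apply it to the structure $(\lambda^+,\in,c,\lambda)$ to obtain an elementary submodel $M\prec(\lambda^+,\in,c,\lambda)$ with $|M|=\mu^+=\lambda$ and $|M\cap\lambda|=\mu$. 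Picking $\tau\in\lambda\setminus M$, elementarity yields $c[[M]^3]\s M\cap\lambda$, so $\tau$ is avoided on the $\lambda$-sized set $M\s\lambda^+$, contradicting the witness property of $c$.

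For $(1)\Rightarrow(2)$, I will fix a Skolem-function witness $F:[\lambda^+]^{<\omega}\rightarrow\lambda^+$ to the failure of Chang (so that every $X\in[\lambda^+]^{\mu^+}$ closed under $F$ satisfies $|X\cap\lambda|\ge\mu^+$), together with a $C$-sequence $\vec C=\langle C_\beta\mid\beta<\lambda^+\rangle$ with $\otp(C_\beta)\le\lambda$; the associated walks-on-ordinals objects $\tr$, $\rho_2$, and the last-step function from Definition~\ref{last}, together with the subadditive $\rho:[\lambda^+]^2\rightarrow\lambda$ supplied by Fact~\ref{all the rho}, will be the raw material for the coloring. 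The coloring $c:[\lambda^+]^3\rightarrow\lambda$ is built by composing $\rho$ with an oscillation-type extraction performed along walks from the top ordinal of each triple, with $F$ used to ensure that the ordinals below $\lambda$ generated on any prescribed $\mu^+$-sized subset of $\lambda^+$ are rich enough to realize every color.

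The main obstacle lies in verifying the witness property: given disjoint $A,B\in[\lambda^+]^\lambda$ with $\sup(A)=\sup(B)=:\delta^*$ and prescribed $\tau<\lambda$, I must exhibit a mixed triple in $[A\cup B]^3\setminus([A]^3\cup[B]^3)$ of color $\tau$. This splits into cases according to $\cf(\delta^*)$, the behavior of $\vec C$ near $\delta^*$, and the way the points $\last{\alpha}{\beta}$ for $\alpha\in A$, $\beta\in B$ with $\alpha<\beta$ (and symmetrically) relate to $C_{\delta^*}$; in each case, the failure of Chang via $F$ is invoked to locate, within any $\mu^+$-sized subcollection of mixed pairs, enough independent $\rho$-values below $\lambda$ for the oscillation to realize $\tau$. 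The overall bookkeeping yields six cases and subcases, corresponding to an independent trichotomy for each of $A$ and $B$, and I expect these case splits together with the precise invocation of $F$ to be the main technical hurdle.
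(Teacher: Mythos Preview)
Your argument for $(2)\Rightarrow(1)$ is correct and essentially the standard one; the paper does not spell this out, treating it as the trivial direction.

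For $(1)\Rightarrow(2)$, your plan diverges substantially from the paper's proof and, as written, has a genuine gap. The paper does \emph{not} work directly with a Skolem-function witness $F$ to the failure of Chang. Instead, it first invokes Fact~\ref{subadditiveupgrade} (i.e., \cite[Lemma~9.2.3]{MR2355670}) to convert the failure of $(\lambda^+,\lambda)\twoheadrightarrow(\mu^+,\mu)$ into a subadditive coloring $\varrho:[\lambda^+]^2\rightarrow\lambda$ witnessing $\U(\lambda^+,\lambda,\lambda,\lambda,\omega)$; this $\varrho$ is then the only input from Chang's failure that is ever used. The top-level case split is not on $\cf(\delta^*)$ or on the local behaviour of $\vec C$, but on whether $2^\mu>\mu^+$ or $2^\mu=\mu^+$: in the first case ${}^{<\mu}\mu$ is a weak $\mu$-Kurepa tree and Theorem~\ref{use of stability} applies; in the second case $\lambda\nrightarrow[\mu;\lambda]^2_\lambda$ holds by Sierpi\'nski and Theorem~\ref{thm53} applies. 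The six subcases mentioned in the introduction live inside the proofs of these two theorems and are driven by tree combinatorics (the height $\theta_A,\theta_B$ of $T^{\branches A},T^{\branches B}$, and whether $\varrho_\beta``A$ is bounded), not by a trichotomy for each of $A$ and $B$.

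Your proposed split on $\cf(\delta^*)$ is vacuous: since $\otp(A)=\lambda$ with $\lambda$ regular, one always has $\cf(\sup(A))=\lambda$. More seriously, your sketch does not indicate how the \emph{rectangular} constraint is met---i.e., how one forces the triple to be mixed between $A$ and $B$ rather than lying entirely in one of them. In the paper this is exactly where the weak Kurepa tree (or the tree ${}^{<\lambda}2$ in the CH case) enters: branches $b_\xi$ are attached to ordinals, and a $\Delta$- or $<_{\lex}$-separation between $A$ and $B$ is engineered so that the coloring can detect which of $A,B$ each coordinate came from. A direct Skolem-function approach along the lines of \cite{MR1297180} would need a new mechanism to achieve this, and your plan does not supply one.
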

\begin{proof} We focus on the nontrivial (that is, forward) implication. As $\lambda=\mu^+$, 
by \cite[Lemma~9.2.3]{MR2355670}, the failure of $(\lambda^+,\lambda)\twoheadrightarrow(\mu^+,\mu)$ 
is equivalent to the existence of a subadditive coloring $\varrho:[\lambda^+]^2\rightarrow\lambda$
witnessing $\U(\lambda^+,\lambda,\lambda,\lambda,\omega)$. In particular, 
$\varrho\restriction [X]^2$ witnesses $\U(\lambda,\lambda,\lambda,3)$
for every $X\s\lambda^+$ of order-type $\lambda$. 
Now, there are two cases to consider:

$\br$ If $2^\mu>\mu^+$, then since $\mu^{<\mu}=\mu$, $T:={}^{<\mu}\mu$ is a weak $\mu$-Kurepa tree with $\lambda^+$-many branches.
In addition, $\mu^{<\mu}=\mu$ implies that $\mu$ is regular, so that $E^\lambda_\mu$ is a nonreflecting stationary set.
Now the result follows from the upcoming Theorem~\ref{use of stability}, using $\kappa:=\lambda^+$.

$\br$ If $2^\mu=\mu^+$, then $\lambda\nrightarrow[\mu;\lambda]^2_\lambda$ holds by a theorem of Sierpi\'nski (see  \cite[Lemma~8.3]{paper47}).
Now the result follows from Theorem~\ref{thm53} below.
\end{proof}

When reading the hypotheses of the upcoming theorem, it may worth keeping in mind that 
if $\lambda=\mu^+$ for an infinite regular cardinal $\mu$, then $E^\lambda_\mu$ is a nonreflecting stationary set, and 
if $\kappa=\lambda^+$, then Fact~\ref{all the rho} provides a subadditive map $\rho:[\kappa]^2\rightarrow\lambda$.
The conclusion of the theorem is a conditional form of $\kappa\nsrightarrow[\lambda,\lambda]^3_\lambda$ in which a third clause is added to Definition~\ref{narrowsup}.

\begin{thm}\label{use of stability} Suppose that:
\begin{itemize}
\item $\mu<\lambda<\kappa$ are infinite regular cardinals;
\item $E^\lambda_\mu$ admits a nonreflecting stationary set;
\item $\varrho:[\kappa]^2\rightarrow\lambda$ is a subadditive coloring of pairs;
\item there exists a weak $\mu$-Kurepa tree with at least $\kappa$-many branches.
\end{itemize}

Then	there exists a corresponding coloring of triples $c:[\kappa]^3\rightarrow\lambda$ such that, 
for all $\tau<\lambda$ and disjoint $A,B\in\mathcal P(\kappa)$ satisfying the three:
\begin{enumerate}
\item[(i)] $\otp(A)=\otp(B)=\lambda$,
\item[(ii)] $\sup(A)=\sup(B)$,
\item[(iii)] $\varrho\restriction [A\cup B]^2$ witnesses $\U(\lambda,\lambda,\lambda,3)$,
\end{enumerate}
there exists $(\alpha,\beta,\gamma)\in[A\cup B]^3\setminus([A]^3\cup[B]^3)$ such that $c(\alpha,\beta,\gamma)=\tau$.
\end{thm}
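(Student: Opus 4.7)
The plan is to manufacture $c\colon[\kappa]^3\to\lambda$ by combining three ingredients: a branch-distance coloring on pairs extracted from the weak $\mu$-Kurepa tree, the subadditive coloring $\varrho$, and the nonreflecting stationary set $S\s E^\lambda_\mu$ used to pin down walk layers. First, fix $T\s{}^{<\mu}\mu$ a weak $\mu$-Kurepa tree admitting an injective enumeration $\langle b_\xi\mid\xi<\kappa\rangle$ of $\kappa$-many branches and set $d(\alpha,\beta):=\Delta(b_\alpha,b_\beta)<\mu$. By Proposition~\ref{prop213}, $d$ witnesses $\U(\kappa,\lambda,\lambda,\mu,2)$, so on any $\lambda$-sized set of ``column indices'' we may, modulo thinning, prescribe arbitrary $d$-values on a cofinal pair of singletons. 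Fix also an injection $\pi\colon\mu\times\lambda\hookrightarrow\lambda$ and a nonreflecting stationary $S\s E^\lambda_\mu$.

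For $\alpha<\beta<\gamma<\kappa$, the value $c(\alpha,\beta,\gamma)$ will encode $\pi(d(\alpha,\beta),\varrho(\alpha,\beta))$ whenever a ``goodness'' predicate $G(\alpha,\beta,\gamma)$ holds, and default to $0$ otherwise. I would design $G$ to assert: (i) $\varrho(\alpha,\beta)$ is strictly smaller than $\varrho(\alpha,\gamma)=\varrho(\beta,\gamma)=:\eta$, so that by subadditivity of $\varrho$ the pair $(\alpha,\beta)$ lives in a common $\varrho$-fiber below $\gamma$; (ii) $\eta\in S$, so that the layer picked by the walk is ``stable'' in the nonreflecting sense; and (iii) some extra synchronization using the trace from $\gamma$ forcing the pair $(\alpha,\beta)$ to appear at a common step of the walk. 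Subadditivity plus nonreflection of $S$ at ordinals of cofinality $\ge\mu$ will be what prevents accidental trivial colorings below the chosen layer.

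Given disjoint $A,B\in[\kappa]^\lambda$ with $\sup(A)=\sup(B)$ and with $\varrho\restriction[A\cup B]^2$ witnessing $\U(\lambda,\lambda,\lambda,3)$, and a target $\tau=\pi(i,j)$, I would verify the conclusion in three nested thinnings. First, apply the $\U(\lambda,\lambda,\lambda,3)$ hypothesis to cascade to $\lambda$-sized $A'\s A$, $B'\s B$ on which every $\varrho$-value of a mixed pair exceeds $j$; push this further using subadditivity so that also $\varrho$-values of ``same-side'' pairs exceed $j$. Second, using $\sup(A')=\sup(B')$, choose $\gamma$ above $A'\cup B'$ (or by continuing to a suitable cofinal $\gamma<\kappa$) whose walk-trace to the common supremum picks out an ordinal in $S$ that is cofinally approached by the $\varrho$-fibers over both $A'$ and $B'$; nonreflection of $S$ at ordinals of cofinality $\mu$ is what guarantees that such a $\gamma$ witnesses both cardinalities of fibers simultaneously, not just one. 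Third, thin $A'$ and $B'$ further using the $\U(\kappa,\lambda,\lambda,\mu,2)$-property of $d$ to locate a mixed pair $(\alpha,\beta)\in A\times B$ (or $B\times A$) with $d(\alpha,\beta)=i$ and with $\varrho(\alpha,\beta)=j$; the resulting triple $(\alpha,\beta,\gamma)$ lies in $[A\cup B]^3\setminus([A]^3\cup[B]^3)$ and, provided $G(\alpha,\beta,\gamma)$ was arranged along the way, satisfies $c(\alpha,\beta,\gamma)=\pi(i,j)=\tau$.

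The main obstacle is simultaneity: the $\varrho$-value $j$ must be attainable on a mixed pair, the Kurepa-$d$ value $i$ must be independently prescribable on \emph{that same} pair, and $\gamma$'s walk must single out the right $S$-layer hosting it. Each of these can be achieved in isolation, but arranging them together without collapsing the surviving subsets of $A$ and $B$ below $\lambda$ requires ordering the $\U$-appeals carefully and committing to the layer $\eta\in S$ before firing the Kurepa $\U$-principle. I expect the actual paper to proceed by a division into subcases governed by $\cf(\lambda)$ versus $\mu$ and by the fine structure of where $S$ lives in $\lambda$, with each subcase tuning the ``goodness'' predicate $G$ and the order of thinnings so that subadditivity of $\varrho$ plus nonreflection of $S$ keep the selected layer intact through the final Kurepa-$d$ extraction.
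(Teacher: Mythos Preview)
Your proposal has a genuine gap at its core. You define $c(\alpha,\beta,\gamma)$ to be $\pi(d(\alpha,\beta),\varrho(\alpha,\beta))$ when the goodness predicate holds, and then try to hit an arbitrary target $\tau=\pi(i,j)$ by arranging $d(\alpha,\beta)=i$ and $\varrho(\alpha,\beta)=j$ on a mixed pair. But neither equality is attainable from the hypotheses. The $\U(\lambda,\lambda,\lambda,3)$ assumption on $\varrho\restriction[A\cup B]^2$ only lets you push $\varrho$-values \emph{above} any prescribed threshold; it gives no control over hitting a specific value $j$. Likewise, $\U(\kappa,\lambda,\lambda,\mu,2)$ for $d$ (via Proposition~\ref{prop213}) again yields values above thresholds, not a prescribed $i<\mu$, and the tree need not split at every level of the relevant subtree. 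Since your coloring factors through the pair $(d(\alpha,\beta),\varrho(\alpha,\beta))$, its range on mixed pairs from $A,B$ is trapped inside $\pi[\{(d(\alpha,\beta),\varrho(\alpha,\beta)):(\alpha,\beta)\text{ mixed}\}]\cup\{0\}$, which can easily miss $\tau$. A secondary issue: you speak of choosing $\gamma$ ``above $A'\cup B'$'' or as ``a suitable cofinal $\gamma<\kappa$'', but the conclusion requires $\gamma\in A\cup B$; there is no outside $\gamma$ available. Finally, the role you assign to nonreflection (``prevents accidental trivial colorings'') is never made operative; you give no mechanism that turns nonreflection of $S$ into a constraint on the triple.

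The paper's construction is organized around a different idea. From the nonreflecting stationary set one extracts (via \cite[Lemma~3.31]{paper36}) a coloring $e:[\lambda]^2\to\mu$ whose set $\partial(e)\s E^\lambda_\mu$ is stationary, and one fixes a surjection $h:\lambda\to\lambda$ with $h^{-1}\{\tau\}\cap\partial(e)$ stationary for every $\tau$. The coloring is then $c(\{\alpha,\beta,\gamma\})=h(\min Z_{(\cdot)})$ for a carefully defined set $Z_{(\alpha,\beta,\gamma)}\s\lambda$ built from $\varrho$, $e$, and the tree $\Delta$-values, with the case split governed by the $\lex$-order of $b_\alpha,b_\beta,b_\gamma$. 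The verification fixes an elementary submodel $M$ with $\sigma:=M\cap\lambda\in\partial(e)$ and $h(\sigma)=\tau$, then chooses the triple so that $\min Z=\sigma$ exactly: one first bounds $Z\cap\sigma$ below $\sigma$ using the $\partial(e)$-property of $e$, and then uses subadditivity of $\varrho$ together with elementarity to force the cutoff $\varrho(\cdot,\cdot)$ appearing in the definition of $Z$ to land strictly between $\sup(Z\cap\sigma)$ and $\sigma$. The argument splits into two cases according to whether $\sup(\varrho_\beta``A)<\lambda$ for every $\beta\in B$, and within each case the tree is used via Corollary~\ref{usefullemma} and the $\lex$-order to control $\max(\Delta``\{\alpha,\beta,\gamma\}^2)$. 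In short, the paper never tries to hit exact $d$- or $\varrho$-values; it instead arranges that a \emph{minimum} of an auxiliary set equals the submodel height $\sigma$, which is where both the nonreflecting stationary set and subadditivity do real work.
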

\begin{proof} Let $T\s{}^{<\mu}2$ be a weak $\mu$-Kurepa tree with at least $\kappa$-many branches,
and let	$\vec b=\langle b_\xi \mid \xi< \kappa\rangle$ be an injective sequence consisting of elements of $\mathcal B(T)$.
For all $\alpha\neq\beta$ from $\kappa$, we write $\Delta(\alpha,\beta)$ for $\Delta(b_\alpha,b_\beta)$.
For all $B\s\kappa$ and $t\in T$, denote $B_t:=\{\beta\in B\mid t\sq b_\beta\}$.
As $E^\lambda_\mu$ admits a nonreflecting stationary set, by \cite[Lemma~3.31]{paper36}, we may fix a coloring $e:[\lambda]^2\rightarrow\mu$ for which the following set is stationary:
$$\partial(e):=\{ \sigma\in E^\lambda_\mu\mid \forall \epsilon\in\lambda\setminus\sigma\,\forall \delta<\mu\,[\sup\{ \zeta<\sigma\mid e(\zeta,\epsilon)\le\delta\}<\sigma]\}.$$
Let $h:\lambda\rightarrow\lambda$ be a surjection such that
$S_\tau:=\{ \sigma\in \partial(e)\mid h(\sigma)=\tau\}$ is stationary
for every $\tau<\lambda$.

For every $(\alpha,\beta,\gamma)\in \kappa\times\kappa\times\kappa$,
let $$Z_{(\alpha,\beta,\gamma)}:= \{\zeta\in\lambda \setminus \varrho(\{\alpha,\beta\}) \mid \max(\Delta``  \{\alpha,\beta,\gamma\}^2) \geq e(\zeta,\varrho(\{\beta , \gamma\})) \}.$$

Now, derive a coloring $c:[\kappa]^3\rightarrow\lambda$ by letting:
$$c( \{\alpha,\beta,\gamma\}) := \begin{cases}
h(\min(Z_{(\beta,\alpha,\gamma)})),&\text{if }\gamma=\max\{\alpha,\beta,\gamma\}\ \&\ b_\alpha<_{\text{lex}}b_\beta<_{\text{lex}}b_\gamma;\\
h(\min(Z_{(\alpha,\beta,\gamma)})),&\text{if }\gamma=\max\{\alpha,\beta,\gamma\}\ \&\ b_\gamma<_{\text{lex}}b_\alpha<_{\text{lex}}b_\beta;\\
0,&\text{otherwise}.
\end{cases}$$

Suppose that $A,B\in\mathcal P(\kappa)$ are disjoint sets satisfying conditions (i)--(iii) above. 
Recalling Lemma~\ref{separation lemma2}(2), by possibly shrinking $A$ and $B$, we may assume the existence of some $\chi<\mu$ such that $\Delta[A\times B]=\{\chi\}$.  
By possibly switching the roles of $A$ and $B$, we may also assume the following:
\begin{itemize}
\item[(iv)] for every $(\alpha,\beta)\in A\times B$, $b_\alpha<_{\text{lex}}b_\beta$.
\end{itemize}

Note that $|T|=\mu<\lambda<\kappa\le 2^\mu$.
Now, given a prescribed color $\tau<\lambda$,
let ${M}$ be an elementary submodel of $\mathcal{H}_{(2^\mu)^+}$ containing $\{A,B,\varrho,\vec b,T\}$ such that $\sigma:= {M}\cap\lambda$ is in $S_\tau$.
In particular, $|{M}|\ge\cf(\sigma)=\mu$.
Denote $\upsilon:=\sup(A)$ and $\upsilon_{M}:=\sup({M}\cap \upsilon)$.
The proof is now divided into two cases: 

\medskip

\underline{\textbf{Case 1: For every $\beta\in B$, $\sup(\varrho_\beta``A)<\lambda$.}} 
By Condition~(iii), find $A'\in[A]^{\lambda}$ and $B'\in[B]^{\lambda}$ such that $\min(\varrho[A'\circledast B'])>\sigma$.
Fix $\alpha\in A'\setminus\upsilon_{{M}}$ arbitrarily. 
Appeal to Corollary~\ref{usefullemma} with $X:=B'$ and $i:=1$ to pick
$\gamma\in B'\setminus(\alpha+1)$ such that for cofinally many $\delta<\mu$, the two hold:
\begin{enumerate}
\item $b_{\gamma}(\delta)=1$, and
\item $\{ \beta\in B'\mid \Delta(\beta,\gamma)=\delta\}$ has size $\lambda$.
\end{enumerate}
Denote $D:=\{\delta<\mu\mid \text{Clauses (1) and (2) both hold}\}$.

As $(\alpha,\gamma)\in A'\circledast B'$, the ordinal $\epsilon:=\varrho(\alpha,\gamma)$ is bigger than $\sigma$. 
Pick $\delta\in D$ above $\max\{\chi,e(\sigma,\epsilon)\}$. 
Since $\sigma\in\partial(e)$, the following set is bounded below $\sigma$:
$$Z:=\{\zeta<\sigma\mid e(\zeta,\epsilon)\le\delta\}.$$

Set $t:=(b_\gamma\restriction\delta){}^\smallfrown\langle 0\rangle$. 
From $\delta\in D$ we infer that $(B')_t$ has size $\lambda$.
As $t\in T\s{M}$, in particular, $B_t$ is a set of size $\lambda$ lying in ${M}$.
By Condition~(iii) and elementarity of ${M}$ one can find $\beta_0\neq\beta_1$ in $B_t\cap{M}$ such that $\varrho(\beta_0,\beta_1)>\sup(Z)$. 
As $\varrho$ is subadditive, 
we may now find $\beta\in\{\beta_0,\beta_1\}$ such that $\varrho(\beta,\alpha)>\sup(Z)$. 
As $\beta\in B$, $\sup(\varrho_\beta``A)<\lambda$. As $\{\varrho,A\}\in{M}$, it follows that $\sup(\varrho_\beta``A)\in M$. In particular, $\varrho(\beta,\alpha)<\sigma$. 

\begin{claim} All of the following hold:
\begin{enumerate}
\item $(\beta,\alpha,\gamma)\in B\circledast A\circledast B$;
\item $b_\alpha<_{\lex}b_\beta<_{\lex}b_\gamma$;
\item $\max(\Delta``  \{\alpha,\beta,\gamma\}^2) =\delta$;
\item $\min(Z_{(\beta,\alpha,\gamma)})=\sigma$.
\end{enumerate}
\end{claim}
\begin{proof}
\begin{enumerate}
\item $\gamma$ was chosen to be in $B\setminus(\alpha+1)$. In addition, $\alpha\in A\setminus\upsilon_{{M}}$, whereas $\beta\in {M}\cap B$. 
\item As $\alpha\in A$ and $\beta\in B$, Condition~(iv) entails that $b_\alpha<_{\lex}b_\beta$.
In addition, as $b_\beta\restriction(\delta+1) = t= (b_{\gamma}\restriction\delta){}^\smallfrown\langle 0\rangle$ and $b_\gamma(\delta)=1$, we get that $b_\beta<_{\lex}b_\gamma$. 
\item By the previous analysis, $\Delta(\beta,\gamma)=\delta$.
In addition, $\Delta(\alpha,\gamma)=\chi<\delta$. Recalling that $|\Delta``  \{\alpha,\beta,\gamma\}^2|=2$, we are done.
\item 
By Clause~(3) and the fact that $\varrho(\alpha , \gamma )=\epsilon$, we infer that 
$Z_{(\beta,\alpha,\gamma)}=\{\zeta\in\lambda \setminus \varrho(\beta,\alpha) \mid \delta\ge e(\zeta,\epsilon)\}$. In particular, $\sigma\in Z_{(\beta,\alpha,\gamma)}$.
Now, if $\zeta:=\min(Z_{(\beta,\alpha,\gamma)})$ is $<\sigma$, then $\zeta\in Z$, contradicting the fact that $\varrho(\beta,\alpha)>\sup(Z)$.\qedhere
\end{enumerate}
\end{proof}

By the preceding claim and the definition of $c$,
$$c(\{\alpha,\beta,\gamma\})=h(\min(Z_{(\beta,\alpha,\gamma)}))=h(\sigma)=\tau,$$ as sought.

\medskip
\underline{\textbf{Case 2: There is $\beta\in B$ such that $\sup(\varrho_\beta``A)=\lambda$.}} 
As $\{\varrho,A\}\in{M}$, we may pick $\beta\in B\cap M$ such that $\sup(\varrho_\beta``A)=\lambda$.
Clearly, $|\varrho_\beta``A|=\lambda$.
Define a function $f:\varrho_\beta``A\rightarrow A$ via
$$f(\xi):=\min\{\alpha\in A\mid \varrho(\beta,\alpha)=\xi\}.$$
As $\{\beta,A,\varrho\}\in {M}$, we infer that $\varrho_\beta``A$, $f$ and $\im(f)$ are all in ${M}$.
Note that $f$ is injective, so that $|\im(f)|=\lambda$. 
It also follows that $$\Gamma:=\{ \gamma\in \im(f)\setminus(\beta+1)\mid \varrho(\beta,\gamma)\le\sigma\}$$ is bounded in $\im(f)$.

Appeal to Corollary~\ref{usefullemma} with $X:=\im(f)\setminus(\beta+1)$ and $i:=0$  to pick
$\gamma\in X\setminus(\Gamma\cup\upsilon_{{M}})$  such that for cofinally many $\delta<\mu$, the two hold:
\begin{enumerate}
\item $b_{\gamma}(\delta)=0$, and
\item $\{ \alpha\in X\mid \Delta(\alpha,\gamma)=\delta\}$ has size $\lambda$.
\end{enumerate}
Denote $D:=\{\delta<\mu\mid \text{Clauses (1) and (2) both hold}\}$.

As $\gamma\notin\Gamma$, $\epsilon:=\varrho(\beta,\gamma)$ is bigger than $\sigma$. 
Pick $\delta\in D$ above $\max\{\chi,e(\sigma,\epsilon)\}$. 
Since $\sigma\in\partial(e)$, the following set is bounded below $\sigma$:
$$Z:=\{\zeta<\sigma\mid e(\zeta,\epsilon)\le\delta\}.$$

Set $t:=(b_{\gamma}\restriction\delta){}^\smallfrown\langle 1\rangle$. 
From $\delta\in D$ we infer that $X_t$ is a set of size $\lambda$. As $t\in T\s{M}$ and $X\in{M}$, $X_t$ is in ${M}$.
As $\alpha\mapsto \varrho(\beta,\alpha)$ is injective over $X$,
we may find an $\alpha\in X_t\cap{M}$ such that $\varrho(\beta,\alpha)>\sup(Z)$. 
Because of the fact that $\{\beta,\alpha\}\in{M}$, we altogether get that $\sup(Z)<\varrho(\beta,\alpha)<\sigma$.

\begin{claim} All of the following hold:
\begin{enumerate}
\item $(\beta,\alpha,\gamma)\in B\circledast A\circledast A$;
\item $b_\gamma<_{\lex}b_\alpha<_{\lex}b_\beta$;
\item $\max(\Delta``  \{\alpha,\beta,\gamma\}^2)=\delta$;
\item $\min(Z_{(\alpha,\beta,\gamma)})=\sigma$.
\end{enumerate}
\end{claim}
\begin{proof}
\begin{enumerate}
\item $\beta$ was chosen to be in $B\cap M$, $\gamma$ was chosen to be in $A\setminus\upsilon_{{M}}$, whereas $\alpha\in M\cap A$ with $\alpha>\beta$.
\item As $\alpha\in A$ and $\beta\in B$, Condition~(iv) entails that $b_\alpha<_{\lex}b_\beta$.
In addition, $b_\alpha\restriction(\delta+1) = t= (b_{\gamma}\restriction\delta){}^\smallfrown\langle 1\rangle$ and $b_\gamma(\delta)=0$, so $b_\gamma<_{\lex}b_\alpha$.
\item By the previous analysis, $\Delta(\alpha,\gamma)=\delta$.
In addition, $\Delta(\alpha,\beta)=\chi<\delta$. Recalling that $|\Delta``\{\alpha,\beta,\gamma\}^2|=2$, we are done.
\item 
By Clause~(3) and the fact that $\varrho(\beta , \gamma )=\epsilon$, we infer that 
$Z_{(\alpha,\beta,\gamma)}=\{\zeta\in\lambda \setminus \varrho(\beta,\alpha) \mid \delta \ge e(\zeta,\epsilon) \}$. In particular, $\sigma\in Z_{(\alpha,\beta,\gamma)}$.
Now, if $\zeta:=\min(Z_{(\alpha,\beta,\gamma)})$ is $<\sigma$, then $\zeta\in Z$, contradicting the fact that $\varrho(\beta,\alpha)>\sup(Z)$.\qedhere
\end{enumerate}
\end{proof}

By the preceding claim and the definition of $c$,
$$c(\{\alpha,\beta,\gamma\})=h(\min(Z_{(\alpha,\beta,\gamma)}))=h(\sigma)=\tau,$$ as sought.
\end{proof}

\begin{thm}\label{thm53}
Suppose that:
\begin{itemize}
\item $\mu=\mu^{<\mu}$ is an infinite cardinal, $\lambda=\mu^+$ and $\kappa=\lambda^+$;
\item $\varrho:[\kappa]^2\rightarrow\lambda$ is a subadditive coloring of pairs;
\item $\lambda\nrightarrow[\mu;\lambda]^2_\lambda$ holds.
\end{itemize}

Then, there exists a corresponding coloring of triples $c:[\kappa]^3\rightarrow\lambda$ such that, 
for all $\tau<\lambda$ and disjoint $A,B\in\mathcal P(\kappa)$ satisfying the three:
\begin{enumerate}
\item[(i)] $\otp(A)=\otp(B)=\lambda$,
\item[(ii)] $\sup(A)=\sup(B)$,
\item[(iii)] $\varrho\restriction [A\cup B]^2$ witnesses $\U(\lambda,\lambda,\lambda,3)$,
\end{enumerate}
there exists $(\alpha,\beta,\gamma)\in[A\cup B]^3\setminus([A]^3\cup[B]^3)$ such that $c(\alpha,\beta,\gamma)=\tau$.
\end{thm}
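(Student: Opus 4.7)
The plan is to mimic the proof of Theorem~\ref{use of stability}, replacing the weak $\mu$-Kurepa tree (which is unavailable under $2^\mu=\mu^+$) by the Sierpi\'nski-type coloring $d:[\lambda]^2\to\lambda$ witnessing $\lambda\nrightarrow[\mu;\lambda]^2_\lambda$. Since $\mu=\mu^{<\mu}$ forces $\mu$ to be regular, $E^\lambda_\mu$ is a stationary non-reflecting subset of $\lambda$, so by \cite[Lemma~3.31]{paper36} I would fix $e:[\lambda]^2\to\mu$ with $\partial(e)$ stationary, together with a surjection $h:\lambda\to\lambda$ such that $S_\tau:=h^{-1}\{\tau\}\cap\partial(e)$ is stationary for every $\tau<\lambda$. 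I would also fix a bijection $\pi:\lambda\leftrightarrow\mu\times\lambda$ and let $d_0,d_1$ denote the two projections of $\pi\circ d$; this way a single call of $d$ produces both a $\mu$-valued separation parameter $d_0(x,y)$ (playing the role of $\max(\Delta``\cdots)$ in Theorem~\ref{use of stability}) and an auxiliary $\lambda$-valued component $d_1(x,y)$.

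The coloring $c:[\kappa]^3\to\lambda$ is then defined by a formula parallel to that of Theorem~\ref{use of stability}: for $\{\alpha,\beta,\gamma\}\in[\kappa]^3$ with $\gamma$ the maximum, I split into two subcases according to the relative size of $\varrho(\alpha,\gamma)$ and $\varrho(\beta,\gamma)$ (playing the role of the $<_{\lex}$ branching in Theorem~\ref{use of stability}), and put $c(\{\alpha,\beta,\gamma\}):=h(\min(Z_{(\alpha,\beta,\gamma)}))$, where
\[
Z_{(\alpha,\beta,\gamma)}:=\{\zeta\in\lambda\setminus\varrho(\alpha,\beta)\mid d_0(\varrho(\alpha,\gamma),\varrho(\beta,\gamma))\ge e(\zeta,\varrho(\beta,\gamma))\}.
\]
Given disjoint $A,B$ satisfying (i)--(iii) and $\tau<\lambda$, I would pick an elementary submodel $M\prec\mathcal{H}_{(2^\lambda)^+}$ of cardinality $\mu$ containing $\{A,B,\varrho,d,e,h\}$ with $\sigma:=M\cap\lambda\in S_\tau$, and run the same case analysis as in Theorem~\ref{use of stability}, namely on whether every $\beta\in B$ has $\sup(\varrho_\beta``A)<\lambda$. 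Using property~(iii) to thin to $\lambda$-sized subsets of $A,B$ with pairwise $\varrho$-values exceeding $\sigma$, and applying $d$'s Sierpi\'nski property in place of Lemma~\ref{separation lemma2}(2) and Corollary~\ref{usefullemma}, I would locate the triple (Case~1 produces one in $B\circledast A\circledast B$; Case~2 one in $B\circledast A\circledast A$) so that the final computation $\min(Z_{(\alpha,\beta,\gamma)})=\sigma$ holds, yielding $c(\{\alpha,\beta,\gamma\})=h(\sigma)=\tau$.

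The main obstacle is that, unlike the tree-theoretic Corollary~\ref{usefullemma}, which furnishes cofinally many separating levels below $\mu$, the Sierpi\'nski coloring $d$ only guarantees realisation of every colour on $X\circledast Y$ for $X\in[\lambda]^\mu$ and $Y\in[\lambda]^\lambda$; it does not automatically provide prescribed $d_0$-values on $\lambda$-sized fibres. The remedy is a double pigeonhole: first use~(iii) to thin $A$ and $B$ so that all pertinent $\varrho$-values exceed $\sigma$, then pigeonhole on the $\mu$-valued $d_0$ to concentrate on a single $\delta\ge e(\sigma,\cdot)$ across $\lambda$-many pairs, and finally invoke the Sierpi\'nski property via $d_1$ to realize the target $\tau$.
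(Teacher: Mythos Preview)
Your proposal diverges substantially from the paper's proof, and the divergence is not cosmetic: the paper does \emph{not} abandon tree combinatorics. Since $\kappa=\lambda^+\le 2^\lambda$, the paper takes $T:={}^{<\lambda}2$ and fixes an injective enumeration $\langle b_\xi\mid\xi<\kappa\rangle$ of branches; the coloring $c$ is then defined by a \emph{four}-way case split on the relative sizes of $\Delta(\alpha,\beta)$ and $\Delta(\beta,\gamma)$ (and a $<_{\lex}$ tiebreak), with three of the four branches invoking $d$ \emph{directly} on a pair such as $(\Delta(\alpha,\gamma),\Delta(\beta,\gamma))$ or $(\Delta(\alpha,\beta),\varrho(\alpha,\gamma))$, and only one branch using an $h(\min Z_{(\alpha,\beta,\gamma)})$ formula. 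The verification does not follow the Case~1/Case~2 dichotomy of Theorem~\ref{use of stability}; instead it splits on the heights $\theta_A,\theta_B\le\lambda$ of the subtrees $T^{\branches A},T^{\branches B}$ (via Lemma~\ref{height lemma}), and within Case~2 ($\theta_A<\lambda$) on whether $\{\alpha\in A:|\varrho_\alpha[B]|=\lambda\}$ is large. The crucial point is that in every subcase the tree supplies a $\mu$-sized set of $\Delta$-values (because $\theta_A\in E^\lambda_\mu$, or via Lemma~\ref{lemma36a} when $\theta_A=\lambda$) to pair against a $\lambda$-sized set of $\Delta$- or $\varrho$-values, which is exactly the input shape that $\lambda\nrightarrow[\mu;\lambda]^2_\lambda$ requires.

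Your plan, by contrast, makes $c$ depend on the triple only through $\varrho$-values, and this is where the gap lies. The ``remedy'' you sketch---pigeonhole on the $\mu$-valued $d_0$ to fix a single $\delta$, and then hope $\delta\ge e(\sigma,\varrho(\beta,\gamma))$---does not close. After pigeonholing you retain $\lambda$-many pairs sharing one value $\delta<\mu$, but these pairs carry $\lambda$-many distinct values of $\varrho(\beta,\gamma)$, and there is no reason a single $\delta$ should dominate $e(\sigma,\cdot)$ on all of them (whether $e$ is of the $\partial(e)$ type or has injective fibers). In Theorem~\ref{use of stability} this step works only because $\delta$ is \emph{chosen} from a cofinal $D\subseteq\mu$ \emph{after} $\epsilon$ is fixed, and then $\beta$ is located using the tree at level $\delta$; you have eliminated the tree, hence eliminated the mechanism that decouples the choice of $\delta$ from the choice of the remaining vertex. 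A second, related problem is that your $\varrho$-comparison branching cannot play the role of the $<_{\lex}$ (or $\Delta$-comparison) branching: the latter separates $A$ from $B$ globally once one shrinks to $A_t,B_{t'}$ below incompatible nodes, whereas nothing about $\varrho$ distinguishes membership in $A$ from membership in $B$. I would recommend reinstating a tree of height $\lambda$ as the paper does; the hypothesis $\lambda\nrightarrow[\mu;\lambda]^2_\lambda$ is then applied by feeding $d$ a $\mu$-sized set of tree-$\Delta$ values against a $\lambda$-sized set.
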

\begin{proof} Let $d:[\kappa]^2\rightarrow\lambda$ be a coloring witnessing $\lambda\nrightarrow[\mu;\lambda]^2_\lambda$.
Let $T:={}^{<\lambda}2$.
Let $\vec{b}=\langle b_\xi\mid\xi<\kappa\rangle$ be an injective enumeration of elements of $\mathcal B(T)$. 
For $\alpha\neq\beta$ from $\kappa$, we write $\Delta(\alpha,\beta)$ for $\Delta(b_\alpha,b_\beta)$.
Likewise, for $B\s\kappa$, we write $T^{\branches B}$ for $T^{\branches \{b_\beta\mid \beta\in B\}}$.

For all $B\s\kappa$ and $t\in T$,
denote $B_t:=\{\beta\in B\mid t\sqsubseteq b_\beta\}$. 
Let $e:[\lambda]^2\rightarrow\mu$ be a map with injective fibers.
Let $h:\lambda\rightarrow\lambda$ be a surjection such that $S_\tau:=\{\sigma\in E^\lambda_\mu\mid h(\sigma)=\tau\}$ is stationary
for every $\tau<\lambda$.
For every $(\alpha,\beta,\gamma)\in\kappa\circledast\kappa\circledast\kappa$, define:
$$Z_{(\alpha,\beta,\gamma)}:=\{\zeta\in\lambda\setminus \varrho(\alpha,\gamma) \mid e(\Delta(\alpha,\beta),\varrho(\beta,\gamma))\ge e(\zeta,\varrho(\beta,\gamma))\}.$$

We define a coloring $c:[\kappa]^3\rightarrow\lambda$ by letting for all $\alpha<\beta<\gamma<\kappa$:
$$c(\alpha,\beta,\gamma):=\begin{cases}\label{thecmap}
d(\Delta(\alpha,\gamma),\Delta(\beta,\gamma)),&\text{if }\Delta(\alpha,\beta)<\Delta(\beta,\gamma);\\
d(\Delta(\alpha,\gamma),\varrho(\beta,\gamma)),&\text{if }\Delta(\alpha,\beta)=\Delta(\beta,\gamma);\\
d(\Delta(\alpha,\beta),\varrho(\alpha,\gamma)),&\text{if }\Delta(\alpha,\beta)>\Delta(\beta,\gamma)\ \&\ b_\alpha<_{\lex}b_\beta;\\
h(\min(Z_{(\alpha,\beta,\gamma)})),&\text{if }\Delta(\alpha,\beta)>\Delta(\beta,\gamma)\ \&\ b_\beta<_{\lex}b_\alpha.
\end{cases}$$

Suppose that $A,B\in\mathcal P(\kappa)$ are disjoint sets satisfying conditions (i)--(iii) above and let $\tau<\lambda$ be a prescribed color.   
By possibly passing to a cofinal subset of $A$, we may assume that $A=A'$ in the sense of Lemma~\ref{height lemma}. In particular, we may assume the existence of $\theta_A\le\lambda$ such that 
$T^{\branches A}\in\mathcal T(\lambda,\theta_A)$ and, in addition, if $\theta_A<\lambda$, then $|\mathcal B(T^{\branches A})|=\lambda$.
As $\mu^{<\mu}<\mu^+=\lambda$, this means that if $\theta_A<\lambda$, then $\theta_A\in E^\lambda_\mu$.
Likewise, we may assume the existence of $\theta_B\in E^\lambda_\mu\cup\{\lambda\}$ such that $T^{\branches B}\in\mathcal T(\lambda,\theta_B)$. 
Without loss of generality, we may also assume that $\theta_A\le\theta_B$.
The proof is now divided into two main cases.

\medskip

\underline{\bf Case 1: $\theta_A=\theta_B=\lambda$.} In this case, we shall need the following claim.
\begin{claim}\label{claim531} There exists $t\in T^{\branches B}$ satisfying all of the following:
\begin{itemize}
\item If $t\notin T^{\branches A}$, then $D:=\{\Delta(b_\beta,t)\mid \beta\in B\}$ has size $\mu$,
and there exists $t'\in T^{\branches A}$ incompatible with $t$ such that $\sup(D)>\Delta(t,t')$;
\item If $t\in T^{\branches A}$, then $D:=\{\Delta(b_\alpha,t)\mid \alpha\in A\}$ has size $\mu$.
\end{itemize}
\end{claim}
\begin{proof} There are two cases to consider:

$\br$ Suppose that there exists $\epsilon<\lambda$ such that $T^{\branches A}\cap T^{\branches B}\cap{}^\epsilon2=\emptyset$.
Pick $t'\in T^{\branches A}\cap{}^\epsilon2$.
For each $\alpha<\lambda$, pick $t_\alpha\in T^{\branches B}\cap{}^\alpha2$.
Then, by Lemma~\ref{lemma36a},
there exists $\alpha\in E^\lambda_\mu$  above $\epsilon$
such that $D:=\{\Delta(b_\beta,t_\alpha)\mid \beta\in B\}\cap\alpha$ is cofinal in $\alpha$.
To see that $t:=t_\alpha$ is as sought, notice that since $t\restriction\epsilon\notin T^{\branches A}$,
it must be the case that $\Delta(t,t')<\epsilon$.

$\br$ Otherwise. Thus, for each $\alpha<\lambda$, we may pick $t_\alpha\in T^{\branches A}\cap T^{\branches B}\cap{}^\alpha2$.
As $\langle t_\alpha\mid \alpha<\lambda\rangle\in\prod_{\alpha<\lambda} T^{\branches A}\cap{}^\alpha2$,
Lemma~\ref{lemma36a} provides an $\alpha\in E^\lambda_\mu$ 
such that $D:=\{\Delta(b_\beta,t_\alpha)\mid \beta\in A\}\cap\alpha$ is cofinal in $\alpha$.
So $t:=t_\alpha$ is as sought.
\end{proof}

Let $t\in T^{\branches B}$ and the corresponding $D$ be as in the claim.  There are two subcases to consider:

\medskip

\underline{\bf Subcase 1.1: $t\in T^{\branches A}$.} Pick $\bar A\in[A]^\mu$ such that $\bar D:=\{\Delta(b_\alpha,t)\mid \alpha\in \bar A\}$ is a $\mu$-sized subset of $D\cap \dom(t)$.
Clearly, $B':=B_t\setminus\sup(\bar A)$ is a set of size $\lambda$.
Since $T^{\branches B'}\in\mathcal T(\lambda,\lambda)$, $E:=\Delta[B'\circledast B']$ is of size $\lambda$, as well.
By the choice of $d$, we may now find $(\delta,\epsilon)\in\bar D\circledast E$ such that $d(\delta,\epsilon)=\tau$.
Pick $\alpha\in\bar A$ such that $\Delta(b_\alpha,t)=\delta$.
Finally, find $(\beta,\gamma)\in B'\circledast B'$ such that $\Delta(\beta,\gamma)=\epsilon$. 
Then
$$\Delta(\beta,\gamma)=\epsilon>\delta=\Delta(b_\alpha,t)=\Delta(\alpha,\gamma),$$
and hence $\Delta(\alpha,\beta)=\Delta(\alpha,\gamma)<\Delta(\beta,\gamma)$.
Altogether, $(\alpha,\beta,\gamma)\in A\circledast B\circledast B$, and
$$	 		c(\alpha,\beta,\gamma)=d(\Delta(\alpha,\gamma),\Delta(\beta,\gamma))=d(\delta,\epsilon)=\tau.$$

\smallskip

\underline{\bf Subcase 1.2: $t\notin T^{\branches A}$.} Pick $t'\in T^{\branches A}$ incompatible with $t$ such that $\sup(D)>\Delta(t,t')$.
As $\cf(\theta_B)=\mu$, we may now pick $\bar B\in[B]^\mu$ such that $\bar D:=\{\Delta(b_\beta,t)\mid \beta\in \bar B\}$ is a $\mu$-sized subset of $D$ with $\min(\bar D)>\Delta(t,t')$.

As $t'\in T^{\branches A}$, $A_{t'}$ has size $\lambda$ and so does $A':=A_{t'}\setminus\sup(\bar B)$.
As $t\in T^{\branches B}$, $B_t$ has size $\lambda$,
so since $\varrho\restriction [A\cup B]^2$ witnesses $\U(\lambda,2,\lambda,3)$,
the set $E:=\varrho[A'\circledast B_{t}]$ has size $\lambda$, as well. 
By the choice of $d$, find $(\delta,\epsilon)\in\bar D\circledast E$ such that $d(\delta,\epsilon)=\tau$. 
Find $\alpha\in\bar B$ such that $\Delta(b_\alpha,t)=\delta$.
Find $(\beta,\gamma)\in A'\circledast B_t$ such that $\varrho(\beta,\gamma)=\epsilon$. 
As $(\beta,\gamma)\in A_{t'}\circledast B_t$,
$$\Delta(\beta,\gamma)=\Delta(t',t)<\min(\bar D)\le \delta=\Delta(b_\alpha,t)=\Delta(\alpha,\gamma),$$
and hence $\Delta(\alpha,\beta)=\Delta(\beta,\gamma)$.
Altogether, $(\alpha,\beta,\gamma)\in B\circledast A\circledast B$, and
$$	 		c(\alpha,\beta,\gamma)=d(\Delta(\alpha,\gamma),\varrho(\beta,\gamma))=d(\delta,\epsilon)=\tau.$$

\medskip

\underline{\bf Case 2: $\theta_A<\lambda$.}
Set $\theta:=\theta_A$. We shall need the following claim.
\begin{claim}\label{claim532} There exist $\chi<\theta$,
$A'\in[A]^\lambda$ and $B'\in[B]^\lambda$ such that $\Delta[A'\times B']=\{\chi\}$.
\end{claim}
\begin{proof} Denote $\bar A:=\mathcal B(T^{\branches A})$ and $\bar B:=\mathcal B(T^{\branches B})$.
Recall that by our application of Lemma~\ref{height lemma},
$|\bar A|=\lambda$, and if $\theta_B<\lambda$, then $|\bar B|=\lambda$, as well.
We shall prove the claim by showing that there exist $\chi<\theta$ and a pair $(t,t')\in (T^{\branches A})_{\chi+1}\times (T^{\branches B})_{\chi+1}$
such that $\Delta(t,t')=\chi$. Indeed, once we have such a pair $(t,t')$, the sets $A':=A_t$ and $B':=B_{t'}$ would be as sought.

There are two cases to consider:

$\br$ If $\theta_B=\theta$, then set $\bar T:=T\cap{}^{<\theta}2$. 
In this case, $\bar B$ and $\bar A$
are $\lambda$-sized subsets of $\mathcal B(\bar T)$.
So Lemma~\ref{separation lemma2}(2) yields an $s\in\bar T$ together with $i\neq i'$ such that $s{}^\smallfrown\langle i\rangle\in\bar T^{\branches\bar A}\s T^{\branches A}$
and $s{}^\smallfrown\langle i'\rangle\in\bar T^{\branches \bar B}\s T^{\branches B}$.
Evidently, $\chi:=\dom(s)$, $t:=s{}^\smallfrown\langle i\rangle$ and $t':=s{}^\smallfrown\langle i'\rangle$ 
are as sought.

$\br$ If $\theta_B>\theta$, then pick $r\in(T^{\branches B})_\theta$.
For every $a\in \bar A\setminus\{r\}$,
$\chi_a:=\Delta(a,r)$ is smaller than $\theta$.
As $|\bar A|=\lambda$, we can find $\chi<\theta$ such that $\lambda$-many $a$'s in $\bar A\setminus\{r\}$ satisfy $\chi_a=\chi$.
As the $\chi^{\text{th}}$ level of $T^{\branches A}$ has size $<\lambda$, we may then find $t\in (T^{\branches A})_{\chi+1}$ such that
that $\lambda$-many $a$'s in $\bar A\setminus\{r\}$ satisfy $\chi_a=\chi$ and $a\restriction(\chi+1)=t$.
Clearly, $\chi$, $t$ and $t':=r\restriction(\chi+1)$ are as sought.
\end{proof}

Let $\chi$ be given by the claim. For notational simplicity, we shall assume that $\Delta[A\times B]=\{\chi\}$.
Let ${M}$ be an elementary submodel of $\mathcal H_{(2^{\lambda})^+}$  containing $\{\chi,A,B,\allowbreak T^{\branches A},\allowbreak T^{\branches B},\varrho,d\}$
such that $\sigma:={M}\cap\lambda$ is in $S_\tau$. In particular, $|M|=\mu$.
Denote $\upsilon:=\sup(A)$ and $\upsilon_{{M}}:=\sup({M}\cap\upsilon)$.
Note that since $T^{\branches A}\in\mathcal T(\lambda,\theta)$ and as $\lambda=\mu^+>|\theta|$, it follows that $T^{\branches A}$ has size $\le\mu$. 
So, $T^{\branches A}\s{M}$.

Consider the following sets:
\begin{itemize}
\item $A^0:=\{\alpha\in A\mid|\varrho_\alpha[B]|=\lambda\}$,
\item $A^1:=\{\alpha\in A\mid|\varrho_\alpha[B]|\leq\mu\}$.
\end{itemize}
Observe that $A^0,A^1\in{M}$. 
We examine two subcases.

\smallskip

\underline{\bf Subcase 2.1: $A^0$ has size $\lambda$.} 
Appeal to Corollary~\ref{usefullemma} with $X:=A^0$ and $i:=0$ to pick
$\alpha\in A^0$ such that for cofinally many $\delta<\theta$, the two hold:
\begin{enumerate}
\item $b_\alpha(\delta)=0$, and
\item $\{\beta\in A\mid\Delta(\alpha,\beta)=\delta\}$ has size $\lambda$.
\end{enumerate}
Since $\theta\in E^\lambda_\mu$, $D:=\{\delta<\theta\mid \text{Clauses (1) and (2) both hold}\}$ has size $\mu$.
For each $\delta\in D$, use Clause~(2) to fix $\beta_\delta\in A$ above $\alpha$ such that $\Delta(\alpha,\beta_\delta)=\delta$.

Consider $\varsigma:=\sup\{\beta_\delta\mid\delta<\mu\}$.
As $|B\cap\varsigma|\leq\mu$,
the fact that $\alpha\in A^0$ implies that $E:=\varrho_\alpha[B\setminus\varsigma]$ has size $\lambda$. 
By the choice of $d$, then, we may pick $\delta\in D\setminus(\chi+1)$ and $\epsilon\in E$ above $\delta$ such that $d(\delta,\epsilon)=\tau$. 
Pick $\gamma\in B\setminus\varsigma$ such that $\epsilon=\varrho(\alpha,\gamma)$.
Clearly, $\alpha<\beta_\delta<\gamma$.

Recall that $\Delta(\alpha,\beta_\delta)=\delta>\chi=\Delta(\beta_\delta,\gamma)$.
Since $b_\alpha(\delta)=0$, we conclude that $b_{\beta_\delta}(\delta)=1$
and $b_\alpha<_{\lex}b_{\beta_\delta}$.
Altogether, $(\alpha,\beta_\delta,\gamma)\in A\circledast A\circledast B$, and
$$c(\alpha,\beta_\delta,\gamma)=d(\Delta(\alpha,\beta_\delta),\varrho(\alpha,\gamma))=d(\delta,\epsilon)=\tau,$$
as sought.

\smallskip

\underline{\bf Subcase 2.2: $A^1$ has size $\lambda$.} 
By Condition~(iii), find $A'\in[A^1]^{\lambda}$ and $B'\in[B]^{\lambda}$ such that 
$\min(\varrho[A'\circledast B'])>\sigma$.
Appeal to Corollary~\ref{usefullemma} with $X:=A'$ and $i:=0$ to pick
$\beta\in A'\setminus\upsilon_{{M}}$ such that for cofinally many $\delta<\theta$, the two hold:
\begin{enumerate}
\item $b_\beta(\delta)=0$, and
\item $\{\alpha\in A'\mid\Delta(\alpha,\beta)=\delta\}$ has size $\lambda$.
\end{enumerate}
Since $\theta\in E^\lambda_\mu$, $D:=\{\delta<\theta\mid \text{Clauses (1) and (2) both hold}\}$ has size $\mu$.

Pick $\gamma\in B'\setminus(\beta+1)$ arbitrarily.
As $(\beta,\gamma)\in A'\circledast B'$, the ordinal $\epsilon:=\varrho(\beta,\gamma)$ is bigger than $\sigma$.
Since $e_\epsilon$ is an injection to $\mu=|D|$,
we may pick $\delta\in D$ such that $e(\delta,\epsilon)>\max\{e(\chi,\epsilon),e(\sigma,\epsilon)\}$.
In addition, since $\mu=\cf(\sigma)$,
the following set is bounded below $\sigma$:
$$Z:=\{\zeta<\sigma\mid e(\zeta,\epsilon)\le e(\delta,\epsilon)\}.$$
Set $t:=(b_\beta\restriction\delta){}^\smallfrown\langle 1\rangle$. 
From $\delta\in D$ we infer that $(A')_{t}$ has size $\lambda$.
As $t\in T^{\branches A}\s M$, in particular, $(A^1)_{t}$ is a set of size $\lambda$ lying in ${M}$.
By Condition~(iii) and elementarity of ${M}$ one can find $\alpha_0\neq\alpha_1$ in $(A^1)_{t}\cap{M}$ such that $\varrho(\alpha_0,\alpha_1)>\sup(Z)$. 
As $\varrho$ is subadditive, 
we may now find $\alpha\in\{\alpha_0,\alpha_1\}$ such that $\varrho(\alpha,\gamma)>\sup(Z)$. 
Note that, as $\{\alpha,B,\varrho\}\in{M}$, and as $\alpha\in A^1$, $\varrho_\alpha[B]$ is a set of size no more than $\mu$ lying in ${M}$,
so that $\sup(\varrho_\alpha[B])\in{M}$.
In particular, $\varrho(\alpha,\gamma)<\sigma$. 
\begin{claim}
All of the following hold:
\begin{enumerate}
\item $(\alpha,\beta,\gamma)\in A\circledast A\circledast B$;
\item $\Delta(\alpha,\beta)>\Delta(\beta,\gamma)$;
\item $b_\beta<_{\lex}b_\alpha$;
\item $\min(Z_{(\alpha,\beta,\gamma)})=\sigma.$
\end{enumerate}	
\end{claim}
\begin{proof}\begin{enumerate}
\item $\gamma$ was chosen to be in $B\setminus(\beta+1)$. In addition, $\beta\in A'\setminus\upsilon_{{M}}$, whereas $\alpha\in M\cap A$. 

\item Since $\Delta(\alpha,\beta)=\Delta(t,b_\beta)=\delta>\chi=\Delta(\beta,\gamma)$.

\item By the definition of $t$ and since $\delta\in D$.

\item As $\delta=\Delta(\alpha,\beta)$ and $\varrho(\beta,\gamma)=\epsilon$,
we infer that
$Z_{(\alpha,\beta,\gamma)}=\{\zeta\in\lambda\setminus\varrho(\alpha,\gamma)\mid e(\delta,\epsilon)\ge e(\zeta,\epsilon)\}$.
In particular, $\sigma\in Z_{(\alpha,\beta,\gamma)}$.
Now, if $\zeta:=\min(Z_{(\alpha,\beta,\gamma)})$ is below $\sigma$, 
then $\zeta\in Z$, contradicting the fact $\varrho(\alpha,\gamma)>\sup(Z)$. 	\qedhere
\end{enumerate}
\end{proof} 

By the preceding claim and the definition of $c$,
$$c(\alpha,\beta,\gamma)=h(\min(Z_{(\alpha,\beta,\gamma)})=h(\sigma)=\tau,$$
as sought. 
\end{proof}

\section{Countably many colors}\label{section5}

The main result of this section asserts that
$${\lambda^+\nsrightarrow[\lambda,\lambda]^3_\omega}$$ holds, provided that $\lambda=\mu^+$ for an infinite cardinal $\mu=\mu^{<\mu}$.
The idea of the proof is to build on the colorings $c:[\lambda^+]^3\rightarrow\lambda$ given by Theorems \ref{use of stability} and \ref{thm53}
with respect to the subadditive coloring $\rho:[\lambda^+]^2\rightarrow\lambda$ given by Fact~\ref{all the rho}.
By Clause~(iii) of these theorems, we must address the problematic case in which the two sets $A,B$ of Definition~\ref{narrowsup}
do not satisfy that $\rho\restriction[A\cup B]^2$ witnesses $\U(\lambda,\lambda,\lambda,3)$.
Anyone that is familiar with \cite[\S10]{MR2355670} would probably suggest to use the oscillation of \cite[\S8]{MR2355670} in this problematic case,
and this indeed works. Unfortunately, to verify that this works in the rectangular context, we had to reopen and tweak the proofs.
The experts may want to skip directly to Corollary~\ref{corofstable}. The newcomers may benefit from the modular exposition.

\begin{setup}\label{setup51}
For the rest of this section, $\kappa$ stands for a regular uncountable cardinal, $\Upsilon$ is a large enough regular cardinal (e.g., $(2^\kappa)^+$), 
and we fix some $C$-sequence $\vec C=\langle C_\beta\mid\beta<\kappa\rangle$.
We shall also assume that $\otp(C_\beta)=\cf(\beta)$ for all $\beta<\kappa$, though this will only play a role in the proof of Lemma~\ref{stable case_relaxed} below.
\end{setup}

The items of the next definition correspond to Definitions 8.1.4, 6.3.1 and 8.1.1 of \cite{MR2355670}, where the last item is a non-essential strengthening of the latter.
\begin{defn}
A subset $\Gamma\s\kappa$ with $\cf(\otp(\Gamma))>\omega$ is said to be:
\begin{itemize}
\item \emph{$\vec{C}$-stationary} iff 	
$\bigcup_{\beta\in\Gamma}(\acc(C_\beta)\cup\{\beta\})$ is stationary in $\sup(\Gamma)$;
\item \emph{$\vec{C}$-nontrivial} iff 
for every club $D\s\sup(\Gamma)$, there exists $\alpha\in\Gamma$ such that $D\cap\alpha\nsubseteq C_\beta$ for all $\beta\in\Gamma$;
\item \emph{$\vec C$-oscillating}
iff for every club $D\s \sup(\Gamma)$, there exist $\beta\in\Gamma$ and an increasing sequence $\langle\delta_j\mid j<\omega\rangle$ of 
ordinals in $D\setminus C_\beta$ such that $(\delta_j,\delta_{j+1} )\cap C_\beta\neq\emptyset$ for all $j<\omega$.
\end{itemize}
\end{defn}

\begin{lemma}\label{nontrivial and stationary is unbounded}
Suppose that $\Gamma\s\kappa$ is such that $\cf(\otp(\Gamma))>\omega$. 

If $\Gamma$ is $\vec{C}$-nontrivial and $\vec{C}$-stationary, then $\Gamma$ is $\vec{C}$-oscillating.
\end{lemma}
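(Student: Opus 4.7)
The plan is to introduce the auxiliary set
\[
D^\dagger := \{\xi < \sup(\Gamma) \mid \sup(D \cap \xi \setminus C_\beta) = \xi \text{ for every } \beta \in \Gamma\},
\]
show it is a club in $\sup(\Gamma)$, then apply $\vec{C}$-stationarity of $\Gamma$ to obtain a point $\xi \in D^\dagger$ at which $C_\beta \cap \xi$ is also cofinal for some $\beta \in \Gamma$, and finally interleave $C_\beta$ and $D \setminus C_\beta$ below $\xi$ to build the desired oscillating sequence.

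First I would note that $\cf(\sup(\Gamma)) = \cf(\otp(\Gamma)) > \omega$ (since $\Gamma$ is cofinal in $\sup(\Gamma)$), so club-vs-stationary arguments in $\sup(\Gamma)$ make sense. Closedness of $D^\dagger$ is immediate: if $\xi$ is a limit of points in $D^\dagger$, then for each $\beta \in \Gamma$ the cofinal witnesses $D \cap \xi' \setminus C_\beta$ (as $\xi' \nearrow \xi$) unite into a cofinal subset of $\xi$. The key step is unboundedness. Given $\zeta < \sup(\Gamma)$, I would recursively produce $\zeta < \alpha_0 < \alpha_1 < \cdots$ in $\Gamma$ by applying $\vec{C}$-nontriviality to the clubs $D \setminus (\alpha_n+1)$; at each step this yields $\alpha_{n+1} \in \Gamma$ such that $D \cap (\alpha_n,\alpha_{n+1}) \not\subseteq C_\beta$ for \emph{every} $\beta \in \Gamma$ at once. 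Setting $\xi := \sup_n \alpha_n$, any $\zeta' < \xi$ lies below some $\alpha_n$, and the witness in $D \cap (\alpha_n,\alpha_{n+1}) \setminus C_\beta$ lies in $(\zeta',\xi)$; hence $\xi \in D^\dagger$.

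With $D^\dagger$ a club in $\sup(\Gamma)$, $\vec{C}$-stationarity provides some $\xi \in D^\dagger \cap \bigcup_{\beta\in\Gamma}(\acc(C_\beta) \cup \{\beta\})$. Fixing $\beta \in \Gamma$ witnessing $\xi \in \acc(C_\beta) \cup \{\beta\}$, one has $\sup(C_\beta \cap \xi) = \xi$ (either directly, or because $\xi = \beta$ and $\sup(C_\beta) = \beta$). Since $\xi \in D^\dagger$ additionally gives $\sup(D \cap \xi \setminus C_\beta) = \xi$, I then recursively choose interleaved $\delta_j \in D \cap \xi \setminus C_\beta$ and $\gamma_j \in C_\beta \cap (\delta_j,\xi)$ so that $\delta_j < \gamma_j < \delta_{j+1}$; the resulting $\langle \delta_j \mid j < \omega\rangle$ is the desired witness, with $\gamma_j \in (\delta_j,\delta_{j+1}) \cap C_\beta$.

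The main obstacle is the unboundedness of $D^\dagger$, as this is the sole place where $\vec{C}$-nontriviality is invoked and the only step requiring a \emph{uniform} escape from all $C_\beta$ simultaneously inside each successor interval; one has to be careful that the single $\alpha_{n+1}$ produced by one application of nontriviality really does avoid every $C_\beta$ at once (which is exactly what nontriviality asserts). Once that uniformity is unlocked, the role of $\vec{C}$-stationarity is merely to place the limit point $\xi$ on an accumulation point of (or at the top of) some $C_\beta$, so that the interleaving can proceed.
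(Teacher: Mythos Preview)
Your argument is correct. It differs from the paper's proof in how $\vec C$-nontriviality and $\vec C$-stationarity are combined. The paper first fixes a selector $\delta\mapsto\beta_\delta$ on the stationary set $\Delta:=\bigcup_{\beta\in\Gamma}(\acc(C_\beta)\cup\{\beta\})$ and then argues by contradiction: assuming $\sup(D\cap\delta\setminus C_{\beta_\delta})<\delta$ for every $\delta\in\Delta$, Fodor's lemma produces a uniform bound $\epsilon$ on a stationary $S\subseteq\Delta$, and the club $D\setminus\epsilon$ is then shown to violate $\vec C$-nontriviality (for each $\alpha\in\Gamma$ one takes $\delta:=\min(S\setminus\alpha)$ and checks $D'\cap\alpha\subseteq C_{\beta_\delta}$). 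By contrast, you bypass Fodor entirely and construct the club $D^\dagger$ directly by an $\omega$-iteration of nontriviality, obtaining at each $\xi\in D^\dagger$ simultaneous escape from \emph{every} $C_\beta$, which is stronger than the single-$\beta_\delta$ escape the paper needs. Both routes are short; yours is more constructive, while the paper's sits squarely in the pressing-down idiom.
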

\begin{proof} Suppose that $\Gamma$ is $\vec{C}$-nontrivial and $\vec{C}$-stationary.
By the latter,
$\Delta:=\bigcup_{\beta\in\Gamma}(\acc(C_\beta)\cup\{\beta\})$ is a stationary subset of $\theta$.
For each $\delta\in\Delta$, pick $\beta_\delta\in \Gamma$ such that $\sup(C_{\beta_\delta}\cap\delta)=\delta$.

Next, to verify that $\Gamma$ is $\vec C$-oscillating, let $D\s \sup(\Gamma)$ be a given club.
\begin{claim} There exists $\delta\in \Delta$ such that
$\sup(D\cap\delta\setminus  C_{\beta_\delta})=\delta$.
\end{claim}
\begin{proof} Suppose not. Then, for every $\delta\in\Delta$,
$\epsilon_\delta:=\sup(D\cap\delta\setminus  C_{\beta_\delta})$ is smaller than $\delta$.
Fix $\epsilon<\sup(\Gamma)$ for which $S:=\{ \delta\in \Delta\mid \epsilon_\delta<\epsilon<\delta\}$ is stationary.
Now, consider the club $D':=D\setminus\epsilon$. Then, for every $\alpha\in\Gamma$, letting $\delta:=\min(S\setminus\alpha)$,
it is the case that $D'\cap\alpha\s D\cap[\epsilon,\delta)\s C_{\beta_\delta}$.
This contradicts the fact that $\Gamma$ is $\vec C$-nontrivial.
\end{proof}

Let $\delta$ be given by the claim.
As $\sup(D\cap\delta\setminus  C_{\beta_\delta})=\delta=\sup(C_{\beta_\delta}\cap\delta)$,
it is easy to recursively construct an increasing sequence $\langle\delta_j\mid j<\omega\rangle$ of 
ordinals in $D\cap\delta\setminus C_{\beta_\delta}$ such that $(\delta_j,\delta_{j+1} )\cap C_{{\beta_\delta}}\neq\emptyset$ for all $j<\omega$.
\end{proof}

\begin{defn} For two disjoint sets of ordinals $y$ and $z$,  we say that $P$ is a \emph{$y$-convex subset of $z$}
iff one of the following occurs:
\begin{itemize}
\item $P=\{ \zeta\in z\mid \zeta<\alpha\}$ and $\alpha=\min(y)$;
\item $P=\{ \zeta\in z\mid \beta<\zeta\}$ and $\beta=\max(y)$;
\item $P=\{ \zeta\in z\mid \alpha<\zeta<\beta\}$ and $\alpha<\beta$ are two consecutive elements of $y$.
\end{itemize}
\end{defn}
Note that if $P$ and $Q$ are nonempty $y$-convex subsets of $z$, then either $P<Q$ or $Q<P$.

\begin{defn}[Todor\v{c}evi\'{c}, {\cite[\S8]{MR2355670}}]  \label{2dim-osc}
For an ordinal $\varepsilon<\kappa$, define a function $\Osc_\varepsilon:[\mathcal{P}(\kappa)]^2 \rightarrow\mathcal P(\mathcal P(\kappa))$ via
$$\Osc_\varepsilon(x,y):=\begin{cases}\{ P\mid P\text{ is a nonempty }y\text{-convex subset of }x\setminus\varepsilon\},&\text{if }y\cap x\s\varepsilon;\\
\emptyset,&\text{otherwise}.	\end{cases}$$ 

Then the \emph{oscillation mapping}
$\osc_\varepsilon:[\mathcal{P}(\kappa)]^2 \rightarrow\card(\kappa+1)$ is defined via
$\osc_\varepsilon(x,y):=|\Osc_\varepsilon(x,y)|$. 
\end{defn}
\begin{remark} \begin{enumerate}
\item If we omit the subscript $\varepsilon$, then $\Osc(x,y)$ and $\osc(x,y)$ are understood to be $\Osc_\varepsilon(x,y)$ and $\osc_\varepsilon(x,y)$ for $\varepsilon:=\ssup(x\cap y)$.
\item For all $\varepsilon<\alpha<\beta<\kappa$ such that $C_\alpha\cap C_\beta\s\varepsilon$,
$C_\alpha\setminus\varepsilon$ and $C_\beta$ have no common accumulation points,
and hence $\Osc_\varepsilon(C_\alpha,C_\beta)$ is finite.
In this case, we shall identify $\Osc_\varepsilon(C_\alpha,C_\beta)$ with its increasing enumeration $\langle P_0,\ldots,P_n\rangle$. 
\end{enumerate}
\end{remark}
The next lemma makes explicit some of the features that are present in the proof of \cite[Lemma~8.1.2]{MR2355670}. 

\begin{lemma}[Todor\v{c}evi\'{c}]\label{osclemma} 
Suppose that $\Gamma$ is a cofinal subset of some $\theta\le\kappa$ of uncountable cofinality,
and that $\Gamma$ is $\vec C$-oscillating.
For every cofinal $E\s\theta$,
there exists $\beta\in\Gamma$ such that for every positive integer $n$,
there are $\alpha\in\Gamma\cap\beta$ and $\varepsilon\in E\cap\alpha$ such that all of the following hold:
\begin{itemize}
\item $\osc_\varepsilon(C_\alpha,C_\beta)=n$;
\item for every $j<n$, there is a pair $\epsilon<\epsilon'$ of ordinals in $E\setminus C_\alpha$ for which
$$\Osc_\varepsilon(C_\alpha,C_\beta)(j)=C_\alpha\cap (\epsilon,\epsilon').$$
\end{itemize}
\end{lemma}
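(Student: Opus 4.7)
The plan is to invoke the $\vec C$-oscillating property of $\Gamma$ a single time with a carefully chosen club, extracting simultaneously the witness $\beta$ and an infinite oscillation pattern; then, for each positive integer $n$, distill an $\alpha\in\Gamma\cap\beta$ and an $\varepsilon\in E\cap\alpha$ from the first $n+1$ terms of that pattern.

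Concretely, set $D:=\acc^+(E)\cap\acc^+(\Gamma)$. Since both factors are clubs of $\theta$ and $\cf(\theta)\geq\cf(\otp(\Gamma))>\omega$, the intersection $D$ is itself a club of $\theta$. Applying the $\vec C$-oscillating property of $\Gamma$ to $D$ produces $\beta\in\Gamma$ together with an increasing sequence $\langle\delta_j\mid j<\omega\rangle$ of ordinals in $D\setminus C_\beta$ such that $(\delta_j,\delta_{j+1})\cap C_\beta\neq\emptyset$ for every $j<\omega$. For each $j$, fix $\gamma_j\in(\delta_j,\delta_{j+1})\cap C_\beta$. Each $\delta_j$ then satisfies three key properties: (i)~$\delta_j\notin C_\beta$; (ii)~$\sup(E\cap\delta_j)=\delta_j$; and (iii)~$\sup(\Gamma\cap\delta_j)=\delta_j$. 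I claim this single $\beta$ works for every $n$.

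Fix $n\ge 1$. Appealing to~(iii) at $\delta_n$, pick $\alpha\in\Gamma$ with $\gamma_{n-1}<\alpha<\delta_n$; then $C_\alpha\subseteq\alpha<\delta_n$, and $C_\beta\cap\alpha$ contains $\gamma_0<\cdots<\gamma_{n-1}$, providing $n$ prospective $C_\beta$-convex pieces. Using~(ii) at each $\delta_j$ for $j\le n$, select for every $j$ a pair $\epsilon_j^-<\epsilon_j^+$ in $E\cap(\gamma_{j-1},\delta_j)$ (with the convention $\gamma_{-1}:=\ssup(C_\alpha\cap C_\beta)$, which is below $\alpha$ in the generic case $\alpha\notin\acc(C_\beta)$) placed strictly above $\max(C_\alpha\cap(\gamma_{j-1},\delta_j))$. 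Set $\varepsilon:=\epsilon_0^-$. The design is that the $j$th nonempty $C_\beta$-convex piece of $C_\alpha\setminus\varepsilon$ equals $C_\alpha\cap(\gamma_{j-1},\gamma_j)$ and is bracketed by the pair $(\epsilon_j^+,\epsilon_{j+1}^-)$ of $E$-points lying outside $C_\alpha$ by their placement.

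The main obstacle is verifying $\osc_\varepsilon(C_\alpha,C_\beta)=n$ exactly. The lower bound is straightforward: $C_\alpha$ is cofinal in $\alpha$, the $\gamma_j$'s all lie below $\alpha$, and since each $\delta_j\in\acc^+(\Gamma)$ sits strictly between $\gamma_{j-1}$ and $\gamma_j$, closedness of $C_\alpha$ forces a $C_\alpha$-point into each such interval. The delicate upper bound requires ruling out extraneous $C_\beta$-cuts in $(\varepsilon,\alpha)$ beyond the $\gamma_j$'s, which is the crux of the lemma --- this is handled either by refining $D$ to include a sparseness condition forcing a single $C_\beta$-cut per interval $(\delta_j,\delta_{j+1})$, or by replacing the naive count by a count of switch-points of $C_\alpha$ relative to $C_\beta$ and absorbing stray cuts into $\varepsilon$; either route exploits that stray cuts accumulate to $\alpha$ only from below $\gamma_0$ by our choice of $\alpha$ just under $\delta_n$. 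The bracketing condition $\Osc_\varepsilon(C_\alpha,C_\beta)(j)=C_\alpha\cap(\epsilon_j^+,\epsilon_{j+1}^-)$ is then immediate from the placement of the $\epsilon_j^\pm$ in the $C_\alpha$-gaps just below each $\delta_j$.
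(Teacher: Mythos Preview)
There is a genuine gap, and it sits exactly where you yourself flag unease: the control over $C_\alpha$.

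Your lower-bound argument is incorrect. You write that ``since each $\delta_j\in\acc^+(\Gamma)$ sits strictly between $\gamma_{j-1}$ and $\gamma_j$, closedness of $C_\alpha$ forces a $C_\alpha$-point into each such interval.'' But $\delta_j\in\acc^+(\Gamma)$ only says that $\Gamma$ accumulates below $\delta_j$; it says nothing whatsoever about $C_\alpha$. There is no relationship between the global set $\Gamma$ and the local ladder $C_\alpha$ attached to an arbitrary $\alpha\in\Gamma$. For a generic $\alpha\in\Gamma\cap(\gamma_{n-1},\delta_n)$, the club $C_\alpha$ could perfectly well be contained in a single interval $(\gamma_{j-1},\gamma_j)$, or skip all of the first several intervals entirely. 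So you have neither the lower bound $\osc_\varepsilon(C_\alpha,C_\beta)\ge n$ nor the upper bound, and your proposed fixes for the upper bound (``refining $D$'', ``absorbing stray cuts into $\varepsilon$'') do not address this --- the problem is not stray elements of $C_\beta$ but the total lack of control over $C_\alpha$.

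This is precisely the obstacle the paper's proof is built to overcome, and it does so by a genuinely different mechanism. Rather than picking $\alpha$ by position alone, the paper uses a continuous $\in$-chain $\mathcal M$ of elementary submodels to generate the club $D$, and then for each $n$ defines by backward recursion a sequence of families $\mathcal F_{n},\mathcal F_{n-1},\ldots,\mathcal F_0$ of increasing finite sequences of closed intervals, where membership of $\langle I_0,\ldots,I_n\rangle$ in $\mathcal F_n$ requires that $\alpha:=\max(I_n)\in\Gamma$, that $C_\alpha\subseteq I_0\cup\cdots\cup I_n$, and that $C_\alpha\cap I_j\neq\emptyset$ for every $j$. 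One shows $\langle[0,\Omega_0]\rangle\in\mathcal F_0$ by elementarity (the models $M_j$ see the partial sequences but not the next interval), and then builds the $I_j$'s forward, placing each $I_{j+1}$ above a fresh element of $E$. The resulting $\alpha$ has $C_\alpha$ \emph{confined} to the chosen intervals by construction --- that is the missing idea in your approach, and without it (or something of comparable strength) the argument cannot close.
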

\begin{proof} Set $\mu:=\cf(\theta)$, and fix a map $\psi:\mu\rightarrow\theta$ whose image is cofinal in $\theta$.
Let $\mathcal M $ be a continuous $\in$-chain of length $\mu$ consisting of elementary submodels $M\prec  H_\Upsilon$ with $M\cap\mu\in\mu$ and
$\{\psi,\vec C,\Gamma,E\}\in M$.
It follows that $D := \{ \sup(M \cap \theta)\mid M \in \mathcal M\}$ constitutes a club in $\theta$. 
Recalling that $\Gamma$ is $\vec C$-oscillating, pick $\beta\in\Gamma$
and an increasing sequence $\langle\delta_j\mid j<\omega\rangle$ of
ordinals in $D\setminus C_\beta$ such that $(\delta_j,\delta_{j+1} )\cap C_\beta\neq\emptyset$ for all $j<\omega$.	
By possibly replacing $\delta_j$ by $\delta_{j+1}$, we may assume that $C_\beta\cap\delta_0$ is nonempty.
For every $j<\omega$, since $\delta_j\in D\setminus C_\beta$, pick $M_j\in\mathcal M$ such that $\sup(M_j\cap\theta)=\delta_j$,
and note that $\gamma_j:=\sup(C_\beta\cap\delta_j)$ and $\Omega_j:=\min(M_j\cap\theta\setminus\gamma_j)$ are both smaller than $\delta_j$.
So, for every $j<\omega$:
$$0<\sup(C_\beta\cap\delta_j)=\gamma_j\le\Omega_j<\delta_j<\gamma_{j+1}<\beta.$$

For each $k<\omega$, let $\mathcal I_k$ denote the collection of all increasing sequences $\vec I=\langle I_j \mid j\le k\rangle$ of closed intervals in $\theta$.
Now, let $n$ be a positive integer
and we shall find $\alpha\in\Gamma\cap\beta$ and $\varepsilon\in E\cap\alpha$ as in the conclusion of the lemma.

Define a sequence of collections $\langle\mathcal F_{n-i}\mid i\le n\rangle$ by recursion on $i\le n$, as follows:

$\br$ For $i=0$, let $\mathcal F_n$ be the set of all $\langle I_j\mid j\le n\rangle\in \mathcal I_n$
such that the following two hold:
\begin{itemize}
\item[(1)] $I_0 = [0,\Omega_0]$;
\item[(2)] $\alpha:=\max(I_n)$ belongs to $\Gamma$, $C_\alpha\s I_0\cup\cdots\cup I_n$, and $C_\alpha\cap I_j\neq\emptyset$ for every $j\le n$.
\end{itemize}

$\br$ For every $i<n$ such that $\mathcal F_{n-i}$ has already been defined,
let $\mathcal F_{n-i-1}$ be the collection of all
$\vec I\in\mathcal I_{n-i-1}$ with the property that for every $\epsilon < \theta$ there exists a closed interval $I\s(\epsilon,\theta)$ such that $\vec I{}^\smallfrown \langle I\rangle\in\mathcal F_{n-i}$.

\begin{claim} $\langle [0,\Omega_0]\rangle \in\mathcal F_0$.
\end{claim}
\begin{proof} 
For every $j\le n$, define:
$$ I_j:=\begin{cases}
[0,\Omega_0],&\text{if }j=0;\\
[\delta_{j-1},\Omega_j],&\text{if }0<j<n;\\
[\delta_{n-1},\beta],&\text{otherwise}.
\end{cases}$$

We shall prove by induction on $i\le n$
that $\langle  I_j\mid j\le n-i\rangle\in\mathcal F_{n-i}$.
The base case is immediate, since 
$\langle  I_j\mid j\le n\rangle$ satisfies requirements (1) and (2), with $\beta$ playing the role of $\alpha$.

Next, suppose that we are given $i<n$ for which $\langle  I_j\mid j\le n-i\rangle\in\mathcal F_{n-i}$ has been established.
Note:
\begin{itemize}
\item $\langle \mathcal F_k\mid k\le n\rangle\in M_0\s M_{n-i-1}$;
\item $\langle  I_j\mid j\le n-i-1\rangle\in M_{n-i-1}\cap \mathcal F_{n-i}$;
\item $ I_{n-i}\in M_{n-i}\setminus M_{n-i-1}$.
\end{itemize}
So, by elementarity of $M_{n-i-1}$, $\langle  I_j\mid j\le n-i-1\rangle\in\mathcal F_{n-i-1}$.
\end{proof}

It follows that we may recursively construct a sequence $\langle I_j\mid j\le n\rangle$ such that:
\begin{enumerate}
\item[(3)] $I_0=[0,\Omega_0]$, so that $\langle I_0\rangle\in\mathcal F_0\cap M_0$;
\item[(4)] $\langle I_j\mid j\le k+1\rangle\in \mathcal F_{k+1}\cap M_k$ for every $k< n$;
\item[(5)] $I_{j+1}\s (\min(E\setminus\Omega_j+1),\theta)$ for every $j<n$.
\end{enumerate}

For each $j<n$, denote $\epsilon_j:=\min(E\setminus\Omega_j+1)$,
and note that since $I_{j+1}$ and $E$ are in $M_j$, $\epsilon_j':=\min(E\setminus\max(I_{j+1})+1)$ is $<\delta_j$.
Denote $\gamma_j':=\min(C_\beta\setminus\gamma_j+1)$ 
so that $\gamma_j<\gamma_j'$ are two consecutive elements of $C_\beta$.
Since $\sup(C_\beta\cap\delta_j)=\gamma_j$,
altogether, 
$$I_{j+1}\s(\epsilon_j,\epsilon_j')\s (\Omega_j,\delta_j)\s(\gamma_j,\gamma_j')\s (\gamma_j,\gamma_{j+1}).$$

Now, put $\alpha:=\max(I_n)$. Then $\alpha\in \Gamma\cap\delta_{n-1}\s\Gamma\cap\beta$, $C_\alpha\s I_0\cup\cdots\cup I_n$ and $C_\alpha\cap I_j\neq\emptyset$ for every $j\le n$.
So $(C_\alpha\setminus I_0)\s \bigcup_{j<n}(\Omega_j,\delta_j)$.
On the other hand, $(C_\beta\setminus I_0)\cap(\bigcup_{j<n}(\Omega_j,\delta_j))=\emptyset$.
Therefore, for $\varepsilon:=\epsilon_0$, we get that $$C_\alpha\cap C_\beta\s(\Omega_0+1)\s\varepsilon.$$
\begin{claim} $\{\epsilon_j,\epsilon_j'\mid j<n\}\cap C_\alpha=\emptyset$.
\end{claim}
\begin{proof} Suppose not, and fix $j<n$ such that $\{\epsilon_j,\epsilon_j'\}\cap C_\alpha\neq\emptyset$.
As $\max(I_0)=\Omega_0\le\Omega_j<\epsilon_j<\epsilon_j'$,
we may fix some $i<n$ such that 
$\{\epsilon_j,\epsilon_j'\}\cap I_{i+1}\neq\emptyset$.
Recalling that $I_{j+1}\s(\epsilon_j,\epsilon_j')$, it must be the case that $i\neq j$.
Note:

$\br$ If $i<j$, then $\gamma_{i+1}\le\Omega_{i+1}\le\Omega_j<\epsilon_j<\epsilon_j'$. 

$\br$ If $i>j$, then $\epsilon_j<\epsilon_j'<\delta_j<\gamma_{j+1}\le\gamma_i$.

So, both options contradict the fact that $I_{i+1}\s(\gamma_i,\gamma_{i+1})$. 
\end{proof}

By Clause~(2), for every $j\le n$, $C_\alpha\cap I_{j+1}\neq\emptyset$. Altogether, for every $\varsigma\in(\Omega_0,\varepsilon]$:
$$\begin{aligned}\Osc_\varsigma(C_\alpha,C_\beta)&=\langle C_\alpha\cap I_{j+1}\mid j<n\rangle
\\&=\langle C_\alpha\cap (\gamma_j,\gamma_j')\mid j<n\rangle
\\&=\langle C_\alpha\cap (\epsilon_j,\epsilon_j')\mid j<n\rangle.
\end{aligned}$$
In particular, $\osc_\varepsilon(C_\alpha,C_\beta)=n$.
\end{proof}
\begin{remark} In the preceding proof,
in the special case that $\kappa=\theta$ or $\kappa=(\cf(\theta))^+$, 
one can secure that $\Omega_0$ be equal to $\gamma_0$. So, in this case, we would get that $\max(C_\alpha\cap C_\beta)=\Omega_0$,
meaning that the conclusion of the lemma remains valid also after omitting the subscript $\varepsilon$.
\end{remark}

\begin{defn}\label{chimap}
Define $\chi:[\kappa]^3\rightarrow\omega$ by letting for all $\alpha<\beta<\kappa$:
$$\chi(\alpha,\beta,\gamma):=\max\{k<\omega\mid \Tr(\alpha,\gamma)(k)=\Tr(\beta,\gamma)(k)\}.$$
\end{defn}

\begin{defn}[{\cite[Definition~10.3.1]{MR2355670}}]
A subset $A\s\kappa$ is said to be \emph{stable} if  $\chi``[A]^3$ is finite.
Otherwise, we say that $A$ is \emph{unstable}.
\end{defn}

Similar to \cite[Definition~10.3.3]{MR2355670}, we use $\chi$ to derive the following stepping-up of the two-dimensional oscillation.
\begin{defn}
The \emph{three-dimensional oscillation mapping}, $\overline{\osc}:[\kappa]^3\rightarrow\omega$ is defined on the basis of the two-dimensional oscillation defined in Definition~\ref{2dim-osc} via:
$$\overline{\osc}(\alpha,\beta,\gamma):=\osc_\alpha(C_{\Tr(\alpha,\beta)(\chi(\alpha,\beta,\gamma))},C_{\Tr(\alpha,\gamma)(\chi(\alpha,\beta,\gamma))}).$$
\end{defn}

We now verify a rectangular version of	\cite[Lemma~10.3.4]{MR2355670}:
\begin{lemma}[Todor\v{c}evi\'{c}]\label{unstable case}
Suppose that $B$ is a cofinal subset of some $\theta\le\kappa$ of uncountable cofinality,
and that every cofinal subset of $B$ is unstable. 

Then, for every cofinal $A\s\theta$ and every positive integer $n$, there exists
$(\alpha,\beta,\gamma)\in A\circledast B\circledast B$ such that $\overline{\osc}(\alpha,\beta,\gamma)=n$.

\end{lemma}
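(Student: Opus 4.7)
This is the rectangular version of \cite[Lemma~10.3.4]{MR2355670}, and my strategy will be to follow Todor\v{c}evi\'{c}'s proof of the unstable case, carefully ensuring that the first coordinate of the output triple can be drawn from $A$ rather than from $B$.

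\emph{Step 1 (stabilising a common walk-prefix).} Using the hypothesis that every cofinal $B'\s B$ is unstable, I would iteratively thin out $B$ to produce a cofinal $B^*\s B$, an ordinal $\bar\alpha$, and an integer $k<\omega$ such that, for every $\beta\in B^*$, the $k$-th step of the walk from $\beta$ to $\bar\alpha$ lies strictly above $\bar\alpha$, i.e., $\eta_\beta:=\Tr(\bar\alpha,\beta)(k)>\bar\alpha$, and the map $\beta\mapsto\eta_\beta$ is injective with cofinal image $\Gamma\s\theta$. Instability powers the fact that $k$ may be pushed arbitrarily deep into the walks before they terminate, and a pigeonhole over the first $k$ walk-steps then stabilises the common prefix.

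\emph{Step 2 ($\vec C$-oscillation of $\Gamma$).} Each $\eta_\beta$ sits in $\acc(C_{\eta'_\beta})\cup\{\eta'_\beta\}$ for the predecessor $\eta'_\beta$ on the walk from $\beta$ to $\bar\alpha$, so that $\bigcup_{\beta\in B^*}(\acc(C_{\eta_\beta})\cup\{\eta_\beta\})$ can be arranged to be stationary in $\theta$ and hence $\Gamma$ is $\vec C$-stationary. Simultaneously, within the recursion of Step~1 one may diagonalise against candidate clubs, again using instability, to secure $\vec C$-nontriviality. Lemma~\ref{nontrivial and stationary is unbounded} then delivers that $\Gamma$ is $\vec C$-oscillating.

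\emph{Step 3 (2-D oscillation and transfer).} Set $E:=A\setminus(\bar\alpha+1)$; this is cofinal in $\theta$ because $A$ is. Apply Lemma~\ref{osclemma} to $\Gamma$ and $E$: for the prescribed positive integer $n$, one obtains $\eta_\gamma\in\Gamma$, $\eta_\beta\in\Gamma\cap\eta_\gamma$, and $\alpha\in E\cap\eta_\beta$ with $\osc_\alpha(C_{\eta_\beta},C_{\eta_\gamma})=n$. Let $\beta,\gamma\in B^*$ be the preimages of $\eta_\beta,\eta_\gamma$ under the map of Step~1; then $(\alpha,\beta,\gamma)\in A\circledast B\circledast B$. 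Since $\alpha\in(\bar\alpha,\eta_\beta)\cap(\bar\alpha,\eta_\gamma)$, the walks from $\beta$ and $\gamma$ to $\alpha$ agree with those from $\beta$ and $\gamma$ to $\bar\alpha$ on their first $k$ steps, so $\Tr(\alpha,\beta)(k)=\eta_\beta$ and $\Tr(\alpha,\gamma)(k)=\eta_\gamma$. Invoking Fact~\ref{fact211} and Remark~\ref{concatenation} one then checks that $\chi(\alpha,\beta,\gamma)=k$, so that
\[
\overline{\osc}(\alpha,\beta,\gamma)=\osc_\alpha(C_{\eta_\beta},C_{\eta_\gamma})=n.
\]

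\emph{Main obstacle.} The delicate points are (i) the construction in Step~1, which must simultaneously stabilise the initial walk-prefix of length $k$ and guarantee that $\Gamma$ is $\vec C$-nontrivial---this is where the full strength of the hypothesis (every cofinal subset of $B$ is unstable, not merely $B$) is genuinely used, since a single unstable triple would not suffice to block an arbitrary club; and (ii) the exact computation of $\chi(\alpha,\beta,\gamma)$ in Step~3, which requires that $\bar\alpha$ exceed $\lambda_2(\beta,\gamma)$ (so that Remark~\ref{concatenation} applies to the pair $(\beta,\gamma)$) and that at step $k+1$ the walks from $\gamma$ to $\alpha$ and to $\beta$ actually diverge. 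Both conditions can be arranged by further thinning within Step~1.
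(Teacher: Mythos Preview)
Your overall three-step strategy matches the paper's, but the implementation of Step~1 diverges in a way that creates genuine gaps downstream.

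The paper does \emph{not} fix a single base ordinal $\bar\alpha$. Instead, for each $\delta<\theta$ it sets $\beta_\delta:=\min(B\setminus(\delta+1))$ and applies Fodor to find a stationary $S\s\acc(\theta)$, a bound $\Lambda$, and an integer $k$ with $\lambda_2(\delta,\beta_\delta)\le\Lambda$ and $\rho_2(\last{\delta}{\beta_\delta},\beta_\delta)=k$ for all $\delta\in S$; the set of interest is $\Gamma:=\{\last{\delta}{\beta_\delta}:\delta\in S\}$. With this choice, $\vec C$-stationarity of $\Gamma$ is immediate, since each $\delta\in S$ lies in $\acc(C_{\last{\delta}{\beta_\delta}})\cup\{\last{\delta}{\beta_\delta}\}$ by the very definition of $\eth$. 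Nontriviality is obtained not by a diagonalisation against clubs but by contradiction: assuming a club $D$ bad for $\Gamma$, one builds a sparse cofinal $B'\s B$ with $\chi``[B']^3=\{k\}$, contradicting instability. Your single-$\bar\alpha$ setup provides no stationary witness for $\vec C$-stationarity, and the sentence in your Step~2 that ``each $\eta_\beta$ sits in $\acc(C_{\eta'_\beta})\cup\{\eta'_\beta\}$'' concerns $C_{\eta'_\beta}$ with $\eta'_\beta\notin\Gamma$, so it does not yield what the definition demands.

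There are two further problems in Step~3. First, after taking preimages $\beta,\gamma\in B^*$ of $\eta_\beta<\eta_\gamma$, you need $\beta<\eta_\gamma$ in order for $\Tr(\beta,\gamma)$ to reach $\eta_\gamma$ at step $k$; your setup only gives $\eta_\beta<\eta_\gamma$, and $\beta$ may well lie above $\eta_\gamma$. In the paper this is forced by the additional Fodor clause ``for every $\bar\delta<\delta$, $\beta_{\bar\delta}<\delta$'', which yields $\beta=\beta_{\bar\delta}<\delta\le\zeta=\eta_\gamma$. Second, the paper does not invoke Lemma~\ref{osclemma} with $E=A$ and target $n$. It uses $E:=\acc^+(A\setminus\Lambda)$ with target $n+1$, extracts $\varepsilon\in E$ and the first convex block bounded by some $\epsilon'\in E$, and then chooses $\alpha\in A\cap(\Lambda,\epsilon')$ large enough to swallow that first block, so that $\osc_\alpha=n$. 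This $n{+}1\to n$ manoeuvre is exactly what plants an element $\bar\alpha\in C_{\eta_\gamma}$ strictly between $\alpha$ and $\xi\le\beta$, giving $\Tr(\alpha,\gamma)(k+1)\le\bar\alpha<\beta\le\Tr(\beta,\gamma)(k+1)$ and hence $\chi(\alpha,\beta,\gamma)=k$. Your direct call with target $n$ and $E=A\setminus(\bar\alpha+1)$ supplies no such separating element, so the divergence at step $k{+}1$ is unjustified. The fix you propose under ``Main obstacle'' (arranging $\bar\alpha>\lambda_2(\beta,\gamma)$) is circular, since $\bar\alpha$ is fixed before $\beta,\gamma$ are produced by the oscillation lemma.
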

\begin{proof} 
For each $\delta<\theta$, let $\beta_\delta:=\min(B\setminus(\delta+1))$
and $\Lambda_\delta:=\lambda_2(\delta,\beta_\delta)$.
By Fodor's lemma, fix $\Lambda<\theta$, $k<\omega$ and a stationary $S\s\acc(\theta)$ such that, for all $\delta\in S$:
\begin{enumerate}
\item $\Lambda_\delta\le\Lambda$;
\item $\rho_2(\last{\delta}{\beta_\delta},\beta_\delta)=k$;
\item for every $\bar\delta<\delta$, $\beta_{\bar\delta}<\delta$.
\end{enumerate}

\begin{claim}\label{claim5251} For every $\delta\in S$ and every ordinal $\alpha$ with $\Lambda<\alpha<\last{\delta}{\beta_\delta}$:
\begin{itemize}
\item $\Tr(\alpha,\beta_\delta)\restriction(k+1)=\Tr(\delta,\beta_\delta)\restriction(k+1)$, and
\item $\Tr(\alpha,\beta_\delta)(k)=\last{\delta}{\beta_\delta}$.
\end{itemize}
\end{claim}
\begin{proof} By Remark~\ref{concatenation}.
\end{proof}

Let $\Gamma:=\{\last{\delta}{\beta_\delta}\mid\delta\in S\}$.
For each $\xi\in\Gamma$, pick $\delta(\xi)\in S$ such that $\xi=\last{\delta(\xi)}{\beta_{\delta(\xi)}}$.
Note that $\delta(\xi)\le\xi\le\beta_{\delta(\xi)}$.

\begin{claim}\label{claim6122} $\Gamma$ is $\vec{C}$-oscillating.
\end{claim}
\begin{proof} As $\delta\in\acc(\last{\delta}{\beta_\delta})\cup\{\last{\delta}{\beta_\delta}\}$ for every $\delta\in S$,
we have $S\s\bigcup_{\xi\in\Gamma}\acc(C_\xi)\cup\{\xi\}$. So $\Gamma$ is $\vec C$-stationary.
By Lemma~\ref{nontrivial and stationary is unbounded}, 
it thus suffices to prove that $\Gamma$ is $\vec C$-nontrivial.
Towards a contradiction, suppose this is not so, and fix a club $D\s\theta$ such that, for every $\alpha\in\Gamma$ there exists $\beta\in\Gamma$ with $D\cap\alpha\s C_\beta$. 
As $\sup(\Gamma)=\theta$, we may then recursively construct a sparse enough cofinal subset $X\s\Gamma$ with the property that for every pair $\xi<\xi'$ of ordinals from $X$,
all of the following hold:
\begin{itemize}
\item $\Lambda<\delta(\xi)$;
\item $D\cap(\beta_{\delta(\xi)},\delta(\xi'))\neq\emptyset$;
\item $D\cap\beta_{\delta(\xi)}\s C_{\xi'}$.
\end{itemize}

As $B':=\{ \beta_{\delta(\xi)}\mid \xi\in X\}$ is a cofinal subset of $B$,
it must be unstable. We shall reach a contradiction by showing that $\chi``[B']^3=\{k\}$.
To this end, let $\alpha<\beta<\gamma$ be a triple of ordinals from $B'$.
Fix a triple $\xi<\xi'<\xi''$ of ordinals from $X$ such that $\alpha=\beta_{\delta(\xi)}$, $\beta=\beta_{\delta(\xi')}$, and $\gamma=\beta_{\delta(\xi'')}$.
Then:
\begin{itemize}
\item $\Lambda<\delta(\xi)<\alpha<\delta(\xi')<\beta<\delta(\xi'')\le \xi''\le\gamma$;
\item $D\cap(\alpha,\delta(\xi'))\neq\emptyset$;
\item $D\cap\beta\s C_{\xi''}$.
\end{itemize}

Pick $\iota\in D\cap(\alpha,\delta(\xi'))\neq\emptyset$, so that $\iota\in D\cap(\alpha,\beta)\s C_{\xi''}$.
Appealing to Claim~\ref{claim5251} with $\delta'':=\delta(\xi'')$, we infer that:
\begin{itemize}
\item $\Tr(\alpha,\gamma)\restriction(k+1)=\Tr(\delta'',\gamma)\restriction(k+1)=\Tr(\beta,\gamma)\restriction(k+1)$, and
\item $\Tr(\alpha,\gamma)(k)=\xi''=\Tr(\beta,\gamma)(k)$.
\end{itemize}

Therefore $$\Tr(\alpha,\gamma)(k+1)=\min(C_{\xi''}\setminus\alpha)\le\iota<\beta\le\min(C_{\xi''}\setminus\beta)=\Tr(\beta,\xi)(k+1).$$
Recalling Definition~\ref{chimap}, this indeed means that $\chi(\alpha,\beta,\gamma)=k$.
\end{proof}

Now, given a cofinal $A\s\theta$ and a positive integer $n$, appeal to Lemma~\ref{osclemma} with $E:=\acc^+(A\setminus\Lambda)$ to find a pair $(\xi,\zeta)\in \Gamma\circledast \Gamma$ and an ordinal $\varepsilon\in E\cap\xi$ such that:
\begin{itemize}
\item $\osc_\varepsilon(C_\xi,C_{\zeta})=n+1$, and
\item for every $j<n+1$, there is a pair $\epsilon<\epsilon'$ of ordinals in $E\setminus C_\xi$ for which
$$\Osc_\varepsilon(C_\xi,C_{\zeta})(j)=C_\xi\cap (\epsilon,\epsilon').$$
\end{itemize}

Let $\epsilon<\epsilon'$ be a pair of ordinals witnessing the case $j=0$ of the preceding. Clearly,
$$\Osc_\varepsilon(C_\xi,C_{\zeta})(0)=C_\xi\cap [\epsilon,\epsilon'].$$
Since $\osc(C_\xi,C_{\zeta})>1$ and $\xi<\zeta$, 
we may fix two consecutive elements $\bar\alpha<\bar\beta$ of $C_\zeta$ such that
$$\Osc_\varepsilon(C_\xi,C_{\zeta})(1)=C_\xi\cap (\bar\alpha,\bar\beta).$$
So, $\epsilon<\epsilon'\le\bar\alpha<\xi$.

Since $C_\xi$ is a closed subset of $\xi$, and $\epsilon'\in \acc^+(A\setminus\Lambda)\cap(\xi\setminus C_\xi)$, we may pick a large enough $\alpha\in A\cap(\Lambda,\epsilon')$ such that
$$\Osc_\varepsilon(C_\xi,C_{\zeta})(0)\s (\epsilon,\alpha).$$
In particular, $C_\xi\cap C_{\zeta}\s\varepsilon\s\alpha$, and 
$$\osc_\alpha(C_\xi,C_{\zeta})=\osc_\varepsilon(C_\xi,C_{\zeta})-1=n.$$

Denote $\bar\delta:=\delta(\xi)$ and $\delta:=\delta(\zeta)$.
Then $(\bar\delta,\delta)\in[S]^2$, $\xi=\last{\bar\delta}{\beta_{\bar\delta}}$ and $\zeta=\last{\delta}{\beta_{\delta}}$.
Set $\beta:=\beta_{\bar\delta}$ and $\gamma:=\beta_{\delta}$, so that $(\beta,\gamma)\in[B]^2$.
Note that
$$\Lambda<\alpha<\epsilon'\le\bar\alpha<\xi\le\beta<\delta\le\zeta\le\gamma.$$
By Claim~\ref{claim5251}, then:
\begin{itemize}
\item $\Tr(\alpha,\gamma)\restriction(k+1)=\Tr(\delta,\gamma)\restriction(k+1)=\Tr(\beta,\gamma)\restriction(k+1)$;
\item $\Tr(\alpha,\gamma)(k)=\zeta=\Tr(\beta,\gamma)(k)$;
\item $\Tr(\alpha,\beta)(k)=\xi$.
\end{itemize}

Therefore, $$\Tr(\alpha,\gamma)(k+1)=\min(C_\zeta\setminus\alpha)\le\bar\alpha<\beta\le\min(C_\zeta\setminus\beta)=\Tr(\beta,\gamma)(k+1),$$
and $\chi(\alpha,\beta,\gamma)=k$.

Summing all up,
$(\alpha,\beta,\gamma)\in A\circledast B\circledast B$, and
$$\begin{aligned}\overline{\osc}(\alpha,\beta,\gamma)=&\ \osc_\alpha(C_{\Tr(\alpha,\beta)(\chi(\alpha,\beta,\gamma))},C_{\Tr(\alpha,\gamma)(\chi(\alpha,\beta,\gamma))})\\
=&\ \osc_\alpha(C_{\Tr(\alpha,\beta)(k)},C_{\Tr(\alpha,\gamma)(k)})\\
=&\ \osc_\alpha(C_\xi,C_{\zeta})=n,\end{aligned}$$
as sought.
\end{proof}

The ending of the proof of Claim~\ref{claim6122} makes it clear that the following hold.
\begin{obs}\label{obs on chi} Suppose:
\begin{itemize}
\item $\lambda_2(\delta,\gamma)<\alpha<\beta<\delta<\gamma<\kappa$;
\item $C_{\last{\delta}{\gamma}}\cap[\alpha,\beta)$ is nonempty.
\end{itemize}

Then $\chi(\alpha,\beta,\gamma)= \rho_2(\last{\delta}{\gamma},\gamma)$. \qed
\end{obs}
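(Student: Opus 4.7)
The plan is to read off the value of $\chi(\alpha,\beta,\gamma)$ directly from the concatenation identity in Remark~\ref{concatenation}. Set $k := \rho_2(\last{\delta}{\gamma},\gamma)$, so that by Definition~\ref{defn21}, $\Tr(\last{\delta}{\gamma},\gamma)(k) = \last{\delta}{\gamma}$ and $\tr(\last{\delta}{\gamma},\gamma)$ is a sequence of length $k$.

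Since $\lambda_2(\delta,\gamma) < \alpha < \beta < \delta < \gamma$, Remark~\ref{concatenation} applies with $\epsilon := \alpha$ and, separately, with $\epsilon := \beta$, yielding
$$\tr(\alpha,\gamma) = \tr(\last{\delta}{\gamma},\gamma) {}^\smallfrown \tr(\alpha,\last{\delta}{\gamma}) \quad\text{and}\quad \tr(\beta,\gamma) = \tr(\last{\delta}{\gamma},\gamma) {}^\smallfrown \tr(\beta,\last{\delta}{\gamma}).$$
Consequently, for every $j < k$ we have $\Tr(\alpha,\gamma)(j) = \Tr(\last{\delta}{\gamma},\gamma)(j) = \Tr(\beta,\gamma)(j)$, while at $j = k$ both values equal $\last{\delta}{\gamma}$ (being the first entry of the suffix $\tr(\cdot,\last{\delta}{\gamma})$ in each concatenation). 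In particular, $\chi(\alpha,\beta,\gamma) \geq k$.

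For the reverse inequality, the $(k+1)^{\text{st}}$ steps of the two walks have to be separated. Since $\Tr(\alpha,\gamma)(k) = \last{\delta}{\gamma} > \alpha$, Definition~\ref{defn21} gives $\Tr(\alpha,\gamma)(k+1) = \min(C_{\last{\delta}{\gamma}} \setminus \alpha)$, and analogously $\Tr(\beta,\gamma)(k+1) = \min(C_{\last{\delta}{\gamma}} \setminus \beta)$. The assumption that $C_{\last{\delta}{\gamma}} \cap [\alpha,\beta)$ is nonempty then furnishes some $\iota$ in that intersection, so that
$$\min(C_{\last{\delta}{\gamma}} \setminus \alpha) \leq \iota < \beta \leq \min(C_{\last{\delta}{\gamma}} \setminus \beta).$$
Thus the $(k+1)^{\text{st}}$ coordinates differ, and $\chi(\alpha,\beta,\gamma) = k$, as sought. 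No real obstacle arises, as this is precisely the computation carried out in the final lines of the proof of Claim~\ref{claim6122}; the role of the new hypothesis on $C_{\last{\delta}{\gamma}}\cap[\alpha,\beta)$ is only to replace the ad hoc production of the separating ordinal $\iota$ used there.
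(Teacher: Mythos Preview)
Your argument is correct and matches the paper's intended reasoning: the paper does not supply a separate proof but simply notes (in the sentence preceding the observation) that ``the ending of the proof of Claim~\ref{claim6122} makes it clear that the following hold,'' and your write-up is precisely that ending, abstracted to the general hypotheses. The use of Remark~\ref{concatenation} to obtain agreement through step $k$, followed by the separation at step $k+1$ via the hypothesized element of $C_{\last{\delta}{\gamma}}\cap[\alpha,\beta)$, is exactly the computation carried out there.
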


The next lemma extracts features present in the proof of \cite[Lemma~10.3.2]{MR2355670}.

\begin{lemma} \label{equivlemma}
Suppose that $X$ is a stable cofinal subset of some $\theta\le\kappa$ of uncountable cofinality.
Then there exist a cofinal $X'\s X$, a club $D\s\theta$,
and a positive integer $k$ satisfying all of the following:	
\begin{enumerate}
\item for every $(\delta,\gamma)\in D\circledast X'$, $D\cap\delta\s C_{\last{\delta}{\gamma}}$;
\item for every $(\delta,\alpha,\delta',\beta,\delta'',\gamma)\in D\circledast\theta\circledast D\circledast\theta\circledast D\circledast X'$:
\begin{itemize}
\item $\chi(\alpha,\beta,\gamma)=k$, and
\item $D\cap\delta''\s C_{\Tr(\alpha,\gamma)(k)}$.
\end{itemize}
\end{enumerate}
\end{lemma}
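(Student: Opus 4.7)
The plan is to exploit stability to pin down a canonical integer $k$, then build the club $D$ via an elementary-submodel reflection so that the walks from any $\gamma\in X'$ become structurally constrained, and finally invoke Observation~\ref{obs on chi} to translate this structure into the desired $\chi$-constancy. First, since $X$ is stable, $\chi``[X]^3$ is a finite subset of $\omega$; set $k:=\max\chi``[X]^3$. A pigeonhole argument, exploiting that $\cf(\otp(X))>\omega$ while the range of $\chi$ on $[X]^3$ is finite, yields a cofinal $X_0\subseteq X$ in which pairs witnessing $\chi(\alpha,\beta,\gamma)=k$ are concentrated: for every $\gamma\in X_0$, cofinally many $\alpha\in X_0\cap\gamma$ admit some $\beta\in X_0\cap\gamma$ with $\chi(\alpha,\beta,\gamma)=k$. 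By the maximality of $k$, for any such triple the walks $\Tr(\alpha,\gamma)$ and $\Tr(\beta,\gamma)$ agree precisely on levels $0$ through $k$, so that $\Tr(\alpha,\gamma)(k)$ serves as a preferred ``landing node'' for walks into $\gamma$, with $\rho_2$-value exactly $k$.

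Next, form a continuous $\in$-chain $\langle M_i\mid i<\cf(\theta)\rangle$ of elementary submodels of $H_\Upsilon$, each of size ${<}\theta$ and containing $\{X_0,\vec C,\theta,\Tr,\rho_2,\chi,k\}$, with $M_i\cap\theta\in\theta$, and let $D_0:=\{\sup(M_i\cap\theta)\mid i<\cf(\theta)\}\cap\acc(\theta)$, which is a club in $\theta$. For $\delta=\sup(M_i\cap\theta)\in D_0$ and $\gamma\in X_0\setminus M_i$, set $\eta:=\last{\delta}{\gamma}$; since $\sup(C_\eta)=\delta$ by Remark~\ref{concatenation} and $M_i$ reflects all of the relevant data, elementarity together with a Fodor-style pressing-down produces, after passing to a sub-club $D\subseteq D_0$ and a cofinal $X'\subseteq X_0$, the following three conditions for every $(\delta,\gamma)\in D\circledast X'$: (i)~$\lambda_2(\delta,\gamma)<\min(D\cap\delta)$; (ii)~$D\cap\delta\subseteq C_{\last{\delta}{\gamma}}$; (iii)~$\rho_2(\last{\delta}{\gamma},\gamma)=k$. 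Item~(i) is a routine consequence of $\lambda_2(\delta,\gamma)<\delta$, (ii) is exactly clause~(1) of the lemma and is the heart of the reflection, while (iii) draws on the concentration of $\chi$ at level $k$ built into $X_0$.

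Finally, clause~(2) follows by invocation of Observation~\ref{obs on chi}. Let $(\delta,\alpha,\delta',\beta,\delta'',\gamma)\in D\circledast\theta\circledast D\circledast\theta\circledast D\circledast X'$; by (ii) applied at $(\delta'',\gamma)$, any point of $D\cap(\delta',\delta'')\subseteq(\alpha,\beta)$ witnesses $C_{\last{\delta''}{\gamma}}\cap[\alpha,\beta)\neq\emptyset$, and combined with $\lambda_2(\delta'',\gamma)<\delta\le\alpha$ supplied by~(i), Observation~\ref{obs on chi} yields $\chi(\alpha,\beta,\gamma)=\rho_2(\last{\delta''}{\gamma},\gamma)=k$. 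Remark~\ref{concatenation} further identifies $\Tr(\alpha,\gamma)(k)=\last{\delta''}{\gamma}$, so that $D\cap\delta''\subseteq C_{\Tr(\alpha,\gamma)(k)}$ follows directly from~(ii). The main obstacle is the middle step: arranging a single club $D$ on which condition~(ii) holds simultaneously for all $\gamma\in X'$ while also keeping $\rho_2(\last{\delta}{\gamma},\gamma)$ uniformly equal to $k$. This is where stability does the essential work---the uniformity of walk landing levels across $\gamma\in X'$---and is plausibly the step that needed reopening of Todor\v{c}evi\'{c}'s original argument in the rectangular context.
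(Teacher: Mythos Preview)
Your final verification of clause~(2) from conditions (i)--(iii) is essentially correct, and the overall architecture (elementary submodels, Fodor, then Observation~\ref{obs on chi}) matches the paper's. However, the heart of the lemma is exactly the step you flag as the ``main obstacle'' and then do not prove: arranging $D\cap\delta\subseteq C_{\last{\delta}{\gamma}}$ for all $(\delta,\gamma)\in D\circledast X'$. Saying that ``elementarity together with a Fodor-style pressing-down'' achieves this is not an argument; you must identify \emph{what} regressive function is being pressed down and \emph{why} stability forces it to stabilize. The paper supplies a concrete incrementation mechanism: its Claim~1 shows that whenever $E\cap\theta_N$ fails to be almost contained in $C_{\last{\theta_N}{\gamma}}$, one can produce $\gamma'\in X$ above $\theta_N$ with $\rho_2(\last{\theta_N}{\gamma'},\gamma')$ strictly larger than $\rho_2(\last{\theta_N}{\gamma},\gamma)$. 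Since stability bounds $\chi``[X]^3$, and Observation~\ref{obs on chi} converts such $\rho_2$-values into $\chi$-values on triples from $X$, the incrementation cannot continue indefinitely; this is what forces the almost-containment to hold on a club (Claim~2). Only \emph{after} this does Fodor enter, to freeze both the bound $\epsilon$ and the integer $k$---the latter emerges from the construction rather than being chosen a priori.

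Relatedly, your opening move of fixing $k:=\max\chi``[X]^3$ and extracting a cofinal $X_0$ on which this value is ``concentrated'' is neither justified nor needed. The pigeonhole claim as stated (for every $\gamma\in X_0$, cofinally many $\alpha\in X_0\cap\gamma$ admit some $\beta$ with $\chi(\alpha,\beta,\gamma)=k$) does not follow from finiteness of $\chi``[X]^3$: nothing prevents the maximum from being attained on a single triple. Even granting such an $X_0$, you give no explanation of how concentration of $\chi$-values yields condition~(iii), i.e., that $\rho_2(\last{\delta}{\gamma},\gamma)=k$ for the \emph{particular} $\gamma$'s landing in $X'$ and for \emph{all} $\delta\in D$ below them. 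In the paper's argument the uniform value of $\rho_2(\last{\delta}{\gamma},\gamma)$ is a byproduct of the incrementation-then-Fodor construction, not an input to it, and need not equal $\max\chi``[X]^3$.
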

\begin{proof} Set $\mu:=\cf(\theta)$, and fix a map $\psi:\mu\rightarrow\theta$ whose image is cofinal in $\theta$.
Let $\mathcal M$ be a continuous $\in$-chain of length $\mu$ consisting of elementary submodels $M\prec  H_\Upsilon$ with $M\cap\mu\in\mu$ and $\{\psi,\vec C,X\}\in M$.
For each $M\in\mathcal M$, denote $\theta_M:=\sup(M\cap\theta)$,
so that $E:=\{\theta_M\mid M\in\mathcal M\} $ is a club in $\theta$.
\begin{claim}\label{claim1} Let $N\prec\mathcal{H}_\Upsilon $ be such that $\{\psi,\vec C,X,\mathcal M\}\in N$ and $N\cap\mu\in\mu$.
Denote $\theta_N:=\sup(N\cap\theta)$ and let $\gamma\in X\setminus(\theta_N+1)$.\footnote{As $\cf(\theta)=\mu>|N|$, $X\setminus\theta_N$ is co-bounded in $X$.}

If $\sup(E\cap\theta_{N}\setminus C_{\last{\theta_N}{\gamma}})=\theta_N$,
then there exists $\gamma'\in X\setminus(\theta_N+1)$ with $\rho_2(\last{\theta_N}{\gamma'},\gamma')>\rho_2(\last{\theta_N}{\gamma},\gamma)$.
\end{claim}
\begin{proof} 
Denote $\bar\gamma:=\last{\theta_N}{\gamma}$, $n:=\rho_2(\bar\gamma,\gamma)$, and
$$\mathcal M^\gamma:=\{ M\in\mathcal M\cap N\mid  \theta_M\notin C_{\bar{\gamma}}\}.$$
Now, assuming that $\sup(E\cap\theta_{N}\setminus C_{\bar\gamma})=\theta_N$,
we infer that 
$$\sup\{ \theta_M\mid M\in\mathcal M^\gamma\}=\theta_N.$$

Pick $M\in\mathcal M^\gamma$ with $\theta_M>\lambda_2(\theta_N,\gamma)$.
Since $\theta_M\notin C_{\bar\gamma}$, it is the case that $\rho_2(\theta_M,\bar\gamma)>1$.
So, by Remark~\ref{concatenation},
$$\tr(\theta_M,\gamma)=\tr(\bar\gamma,\gamma){}^\smallfrown\tr(\theta_M,\bar\gamma),$$
and
$$\rho_2(\theta_M,\gamma)>\rho_2(\bar\gamma,\gamma)+1.$$

Thus, $\im(\tr(\theta_M,\gamma))$ contains not only $\im(\tr(\bar{\gamma},\gamma))$ but also $\min(C_{\bar{\gamma}}\setminus\theta_M)$ which is strictly above $\theta_M$, 
therefore, $\rho_2(\last{\theta_M}{\gamma},\gamma)>n$.
Pick $\Lambda\in M\cap\theta$ above $\lambda_2(\last{\theta_M}{\gamma},\gamma)$.   
Then, the following set belongs to $M$ and $\gamma$ witnesses that it has $\theta_M$ as an element:
$$S:=\{ \delta<\theta\mid \exists \gamma'\in X\setminus(\theta+1)\,[\rho_2(\last{\delta}{\gamma'},\gamma')>n\ \&\ \lambda_2(\delta,\gamma')\le\Lambda]\},$$
so that $S$ is stationary in $\theta$.
Pick $\delta\in S$ above $\theta_N$, along with a witnessing $\gamma'$. As $\lambda_2(\delta,\gamma')\le\Lambda<\theta_M<\theta_N<\delta$, we get from Remark~\ref{concatenation} that
$$\tr(\theta_N,\gamma')=\tr(\last{\delta}{\gamma'},\gamma'){}^\smallfrown\tr(\theta_N,\last{\delta}{\gamma'}),$$
and hence $\rho_2(\theta_N,\gamma')\ge \rho_2(\last{\delta}{\gamma'},\gamma')+1>n+1$.
Therefore $\rho_2(\last{\theta_N}{\gamma'},\gamma')>n$, as sought.
\end{proof}
\begin{claim} $\Delta:=\{ \delta<\theta\mid \exists\gamma\in X\setminus(\delta+1)\,[E\cap\delta\s^*C_{\last{\delta}{\gamma}}]\}$ covers a club in $\theta$.
\end{claim}
\begin{proof} Let $S$ be a stationary subset of $\theta$, and we shall prove that $S\cap\Delta\neq\emptyset$.
Let $N\prec\mathcal{H}_\Upsilon $ be such that $\{\psi,\vec C,X,\mathcal M\}\in N$, $N\cap\mu\in\mu$,
with $\theta_N:=\sup(N\cap\theta)$ in $S$.
Using the fact that $X$ is stable, fix $m<\omega$ such that $\chi``[X]^3\s m$.
Now, if $\theta_N\notin\Delta$, then by iterating Claim~\ref{claim1} finitely many times,
we may find a $\gamma\in X\setminus(\theta_N+1)$ such that $\rho_2(\last{\theta_N}{\gamma},\gamma)>m$.
Pick $\alpha,\beta\in X$ with $\lambda_2(\theta_N,\gamma)<\alpha<\beta<\theta_N$ such that $C_{\last{\theta_N}{\gamma}}\cap(\alpha,\beta)\neq\emptyset$. 
By Observation~\ref{obs on chi}, $\chi(\alpha,\beta,\gamma)>m$. This is a contradiction. 
\end{proof}

For each $\delta\in\Delta$, fix $\gamma_\delta\in X\setminus(\delta+1)$, $\epsilon_\delta<\delta$ and $k_\delta<\omega$ such that:
\begin{itemize}
\item $E\cap\delta\setminus\epsilon_\delta\s C_{\last{\delta}{\gamma_\delta},\gamma_\delta}$;
\item $\lambda_2(\delta,\gamma_\delta)\le\epsilon_\delta$;
\item $\rho_2(\last{\delta}{\gamma_\delta},\gamma_\delta)=k_\delta$.
\end{itemize}

Find $\epsilon<\theta$ and $k<\omega$ for which the following set is stationary:
$$S:=\{\delta\in\Delta\mid \epsilon_\delta=\epsilon\ \&\ k_\delta=k\}.$$

Consider the club $D:=\{\delta\in\acc(E\setminus\epsilon)\mid \forall \bar\delta\in\Delta\cap\delta\,(\gamma_{\bar\delta}<\delta)\}$,
and the set $X':=\{\gamma_\delta\mid\delta\in S\cap D\}$.
We shall verify that $X'$, $D$ and $k$ satisfy the requirements of the two clauses. 

(1) Let $\delta\in D$ and $\gamma\in X'\setminus(\delta+1)$. Pick $\delta^*\in D$ such that $\gamma=\gamma_{\delta^*}$. 
If $\delta^*=\delta$, then $D\cap\delta^*\s E\cap\delta\setminus\epsilon\s C_{\last{\delta^*}{\gamma}}$.
Otherwise,  $\lambda_2(\delta^*,\gamma)\le\epsilon<\delta<\delta^*<\gamma=\gamma_{\delta^*}$. So, by Remark~\ref{concatenation},
$$\tr(\delta,\gamma)=\tr(\last{\delta^*}{\gamma},\gamma){}^\smallfrown\tr(\delta,\last{\delta^*}{\gamma}),$$
with $E\cap\delta^*\setminus\epsilon\s C_{\last{\delta^*}{\gamma}}$. Since $\delta\in\acc(E)\cap(\epsilon,\delta^*)$,
it is the case that $\delta\in\acc(C_{\last{\delta^*}{\gamma}})$. Altogether, $$\tr(\last{\delta}{\gamma},\gamma)=\tr(\last{\delta^*}{\gamma},\gamma).$$
In particular, $D\cap\delta\s E\cap(\epsilon,\delta^*)\s C_{\last{\delta^*}{\gamma}}$.

(2)  Let $(\delta,\alpha,\delta',\beta,\delta'',\gamma)\in D\circledast\theta\circledast D\circledast\theta\circledast D\circledast X'$.
Fix $\delta^*\in S\cap D$ such that $\gamma=\gamma_{\delta^*}$. 
As $\delta^*\in\Delta$ and $\gamma_{\bar\delta}<\delta''<\gamma$ for every $\bar\delta\in\Delta\cap\delta''$, it must be the case that $\delta^*\ge\delta''$.
So $\lambda_2(\delta^*,\gamma)\le\epsilon<\alpha<\beta<\delta^*<\gamma$ and $\delta'\in D\cap(\alpha,\beta)\s C_{\last{\delta^*}{\gamma}}\cap(\alpha,\beta)$.
Then, Observation~\ref{obs on chi} implies that $\chi(\alpha,\beta,\gamma)=\rho_2(\last{\delta^*}{\gamma},\gamma)=k$.
\end{proof}
\begin{remark} While we will not be needing this fact, we point out that the proofs of Lemmas  \ref{unstable case} and \ref{equivlemma}
together show that for every $\theta\le\kappa$ of uncountable cofinality,
and every cofinal subset $X\s\theta$,
the following are equivalent:
\begin{itemize}
\item Every cofinal subset of $X$ is unstable;
\item For every stationary $S\s\theta$ and every $\langle \beta_\delta\mid\delta\in S\rangle$ in ${\prod_{\delta\in S}(X\setminus(\delta+1))}$, 
the set $\Gamma:=\{ \last{\delta}{\beta_\delta}\mid \delta\in S\}$ is $\vec C$-nontrivial.
\end{itemize}
\end{remark}

The following is an easy strengthening of \cite[Lemma~10.3.2]{MR2355670}: 

\begin{lemma}[Todor\v{c}evi\'{c}]\label{stable case_relaxed}
Suppose that $\kappa=\lambda^+$ for some regular uncountable cardinal $\lambda$,
and let $\rho:[\kappa]^2\rightarrow\lambda$ be the corresponding map given by Fact~\ref{all the rho}.

Suppose also that $X$ is a stable subset of $\kappa$ of order-type $\lambda$. Then there exists a cofinal subset $X'\s X$ such that $\rho\restriction[X']^2$ witnesses $\U(\lambda,\lambda,\lambda,\omega)$. 
\end{lemma}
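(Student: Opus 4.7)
The plan is to apply Lemma~\ref{equivlemma} to $X$ and feed its conclusion into the key property of $\rho$ recorded in Fact~\ref{all the rho}, namely that $\rho(\alpha,\gamma)\ge\otp(C_\eta\cap\alpha)$ whenever $\eta\in\im(\tr(\alpha,\gamma))$. Set $\theta:=\sup(X)$, so that $\cf(\theta)=\lambda$ is uncountable, and invoke Lemma~\ref{equivlemma} on the stable cofinal set $X\s\theta$ to obtain a cofinal $X'\s X$ (automatically of order-type $\lambda$), a club $D\s\theta$, and a positive integer $k$ satisfying Clauses~(1) and~(2) of that lemma. We will show that this $X'$ is as required.

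Fix $\sigma<\omega$ with $\sigma\ge 1$ (the case $\sigma=0$ is vacuous), a color $\tau<\lambda$, and a pairwise disjoint family $\mathcal A\s[X']^\sigma$ of size $\lambda$. Pick $\eta<\theta$ with $\otp(D\cap\eta)>\tau$, and discard from $\mathcal A$ every $a$ with $\min(a)\le\eta$, keeping $\lambda$-many. Using regularity of $\lambda$ and finiteness of each $a$, thin further so that the remaining family is \emph{linearly ordered}, in the sense that $\max(a)<\min(b)$ whenever $a$ precedes $b$ in some fixed enumeration. Finally, since $\acc(D)$ is cofinal in $\theta$, a straightforward recursion of length $\lambda$ produces a $\lambda$-sized subfamily $\mathcal B$ with the property that for every pair $a<b$ from $\mathcal B$, the interval $(\max(a),\min(b))$ contains two elements $\delta'_{a,b}<\delta''_{a,b}$ of $D$ with $\delta'_{a,b}+1<\delta''_{a,b}$.

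To check that $\mathcal B$ is as sought, fix distinct $a<b$ in $\mathcal B$ along with $i,j<\sigma$, and set $\alpha:=a(i)$, $\gamma:=b(j)$, $\delta':=\delta'_{a,b}$, $\delta'':=\delta''_{a,b}$, $\beta:=\delta'+1$, and $\delta:=\min(D)$; observe that $\delta<\eta<\min(a)\le\alpha$. Then
$$(\delta,\alpha,\delta',\beta,\delta'',\gamma)\in D\circledast\theta\circledast D\circledast\theta\circledast D\circledast X',$$
so Clause~(2) of Lemma~\ref{equivlemma} delivers $D\cap\delta''\s C_{\Tr(\alpha,\gamma)(k)}$. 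Since $\delta'\in D\cap\delta''$ and $\delta'>\alpha$, we infer $\Tr(\alpha,\gamma)(k)>\alpha$, and hence $\Tr(\alpha,\gamma)(k)\in\im(\tr(\alpha,\gamma))$. Combining Fact~\ref{all the rho} with the inclusion $D\cap\alpha\s D\cap\delta''\s C_{\Tr(\alpha,\gamma)(k)}$:
$$\rho(a(i),b(j))\ \ge\ \otp(C_{\Tr(\alpha,\gamma)(k)}\cap\alpha)\ \ge\ \otp(D\cap\alpha)\ \ge\ \otp(D\cap\eta)\ >\ \tau.$$
As $i,j<\sigma$ were arbitrary, $\min(\rho[a\times b])>\tau$ for all distinct $a,b\in\mathcal B$, which is exactly what the definition of $\U(\lambda,\lambda,\lambda,\omega)$ demands. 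All the genuine difficulty lives inside Lemma~\ref{equivlemma}; the only potential pitfall in the thinning is ensuring the two extracted $D$-points $\delta'_{a,b},\delta''_{a,b}$ are non-adjacent so that $\beta=\delta'_{a,b}+1$ fits strictly between them, and this is automatic because $\acc(D)$ is cofinal in $\theta$.
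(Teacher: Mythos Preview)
Your proof is correct and follows the paper's strategy: apply Lemma~\ref{equivlemma} to the stable set $X$, then feed the resulting inclusion of an initial segment of $D$ into some $C_\eta$ with $\eta\in\im(\tr(\alpha,\gamma))$ into the lower bound of Fact~\ref{all the rho}. The one difference worth noting is that the paper uses Clause~(1) of Lemma~\ref{equivlemma} and then needs a Fodor pressing-down on $\Lambda_\delta=\max\{\lambda_2(\delta,\beta)\mid\beta\in b_\delta\}$ to ensure that $\last{\delta}{\beta}$ actually lies on $\tr(\alpha,\beta)$; you instead invoke Clause~(2), whose six-tuple hypothesis already packages this, so your thinning is a straight $\lambda$-length recursion with no Fodor step---a mild simplification that trades the pressing-down for the need to manufacture the auxiliary points $\delta',\beta=\delta'+1,\delta''$ in each gap.
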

\begin{proof} Denote $\theta:=\sup(X)$. 
Let $X'\s X$ and $D\s\theta$ be given by Lemma~\ref{equivlemma}.
To see that $\rho\restriction[X']^2$ witnesses $\U(\lambda,\lambda,\lambda,\omega)$,
let $\mathcal A\in[X']^\sigma$ be some $\lambda$-sized pairwise disjoint, with $\sigma<\omega$,
and let $\tau<\lambda$.

As $\mathcal A$ consists of $\lambda$-many pairwise disjoint subsets of $X'$ and $\otp(X')=\lambda$, for every $\delta\in D$, we may pick $b_\delta\in\mathcal A$ with $\min(b_\delta)>\delta$.
Then put $\Lambda_\delta:=\max\{ \lambda_2(\delta,\beta)\mid \beta\in b_\delta\}$.
Recalling that $X'$ and $D$ were given by Lemma~\ref{equivlemma}, for all $\delta\in D$ and $\beta\in b_\delta$, 
$$D\cap\delta\s C_{\last{\delta}{\beta}}.$$ 

Next, fix some $\Lambda<\theta$ and a stationary set $S\s D$ such that for every $\delta\in S$:
\begin{enumerate}
\item $\Lambda_\delta\le\Lambda<\delta$,
\item $\otp(D\cap\delta)>\tau$, and
\item For every $\gamma\in D\cap\delta$, $\sup(b_\gamma)<\delta$.
\end{enumerate}

Evidently, $\mathcal B:=\{ b_\delta\mid \delta\in S\}$ is a $\lambda$-sized subset of $\mathcal A$.
Now, given $a\neq b$ in $\mathcal B$, we may find a pair $\gamma\neq\delta$ of ordinals from $S$ such that $a=b_\gamma$ and $b=b_\delta$.
Without loss of generality, $\gamma<\delta$.
Let $(\alpha,\beta)\in a\times b$. Then 
$$\Lambda_\delta<\gamma<\alpha<\delta<\beta,$$
so by Remark~\ref{concatenation},
$\last{\delta}{\beta}\in\im(\tr(\alpha,\beta))$.
As the map $\rho$ was given by Fact~\ref{all the rho}, 
$$\rho(\alpha,\beta)\ge \otp(C_{\last{\delta}{\beta}}\cap\alpha)\ge \otp(C_{\last{\delta}{\beta}}\cap\gamma)\ge\otp(D\cap\gamma)>\tau,$$
as sought.
\end{proof}

It is clear that every subset of a stable set is stable. The next corollary addresses the question of closure under unions.

\begin{cor}\label{corofstable}
Suppose that $A,B$ are cofinal stable subsets of some $\theta\le \kappa$ of uncountable cofinality.
Then, there exist cofinal subsets $A'\s A$ and $B'\s B$ such that:
\begin{itemize}
\item $A'\cup B'$ is stable, and
\item for every $(\alpha,\beta,\gamma)\in[A'\cup B']^3$, if $(\beta,\gamma)\in[A']^2\cup[B']^2$, then $\overline\osc(\alpha,\beta,\gamma)=0$.
\end{itemize}
\end{cor}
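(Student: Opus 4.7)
The plan is to apply Lemma~\ref{equivlemma} independently to $A$ and to $B$, producing cofinal sets $A^*\s A$ and $B^*\s B$, clubs $D_A,D_B\s\theta$, and integers $k_A,k_B<\omega$ witnessing that lemma for each. Let $D:=D_A\cap D_B$, still a club in $\theta$. A routine recursion of length $\cf(\theta)$ (which is uncountable by assumption), interleaving cofinal subsequences of $D$, $A^*$, and $B^*$, produces cofinal $A'\s A^*$ and $B'\s B^*$ satisfying the density property: $|D\cap\min(A'\cup B')|\ge 3$ and, for every $\alpha<\beta$ in $A'\cup B'$, $|D\cap(\alpha,\beta)|\ge 3$.

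Fix now a triple $(\alpha,\beta,\gamma)\in[A'\cup B']^3$ and extract $d_1<d_2<d_3\in D\cap\alpha$, $d_4<d_5<d_6\in D\cap(\alpha,\beta)$, and $d_7<d_8<d_9\in D\cap(\beta,\gamma)$. If $\gamma\in A'$, then the tuple $(d_3,\alpha,d_6,\beta,d_9,\gamma)$ lies in $D_A\circledast\theta\circledast D_A\circledast\theta\circledast D_A\circledast A^*$, so Lemma~\ref{equivlemma}(2) applied to $A$ yields $\chi(\alpha,\beta,\gamma)=k_A$ together with $D_A\cap d_9\s C_{\Tr(\alpha,\gamma)(k_A)}$. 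The case $\gamma\in B'$ is symmetric. In either case $\chi``[A'\cup B']^3\s\{k_A,k_B\}$, proving stability of $A'\cup B'$.

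For the oscillation clause, assume $(\beta,\gamma)\in[A']^2$; the $B'$-case is identical. Write $k:=k_A$, $\zeta:=\Tr(\alpha,\gamma)(k)$ and $\xi:=\Tr(\alpha,\beta)(k)$. The previous application supplies $\Tr(\alpha,\gamma)(k)=\Tr(\beta,\gamma)(k)=\zeta$ and $D_A\cap d_9\s C_\zeta$. A second application of Lemma~\ref{equivlemma}(2) to $A$, using the auxiliary tuple $(d_1,d_2,d_3,\alpha,d_6,\beta)\in D_A\circledast\theta\circledast D_A\circledast\theta\circledast D_A\circledast A^*$ (note $\beta\in A^*$), yields $\chi(d_2,\alpha,\beta)=k$ (whence $\Tr(d_2,\beta)(k)=\Tr(\alpha,\beta)(k)=\xi$) together with $D_A\cap d_6\s C_\xi$. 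Now $d_5\in D\s D_A$ satisfies $d_5<d_6$ and $d_5<d_9$, so $d_5\in C_\xi\cap C_\zeta$, and $d_5>\alpha$. Hence $C_\xi\cap C_\zeta\nsubseteq\alpha$, so by the ``otherwise'' clause of Definition~\ref{2dim-osc} we have $\Osc_\alpha(C_\xi,C_\zeta)=\emptyset$, and therefore $\overline{\osc}(\alpha,\beta,\gamma)=\osc_\alpha(C_\xi,C_\zeta)=0$.

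The main obstacle is the preparatory construction of $A'$ and $B'$, since the cofinal subsets $A^*,B^*$ need not individually meet arbitrary windows between consecutive $D$-elements; the interleaving recursion is essential to guarantee the auxiliary tuple needed for the second application of Lemma~\ref{equivlemma}(2). Beyond that, the proof is a clean double invocation of Lemma~\ref{equivlemma}(2) for $A$, chained through the witness $d_5\in C_\xi\cap C_\zeta$ that forces the three-dimensional oscillation to vanish.
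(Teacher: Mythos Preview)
Your proof is correct and follows essentially the same strategy as the paper's: apply Lemma~\ref{equivlemma} separately to $A$ and $B$, intersect the clubs, thin both sets so that enough $D$-elements separate consecutive points, and then invoke Lemma~\ref{equivlemma}(2) twice to exhibit a point of $C_\xi\cap C_\zeta$ above $\alpha$, forcing $\Osc_\alpha(C_\xi,C_\zeta)=\emptyset$. The only cosmetic difference is the shape of the auxiliary tuple in the second application: the paper uses $(\delta,\alpha,\delta',\iota,\delta'',\beta)$ with $\delta'<\iota<\delta''$ lying between $\alpha$ and $\beta$, whereas you use $(d_1,d_2,d_3,\alpha,d_6,\beta)$ with the first three coordinates below $\alpha$ and then pass through $\chi(d_2,\alpha,\beta)=k$ to identify $\Tr(d_2,\beta)(k)$ with $\xi$; both variants are legitimate instantiations of the lemma.
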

\begin{proof} Appeal to Lemma~\ref{equivlemma} with $A$ to get a cofinal $A'\s A$, a club $D_1\s\theta$ and an integer $k_1$.
Likewise, appeal to Lemma~\ref{equivlemma} with $B$ to get a cofinal $B'\s B$, a club $D_2\s\theta$ and an integer $k_2$.
Consider the club $D:=D_1\cap D_2$.
Pick sparse enough cofinal subsets $A''\s A'$ and $B''\s B'$ such that, letting $X:=A''\cup B''$,
for every $(\alpha,\beta)\in[X]^2$, there are $\iota<\theta$ and $\delta,\delta',\delta''\in D$ with $\delta<\alpha<\delta'<\iota<\delta''<\beta$.

\begin{claim} $X$ is stable. Furthermore, $\max(\chi``[X]^3)=k_2$. 
\end{claim}
\begin{proof} Let $(\alpha,\beta,\gamma)\in[X]^3$.  
Pick $\delta,\delta',\delta''$ such that
$$(\delta,\alpha,\delta',\beta,\delta'',\gamma)\in D\circledast X\circledast D\circledast X\circledast D\circledast X.$$
Then $\chi(\alpha,\beta,\gamma)=k_1$ if $\gamma\in A'$, and $\chi(\alpha,\beta,\gamma)=k_2$ otherwise.
\end{proof}
\begin{claim} Let $Y\in\{A'',B''\}$ and
$(\alpha,\beta,\gamma)\in X\circledast Y\circledast Y$.
Then $\overline\osc(\alpha,\beta,\gamma)=0$.
\end{claim}
\begin{proof} If $Y=A''$, then denote $k:=k_1$. Otherwise, denote $k:=k_2$.
Now, pick $\iota,\delta,\delta',\delta'',\delta'''$ such that
$$(\delta,\alpha,\delta',\iota,\delta'',\beta,\delta''',\gamma)\in D\circledast X\circledast D\circledast\theta\circledast D\circledast Y\circledast D\circledast Y.$$
In particular,
$$\{(\delta,\alpha,\delta',\iota,\delta'',\beta),(\delta,\alpha,\delta'',\beta,\delta''',\gamma)\}\s D\circledast\theta\circledast D\circledast\theta\circledast D\circledast X.$$
Then $D\cap\delta''\s C_{\Tr(\alpha,\beta)(k)}$ and $D\cap\delta'''\s C_{\Tr(\alpha,\gamma)(k)}$.
In addition, $\chi(\alpha,\beta,\gamma)=k$, so that 
$$\delta'\in D\cap(\alpha,\delta'')\s C_{\Tr(\alpha,\beta)(\chi(\alpha,\beta,\gamma))}\cap C_{\Tr(\alpha,\gamma)(\chi(\alpha,\beta,\gamma))}.$$
Recalling Definition~\ref{2dim-osc}, this means that 
$$\Osc_\alpha(C_{\Tr(\alpha,\beta)(\chi(\alpha,\beta,\gamma))},C_{\Tr(\alpha,\gamma)(\chi(\alpha,\beta,\gamma))})=\emptyset,$$
and hence $\overline\osc(\alpha,\beta,\gamma)=0$.
\end{proof}
So $A''$ and $B''$ are as sought.
\end{proof}

\begin{cor}\label{thm712} Suppose that:
\begin{enumerate}
\item $\mu<\lambda<\lambda^+=\kappa$ are infinite regular cardinals;
\item $E^\lambda_\mu$ admits a nonreflecting stationary set;
\item there exists a weak $\mu$-Kurepa tree with at least $\kappa$-many branches.
\end{enumerate}
Then $\kappa\nsrightarrow[\lambda,\lambda]^3_\omega$ holds. 
\end{cor}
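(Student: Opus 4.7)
The plan is to combine the $\lambda$-valued coloring $c$ from Theorem~\ref{use of stability}---which applies here because conditions (1)--(3) of the corollary coincide with its hypotheses once $\varrho:[\kappa]^2\to\lambda$ is taken to be the subadditive map $\rho$ supplied by Fact~\ref{all the rho}---with the three-dimensional oscillation $\overline{\osc}$, to build a coloring $c^*:[\kappa]^3\to\omega$. Fix a surjection $g:\lambda\twoheadrightarrow\omega$ whose every fibre has size $\lambda$, and set
$$c^*(\alpha,\beta,\gamma):=\begin{cases}\overline{\osc}(\alpha,\beta,\gamma)-1&\text{if }\overline{\osc}(\alpha,\beta,\gamma)\ge 1,\\ g(c(\alpha,\beta,\gamma))&\text{otherwise.}\end{cases}$$
To verify that $c^*$ witnesses $\kappa\nsrightarrow[\lambda,\lambda]^3_\omega$, I would take disjoint $A,B\s\kappa$ of order-type $\lambda$ with $\sup(A)=\sup(B)=:\theta$ (forcing $\cf(\theta)=\lambda>\omega$) and a target color $\tau\in\omega$, and split on the stability dichotomy for the pair $(A,B)$.

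If some cofinal $B^\circ\s B$ has every cofinal subset unstable (or symmetrically for $A$), I would invoke Lemma~\ref{unstable case} with $A$ and $B^\circ$ for $n:=\tau+1$ to locate $(\alpha,\beta,\gamma)\in A\circledast B^\circ\circledast B^\circ$ with $\overline{\osc}(\alpha,\beta,\gamma)=\tau+1$; this yields $c^*(\alpha,\beta,\gamma)=\tau$ and $(\alpha,\beta,\gamma)\in[A\cup B]^3\setminus([A]^3\cup[B]^3)$. If no such hereditarily unstable cofinal subset exists on either side, then both $A$ and $B$ harbour stable cofinal subsets, and Corollary~\ref{corofstable} supplies cofinal $A'\s A$ and $B'\s B$ with $A'\cup B'$ stable and with $\overline{\osc}$ vanishing on every triple of $[A'\cup B']^3$ whose last two coordinates lie in $[A']^2\cup[B']^2$. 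Running the proof of Lemma~\ref{stable case_relaxed} on $A'\cup B'$ while splitting its stationary set into two complementary pieces that force the selected witnesses $\gamma_\delta$ to live alternately in $A'$ and in $B'$, I would extract cofinal $A''\s A'$ and $B''\s B'$, each of order-type $\lambda$, on whose union $\rho$ witnesses $\U(\lambda,\lambda,\lambda,3)$. Theorem~\ref{use of stability} then applies to $(A'',B'')$: for a prescribed $\tau'\in g^{-1}(\{\tau\})$, and after possibly swapping the roles of $A''$ and $B''$ to land in the Case~2 branch of that theorem's proof, it produces a triple of the form $B''\circledast A''\circledast A''$ (or its swap) with $c$-value $\tau'$. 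Since the last two coordinates lie in $[A'']^2\s[A']^2$, Corollary~\ref{corofstable} forces $\overline{\osc}=0$ on that triple, so $c^*=g(c)=g(\tau')=\tau$, closing the verification.

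Two technical points will require care. The more delicate is excluding the \emph{bad-bad} situation in which both orderings of $(A'',B'')$ fall into Case~1 of Theorem~\ref{use of stability}---that is, with $\rho$-rows bounded below $\lambda$ in both directions between $A''$ and $B''$. I would dispose of it by playing a $\sigma=2$ application of $\U(\lambda,\lambda,\lambda,3)$ against subadditivity of $\rho$: from a pairwise disjoint family $\{\{\alpha_\xi,\beta_\xi\}\mid\xi<\lambda\}$ with $\alpha_\xi\in A''$ and $\beta_\xi\in B''$, the $\U$-principle extracts a $\lambda$-sized refinement on which all four cross-values of $\rho$ exceed any prescribed $\tau$, while subadditivity, combined with the bounded-row hypotheses, propagates those bounds to keep $\rho$ uniformly bounded below $\lambda$ on the refined family---the sought contradiction. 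The secondary point is adapting Lemma~\ref{stable case_relaxed}'s proof to keep both $A''$ and $B''$ cofinal: this should follow by partitioning the stationary set $S$ arising there into two stationary sub-claims, each supplying the existence statement of Claim~\ref{claim1} with witnesses drawn from $A'$ respectively $B'$, using that each of $A',B'$ is itself a stable cofinal subset of $\theta$.
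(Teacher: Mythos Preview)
Your coloring $c^*$ is missing the ingredient that makes the stable case go through. The paper's coloring uses not just $\overline{\osc}(\alpha,\beta,\gamma)>0$ but its conjunction with $\Delta(\alpha,\beta)<\Delta(\beta,\gamma)$, where $\Delta$ is read off the weak $\mu$-Kurepa tree. After separating $A$ and $B$ along that tree (via Lemma~\ref{separation lemma2}), one proves that for any mixed triple $(\alpha,\beta,\gamma)\in[A\cup B]^3\setminus([A]^3\cup[B]^3)$ the condition $\Delta(\alpha,\beta)<\Delta(\beta,\gamma)$ holds \emph{if and only if} $(\beta,\gamma)\in[A]^2\cup[B]^2$. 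Hence, whatever triple Theorem~\ref{use of stability} returns: if its last two coordinates lie in the same set, Corollary~\ref{corofstable} forces $\overline{\osc}=0$; if they are mixed, the $\Delta$-condition fails. Either way the coloring falls through to $c_\omega$, and no control over which case of Theorem~\ref{use of stability} fires is needed.

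Your workaround---forcing the Case-2 branch by swapping---does not go through. Inside the proof of Theorem~\ref{use of stability}, the assignment of which input plays the role of $A$ and which plays $B$ is dictated by condition~(iv), the $<_{\lex}$ ordering inherited from the tree, and the claims verifying the lex hypotheses built into the definition of $c$ genuinely use it; you are not free to elect the other labeling without changing $c$ itself. So even if you excluded the bad-bad scenario, you would only know that \emph{some} labeling yields Case~2, not that the labeling imposed by~(iv) does. The bad-bad exclusion is also not correct as stated: the bounded-row hypotheses give per-$\beta$ bounds $\sigma_\beta<\lambda$ rather than a uniform one, and subadditivity together with $\U(\lambda,\lambda,\lambda,3)$ at $\sigma=2$ does not produce the contradiction you sketch. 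Your secondary observation---that Lemma~\ref{stable case_relaxed} must be refined (by splitting the stationary set and choosing the $\gamma_\delta$'s alternately from $A'$ and $B'$) so that the resulting cofinal subset meets both sides---is correct, and the paper's argument tacitly relies on the same maneuver.
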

\begin{proof} Let $c:[\kappa]^3\rightarrow\lambda$ be the map given by Theorem~\ref{use of stability}
with respect to the subadditive coloring $\rho:[\kappa]^2\rightarrow\lambda$ of Fact~\ref{all the rho}.
Define $c_\omega:[\kappa]^3\rightarrow\omega$ via
$$c_\omega(\alpha,\beta,\gamma):=\begin{cases}c(\alpha,\beta,\gamma),&\text{if }c(\alpha,\beta,\gamma)<\omega;\\
0,&\text{otherwise}.
\end{cases}$$

Let $T\s{}^{<\mu}2$ be a weak $\mu$-Kurepa tree with at least $\kappa$-many branches,
and let $\langle b_\xi \mid \xi< \kappa\rangle$ be an injective sequence consisting of elements of $\mathcal B(T)$.
For notational simplicity, we shall write $\Delta(\alpha,\beta)$ for $\Delta(b_\alpha,b_\beta)$.
Define a coloring $d:[\kappa]^3\rightarrow\omega$ by letting for all $\alpha<\beta<\gamma<\kappa$:
$$d(\alpha,\beta,\gamma):=\begin{cases}
\overline{\osc}(\alpha,\beta,\gamma)-1,&\text{if }\overline\osc(\alpha,\beta,\gamma)>0\text{ and }\Delta(\alpha,\beta)<\Delta(\beta,\gamma);\\
c_\omega(\alpha,\beta,\gamma),&\text{otherwise}.
\end{cases}$$

To see that $d$ witnesses $\kappa\nsrightarrow[\lambda,\lambda]^3_\omega$,
let $A,B$ be disjoint subsets of $\kappa$ of order-type $\lambda$ with $\sup(A)=\sup(B)$,
and let $n<\omega$. 
Using Lemma~\ref{separation lemma2} and by possibly passing to cofinal subsets,
we may assume the existence of $s\in T$ and $i\neq i'$ such that $s{}^\smallfrown\langle i\rangle\sq b_\alpha$ for all $\alpha\in A$,
and $s{}^\smallfrown\langle i'\rangle\sq b_\beta$ for all $\beta\in B$.
\begin{claim} Let $(\alpha,\beta,\gamma)\in [A\cup B]^3\setminus([A]^3\cup[B]^3)$.

Then $(\beta,\gamma)\in([A]^2\cup[B]^2)$ iff $\Delta(\alpha,\beta)<\Delta(\beta,\gamma)$.
\end{claim}
\begin{proof} For every $(\epsilon,\delta)\in (A\circledast B)\cup(B\circledast A)$, $\Delta(\epsilon,\delta)=\Delta(s{}^\smallfrown\langle i\rangle,s{}^\smallfrown\langle i'\rangle)=\dom(s)$.
For every $(\epsilon,\delta)\in (A\circledast A)\cup(B\circledast B)$, $\Delta(\epsilon,\delta)\ge \dom(s+1)>\dom(s)$. 

By the hypothesis on $(\alpha,\beta,\gamma)$, there are three cases to consider:

$\br$ If $(\alpha,\beta,\gamma)\in A\circledast B\circledast B$, then $\Delta(\alpha,\beta)=\dom(s)<\Delta(\beta,\gamma)$.

$\br$ If $(\alpha,\beta,\gamma)\in B\circledast A\circledast A$, then $\Delta(\alpha,\beta)=\dom(s)<\Delta(\beta,\gamma)$.

$\br$ If $(\beta,\gamma)\in (A\circledast B)\cup(B\circledast A)$, then $\Delta(\beta,\gamma)=\dom(s)\le\Delta(\alpha,\beta)$.
\end{proof}

There are three cases to consider:
\begin{itemize}
\item[$\br$] Suppose that there exist $A'\in[A]^\lambda$ and $B'\in[B]^\lambda$ such that $A'$ and $B'$ are stable.
Then, by Corollary~\ref{corofstable}, we may moreover assume that $A'\cup B'$ is stable,
and that for every $(\alpha,\beta,\gamma)\in[A'\cup B']^3$ with $(\beta,\gamma)\in[A']^2\cup[B']^2$, $\overline\osc(\alpha,\beta,\gamma)=0$.
By Lemma~\ref{stable case_relaxed}, then, $\rho\restriction[A'\cup B']^2$ witnesses $\U(\lambda,\lambda,\lambda,3)$. 
As $c:[\kappa]^3\rightarrow\lambda$ was given by Theorem~\ref{use of stability}, 
we may find $(\alpha,\beta,\gamma)\in[A'\cup B']^3\setminus([A']^3\cup[B']^3)$ such that
$c(\alpha,\beta,\gamma)=n$. Now, if $(\beta,\gamma)\in[A']^2\cup[B']^2$, then $\overline\osc(\alpha,\beta,\gamma)=0$,
and if $(\beta,\gamma)\notin [A']^2\cup[B']^2$, then $\Delta(\alpha,\beta)\ge \Delta(\beta,\gamma)$.
It thus follows that $d(\alpha,\beta,\gamma)=c_\omega(\alpha,\beta,\gamma)=c(\alpha,\beta,\gamma)=n$, as sought. 

\item[$\br$]     Suppose that every cofinal subset of $B$ is unstable. 
By appealing to Lemma~\ref{unstable case} with $A$ and $B$, we may find 
$(\alpha,\beta,\gamma)\in A\circledast B\circledast B$ such that $\overline{\osc}(\alpha,\beta,\gamma)=n+1$. 	
As $(\beta,\gamma)\in[B]^2$, it is the case that $\Delta(\alpha,\beta)<\Delta(\beta,\gamma)$.
So $d(\alpha,\beta,\gamma)=\overline{\osc}(\alpha,\beta,\gamma)-1=n$.

\item[$\br$]    Otherwise. So, every cofinal subset of $A$ is unstable, and then the argument is similar to that of the previous case. \qedhere
\end{itemize}
\end{proof}

We are finally in conditions to prove the rectangular extension of \cite[Theorem~10.3.6]{MR2355670}.

\begin{cor}\label{coloring with CH} Suppose that $\lambda=\mu^+$ for an infinite cardinal $\mu=\mu^{<\mu}$. 

Then $\lambda^+\nsrightarrow[\lambda,\lambda]^3_\omega$ holds. 
\end{cor}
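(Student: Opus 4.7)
The strategy is to mirror the proof of Corollary~\ref{thm712}, invoking Theorem~\ref{thm53} in place of Theorem~\ref{use of stability} when the latter is unavailable. Set $\kappa := \lambda^+$. Since $\mu^{<\mu} = \mu$ forces $\mu$ to be regular, $E^\lambda_\mu$ is a nonreflecting stationary subset of $\lambda$. The proof splits according to the value of $2^\mu$.

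\emph{Case A: $2^\mu \ge \lambda^+$.} The tree ${}^{<\mu}2$ is a weak $\mu$-Kurepa tree with at least $\lambda^+$ many branches, so all three hypotheses of Corollary~\ref{thm712} are met (with $\kappa := \lambda^+$), and the conclusion follows directly.

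\emph{Case B: $2^\mu = \lambda$.} By Sierpi\'nski's theorem (\cite[Lemma~8.3]{paper47}), $\lambda \nrightarrow [\mu;\lambda]^2_\lambda$ holds, so Theorem~\ref{thm53} applies (relative to the subadditive $\varrho$ of Fact~\ref{all the rho}) and produces a coloring $c: [\kappa]^3 \to \lambda$. Set $T := {}^{<\lambda}2$, fix an injective sequence $\langle b_\xi \mid \xi < \kappa\rangle$ of elements of $\mathcal B(T)$, and let $\Delta(\alpha, \beta) := \Delta(b_\alpha, b_\beta)$. Define $d: [\kappa]^3 \to \omega$ by the same formula used in the proof of Corollary~\ref{thm712}:
\[
d(\alpha, \beta, \gamma) := \begin{cases}\overline\osc(\alpha, \beta, \gamma) - 1, & \text{if } \overline\osc(\alpha, \beta, \gamma) > 0 \text{ and } \Delta(\alpha, \beta) < \Delta(\beta, \gamma);\\ c_\omega(\alpha, \beta, \gamma), & \text{otherwise}.\end{cases}
\]
The verification proceeds by the stability dichotomy as in Corollary~\ref{thm712}: in the stable branch, thin $A, B$ via Corollary~\ref{corofstable} and Lemma~\ref{stable case_relaxed} so that Clause~(iii) of Theorem~\ref{thm53} is satisfied and $\overline\osc$ vanishes on same-side triples; then walk through the subcases of Theorem~\ref{thm53}'s proof, checking in each that the produced triple has either $\overline\osc = 0$ (when $(\beta, \gamma)$ is same-side) or $\Delta(\alpha, \beta) \ge \Delta(\beta, \gamma)$ (forced by the explicit tree structure of the thinned sets), yielding $d(\alpha,\beta,\gamma) = c_\omega(\alpha,\beta,\gamma) = n$. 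In the unstable branch (say every cofinal subset of $B$ is unstable), pre-thin $A, B$ to $A', B'$ of size $\lambda$ such that $B'$ extends a common node $t \in T$ while the $b_\alpha$ for $\alpha \in A'$ all depart from $t$ at levels below $\dom(t)$; then Lemma~\ref{unstable case} produces $(\alpha, \beta, \gamma) \in A' \circledast B' \circledast B'$ with $\overline\osc = n+1$, and by construction $\Delta(\alpha, \beta) < \dom(t) \le \Delta(\beta, \gamma)$, so $d(\alpha, \beta, \gamma) = n$.

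The hard part is the $\Delta$-pre-thinning in Case~B's unstable branch, since Lemma~\ref{separation lemma2}(2) does not apply directly to $T = {}^{<\lambda}2$ for $\lambda$-sized branch sets. This is handled by a case analysis based on the Lemma~\ref{height lemma}-produced parameters $\theta_A, \theta_B$: when $\theta_A < \lambda$, the pair $(t, t')$ furnished by Claim~\ref{claim532} (of Theorem~\ref{thm53}) provides the required separation at a single level $\chi < \theta_A$; when $\theta_A = \theta_B = \lambda$, Claim~\ref{claim531}'s analysis supplies a node $t \in T^{\branches B}$ together with a $\lambda$-sized subset of $A$ whose branches do not extend $t$ (using the disjointness of the $A$- and $B$-branch sets and the fact that the level sizes of $T^{\branches A}, T^{\branches B}$ are below $\lambda$), which is exactly what the argument needs.
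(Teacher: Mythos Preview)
Your overall strategy matches the paper's proof: the same case split on $2^\mu$, the same coloring $d$, and the same stability dichotomy via Corollary~\ref{corofstable} and Lemma~\ref{stable case_relaxed}. The stable branch is handled correctly (and one should note, as the paper does, that the sequence $\langle b_\xi\rangle$ used to define $\Delta$ in $d$ must be \emph{the same} sequence used inside Theorem~\ref{thm53} to define $c$, so that the subcase analysis of Theorem~\ref{thm53} transfers to statements about your $\Delta$).

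The gap is in your pre-thinning for the unstable branch when $\theta_A=\theta_B=\lambda$. You claim that Claim~\ref{claim531}'s analysis produces a node $t\in T^{\branches B}$ together with a $\lambda$-sized subset of $A$ whose branches do not extend $t$. But Claim~\ref{claim531} does not give this: in its second alternative (the one where $t\in T^{\branches A}$), it only guarantees that $\{\Delta(b_\alpha,t)\mid\alpha\in A\}$ has size $\mu$, which yields at most $\mu$ many $\alpha\in A$ with $b_\alpha\not\sqsupseteq t$ --- not $\lambda$ many. More fundamentally, your single-node separation scheme cannot work when $T^{\branches A}\subseteq T^{\branches B}$ (or vice versa): every $t\in T^{\branches B}$ might also lie in $T^{\branches A}$, so $|A_t|=\lambda$ and you have no control over $|A\setminus A_t|$. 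The disjointness of the branch sets $\{b_\alpha\mid\alpha\in A\}$ and $\{b_\beta\mid\beta\in B\}$ does not prevent $T^{\branches A}=T^{\branches B}$.

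The paper's proof of the corresponding claim is more delicate. It first disposes of the easy cases ($\min\{\theta_A,\theta_B\}<\lambda$, or $\theta_A=\theta_B=\lambda$ with neither of $T^{\branches A},T^{\branches B}$ contained in the other) by finding incomparable nodes, much as you suggest. But in the remaining case, say $T^{\branches A}\subseteq T^{\branches B}$, it splits again: if $T^{\branches A}$ is $\lambda$-Aronszajn one finds two incomparable nodes inside it, while if $\mathcal B(T^{\branches A})\neq\emptyset$ one fixes a common cofinal branch $b$ through both trees and \emph{recursively} builds $A',B'$ so that successive elements agree with $b$ on ever-longer initial segments, forcing $\Delta(\alpha,\beta)<\Delta(\beta,\gamma)$ for every triple $\alpha<\beta<\gamma$ in $A'\cup B'$. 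This recursive construction, not a single separating node, is what your argument is missing.
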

\begin{proof} Denote $\kappa:=\lambda^+$. If $2^\mu>\mu^+$, then since $\mu^{<\mu}=\mu$, there is a weak $\mu$-Kurepa tree with $\kappa$-many branches,
and $E^\lambda_\mu$ constitutes a nonreflecting stationary set.
So, by Corollary~\ref{thm712}, we may assume here that $2^\mu=\mu^+$.  In particular, $\lambda\nrightarrow[\mu;\lambda]^2_\lambda$ holds by a theorem of Sierpi\'nski.
Let  $c:[\kappa]^3\rightarrow\lambda$ be the map given by Theorem~\ref{thm53}
with respect to the subadditive coloring $\rho:[\kappa]^2\rightarrow\lambda$ of Fact~\ref{all the rho}.
Derive $c_\omega$ from $c$ as in the proof of Theorem~\ref{thm712}.
Also, denote $T:={}^{<\lambda}2$, 
and let $\langle b_\xi\mid\xi<\kappa\rangle$ be the injective sequence of elements of $\mathcal B(T)$ used in the proof of
Theorem~\ref{thm53} to define the coloring $c$.
For notational simplicity, we shall write $\Delta(\alpha,\beta)$ for $\Delta(b_\alpha,b_\beta)$.
Likewise, for $B\s\kappa$, we write $T^{\branches B}$ for $T^{\branches \{b_\beta\mid \beta\in B\}}$.

Finally, define a coloring $d:[\kappa]^3\rightarrow\omega$ by letting for all $\alpha<\beta<\gamma<\kappa$:\footnote{It may appear that this is the same map from the proof of Corollary~\ref{thm712}. Note, however, that there $\Delta$ was a map from $[\kappa]^2$ to $\mu$, whereas here $\Delta$ is a map from $[\kappa]^2$ to $\lambda$.}
$$d(\alpha,\beta,\gamma):=\begin{cases}
\overline{\osc}(\alpha,\beta,\gamma)-1,&\text{if }\overline\osc(\alpha,\beta,\gamma)>0\text{ and }\Delta(\alpha,\beta)<\Delta(\beta,\gamma);\\
c_\omega(\alpha,\beta,\gamma),&\text{otherwise}.
\end{cases}$$

To see that $d$ witnesses $\kappa\nsrightarrow[\lambda,\lambda]^3_\omega$,
let $A,B$ be disjoint subsets of $\kappa$ of order-type $\lambda$ with $\sup(A)=\sup(B)$,
and let $n<\omega$. 
Using Corollary~\ref{corofstable} and by possibly passing to cofinal subsets,
we may assume that one of the following holds:
\begin{itemize}
\item[(I)] $A\cup B$ is stable,
and for every $(\alpha,\beta,\gamma)\in[A\cup B]^3$ with $(\beta,\gamma)\in[A]^2\cup[B]^2$, $\overline\osc(\alpha,\beta,\gamma)=0$;
\item[(II)] every cofinal subset of $B$ is unstable;
\item[(III)] every cofinal subset of $A$ is unstable.
\end{itemize}

Let us dispose of Case~(I) right away.
In this case, by Lemma~\ref{stable case_relaxed}, $\rho\restriction[A\cup B]^2$ witnesses $\U(\lambda,\lambda,\lambda,3)$. 
So by the choice of $c$,
we may find $(\alpha,\beta,\gamma)\in[A\cup B]^3\setminus([A]^3\cup[B]^3)$ such that
$c(\alpha,\beta,\gamma)=n$. Going over the division into cases in the proof of Theorem~\ref{thm53},
we see that $\Delta(\alpha,\beta)\ge\Delta(\beta,\gamma)$ in all subcases but to Subcase~1.1.
So, in all of these cases, $d(\alpha,\beta,\gamma)=c_\omega(\alpha,\beta,\gamma)=c(\alpha,\beta,\gamma)=n$.
Finally, looking at Subcase~1.1, we see that the provided triple $(\alpha,\beta,\gamma)$ is an element of $A\circledast B\circledast B$ (or an element of $B\circledast A\circledast A$, once lifting the initial ``without loss of generality'' assumption).
So $(\beta,\gamma)\in[A]^2\cup[B]^2$, and hence $\overline\osc(\alpha,\beta,\gamma)=0$.
Therefore, again $d(\alpha,\beta,\gamma)=c_\omega(\alpha,\beta,\gamma)=n$.

\medskip

Moving on to handling Cases (II) and (III), we shall need the following claim.
\begin{claim} There are cofinal subsets $A'\s A$ and $B'\s B$ such that, for every $(\alpha,\beta,\gamma)\in(A\circledast B\circledast B)\cup(B\circledast A\circledast A)$, $\Delta(\alpha,\beta)<\Delta(\beta,\gamma)$.
\end{claim}
\begin{proof} By possibly passing to a cofinal subset of $B$,
we may assume that $B=B'$ in the sense of Lemma~\ref{height lemma}.
So let $\theta_B\le\lambda$ be
such that $T^{\branches B'}$ is a normal tree in $\mathcal T(\lambda,\theta_B)$ for every $B'\in[B]^\lambda$.
Likewise, we may assume that $A=A'$ in the sense of Lemma~\ref{height lemma},
and let $\theta_A\le\lambda$ be such that $T^{\branches A'}$ is a normal tree in $\mathcal T(\lambda,\theta_A)$ for every $A'\in[A]^\lambda$.

If $\min\{\theta_A,\theta_B\}<\lambda$,
then the proof of Claim~\ref{claim532} provides $\chi<\min\{\theta_A,\theta_B\}$ and a pair $(t,t')\in (T^{\branches A})_{\chi+1}\times (T^{\branches B})_{\chi+1}$
such that $\Delta(t,t')=\chi$. Thus letting $A':=A_t$ and $B':=B_{t'}$,
we see that $\Delta(\alpha,\beta)=\chi$ whenever $(\alpha,\beta)\in (A'\circledast B')\cup (B'\circledast A')$,
and $\Delta(\alpha,\beta)>\chi$ whenever $(\alpha,\beta)\in (A'\circledast A')\cup (B'\circledast B')$.
Thus, we may assume that $\theta_A=\theta_B=\lambda$, so that $T^{\branches A},T^{\branches B}\in \mathcal T(\lambda,\lambda)$.
Now, if $(T^{\branches A})\nsubseteq(T^{\branches B})$ and $(T^{\branches A})\nsubseteq(T^{\branches B})$, then 
by normality of the two trees there must exist $\chi<\lambda$,
$t\in (T^{\branches A})_\chi\setminus (T^{\branches B})_\chi$ and $t'\in (T^{\branches B})_\chi\setminus(T^{\branches A})_\chi$.
Clearly, $A':=A_t$ and $B':=B_{t'}$ are as sought.

Thus, the only nontrivial case is in which $\theta_A=\theta_B=\lambda$
and $T^{\branches A}\s T^{\branches B}$ or $T^{\branches A}\s T^{\branches B}$.
Without loss of generality, assume that $T^{\branches A}\s T^{\branches B}$. Now, there are two options:

$\br$ If $\mathcal B(T^{\branches A})$ is nonempty,
then there is $b:\lambda\rightarrow2$ that constitutes a branch through both $T^{\branches A}$ and $T^{\branches B}$.
In this case, it is easy to recursively simultaneously construct cofinal subsets $A'\s A$ and $B'\s B$ such that
for all triple $\alpha<\beta<\gamma$ of ordinals from $A'\cup B'$, it is the case that $\Delta(\alpha,\beta)<\Delta(\beta,\gamma)$.

$\br$ Otherwise. So $T^{\branches A}$ is a $\lambda$-Aronszajn tree.
In particular, we may find $\chi<\lambda$ and $t\neq t'$ in $(T^{\branches A})_\chi$.
Altogether, $t\in (T^{\branches A})_\chi$, $t'\in (T^{\branches B})_\chi$ and $\Delta(t,t')<\chi$.
Then $A':=A_t$ and $B':=B_{t'}$ are as sought.
\end{proof}

At this point, the proof is similar to that of Theorem~\ref{thm712}.
Succinctly, in Case~(II), 
we appeal to Lemma~\ref{unstable case} with $A$ and $B$, to find 
$(\alpha,\beta,\gamma)\in A\circledast B\circledast B$ such that $\overline{\osc}(\alpha,\beta,\gamma)=n+1$. 	
By the preceding claim, it is the case that $\Delta(\alpha,\beta)<\Delta(\beta,\gamma)$.
So $d(\alpha,\beta,\gamma)=\overline{\osc}(\alpha,\beta,\gamma)-1=n$.
The handling of Case~(III) is similar.
\end{proof}

\section{Connecting the dots}\label{sectionTHMB}

The next result implies Theorem~A$'$.
\begin{thm}
For every infinite cardinal $\mu$ satisfying $\mu^{<\mu}<\mu^+<2^\mu$:
\begin{enumerate}
\item $S_3(\mu^{++} , \mu^+, \omega )$ holds;
\item  $G\nrightarrow	[\mu^+]^{\fs_3}_\omega$ holds for every Abelian group $(G,+)$ of size $\mu^{++}$.
\end{enumerate}
\end{thm}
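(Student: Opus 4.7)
The plan is to assemble this theorem directly from the principles developed in Sections~\ref{sums2} and \ref{section5}. First, the hypothesis $\mu^{<\mu}<\mu^+$ forces $\mu^{<\mu}=\mu$, which by K\"onig's theorem additionally forces $\mu$ to be regular (otherwise $\mu^{<\mu}\ge\mu^{\cf(\mu)}>\mu$). Set $\lambda:=\mu^+$ and $\kappa:=\mu^{++}$. Since $\mu=\mu^{<\mu}$, Corollary~\ref{coloring with CH} applied to $\mu$ delivers the narrow-support partition relation $\kappa\nsrightarrow[\lambda,\lambda]^3_\omega$.

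For the matching instance of the extraction principle, I would use the tree $T:={}^{<\mu}\mu$. Because $\mu^{<\mu}=\mu$, each level $T_\alpha={}^\alpha\mu$ has cardinality $\mu^{|\alpha|}\le\mu<\lambda$, hence $T\in\mathcal T(\lambda,\mu)$. Every element of ${}^\mu\mu$ is a branch of $T$, so $|\mathcal B(T)|=2^\mu\ge\mu^{++}=\kappa$ by the assumption $\mu^+<2^\mu$. Thus $\kappa\in T(\lambda,\mu)$. Applying Lemma~\ref{ext3} with tree parameter $\lambda$ and height $\mu$ (noting that $\max\{\lambda,\mu^+\}=\lambda$ falls within the admissible range $[\lambda,\kappa]$) yields $\ext_3(\kappa,\lambda,\cf(\mu),\omega)$; since $\mu$ is regular and infinite, $\cf(\mu)\ge\omega$, and the obvious monotonicity in the third parameter then gives $\ext_3(\kappa,\lambda,\omega,\omega)$.

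With both ingredients in hand, Lemma~\ref{pilemma} (instantiated with $n=3$ and $\theta=\omega$) produces $S_3(\kappa,\lambda,\omega)$, which is Clause~(1). For Clause~(2), let $(G,+)$ be an Abelian group of size $\kappa$. As noted after the definition of a well-behaved magma, every infinite commutative cancellative semigroup is well-behaved via some countable-to-one map $\varphi:G\rightarrow[G]^{<\omega}$; such a $\varphi$ is in particular ${<}\lambda$-to-one. Proposition~\ref{prop316} applied with this $\varphi$ and the operation $*:=+$ then produces a coloring $c:G\rightarrow\omega$ such that, for every $X\in[G]^\lambda$ and every $\tau<\omega$, there are distinct $x_1,x_2,x_3\in X$ with $c(x_1+x_2+x_3)=\tau$. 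This is precisely $G\nrightarrow[\mu^+]_\omega^{\fs_3}$.

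The main obstacle has already been overcome in the preceding sections: the oscillation analysis of Section~\ref{section5} culminating in Corollary~\ref{coloring with CH} provides the partition relation, and the tree-based construction of Lemma~\ref{ext3} provides the extraction principle. The only substantive verification here is that the canonical tree $T={}^{<\mu}\mu$ fits the parameter constraints of Lemma~\ref{ext3}, which is guaranteed precisely by the hypothesis $\mu^{<\mu}=\mu$ (which both bounds the level sizes by $\mu$ and forces $\mu$ to be regular, so that $\cf(\mu)\ge\omega$).
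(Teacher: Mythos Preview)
Your proof is correct and follows essentially the same route as the paper: derive $\kappa\nsrightarrow[\lambda,\lambda]^3_\omega$ and $\ext_3(\kappa,\lambda,\omega,\omega)$, then combine via Lemma~\ref{pilemma} and Proposition~\ref{prop316}. The only cosmetic differences are that the paper cites Corollary~\ref{thm712} directly (rather than the more general Corollary~\ref{coloring with CH}, which under $\mu^+<2^\mu$ reduces to it anyway) and uses the tree ${}^{<\mu}2$ in place of your ${}^{<\mu}\mu$.
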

\begin{proof} Suppose that $\mu$ is an infinite cardinal satisfying $\mu^{<\mu}<\mu^+<2^\mu$.
Denote $\lambda:=\mu^+$ and $\kappa:=\lambda^+$.
Then, $T:={}^{<\mu}2$ is a weak $\mu$-Kurepa tree with at least $\kappa$-many branches.
So, by Corollary~\ref{thm712}, $\kappa\nsrightarrow[\lambda,\lambda]^3_\omega$ holds.
As $T$ witnesses that $\kappa\in T(\lambda,\mu)$,
by Lemma~\ref{ext3},  $\ext_3(\kappa,\lambda,\mu,\omega)$ holds, as well.
Together with Lemma~\ref{pilemma}, this yields Clause~(1). Then, Clause~(2) follows from Proposition~\ref{prop316} and the fact (see \cite[Lemma~2.2]{paper27}) that every Abelian group is a well-behaved magma.
\end{proof}

By \cite[Theorem~3.8]{paper27}, for every infinite cardinal $\mu=2^{<\mu}$,
$S_2(2^\mu,2^\mu,\omega)$ holds. By the upcoming corollary, for every infinite cardinal $\mu=2^{<\mu}$, $S_2(2^\mu,\mu^+,2)$ holds.
While it is easy to get $S_2(2^\mu,\mu^+,\mu)$ from $2^\mu=\mu^+$,
Remark~\ref{Fleissner} shows that $S_2(2^\mu,\mu^+,\mu)$ is also compatible with $2^\mu>\mu^+$.
Note, however, that by a theorem of Shelah \cite[Theorem~2.1]{MR955139}, one cannot prove $S_2(2^\mu,\mu^+,3)$ in $\zfc$.
Thus, in view of the number of colors, the following corollary is optimal.

\begin{cor} For every infinite cardinal $\mu$, $S_2(\mu^\theta,\mu^+,2)$ holds, for $\theta:=\log_\mu(\mu^+)$.

In particular, $S_2(2^\mu,\mu^+,\allowbreak2)$ holds for every strong limit cardinal $\mu$.
\end{cor}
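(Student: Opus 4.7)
The plan is to apply Lemma~\ref{lemma49} with $\kappa:=\mu^\theta$, $\lambda:=\mu^+$, and color parameter $2$, for which we must verify $\ext_2(\mu^\theta,\mu^+,\omega,\omega)$ together with $\mu^\theta\nrightarrow[\mu^+;\mu^+]^2_2$. The first hypothesis is immediate from Corollary~\ref{cor410}, applied with $\nu:=\mu$ and $\lambda:=\mu^+$: the required cardinal $\alpha<\mu^+$ with $\mu^\alpha\ge\mu^+$ exists (take $\alpha:=\mu$, as $\mu^\mu=2^\mu\ge\mu^+$), and the least such $\alpha$ is by definition $\theta=\log_\mu(\mu^+)$. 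Hence $\ext_2(\mu^\theta,\mu^+,\cf(\theta),\omega)$ holds, and since $\theta$ is infinite (so $\cf(\theta)\ge\omega$), the weaker $\ext_2(\mu^\theta,\mu^+,\omega,\omega)$ follows.

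For the second hypothesis, consider the tree $T:={}^{<\theta}\mu$. By minimality of $\theta$, $|T_\alpha|=\mu^\alpha\le\mu<\mu^+$ for every $\alpha<\theta$, so $T\in\mathcal T(\mu^+,\theta)$; and $|\mathcal B(T)|=\mu^\theta\ge\mu^+$. Fix an injective enumeration $\langle b_\xi\mid\xi<\mu^\theta\rangle$ of $\mathcal B(T)$ and construct a Sierpi\'nski-style coloring $c:[\mu^\theta]^2\rightarrow 2$ comparing the natural ordering of indices with the lexicographic ordering on branches (e.g., $c\{\alpha,\beta\}:=0$ iff the $<$-minimum of $\{\alpha,\beta\}$ also carries the $<_{\lex}$-minimum of $\{b_\alpha,b_\beta\}$). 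Given any two $\mu^+$-sized sets $A,B\s\mu^\theta$, one application of Lemma~\ref{separation lemma2}(2) (applicable as $\mu^+\ge\max\{\mu^+,\theta^+\}$) supplies $s\in T$, $i\neq i'$ and $\mu^+$-sized $A'\s A$, $B'\s B$ with $s{}^\smallfrown\langle i\rangle\sq b_\alpha$ for $\alpha\in A'$ and $s{}^\smallfrown\langle i'\rangle\sq b_\beta$ for $\beta\in B'$; a secondary application, inside suitable halves of $A$ and $B$, then secures the opposite lexicographic configuration, and together the two witnesses produce pairs in $A\circledast B$ realizing both colors.

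Lemma~\ref{lemma49} now delivers $S_2(\mu^\theta,\mu^+,2)$, as desired. For the ``In particular'' clause: when $\mu$ is a strong limit, $\mu^\alpha=\mu$ for every $\alpha<\mu$ (since $2^\alpha<\mu$), whereas $\mu^\mu=2^\mu\ge\mu^+$; hence $\log_\mu(\mu^+)=\mu$ and $\mu^\theta=2^\mu$.

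The principal technical obstacle lies in establishing the rectangular relation $\mu^\theta\nrightarrow[\mu^+;\mu^+]^2_2$: while a single application of Lemma~\ref{separation lemma2}(2) readily supplies one lexicographic alignment between the branches of $A$ and those of $B$, arranging that pairs of the opposite alignment also appear inside the ordered product $A\circledast B$ requires a secondary combinatorial step, most naturally an auxiliary decomposition of $A$ and $B$ into halves combined with a reapplication of the separation argument to extract witnesses of both parities.
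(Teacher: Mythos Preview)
Your overall architecture is sound and matches the paper's: verify $\ext_2(\mu^\theta,\mu^+,\omega,\omega)$ via the tree $T={}^{<\theta}\mu$, establish a two-color rectangular relation via a Sierpi\'nski-style coloring on branches, and combine. The gap lies squarely in the second ingredient.

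The Sierpi\'nski coloring does \emph{not} witness the full semicolon relation $\mu^\theta\nrightarrow[\mu^+;\mu^+]^2_2$, and your ``secondary application'' cannot repair this. Concretely: nothing prevents an enumeration $\langle b_\xi\mid\xi<\mu^\theta\rangle$ and a choice of $A,B\in[\mu^\theta]^{\mu^+}$ for which $b_\alpha<_{\lex}b_\beta$ for \emph{every} $\alpha\in A$ and $\beta\in B$ (e.g.\ all $A$-branches pass through $\langle 0\rangle$ and all $B$-branches through $\langle 1\rangle$). Then every pair in $A\circledast B$ receives the same color. Reapplying Lemma~\ref{separation lemma2}(2), on halves or with the roles swapped, will only recover the same lexicographic alignment, since that lemma hands you \emph{some} splitting node but gives no control over which side $A$ lands on. More fundamentally, when $\mu^\theta>\mu^+$ one can even take $B<A$, rendering $A\circledast B$ empty; so the semicolon relation as stated in Definition~\ref{hungarian} is simply false in the regime of interest, and Lemma~\ref{lemma49} is not the right tool here.

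The paper avoids this by working with the weaker sup-relation $\kappa\nsrightarrow[\lambda,\lambda]^2_2$ of Definition~\ref{narrowsup} and invoking Lemma~\ref{pilemma} rather than Lemma~\ref{lemma49}. The sup hypothesis $\sup(A)=\sup(B)$ is exactly what makes the Sierpi\'nski argument go through: after a single application of Lemma~\ref{separation lemma2}(2) one obtains large $A'\subseteq A$, $B'\subseteq B$ lexicographically separated, and since $\sup(A')=\sup(B')$ one can pick $(\alpha,\beta,\gamma)\in A'\circledast B'\circledast A'$; then $c(\alpha,\beta)$ and $c(\beta,\gamma)$ are forced to be different (this is the content of Claim~\ref{claim721} in Lemma~\ref{thm73}). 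The paper then packages everything into Lemma~\ref{thm73}, so the corollary reduces to observing $\mu^\theta\in T(\mu^+,\theta)$.
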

\begin{proof} Appeal to the upcoming theorem with $\lambda:=\mu^+$ and $\kappa:=\mu^\theta$.
\end{proof}

\begin{lemma}\label{thm73} Suppose that $\theta<\lambda\le\kappa$ are infinite cardinals, with $\lambda$ being regular.

If $\kappa \in T(\lambda,\theta)$, then:
\begin{enumerate}
\item $\kappa\nsrightarrow[\lambda,\lambda]^2_2$ holds;
\item $S_2(\kappa,\lambda,2)$ holds.
\end{enumerate}
\end{lemma}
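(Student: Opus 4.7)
The plan is to prove (1) directly via a simple lexicographic coloring, and then to derive (2) from (1) by combining it with the tree-driven extractor of Lemma~\ref{extract2} and a mild strengthening of the reduction of Lemma~\ref{lemma49}.

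For (1), I would first fix $T\in\mathcal T(\lambda,\theta)$ together with an injective sequence $\langle b_\xi\mid\xi<\kappa\rangle$ of branches witnessing $\kappa\in T(\lambda,\theta)$. As $\theta<\lambda$ with $\lambda$ regular, $|\theta|^+\le\lambda$, so Lemma~\ref{separation lemma2}(2) (with its ``$\mu$'' set to $\lambda$) applies to any two $\lambda$-sized subsets of $\mathcal B(T)$. Next, define $c:[\kappa]^2\rightarrow2$ by declaring, for ordinal-indices $\alpha<\beta$, $c(\{\alpha,\beta\}):=0$ iff $b_\alpha<_{\lex}b_\beta$. To verify that $c$ witnesses $\kappa\nsrightarrow[\lambda,\lambda]^2_2$, I would take disjoint $A,B\s\kappa$ of order-type $\lambda$ with $\sup(A)=\sup(B)$ and apply Lemma~\ref{separation lemma2}(2) to $\{b_\alpha\mid\alpha\in A\}$ and $\{b_\beta\mid\beta\in B\}$ to obtain $s\in T$ and $i\neq i'$ such that $A':=\{\alpha\in A\mid s{}^\smallfrown\langle i\rangle\sq b_\alpha\}$ and $B':=\{\beta\in B\mid s{}^\smallfrown\langle i'\rangle\sq b_\beta\}$ are each of size $\lambda$. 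For every $\alpha\in A'$ and $\beta\in B'$ the lex relation between $b_\alpha$ and $b_\beta$ is uniformly fixed by the comparison of $i$ and $i'$, so $c(\{\alpha,\beta\})$ depends only on whether $\alpha<\beta$ or $\beta<\alpha$. Since $A',B'$ remain unbounded in $A,B$ and $\sup(A)=\sup(B)$, one can interleave them: both $A'\circledast B'$ and $B'\circledast A'$ are nonempty, yielding both colors.

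For (2), Lemma~\ref{extract2} applied with its ``$\mu$'' set to $\lambda$ will give $\ext_2(\kappa,\lambda,\cf(\theta),\omega)$, and since $\theta$ is an infinite cardinal we have $\cf(\theta)\ge\omega$, whence monotonicity of $\ext_n$ in its third argument yields $\ext_2(\kappa,\lambda,\omega,\omega)$. The finishing move is to rerun the proof of Lemma~\ref{lemma49}, using as $c$ the coloring produced in (1) and using $\kappa\nsrightarrow[\lambda,\lambda]^2_2$ in place of $\kappa\nrightarrow[\lambda;\lambda]^2_2$. This substitution is legitimate because the pair of sets $A:=\{y_\gamma(j)\mid\gamma\in\Gamma_0\}$ and $B:=\{y_\gamma(j)\mid\gamma\in\Gamma_1\}$ produced inside Lemma~\ref{lemma49}'s proof is disjoint of common order-type $\lambda$ and satisfies $\sup(A)=\sup(B)$ (as $\gamma\mapsto y_\gamma(j)$ is strictly increasing on $\Gamma_0\cup\Gamma_1$, each of which is cofinal in $\lambda$), so the $\nsrightarrow$ version applies; moreover clause~(b) of the extractor already handles both orientations $\Gamma_0\circledast\Gamma_1$ and $\Gamma_1\circledast\Gamma_0$ symmetrically, so either orientation of the mixed pair produced by $\nsrightarrow$ is enough.

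I expect the main conceptual obstacle to be the meta-observation that Lemma~\ref{lemma49} truly needs only the $\nsrightarrow$ form of the pair-coloring, rather than the strictly stronger rectangular $\nrightarrow[\lambda;\lambda]$; this is what the common-supremum of the extracted sets $A,B$ above buys us. Once that point is isolated, (1) is an elementary exercise with the lex order and the separation lemma, and (2) reduces to a routine combination of (1) with the tree-driven extraction Lemma~\ref{extract2}.
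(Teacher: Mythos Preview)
Your proposal is correct and follows essentially the same approach as the paper: the Sierpi\'nski lexicographic coloring plus Lemma~\ref{separation lemma2}(2) for (1), and the combination of Lemma~\ref{extract2} with the $\nsrightarrow$ reduction for (2). The only cosmetic difference is that where you propose to rerun the proof of Lemma~\ref{lemma49} with $\nsrightarrow$ substituted for $\nrightarrow[\lambda;\lambda]$, the paper simply cites Lemma~\ref{pilemma}, which is precisely that generalization already packaged for arbitrary $n$.
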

\begin{proof} Suppose that $\kappa\in T(\lambda,\theta)$, and fix $T\in\mathcal T(\lambda,\theta)$ 
admitting an injective sequence $\langle b_\xi\mid \xi<\kappa\rangle$ consisting of elements of $\mathcal B(T)$.

(1) Consider the Sierpi\'{n}ski map $c:[\kappa]^2\rightarrow2$ defined by letting, for all $\alpha<\beta<\kappa$:
$$c(\alpha,\beta):=1\text{ iff }b_\alpha<_{\lex}b_\beta.$$
\begin{claim}\label{claim721} $c$ witnesses $\kappa\nsrightarrow[\lambda,\lambda]^2_2$.
\end{claim}
\begin{proof} Suppose that we are given two disjoint subsets $A,B$ of $\kappa$ with $\otp(A)=\otp(B)=\lambda$ and $\sup(A)=\sup(B)$.
By Lemma~\ref{separation lemma2}(2) (using $\mu:=\lambda$), we may find $s\in T$ and $i\neq i'$ such that 
$A':=\{ \alpha\in A\mid s{}^\smallfrown\langle i\rangle\sq b_\alpha\}$ and $B':=\{ \beta\in B\mid s{}^\smallfrown\langle i'\rangle\sq b_\beta\}$ are both of size $\lambda$.
As $\sup(A')=\sup(B')$, we may now fix $(\alpha,\beta,\gamma)\in A'\circledast B'\circledast A'$.
To see that $\{ c(\alpha,\beta),c(\beta,\gamma)\}=2$, consider the following cases:

$\br$ If $i<i'$, then $b_\alpha,b_{\gamma}<_{\lex} b_\beta$ and hence $c(\alpha,\beta)=1>0=c(\beta,\gamma)$;

$\br$ If $i'<i$, then $b_\beta<b_\alpha,b_{\gamma}$ and hence $c(\alpha,\beta)=0<1=c(\beta,\gamma)$.
\end{proof}

(2) By Lemma~\ref{extract2} (again, using $\mu:=\lambda$), in particular, $\ext_2(\kappa,\lambda,\omega,\omega)$ holds.
So, by Lemma~\ref{pilemma}, Clause~(1) implies Clause~(2).
\end{proof}

\begin{cor}\label{Theorem A} If there exists a weak $\mu$-Kurepa tree with $\kappa$-many branches,
then $S_2(\kappa,\mu^+,\allowbreak2)$ holds.
\qed
\end{cor}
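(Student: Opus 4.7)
The plan is to derive this directly from Lemma~\ref{thm73}, by recasting the hypothesis as an instance of the branch spectrum statement $\kappa\in T(\mu^+,\mu)$. First I would invoke the Remark following the definition of weak $\mu$-Kurepa tree to assume, without loss of generality, that the tree $T$ witnessing the hypothesis is a subset of ${}^{<\mu}2$. Since $T$ is downward-closed and each of its $\mu$-many levels has size bounded by $|T|=\mu<\mu^+$, it follows immediately from the definition that $T\in\mathcal T(\mu^+,\mu)$.

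Next, since the hypothesis provides an injective enumeration of $\kappa$-many elements of $\mathcal B(T)$, the definition of the branch spectrum gives $\kappa\in T(\mu^+,\mu)$. To then apply Lemma~\ref{thm73} with parameters $(\theta,\lambda):=(\mu,\mu^+)$, I would check the remaining hypotheses: $\mu<\mu^+$ is trivial; $\mu^+$ is regular as a successor cardinal; and the inequality $\mu^+\le\kappa$ follows from the defining property $|\mathcal B(T)|>\mu$ of a weak $\mu$-Kurepa tree, which forces $\kappa>\mu$, hence $\kappa\ge\mu^+$. Clause~(2) of Lemma~\ref{thm73} then yields $S_2(\kappa,\mu^+,2)$, as required.

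There is no real obstacle here; the Corollary is essentially a translation of Lemma~\ref{thm73} into the vocabulary of Kurepa trees, and all the combinatorial work has already been done in Lemma~\ref{thm73} (via the Sierpi\'{n}ski-style coloring built from lexicographic comparisons of the branches, followed by an appeal to $\ext_2(\kappa,\mu^+,\omega,\omega)$ from Lemma~\ref{extract2}).
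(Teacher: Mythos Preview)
Your proposal is correct and matches the paper's intended argument: the corollary is stated with a bare \qed\ precisely because it is an immediate instance of Lemma~\ref{thm73} once one observes that a weak $\mu$-Kurepa tree (taken inside ${}^{<\mu}2$) witnesses $\kappa\in T(\mu^+,\mu)$, and you have reconstructed exactly those verification steps.
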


Theorem~B now follows (using $\mu:=2$):
\begin{cor} For every infinite set $G$,
for every map $\varphi:G\rightarrow[G]^{<\omega}$,
and for every pair of cardinals $\mu,\theta$ such that $\mu^{<\theta}<|G|\le\mu^\theta$,
there exists a corresponding coloring $c:G\rightarrow 2$ satisfying the following.

For every binary operation $*$ on $G$, if $\varphi$ witnesses that $(G,*)$ is well-behaved,
then for every $X\s G$ of size $(\mu^{<\theta})^+$ and every $i\in\{0,1\}$, 
there are $x\neq y$ in $X$ such that $c(x*y)=i$.
\end{cor}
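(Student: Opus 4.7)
The plan is to chain together the two pieces of machinery just developed in this section, leaving only cardinal bookkeeping. Set $\kappa := |G|$, $\nu := \mu^{<\theta}$ and $\lambda := \nu^+$. First I would verify the cardinal arithmetic: the hypothesis $\mu^{<\theta} < |G|$ together with $G$ being infinite forces $\mu \ge 2$ and $\theta$ to be infinite, so $\nu \ge 2^{<\theta} \ge \theta$ and hence $\theta < \lambda$. Combined with $\nu < \kappa$, this gives $\theta < \lambda \le \kappa$ with $\lambda$ a regular uncountable cardinal.

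Next I would point to the full tree $T := {}^{<\theta}\mu$ as a witness to $\kappa \in T(\lambda,\theta)$: each level has size $\mu^{|\alpha|}\le \nu < \lambda$, so $T\in\mathcal{T}(\lambda,\theta)$, and $|\mathcal{B}(T)| = \mu^\theta \ge \kappa$. Lemma~\ref{thm73}(2) then supplies $S_2(\kappa,\lambda,2)$.

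Finally, identifying $G$ with $\kappa$, I would feed the given $\varphi$ and the principle $S_2(\kappa,\lambda,2)$ into Proposition~\ref{prop316} (with $n=2$ and target cardinal $2$). The only hypothesis on $\varphi$ there is that it be ${<}\lambda$-to-one; since well-behavedness forces $\varphi$ to be countable-to-one and $\lambda$ is uncountable, this is automatic. The proposition then outputs a single coloring $c:G\rightarrow 2$ whose universal character---working simultaneously for every binary operation $*$ rendering $(G,*)$ well-behaved via $\varphi$---is precisely what the corollary asserts.

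There is no substantive obstacle; this is a bookkeeping exercise. The only delicate point worth double-checking is the inequality $\theta < \lambda$ (which boils down to ruling out $\theta$ finite, whence $G$ would be forced to be finite as well) and the observation that ``countable-to-one'' suffices for ``${<}\lambda$-to-one'' because $\lambda$ is necessarily uncountable.
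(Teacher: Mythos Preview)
Your proof is correct and follows exactly the same route as the paper's: exhibit $T={}^{<\theta}\mu$ as a witness to $\kappa\in T(\lambda,\theta)$, invoke Lemma~\ref{thm73}(2) to obtain $S_2(\kappa,\lambda,2)$, then feed this together with the given $\varphi$ into Proposition~\ref{prop316}. Your cardinal bookkeeping is more explicit than the paper's two-line sketch; the only minor wrinkle is that your argument ``$\theta$ finite forces $G$ finite'' does not literally dispose of the degenerate case $\theta=1$ (where $\mu^{<1}=1$ and $\mu^1=\mu$ can be infinite), but this boundary case is not addressed by the paper either and is clearly outside the intended scope.
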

\begin{proof} Given $G$, $\varphi$, $\mu$ and $\theta$ as above, denote $\kappa:=|G|$ and $\lambda:=(\mu^{<\theta})^+$, so that $\lambda\le\kappa\le\mu^\theta$.
Evidently, $T:={}^{<\theta}\mu$ witnesses that $\kappa\in T(\lambda,\theta)$,
so $S_2(\kappa,\lambda,2)$ holds by Lemma~\ref{thm73}.
Now, appeal to Proposition~\ref{prop316}.
\end{proof}

\section{Acknowledgments}

The results of this paper stemmed from the first author's M.Sc.~thesis written under the supervision of the second author at Bar-Ilan University,
and supported by the Israel Science Foundation (grant agreement 2066/18).

The first author was partially supported by the Israel Science Foundation (grant agreement 203/22).
The second author was partially supported by the Israel Science Foundation (grant agreement 203/22)
and by the European Research Council (grant agreement ERC-2018-StG 802756).

\end{document}